\theoremstyle{plain}
\newtheorem{theo}{Theorem}[subsection]
\newtheorem*{theo*}{Theorem}
\newtheorem{lem}[theo]{Lemma}
\newtheorem*{lem*}{Lemma}
\newtheorem{cor}[theo]{Corollary}
\newtheorem{prop}[theo]{Proposition}
\newtheorem*{prop*}{Proposition}
\newtheorem*{conj*}{Conjecture}
\theoremstyle{remark}
\newtheorem{rem}[theo]{Remark}
\theoremstyle{definition}
\newtheorem{defi}[theo]{Definition}
\newtheorem{ex}[theo]{Example}
\newtheorem*{Qu}{Question}
\numberwithin{equation}{subsection}
\DeclareMathOperator{\NN}{\mathbb{N}}
\DeclareMathOperator{\ZZ}{\mathbb{Z}}
\DeclareMathOperator{\QQ}{\mathbb{Q}}
\DeclareMathOperator{\RR}{\mathbb{R}}
\DeclareMathOperator{\PP}{\mathbb{P}}
\DeclareMathOperator{\HH}{\mathbb{H}}
\DeclareMathOperator{\aA}{\mathbb{A}}
\DeclareMathOperator{\Ker}{Ker}
\DeclareMathOperator{\Hom}{Hom}
\DeclareMathOperator{\Aut}{Aut}
\DeclareMathOperator{\id}{id}
\DeclareMathOperator{\OO}{\mathscr{O}}
\DeclareMathOperator{\GL}{\mathbf{GL}}
\DeclareMathOperator{\Spec}{Spec}
\DeclareMathOperator{\Spf}{Spf}
\DeclareMathOperator{\Proj}{Proj}
\DeclareMathOperator{\ord}{ord}
\DeclareMathOperator{\MM}{\mathscr{M}}
\DeclareMathOperator{\h}{H}
\DeclareMathOperator{\Pic}{Pic}
\DeclareMathOperator{\GD}{\mathscr{G}}
\DeclareMathOperator{\GG}{\mathbb{G}}
\DeclareMathOperator{\PU}{\mathscr{P}}
\DeclareMathOperator{\ver}{\mathscr{V}}
\DeclareMathOperator{\edg}{\mathscr{E}}
\DeclareMathOperator{\rank}{rank}
\DeclareMathOperator{\K}{K}
\DeclareMathOperator{\gr}{gr}
\DeclareMathOperator{\Tor}{Tor}
\DeclareMathOperator{\F}{\mathscr{F}}
\DeclareMathOperator{\Supp}{Supp}
\DeclareMathOperator{\f}{F}
\DeclareMathOperator{\V}{V}
\DeclareMathOperator{\CH}{CH}
\DeclareMathOperator{\Ksheaf}{\mathscr{K}}
\DeclareMathOperator{\yer}{\mathscr{Y}}
\DeclareMathOperator{\DF}{\mathbf{R}}
\DeclareMathOperator{\R}{R}
\DeclareMathOperator{\FaiAb}{\mathfrak{S}}
\DeclareMathOperator{\Fan}{\mathfrak{ST}}
\DeclareMathOperator{\Saut}{\mathscr{A}\mathit{ut}}
\DeclareMathOperator{\sgn}{sgn}
\begin{document}

\selectlanguage{british}

\thispagestyle{empty}

\title{Overconvergent Chern Classes and Higher Cycle Classes}
\author{Veronika Ertl\\
Universit\"at Regensburg}
\date{}
\maketitle

\begin{abstract}
{\noindent
The goal of this work is to construct integral Chern classes and higher cycle classes for a smooth variety over a perfect field of characteristic $p>0$ that are compatible with the rigid Chern classes defined by Petrequin. The Chern classes we define have coefficients in the overconvergent de Rham--Witt complex of Davis, Langer and Zink and the construction is based on the theory of cycle modules discussed by Rost. We prove a comparison theorem in the case of a quasiprojective variety.}
\end{abstract}
\vspace{-9pt}
\selectlanguage{french}
\begin{abstract}
{\noindent
Le but de ce travail est de construir des classes de Chern enti\`eres et des classes de cycles pour une vari\'et\'e lisse sur un corps parfait de caract\'eristique $p>0$ compatible aux classes de Chern rigides d\'efinies par Petrequin. Les classes de Chern que l'on d\'efinit sont \`a co\'efficientes dans le complexe de de Rham--Witt surconvergent de Davis, Langer et Zink et la construction repose sur la th\'eorie de modules de cycles discut\'ee par Rost. On d\'emontre un th\'eor\`eme de comparaison dans le cas d'une vari\'et\'e quasiprojective.
\medskip\\
\textit{Key Words:} Overconvergent de Rham--Witt complex, higher integral Chern classes, rigid Chern classes, higher cycle classes.\\
\textit{Mathematics Subject Classification 2000:} 14F30, 19L10, 19E15, 19D45
}
\end{abstract}

\selectlanguage{british}

\tableofcontents

\addcontentsline{toc}{section}{Introduction}
\section*{Introduction}

It is well known that crystalline cohomology is a good integral model for Berthelot's rigid cohomology in the case of a proper variety. The overconvergent de Rham-Witt cohomology introduced by Davis, Langer and Zink \cite{DavisLangerZink} is an integral $p$-adic cohomology theory for smooth varieties designed to be compatible with Monsky-Washnitzer cohomology in the affine case and with rigid cohomology in the quasi-projective case. 

In view of the fact that for proper smooth varieties over a field of characteristic $p>0$ crystalline Chern classes are integral Chern classes which are according to Petrequin compatible with the rigid ones \cite{Petrequin}, the following question is reasonable:

\begin{Qu} Can we define integral Chern classes for (open) smooth varieties that are compatible with the rigid ones?\end{Qu}

We use the above mentioned overconvergent de Rham-Witt complex as an obvious choice for coefficients for integral Chern classes on smooth varieties.

Let $X$ be a smooth variety over a perfect field $k$ of positive characteristic $p>0$. Denote by $W^\dagger\Omega_X$ the \'etale sheaf of overconvergent Witt differentials over $X$. We construct a theory of higher Chern classes with coefficients in the overconvergent complex
$$c_{ij}^{\text{sc}}:K_j(X)\rightarrow\HH^{2i-j}(X,W^\dagger\Omega_X).$$
If $X$ is quasi-projective we prove the following comparison:
\begin{prop*} The overconvergent Chern classes $c_{ij}^{\text{sc}}:K_j(X)\rightarrow\HH^{2i-j}(X,W^\dagger\Omega_X)$ are compatible with the rigid Chern classes $c_{ij}^{\text{rig}}:K_j(X)\rightarrow\h_{\text{rig}}^{2i-j}(X/K)$ defined by Petrequin \cite{Petrequin} via the comparison morphism of \cite{DavisLangerZink}.\end{prop*}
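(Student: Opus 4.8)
The plan is to reduce the statement to the \emph{uniqueness} of Chern classes in Gillet's axiomatic sense \cite{Gillet}. Both families $c_{ij}^{\text{sc}}$ and $c_{ij}^{\text{rig}}$ are produced by Gillet's machinery out of a cohomology theory with supports carrying cup products and a theory of cycle classes satisfying the projective bundle formula: on the overconvergent side the relevant theory is $X\mapsto\HH^{*}(X,W^\dagger\Omega_X)$, whose Gersten resolution, Gysin maps and cycle classes are furnished by Rost's cycle modules, while on the rigid side it is $X\mapsto\h_{\text{rig}}^{*}(X/K)$ with the structure used by Petrequin \cite{Petrequin}. For $X$ quasi-projective the comparison morphism of Davis--Langer--Zink \cite{DavisLangerZink} yields a natural transformation $\HH^{*}(-,W^\dagger\Omega)\to\h_{\text{rig}}^{*}(-/K)$ (an isomorphism after extending scalars to $K$). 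The proof then amounts to two points: (i) this transformation is a morphism of cohomology theories with supports in the above sense; and (ii) Gillet's uniqueness theorem, which says that such a morphism carries the Chern classes of the source to those of the target.

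For (i), compatibility with pullbacks and with cup products is part of the construction of the comparison morphism, so the real content is its compatibility with the Gysin maps attached to closed immersions, equivalently the assertion that it sends the overconvergent cycle class $\mathrm{cl}^{\text{sc}}(Z)$ of an integral closed subscheme $Z\subseteq X$ to the rigid cycle class $\mathrm{cl}^{\text{rig}}(Z)$. I would prove this by dévissage: both coniveau (Gersten) spectral sequences are functorial, and one checks that the comparison morphism commutes with their boundary maps, which reduces the claim to the case $\mathrm{codim}_{X}Z=1$; working locally around the generic point of $Z$ this in turn comes down to compatibility of the comparison morphism with the residue map of a discrete valuation and with the identification of $\mathscr{K}_1$ with $\GG_m$, which fixes the normalisation ``$c_1$ of a line bundle $=$ class of a representing divisor'' on both sides. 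This local compatibility, which should follow from the explicit form of the Davis--Langer--Zink morphism on overconvergent Witt differentials, is the technical heart of the argument.

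Granting (i), compatibility of first Chern classes of line bundles is immediate; the splitting principle — available because both theories satisfy the projective bundle formula — together with the Whitney sum formula then gives compatibility of $c_i^{\text{sc}}$ and $c_i^{\text{rig}}$ on $K_0$. For the higher classes ($j\ge 1$) the classes $c_{ij}$ are induced, following Gillet, by morphisms of simplicial sheaves built from the universal Chern classes on $\BGL$ and evaluated on homotopy groups $K_j(X)$; since the comparison transformation is natural in $X$, it commutes with restriction to these simplicial sheaves and with that evaluation, so it carries the universal overconvergent classes to the universal rigid ones and hence $c_{ij}^{\text{sc}}$ to $c_{ij}^{\text{rig}}$. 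Quasi-projectivity is used twice: it is the hypothesis under which \cite{DavisLangerZink} establishes the comparison with rigid cohomology, and it is Petrequin's standing assumption; any geometric reduction needed along the way (passing from cycles to divisors, handling pushforwards) can be carried out with Chow's lemma and, if necessary, de Jong's alterations, all the structures in play being compatible with proper pushforward after inverting $p$.

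The main obstacle will be step (i), and within it the compatibility of the Davis--Langer--Zink morphism with Gysin maps and cycle classes, i.e.\ the matching of Rost's cycle-module description of the overconvergent cohomology with Petrequin's description of rigid cohomology via cohomology with supports. Once this single normalisation is in place, the comparison of all $c_{ij}^{\text{sc}}$ with $c_{ij}^{\text{rig}}$ is a formal consequence of Gillet's uniqueness theorem.
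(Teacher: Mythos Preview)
Your strategy is plausible in outline but differs substantially from the paper's, and it also rests on a mischaracterisation of how $c_{ij}^{\text{sc}}$ is built. The paper does \emph{not} equip $W^\dagger\Omega$ with a Gillet duality-theory structure (Gersten resolution, Gysin maps, cycle classes) and then run Gillet's machine on it; rather, the Gillet axioms are verified only for the Milnor $K$-sheaf $\Ksheaf^M_\ast$, producing $c_{ij}^M:K_j(X)\to\h^{i-j}(X,\Ksheaf^M_i)$, and $c_{ij}^{\text{sc}}$ is then \emph{defined} as the composite with the cohomology map induced by $d\log^i:\Ksheaf^M_i\to W^\dagger\Omega[i]$. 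The comparison with $c_{ij}^{\text{rig}}$ is obtained by showing (Proposition~\ref{RigChernFac}) that Petrequin's classes also factor through $c_{ij}^M$ via a map $\h^{i-j}(X,\Ksheaf^M_i)\to\h^{2i-j}_{\text{rig}}(X/K)$, and then (the proposition preceding Theorem~\ref{CompChern}) that this map agrees, under the Davis--Langer--Zink comparison, with the overconvergent $d\log$ map. The last point is checked by an explicit \v{C}ech computation: on a suitable cover one lifts a symbol $\{\overline{u}^{(1)},\ldots,\overline{u}^{(m)}\}$ to $\tfrac{d\tilde u^{(1)}}{\tilde u^{(1)}}\cdots\tfrac{d\tilde u^{(m)}}{\tilde u^{(m)}}$ on the rigid side and to $\tfrac{d[\overline u^{(1)}]}{[\overline u^{(1)}]}\cdots\tfrac{d[\overline u^{(m)}]}{[\overline u^{(m)}]}$ on the overconvergent side, and the comparison morphism sends one to the other because a Frobenius lift with $F(u)=u^p$ forces $\varkappa(\tilde u)=[\overline u]$.

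So the paper's pivot through Milnor $K$-theory replaces your ``technical heart'' --- compatibility of the comparison morphism with Gysin maps and cycle classes for $W^\dagger\Omega$ --- by a much lighter verification on explicit symbols. Your approach would presumably go through if one first established the full duality-theory package for $W^\dagger\Omega$ and proved your step~(i); but that is precisely the work the paper avoids, and you have only sketched how (i) ``should follow''. Put differently, the paper proves a triangle with $\h^{i-j}(X,\Ksheaf^M_i)$ at the apex and both Chern-class maps as legs, whereas you are trying to compare the two legs directly by transporting the entire Gillet structure across the comparison isomorphism.
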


Or more explicitly, the following diagram commutes for all i,j:
\begin{equation}\label{DiagComp}\xymatrix{& & \HH^{2i-j}(X,W^\dagger\Omega_X)\otimes\QQ\\
K_j(X)\ar[rru]^{c_{ij}^{\text{sc}}}\ar[rrd]_{c_{ij}^{\text{rig}}} & &\\
& &\h_{\text{rig}}^{2i-j}(X/K)\ar[uu]^{\cong}}\end{equation}
where the vertical map is the comparison isomorphism.

Let us now present the different parts of the article.

We begin by recalling facts about Milnor $K$-theory for local rings, including the Gersten conjecture for the associated sheaf on a scheme $X$ where all residue fields have ``enough'' elements due to Kerz \cite{Kerz}. We note that the Gersten conjecture implies that the ``na\"ive'' definition of the Milnor $K$-sheaf as the sheaf associated to the pre-sheaf given for a ring $A$ by
$$\overline{K}_\ast^M(A)=T^\ast(A)\slash\text{ Steinberg relations }$$
coincides with the definition used by Rost \cite{Rost} denoted by $\Ksheaf_\ast^M$. 

In order to be able to apply our results to a more general case, we describe Kerz's improved Milnor $K$-theory for local rings with finite residue fields and note that his results hold for \'etale topology, too.

In Section \ref{Rost} we state Rost's axiomatic approach to Chow groups in terms of cycle modules. We will later use the fact that the Milnor $K$-ring is a cycle module. An important result is Corollary \ref{Cor65} which makes it possible to calculate the cohomology of a cycle module in terms of the associated Chow groups. In particular, we can calculate the cohomology of the Milnor $K$-sheaf in terms of the cohomology of the associated cycle complex. We make use of this in the proof of the Projective Bundle Formula (Proposition \ref{ProjectiveBundleFormula}), sketched by Gillet in his survey \cite{Gillet2}. The statement is proven for Chow groups in general, but is in particular applied to the Milnor $K$-sheaf in the next section in order to show that it gives rise a duality theory.

In Section \ref{Gillet} we mention of Gillet's generalised duality theories. In Theorem \ref{TheoChernClasses} and Theorem \ref{TheoHigherChernClasses} we recall his result that for a duality theory $\Gamma(\ast)$ satisfying such axioms there exists a theory of higher Chern classes
$$c_{ij}:K_j(X)\rightarrow\h^{di-j}(X,\Gamma(i)).$$
We now define a duality theory by setting
$$\underline{\Gamma}^\ast_X(j)=\Ksheaf_j^M.$$
As we show that it satisfies Gillet's axioms, we conclude in Theorem \ref{TheoKsheafChern} that there is a theory of Chern classes with coefficients in the Milnor $K$-sheaf
$$c_{ij}:K_j(X)\rightarrow \h^{i-j}(X,\Ksheaf_n^M).$$

Assume now that $k$ is a perfect field of characteristic $p>0$ and $X$ a smooth $k$-scheme. 

In Section \ref{Logarithmic} we recall the definition of the overconvergent de Rham--Witt complex $W^\dagger\Omega_X$ introduced by Davis, Langer and Zink in \cite{DavisLangerZink}. It is easy to see that logarithmic Witt differentials are in fact overconvergent. This leads us to define for every $i\geqslant 0$ a morphism
\begin{eqnarray*}d\log^i:\OO^\ast_X\otimes\cdots\OO^\ast_X &\rightarrow& W\Omega^i_{X,\log} \rightarrow W^\dagger\Omega_X\left[i\right]\\
x_1\otimes\cdots\otimes x_i &\mapsto& d\log(x_1)\cdots d\log(x_i).\end{eqnarray*}
In Proposition \ref{PropSteinberg} we prove that the symbols $d\log(x_1)\cdots d\log(x_i)$ satisfy the Steinberg relation. Therefore the morphism $d\log^i$ factors through the na\"ive Milnor $K$-sheaf
$$d\log^i:\overline{\Ksheaf}_i^M\rightarrow W^\dagger\Omega\left[i\right].$$
We show that the overconvergent de Rham--Witt complex has a transfer map or norm which satisfies the conditions given in \cite{Kerz3}. Moreover, it is continuous. As a consequence we obtain for each $i$ a unique natural transformation
$$\widehat{d\log^i}:\widehat{\Ksheaf}_i^M\rightarrow W^\dagger\Omega\left[i\right],$$
and we don't have to distinguish any more between the different definitions of Milnor $K$-theory.

This enables us by functoriality of sheaf cohomology to define in Section \ref{Overconvergent} Chern classes with coefficients in the overconvergent complex induced by the ones for Milnor $K$-theory and prove some basic facts about them. 
\begin{theo*} There is a theory of Chern classes for vector bundles and higher algebraic $K$-theory of regular varieties over $k$ with infinite residue fields, with values with coefficients in the overconvergent de Rham--Witt complex:
$$c_{ij}^{\text{sc}}:K_j(X)\rightarrow\HH^{2i-j}(X,W^\dagger\Omega_X).$$
\end{theo*}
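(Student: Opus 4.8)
The plan is to obtain the overconvergent Chern classes by transporting the Milnor $K$-theory Chern classes of Theorem~\ref{TheoKsheafChern} along $\widehat{d\log}$, rather than by re-running Gillet's machine for $W^\dagger\Omega_X$ from scratch. For a regular $k$-variety $X$ all of whose residue fields are infinite, Theorem~\ref{TheoKsheafChern} supplies Chern classes
$$c_{ij}\colon K_j(X)\longrightarrow\h^{i-j}\bigl(X,\Ksheaf_i^M\bigr)\qquad(i,j\geqslant 0),$$
and Section~\ref{Logarithmic} supplies, for each $i$, the natural transformation $\widehat{d\log^i}\colon\widehat{\Ksheaf}_i^M\to W^\dagger\Omega_X[i]$. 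Since the residue fields are infinite, Kerz's Gersten conjecture identifies $\widehat{\Ksheaf}_i^M$, $\overline{\Ksheaf}_i^M$ and $\Ksheaf_i^M$, so $\widehat{d\log^i}$ may be viewed as a morphism of complexes of \'etale sheaves $\Ksheaf_i^M\to W^\dagger\Omega_X[i]$: the symbols $d\log x_1\cdots d\log x_i$ are closed, hence land in $W\Omega^i_{X,\log}\subseteq W^\dagger\Omega^i_X=(W^\dagger\Omega_X[i])^0$ and are killed by the differential out of degree $0$. Applying $R\Gamma(X,-)$ and taking cohomology yields
$$\h^{i-j}\bigl(X,\Ksheaf_i^M\bigr)\longrightarrow\HH^{i-j}\bigl(X,W^\dagger\Omega_X[i]\bigr)=\HH^{2i-j}\bigl(X,W^\dagger\Omega_X\bigr),$$
the equality being the shift by $i$, and I would define $c_{ij}^{\text{sc}}$ as the composite with $c_{ij}$.

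I would then verify that the family $(c_{ij}^{\text{sc}})$ satisfies the axioms for a theory of Chern classes for vector bundles and higher $K$-theory recorded in Theorems~\ref{TheoChernClasses} and~\ref{TheoHigherChernClasses}. The key point is that $\widehat{d\log}=\bigoplus_i\widehat{d\log^i}$ is multiplicative — a homomorphism from the graded ring $\Ksheaf_\ast^M$ into the graded ring underlying the differential graded algebra $W^\dagger\Omega_X$, since $d\log(x_1\cdots x_i)\cdot d\log(y_1\cdots y_j)=d\log(x_1\cdots x_i\,y_1\cdots y_j)$ — and is compatible with the transfer/norm maps by the results of Section~\ref{Logarithmic}. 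Each of Gillet's axioms for $(c_{ij}^{\text{sc}})$ then follows formally from the corresponding axiom for $(c_{ij})$: functoriality under $f\colon X'\to X$ from the functoriality of $c_{ij}$, the contravariance of \'etale hypercohomology and the naturality of $\widehat{d\log^i}$ (which uses the norm-compatibility); the Whitney sum formula, and more generally compatibility with products in $K$-theory and with the $K_0$-module structure, from the fact that the induced map on cohomology is a ring homomorphism intertwining the cup product of $\Ksheaf_\ast^M$ with that of $W^\dagger\Omega_X$; and normalization from the fact that $\widehat{d\log^1}$ sends the Milnor first Chern class $c_{1,0}(L)\in\h^1(X,\Ksheaf_1^M)=\h^1(X,\OO^\ast_X)=\Pic(X)$ of a line bundle $L$ to the class obtained by applying $d\log$ to a cocycle of transition functions of $L$, which is exactly the overconvergent first Chern class. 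Note that the projective bundle formula and the duality-theory structure needed to run Gillet's machine have already been established for $\Ksheaf_\ast^M$ in Proposition~\ref{ProjectiveBundleFormula} and Theorem~\ref{TheoKsheafChern}, so nothing of that kind need be reproved for $W^\dagger\Omega_X$.

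Once all of Section~\ref{Logarithmic} is in hand this amounts to bookkeeping, and the main obstacle I anticipate is ensuring that $\widehat{d\log}$ is compatible with \emph{all} of the structure entering Gillet's axioms at once: that it is a map of complexes (not merely of graded sheaves), a homomorphism of the relevant sheaves of rings, and compatible with the norm maps, and that the resulting morphism in the derived category of \'etale sheaves is independent of the chosen model for $\Ksheaf_i^M$ — here Kerz's uniqueness statement for continuous transfers is what makes the construction unambiguous. With this coherence established, the two places where signs or normalisations might a priori go wrong, namely the cup-product compatibility and the value on first Chern classes, cause no trouble, because the symbols $d\log x_1\cdots d\log x_i$ are closed differential forms of pure degree $i$, so no correcting homotopies or shifts intervene. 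This yields the asserted theory of overconvergent Chern classes $c_{ij}^{\text{sc}}\colon K_j(X)\to\HH^{2i-j}(X,W^\dagger\Omega_X)$.
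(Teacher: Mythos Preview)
Your proposal is correct and follows exactly the paper's approach: the overconvergent Chern classes are defined by composing the Milnor $K$-theory Chern classes of Theorem~\ref{TheoKsheafChern} with the cohomology map induced by $d\log^i\colon\Ksheaf_i^M\to W^\dagger\Omega_X[i]$. If anything, you are more careful than the paper itself, which simply states that the Chern classes $c_{ij}^M$ ``induce'' the overconvergent classes without spelling out the verification of the axioms; your observation that multiplicativity of $d\log$ transports the Whitney sum formula and products, and that the normalisation follows from the line-bundle case, is precisely the implicit content behind that word.
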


In Section \ref{Sec2.2} we compare these Chern classes to crystalline Chern classes using a construction due to Gros \cite{Gros}. Section \ref{AppendixGamma} examines the behaviour of the map $c^{\text{sc}}_{ij}$ on the $\gamma$-graded pieces of algebraic $K$-theory. In Section \ref{SubsecShortExactSequence} we attempt to establish a short exact sequence 
$$0\rightarrow W^\dagger\Omega^i_{X,\log}\rightarrow W^\dagger\Omega^i_X\xrightarrow{\f-1} W^\dagger\Omega^i_X\rightarrow 0$$
used to obtain Chern classes into Frobenius fixed submodules of $W^\dagger\Omega$.

As a preparation for our comparison theorem we go in Section \ref{RigChernClasses} over Petrequin's definition of rigid Chern classes and how to calculate them with \v{C}ech cocycles. We show that they factor through Milnor $K$-theory. From now on we assume that $X/k$ is smooth and quasi-projective. In this case Davis, Langer and Zink construct a rigid-overconvergent comparison morphism, which we recall in Section \ref{CompDLZ}. In fact, they show that there is a natural quasi-isomorphism
$$R\Gamma_{\text{rig}}(X)\rightarrow R\Gamma(X,W^\dagger\Omega_{X/k})\otimes\QQ.$$
In the last part of Section \ref{Comparison}, we show that the overconvergent Chern classes that we constructed are compatible with Petrequin's rigid Chern classes via this comparison map. This relies on the fact that they both factor through Milnor $K$-theory and we have a commutative diagram
$$\xymatrix{&&&\h^{2j-i}_{\text{rig}}(X/K)\\
K_j(X)\ar[rrru]^{c_{ij}^{\text{rig}}}\ar[rr]|{c_{ij}^M}\ar[rrrd]_{c_{ij}^{\text{sc}}}& & \h^{i-j}(X,\Ksheaf^M_i)\ar[ru]\ar[rd] & \\
&&&\HH^{2i-j}(X,W^\dagger\Omega)\ar[uu]}$$
where the outer triangle leads to the desired diagram (\ref{DiagComp}).

In Section \ref{Cycle} we construct higher cycle classes using the method of Bloch \cite{Bloch}. For this we first recall the definition of Bloch's higher Chow groups $\CH^b(X,n)$, which under certain conditions calculate Voevodsky's motivic cohomology. They form together the higher Chow ring $\CH^\ast(X,\cdot)$ of $X$ and Bloch establishes further properties useful for a cohomology theory, among other things there is a rational relation with algebraic $K$-theory which motivates the construction of higher cycle class maps. Similar to the method used for the Chern classes, we construct first cycle maps for the Milnor $K$-sheaf
$$\eta_M^{bn}:\CH^b(X,n)\rightarrow \h^{b-n}(X,\Ksheaf_b^M),$$
which satisfy a normalisation property, allow flat pull-backs and are compatible with addition and multiplication thus giving a homomorphism of rings. This can be done because the target cohomology theory $\h^n(X,\Ksheaf_b^M)$ satisfies certain properties such as weak purity. Lastly, we use again the map 
$$d\log^i: \Ksheaf^M_i\rightarrow W^\dagger\Omega[i]$$
to obtain morphisms of higher cycle classes
$$\eta_{\text{sc}}^{bn}:\CH^b(X,n)\rightarrow \HH^{2b-n}(X,W^\dagger\Omega^{\geqslant b}).$$

\subsection*{Acknowledgements}

The work on this subject began as my PhD thesis at the University of Utah and this article is a modified version of part of it. I would like to thank my advisor Wies\l{}awa Nizio\l{} for introducing me to the subject as well as all the advice and help that I received along the way. Part of this work was done at the \'Ecole Normale Sup\'erieure in Lyon and I am very grateful for their support and hospitality. I also would like to thank Christopher Davis and Lance Miller for very helpful and inspiring discussions.

\section{Preliminaries}

In this paper, let $p$ be a fixed prime. For a scheme $X$ and a closed point $x$, let $\kappa(x)$ be the associated residue field. The generic point is denoted by $\xi$. 

\subsection{Milnor $K$-theory}\label{Milnor}

In this section we recall the definition and basic properties of Milnor $K$-theory for fields and rings with infinite and finite residue field respectively following \cite{Kerz2} and assure the validity of the definition on the \'etale site as well. 

\subsubsection*{The theory for local rings with infinite residue fields}

Milnor $K$-theory for fields is well-known and studied. Now one might attmept to define the following. 

\begin{defi} For a unital ring $R$ let
$$\overline{K}_\ast^M(R)=T^\ast(R)\slash J$$
where $J$ is the two-sided homogeneous ideal generated by the Steinberg relations and elements of the form $a\otimes(-a)$.
\end{defi}

On the other hand the following makes sense. 

\begin{defi}\label{KRingRegSemiLoc} For a regular semi-local ring $R$ over a field $k$ the Milnor $K$-groups are given by
$$K_n^M(R)=\Ker\left(\bigoplus_{x\in R^{(0)}}K_n^M(k(x))\xrightarrow{\partial}\bigoplus_{y\in R^{(1)}}K_n^M(k(y))\right).$$
\end{defi}

If $R$ is a regular semi-local ring over a field, there is a canonical homomorphism of groups
$$\overline{K}_i^M(R)\rightarrow K_i^M(R)$$
which is surjective if the base field is infinite (or sufficiently large, as in \cite{Kerz2}).

This definition globalises to schemes.

\begin{defi}\label{Def1.5} Define $\overline{\Ksheaf}_\ast^M$ to be the Zariski sheaf associated to the presheaf
$$U\mapsto\overline{K}_\ast^M(\Gamma(U,\OO_U))$$
on the category of schemes.\end{defi}

Inspired by Definition \ref{KRingRegSemiLoc} one defines the following.

\begin{defi}\label{Defi19} Let $\Ksheaf_n^M$ be the sheaf
$$U\mapsto \Ker\left(\bigoplus_{x\in U^{(0)}}i_{x\ast}K_n^M(k(x))\xrightarrow{\partial}\bigoplus_{y\in U^{(1)}}i_{y\ast}K_n^M(k(y))\right)$$
on the big Zariski site of regular varieties (schemes of finite type) over a field $k$, where $i_x$ is the embedding of a point $x$ in $U$.\end{defi}

One part of the Gersten conjecture for Milnor $K$-theory is to show that these two definitions coincide. Kato constructed a Gersten complex of Zariski sheaves for Milnor $K$-theory of a scheme $X$
\begin{equation}\label{GerstenComplex} 0\rightarrow\overline{\Ksheaf}_n^M\rightarrow\bigoplus_{x\in X^{(0)}}i_{x\ast}K_n^M(k(x))\rightarrow\bigoplus_{y\in X^{(1)}}i_{y\ast}K_n^M(k(y))\rightarrow\cdots\end{equation}

In \cite{Rost} Rost gives a proof that this sequence is exact if $X$ is regular and of algebraic type over an arbitrary field $k$ except possibly at the first two places. Exactness at the second place was shown independently by Gabber and Elbaz--Vincent/M\"uller--Stach. Finally Kerz proved that the Gersten complex is exact at the first place for $X$ a regular scheme over a field, such that all residue fields are ``big enough''. Hence the Gersten conjecture holds for Milnor $K$-theory in this case.

\begin{cor}\label{CorGersten} Let $X$ be a regular scheme of dimension $n$ over a field with enough elements. Then
$$\Ksheaf_\ast^M=\overline{\Ksheaf}_\ast^M.$$
\end{cor}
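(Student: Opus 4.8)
The plan is to recognise the statement as a reformulation of the exactness of the Kato--Gersten complex (\ref{GerstenComplex}) at its first two places, which holds under the present hypotheses by the cited results: Kerz's theorem \cite{Kerz} at the first place and the theorem of Gabber and, independently, Elbaz--Vincent/M\"uller--Stach at the second. First I would observe that (\ref{GerstenComplex}) is by construction a complex of Zariski sheaves, so the edge map $\overline{\Ksheaf}_n^M\to\bigoplus_{x\in X^{(0)}}i_{x\ast}K_n^M(k(x))$ has zero composite with $\partial$ and therefore factors canonically through $\Ker\bigl(\bigoplus_{x\in X^{(0)}}i_{x\ast}K_n^M(k(x))\xrightarrow{\partial}\bigoplus_{y\in X^{(1)}}i_{y\ast}K_n^M(k(y))\bigr)$, which by Definition \ref{Defi19} is exactly $\Ksheaf_n^M$. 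This produces a canonical morphism of sheaves $\overline{\Ksheaf}_n^M\to\Ksheaf_n^M$, and it remains to see that it is an isomorphism for every $n$.

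Injectivity of this map is precisely exactness of (\ref{GerstenComplex}) at the first place, i.e.\ Kerz's theorem, which applies since $X$ is regular over a field with enough elements; surjectivity is exactness at the second place, which identifies the image of $\overline{\Ksheaf}_n^M$ inside $\bigoplus_{x\in X^{(0)}}i_{x\ast}K_n^M(k(x))$ with $\Ker\partial=\Ksheaf_n^M$. Hence the canonical map is bijective and $\overline{\Ksheaf}_\ast^M=\Ksheaf_\ast^M$. Should one prefer to argue on stalks, the same reasoning reduces the claim at a point $x\in X$ to the bijectivity of the canonical homomorphism $\overline{K}_n^M(\OO_{X,x})\to K_n^M(\OO_{X,x})$ for the regular local ring $\OO_{X,x}$: the stalk of $\overline{\Ksheaf}_n^M$ at $x$ is $\overline{K}_n^M(\OO_{X,x})$ because na\"ive Milnor $K$-theory commutes with filtered colimits, while localising (\ref{GerstenComplex}) at $x$ yields the Gersten complex of $\OO_{X,x}$, whose points of codimension $0$ and $1$ are the generisations of $x$ in $X$ of the respective codimension, so that the stalk of $\Ksheaf_n^M$ is the kernel of Definition \ref{KRingRegSemiLoc}.

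The genuine content is entirely imported: it lies in Kerz's proof that (\ref{GerstenComplex}) is exact at the first place --- the hard part of the Gersten conjecture for Milnor $K$-theory over fields with enough elements --- together with the second-place exactness of Gabber and Elbaz--Vincent/M\"uller--Stach. The only step on our side that calls for a little care is the stalk computation for $\Ksheaf_n^M$, which rests on the observation that $i_{x'\ast}K_n^M(k(x'))$ contributes to the stalk at $x$ exactly when $x$ lies in the closure of $x'$, so that only the generisations of $x$ survive when the complex is localised at $x$.
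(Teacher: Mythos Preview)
Your argument is correct and is exactly the reasoning the paper has in mind: the corollary is stated immediately after the paragraph explaining that exactness of the Gersten complex (\ref{GerstenComplex}) at the first two places is known by Kerz and by Gabber/Elbaz--Vincent--M\"uller--Stach, and your proposal simply unpacks why this exactness is equivalent to the canonical map $\overline{\Ksheaf}_n^M\to\Ksheaf_n^M$ being an isomorphism. The paper gives no further proof beyond that paragraph, so you have supplied precisely the implicit deduction.
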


\subsubsection*{The theory for local rings with finite residue fields}\label{ImprMK}

As Kerz points out in \cite{Kerz3}, the Gersten conjecture does not hold in general if one uses the same construction of Milnor $K$-theory for local rings with finite residue fields.

Let $\FaiAb$ be the category of abelian sheaves on the big Zariski site of schemes and $\Fan$ the full subcategory of sheaves that admit a transfer map in the sense of Kerz \cite{Kerz3}.  Furthermore, let $\Fan^\infty$ be the full subcategory of sheaves in $\FaiAb$ which admit norms as described if we restrict the system to local $A$-algebras $A'$ with infinite (or ``big enough'',cf. \cite{Kerz2}) residue fields.

\begin{ex} Kerz shows that the Milnor $K$-sheaf $\overline{\Ksheaf}^M_\ast$ is continuous and an element of $\Fan^\infty$ (cf. \cite[Proposition 4]{Kerz3}). \end{ex}

A main result in Kerz's article \cite{Kerz3} is that for a continuous functor $F\in\Fan^\infty$ there exists a continuous functor $\widehat{F}\in\Fan$ and a natural transformation satisfying a universal property. Namely, for an arbitrary continuous functor $G\in\Fan$ together with a natural transformation $F\rightarrow G$ there is a unique natural transformation $\widehat{F}\rightarrow G$ making the diagram
$$\xymatrix{F\ar[rr]\ar[rd] &&\widehat{F}\ar@{-->}[ld]^{\exists!}\\&G&}$$
commutative. Moreover, for a local ring with infinite residue field, the two functors coincide. 

A corollary is the existence of an improved Milnor $K$-theory, taking into account that $\overline{\Ksheaf}_n^M$ is in $\Fan^\infty$ and continuous.

\begin{cor}\label{KtoFunctor} For every $n\in\NN$ there exists a universal continuous functor $\widehat{\Ksheaf}_n^M\in\Fan$ and a natural transformation
$$\overline{\Ksheaf}_n^M\mapsto \widehat{\Ksheaf}_n^M$$
such that for any continuous $G\in\Fan$ together with a natural transformation $\overline{\Ksheaf}_n^M\rightarrow G$ there is a unique natural transformation $\widehat{\Ksheaf}_n^M\rightarrow G$ such that the diagram
$$\xymatrix{\overline{\Ksheaf}_n^M\ar[rr]\ar[dr] &&\widehat{\Ksheaf}_n^M\ar@{-->}[dl]^{\exists!}\\&G&}$$
commutes.\end{cor}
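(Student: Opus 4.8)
The plan is to obtain this corollary as a direct instance of Kerz's existence theorem for the improved functor, recalled in the paragraph immediately preceding the statement. Concretely, I would fix $n\in\NN$, set $F=\overline{\Ksheaf}_n^M$, and apply the result of \cite{Kerz3} which, from any continuous $F\in\Fan^\infty$, produces a continuous functor $\widehat{F}\in\Fan$ together with a natural transformation $F\rightarrow\widehat{F}$ satisfying the stated universal property: for every continuous $G\in\Fan$ and every natural transformation $F\rightarrow G$ there is a unique $\widehat{F}\rightarrow G$ factoring it. One then puts $\widehat{\Ksheaf}_n^M:=\widehat{F}$, and the triangle in the corollary is nothing but the triangle in Kerz's theorem specialised to $F=\overline{\Ksheaf}_n^M$.

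The only hypothesis to check is that $\overline{\Ksheaf}_n^M$ actually satisfies the conditions of that theorem, namely that it is continuous and lies in $\Fan^\infty$. This is exactly the content of the Example stated just above the corollary: by \cite[Proposition 4]{Kerz3}, the na\"ive Milnor $K$-sheaf $\overline{\Ksheaf}_n^M$ is continuous and admits norms for systems of local algebras with sufficiently large (in particular infinite) residue fields, i.e.\ it belongs to $\Fan^\infty$. Granting this input, the construction of $\widehat{\Ksheaf}_n^M$ and the verification of its universal property are purely formal.

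To round off the statement I would also record the two compatibilities that are part of Kerz's result: first, that $\widehat{\Ksheaf}_n^M$ coincides with $\overline{\Ksheaf}_n^M$ on local rings with infinite residue field, which via Corollary \ref{CorGersten} reconciles it with Definitions \ref{KRingRegSemiLoc} and \ref{Defi19} in the case of varieties over a field with enough elements; and second, that everything transfers to the \'etale topology, since Kerz's arguments are insensitive to the passage from the Zariski to the \'etale site in this setting. The step requiring the most attention is the membership $\overline{\Ksheaf}_n^M\in\Fan^\infty$ together with continuity, but since this is already established in \cite{Kerz3} there is no genuine obstacle here; the corollary is a formal consequence once that input is cited.
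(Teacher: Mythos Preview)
Your proposal is correct and matches the paper's approach exactly: the corollary is stated there without a separate proof, being an immediate application of Kerz's existence theorem (recalled just before) to $F=\overline{\Ksheaf}_n^M$, using the Example that $\overline{\Ksheaf}_n^M$ is continuous and lies in $\Fan^\infty$. The extra compatibilities you mention are recorded by the paper as well, though in the surrounding text rather than as part of this corollary.
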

Kerz proves in \cite[Proposition 10]{Kerz3} that $\widehat{\Ksheaf}_\ast^M$ satisfies the Gersten conjecture, and as before we deduce the

\begin{cor}\label{CorGersten2} Let $X$ be a smooth scheme with finite residue fields. Then
$$\Ksheaf_\ast^M=\widehat{\Ksheaf}_\ast^M$$
where $\Ksheaf_\ast^M$ is as in Definition \ref{Defi19}.
\end{cor}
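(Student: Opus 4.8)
The plan is to apply Kerz's Proposition 10 of \cite{Kerz3} — which states that the improved Milnor $K$-sheaf $\widehat{\Ksheaf}_\ast^M$ satisfies the Gersten conjecture — in exactly the same way that Corollary \ref{CorGersten} was deduced from Rost's and Kerz's results in the infinite residue field case. Concretely, I would first recall that the Gersten conjecture for $\widehat{\Ksheaf}_\ast^M$ on a smooth scheme $X$ over a field means that the Gersten complex
$$0\rightarrow\widehat{\Ksheaf}_n^M\rightarrow\bigoplus_{x\in X^{(0)}}i_{x\ast}K_n^M(k(x))\rightarrow\bigoplus_{y\in X^{(1)}}i_{y\ast}K_n^M(k(y))\rightarrow\cdots$$
is exact, i.e.\ it furnishes a flasque resolution of $\widehat{\Ksheaf}_n^M$ as a Zariski sheaf. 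Since the residue fields $k(x)$ at points $x\in X$ are fields, their Milnor $K$-groups are unambiguous (the finite/infinite distinction only matters for the local rings, not for the residue fields), so the terms of this complex agree with the terms of the defining complex for $\Ksheaf_n^M$ in Definition \ref{Defi19}.

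Next I would observe that $\Ksheaf_n^M$ is, by Definition \ref{Defi19}, precisely the kernel sheaf of the map $\bigoplus_{x\in X^{(0)}}i_{x\ast}K_n^M(k(x))\xrightarrow{\partial}\bigoplus_{y\in X^{(1)}}i_{y\ast}K_n^M(k(y))$, and that the Gersten complex for $\widehat{\Ksheaf}_n^M$ being exact at its first and second places says exactly that $\widehat{\Ksheaf}_n^M$ is isomorphic to that same kernel sheaf, the isomorphism being induced by the canonical augmentation. Composing, we get the desired identification $\Ksheaf_\ast^M=\widehat{\Ksheaf}_\ast^M$ as subsheaves of $\bigoplus_{x\in X^{(0)}}i_{x\ast}K_\ast^M(k(x))$. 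One should check that the comparison is natural and compatible with the ring structures, but this is formal given that the augmentation $\widehat{\Ksheaf}_n^M\to i_{\xi\ast}K_n^M(k(\xi))$ is a map of (pre)sheaves of rings.

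The only genuine content being imported is Kerz's Proposition 10, so there is no serious obstacle internal to this deduction; the work is all in \cite{Kerz3}. The one point that requires a little care is making sure the hypotheses match: Proposition 10 is stated for regular (or smooth) schemes over a field, and here $X$ is assumed smooth, so the hypothesis is met; one should also note that ``finite residue fields'' is the case \emph{not} covered by Corollary \ref{CorGersten}, which is exactly why the improved theory $\widehat{\Ksheaf}_\ast^M$ rather than $\overline{\Ksheaf}_\ast^M$ appears on the right-hand side. I would close by remarking that, combining Corollaries \ref{CorGersten} and \ref{CorGersten2}, for a smooth variety over a perfect field one may use whichever of the three models of the Milnor $K$-sheaf is most convenient, which is what the later sections tacitly do.
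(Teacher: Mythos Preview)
Your proposal is correct and follows exactly the approach the paper takes: the paper states no proof beyond ``as before we deduce'', referring to the parallel deduction of Corollary \ref{CorGersten} from the Gersten conjecture, and your argument is precisely that deduction spelled out---exactness of the Gersten complex for $\widehat{\Ksheaf}_\ast^M$ (Kerz's Proposition 10) identifies $\widehat{\Ksheaf}_n^M$ with the kernel sheaf that defines $\Ksheaf_n^M$ in Definition \ref{Defi19}.
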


Another important feature of the improved Milnor $K$-theory is that it is locally generated by symbols. In other words, it's elements satisfy the Steinberg relation \cite[Theorem 13]{Kerz2}

\subsubsection*{Milnor $K$-theory on the \'etale site}\label{MilnorKOnEtale}

Although the improved Milnor $K$-sheaf was constructed on the big Zariski site of all schemes, we can consider it as a sheaf over the big \'etale site. In particular, the theory makes sense on the small \'etale site $X_{\text{\'et}}$ of a scheme $X$.

More precisely, we can define $\overline{\Ksheaf}_\ast^M$ on the big \'etale site as in Definition \ref{Def1.5} \'etale locally instead of Zariski locally. Let $\FaiAb_{\text{\'et}}$, $\Fan_{\text{\'et}}$ and $\Fan_{\text{\'et}}^\infty$ the \'etale analogues of the above defined categories. This still makes sense as everything is only defined and described locally. On a similar note, continuity can be defined locally, so that the Milnor $K$-sheaf over the \'etale site is also continuous. The theorem now reads:

\begin{theo}\label{ImprEt} For a continuous functor $F\in\Fan^\infty_{\text{\'et}}$ there exists a universal continuous functor $\widehat{F}\in\Fan_{\text{\'et}}$ and a natural transformation $F\rightarrow \widehat{F}$. That means, for an arbitrary continuous functor $G\in\Fan_{\text{\'et}}$ together with a natural transformation $F\rightarrow G$ there is a unique natural transformation $\widehat{F}\rightarrow G$ making the diagram
$$\xymatrix{F\ar[rr]\ar[rd] &&\widehat{F}\ar@{-->}[ld]^{\exists!}\\&G&}$$
commutative. Moreover, for a local ring with infinite residue field, the two functors coincide.\end{theo}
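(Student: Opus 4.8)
The plan is to reduce the statement to Kerz's Zariski theorem (\cite{Kerz3}, of which Corollary \ref{KtoFunctor} is the case $F = \overline{\Ksheaf}^M_n$) by a formal argument together with one descent check. Observe first that the big Zariski and big \'etale sites have the same underlying category, so an \'etale sheaf is in particular a Zariski sheaf and a morphism of \'etale sheaves is nothing but a morphism of the underlying Zariski sheaves; consequently $\FaiAb_{\text{\'et}}$, $\Fan_{\text{\'et}}$ and $\Fan^\infty_{\text{\'et}}$ sit as full subcategories inside $\FaiAb$, $\Fan$ and $\Fan^\infty$, and ``continuous'' means the same thing in either setting. Hence, given a continuous $F \in \Fan^\infty_{\text{\'et}}$, I may regard it as a continuous object of $\Fan^\infty$ and invoke the theorem of \cite{Kerz3} to obtain a continuous $\widehat F \in \Fan$, a natural transformation $F \to \widehat F$ which is an isomorphism over local rings with infinite residue field, and the universal property of $\widehat F$ inside $\Fan$.

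The theorem will then follow once one knows that $\widehat F$ in fact lies in the full subcategory $\Fan_{\text{\'et}}$, i.e. that its underlying abelian presheaf satisfies \'etale descent. Indeed, given any continuous $G \in \Fan_{\text{\'et}} \subseteq \Fan$ together with a natural transformation $F \to G$, the Zariski universal property supplies a unique transfer-compatible morphism $\widehat F \to G$ in $\Fan$; since $\widehat F$ and $G$ both lie in the full subcategory $\Fan_{\text{\'et}}$, this is automatically the unique such morphism in $\Fan_{\text{\'et}}$, and the statement over local rings with infinite residue field is inherited word for word.

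To verify that $\widehat F$ is an \'etale sheaf I would unwind Kerz's construction of it. Locally on the base, $\widehat F$ is assembled from the values of $F$ on the auxiliary local rings obtained by adjoining finitely many ``generic'' indeterminates to the local rings of $X$ — for instance $A(t)$, the localization of $A[t]$ at the polynomials whose reduction modulo $\mathfrak m_A$ is nonzero, which is again local but now with the infinite residue field $k(t)$ — together with the transfer maps of $F$ along the finite extensions $A(t) \hookrightarrow B(t)$ induced by finite maps $A \to B$; concretely $\widehat F(A)$ is a finite limit of such terms. The point is then to check that forming these auxiliary rings is compatible with \'etale morphisms of the base: granting this, each of the presheaves $A \mapsto F(A(t))$, $A \mapsto F(A(t_1,t_2))$, \dots inherits \'etale descent from $F$, and a finite limit of \'etale sheaves is again an \'etale sheaf, whence $\widehat F \in \Fan_{\text{\'et}}$. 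Continuity of $\widehat F$ is seen the same way; here one uses that finite limits of abelian groups commute with filtered colimits.

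I expect this compatibility of the ``generic variable'' rings $A(t)$ with \'etale base change — together with the analogous statement for their iterates and for the attached finite extensions — to be the only genuine point; it should not be hard, but it is the one place where one really leaves the verbatim transcription of \cite{Kerz3}. The substantial content of \cite{Kerz3}, namely the construction of $\widehat F$ and the proof of its universal property over the Zariski site, enters only as a black box.
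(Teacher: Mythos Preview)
Your route differs from the paper's. The paper does not treat Kerz's Zariski theorem as a black box and then verify \'etale descent for the resulting $\widehat F$; instead it observes that Kerz's entire argument in \cite{Kerz3} takes place at the level of presheaves on local rings, the only additional ingredient being the transfer maps for finite \'etale extensions of such rings, followed by a sheafification step. Since none of this is sensitive to the topology in which one sheafifies, one simply replaces Zariski sheafification by \'etale sheafification throughout and Kerz's proofs carry over verbatim --- that is the whole of the paper's argument, and it works equally for any topology between Zariski and \'etale.

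Your approach would also succeed provided you can show that Kerz's Zariski $\widehat F$ is already an \'etale sheaf, but this step --- which you correctly flag as the ``only genuine point'' --- is less innocent than your sketch suggests, and completing it largely undoes the black-box saving. The assignment $A \mapsto A(t)$ is defined only for \emph{local} $A$, whereas an \'etale cover of $\Spec A$ with $A$ local is not in general by spectra of local rings; so the assertion that ``$A \mapsto F(A(t))$ inherits \'etale descent from $F$'' does not parse as written and would need to be reformulated, most likely by going back inside Kerz's construction rather than around it. The paper's route sidesteps this entirely: it never compares the Zariski and \'etale sheafifications of Kerz's presheaf, but takes the \'etale one from the outset and notes that nothing in Kerz's purely local arguments is affected.
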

\begin{proof}: The proof of the Zariski case used in \cite[Theorem 7]{Kerz3}  is purely local. Taking into account that the functors in question are sheafifications of presheaves on the category of local rings and furthermore that the only condition which goes beyond this is the existence of a transfer map for finite \'etale extensions of local rings, we see that the arguments can be carried over verbatim to the case of the \'etale site instead of the Zariski site. In fact, this is valid for any (Grothendieck) topology in between the \'etale and Zariski topology.   \end{proof}
\medskip

The properties of the improved Milnor $K$-sheave from \cite{Kerz3} in particular  \cite[Proposition 10]{Kerz3} cited above hold in the case of the \'etale site equally. In particular, the Gersten complex
$$0\rightarrow \widehat{K}^M_n(A)\rightarrow K_n^M(F)\rightarrow \oplus_{x\in X^{(1)}}K_{n-1}^M(k(x))\rightarrow\cdots$$
is exact and the improved Milnor $K$-sheaf over the \'etale site is locally generated by symbols.

\subsection{Cycle modules}\label{Rost}

\subsubsection*{Definition and properties}

We recall the definition of cycle modules given by Rost.

Rost defines in \cite{Rost} a cycle premodule $M$ as a functor from the category of fields to the category of modules over Milnor $K$-theory which have transfer morphisms and residue maps for discrete valuations and satisfy the the usual canonical axioms, among other things, it has a $\ZZ$-grading compatible with Milnor $K$-theory. 

A ring structure on a cycle premodule $M$ is a pairing $M\times M\rightarrow M$ which respects grading inducing for each $F$ an associative and anti-commutative ring structure.

A cycle module $M$ over a field $k$ is a cycle premodule with two additional conditions which allow to define a differential map $d$ on the associated cycle module and guarantee that $d\circ d=0$. Furthermore, Rost shows that cycle modules have the homotopy property and permit proper push forwards. 

\subsubsection*{Cycle complexes}

We distinguish dimension and codimension complexes.

\begin{defi} For a cycle module $M$ over $X$ we define a complex of graded modules with respect to dimension
$$C_p(X;M) = \coprod_{x\in X_{(p)}}M(x)\, ,\qquad d = d_X: C_p(X;M)\rightarrow C_{p-1}(X;M),$$
where $d$ is induced by the residue map, and similarly with respect to codimension
$$C^p(X;M) = \coprod_{x\in X^{(p)}}M(x)\, , \qquad d = d_X: C^p(X;M)\rightarrow C^{p+1}(X;M).$$
\end{defi}
 We can also define a version of the above with support in a closed subscheme $Y\rightarrow X$. Then define
$$C_p^Y(X;M)=\coprod_{\substack{x\in X_{(p)}\\x\in Y}}M(x)\qquad\text{and}\qquad C^p_Y(X;M)=\coprod_{\substack{x\in X^{(p)}\\x\in Y}}M(x).$$

It is not hard to show that $d\circ d=0$ in both cases so that these are indeed complexes. There are four relevant types of maps between cycle complexes. A finite morphism of schemes $f:X\rightarrow Y$ induces a push-forward
$$f_\ast: C_p(X;M)\rightarrow C_p(Y;M).$$
If $f$ has relative dimension $s$, we may also define a pull-back map
$$f^\ast:C_p(Y;M)\rightarrow C_{p+s}(X;M).$$
The fact, that cycle modules are modules over the Milnor $K$-ring makes it possible to define for each section $\{a_1,\ldots,a_n\}$ for $a_i\in\OO_X^\ast$ a map
\begin{equation}\label{SubsecCyclecom}\{a_1,\ldots,a_n\}:C_p(X;M)\rightarrow C_p(X;M)\end{equation}
which we call multiplication with units. Finally, let $X$ be of finite type over a field, $i:Y\rightarrow X$ a closed immersion and $j:U=X\backslash Y\rightarrow X$ the inclusion of the complement. We define the boundary map associated to a so-called boundary triple $(Y,i,X,j,U)$
$$\partial=\partial_Y^U:C_p(U;M)\rightarrow C_{p-1}(Y;M).$$
Although we just defined the four basic maps for the cycle complex with respect to dimension, it is clear that there are similar maps for the codimension-cycle complex (or co-cycle complex). Sums of composites of maps of the four basic types are called generalised correspondences. Rost shows in \cite[Section 4]{Rost} compatibilities of the four basic types of maps as desired for a reasonable cycle theory. 

The grading on $M$ induces a grading on the dimension and codimension complex via
$$C_p(X;M,q) = \coprod_{x\in X_{(p)}}M_{q+p}(x)\qquad\text{and}\qquad C^p(X;M,q) = \coprod_{x\in X^{(p)}}M_{q-p}(x).$$
The maps to be considered will respect the grading.

\subsubsection*{The cohomology of cycle modules}

The following results are useful if one is faced with the task to calculate the cohomology of a cycle module explicitly.

\begin{defi} The Chow group of $p$-dimensional (co)cycles with coefficients in $M$ without and with is defined as the $p$\textsuperscript{th} homology group of the above  complexes
\begin{eqnarray*}A_p(X;M):=\h_p(C_\ast(X;M))\quad&\text{and}&\quad A^p(X;M) := \h^p(C^\ast(X;M)),\\
A_p(X;M,n) := \h_p(C_\ast(X;M,n))\quad &\text{and}&\quad A^p(X;M,n) := \h^p(C^\ast(X;M,n)).\end{eqnarray*}
\end{defi}

The morphisms induced by the four basic maps on the cycle complexes induce maps on the homology and cohomology groups and commute, respectively anti-commute with the differentials. The compatibilities carry over from cycle modules to Chow groups (for proper $f$, $f'$ and flat $g$). Moreover, a boundary triple $(Y,i,X,j,U)$ induces a long exact sequence for homology
\begin{equation}\label{LESfH}\cdots\xrightarrow{\partial}A_p(Y;M)\xrightarrow{i_\ast}A_p(X;M)\xrightarrow{j_\ast}A_p(U;M)\xrightarrow{\partial}A_{p-1}(Y;M)\xrightarrow{i_\ast}\cdots.\end{equation}

An interesting feature of the Chow groups as defined here, which brings it closer to classical topology is the \textbf{homotopy invariance}. Let $\pi:V\rightarrow X$ be an affine bundle of dimension $n$. Then
\begin{equation}\label{HIH}\pi^\ast:A_p(X;M)\rightarrow A_{p+n}(V;M)\end{equation}
is bijective for all $p$. If $X$ is equidimensional, then
\begin{equation}\label{HIC}\pi^\ast:A^p(X;M)\rightarrow A^p(V;M)\end{equation}
is bijective for all $p$. In particular this applies to fiberproducts with $\aA^n$. Rost proves this in \cite[Proposition 8.6]{Rost} using a spectral sequence argument.

We now come to one of the main results which we need from Rost's discussion for our purpose.

Let $M$ be a cycle module over a field $k$.

\begin{theo}\label{Theo61} Let $X$ be smooth, semi-local and a localisation of a separated scheme of finite type over $k$. Then
$$A^p(X;M)=0\qquad\text{ for }\qquad p>0.$$
\end{theo}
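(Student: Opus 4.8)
The plan is to reduce, by a limit argument, to the case where $X$ is the semi-localisation of an integral smooth affine $k$-variety at a finite set of points, and then to kill every positive-codimensional cohomology class by a homotopy of Quillen type, the main engine being the homotopy invariance of Chow groups with coefficients recalled in~(\ref{HIC}). First, since $X$ is smooth, hence regular, it is a finite disjoint union of integral schemes, and $A^p(-;M)$ being additive over these we may assume $X$ is integral; after passing to an affine open around the closed points we write $X=\Spec\OO_{\bar X,S}$ for an integral smooth affine $k$-variety $\bar X$ and a finite set of points $S\subset\bar X$. Then $X$ is the filtered limit of the affine open neighbourhoods $U$ of $S$ in $\bar X$ along open immersions. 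Since every point of $X$ lifts to a point of some $U$ with the same residue field, and since the boundary maps $\partial$ and the structure maps of $M$ are preserved under open immersions, the cycle complex commutes with this limit: $C^\bullet(X;M)=\colim_U C^\bullet(U;M)$, hence $A^p(X;M)=\colim_U A^p(U;M)$. It therefore suffices to show that for every closed cycle $\alpha\in C^p(\bar X;M)$ with $p\geqslant 1$ the class $[\alpha]$ becomes zero in $A^p(U;M)$ for some open $U\supseteq S$.

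Next I would set up a \emph{geometric presentation}. Put $Z=\overline{\Supp\alpha}$, a closed subset of codimension $\geqslant p\geqslant 1$, and $d=\dim\bar X$. Assuming first that $k$ is infinite, Quillen's geometric presentation lemma, in the refined form due to Gabber and to Colliot-Th\'el\`ene--Hoobler--Kahn, yields, after shrinking $\bar X$ to a smaller affine neighbourhood of $S$, a morphism $\pi\colon\bar X\to V$ onto an open $V\subseteq\aA^{d-1}_k$ that is smooth of relative dimension one, together with a locally closed immersion $\bar X\hookrightarrow\PP^1_V$ over $V$, such that $\pi|_Z$ is finite, $Z$ lands in $\aA^1_V$, and the part at infinity of the closure of $Z$ in $\PP^1_V$ is disjoint from $\bar X$. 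When $k$ is finite one runs the argument over two coprime extensions of $k$ (a pro-$\ell$ and a pro-$\ell'$ one), over which all residue fields in play are infinite, and descends using the transfer maps and the projection formula for $M$; this is the step where having a genuine \emph{cycle module}, and not merely a sheaf, is indispensable.

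Finally, the \emph{homotopy}. On $\PP^1\times_k\bar X$ take the two sections $s_0,s_\infty$, and on $\aA^1\times_k\bar X$ the projection $q$ to $\bar X$; by~(\ref{HIC}) the map $q^\ast$ on $A^\bullet(-;M)$ is bijective, so the two specialisation maps $s_0^\ast$ and $s_\infty^\ast$ on $A^\bullet(\aA^1\times_k\bar X;M)$ agree. Using the presentation of the previous step and the $\GG_m$-action on the $\PP^1$-fibration one constructs, over a suitable $U\supseteq S$, a closed cycle $\beta\in C^p(\aA^1\times_k U;M)$ with $s_0^\ast\beta=\alpha|_U$ and with $\Supp(s_\infty^\ast\beta)$ disjoint from $S$; that $\beta$ is a well-defined cycle and that its restrictions are as claimed follows from the compatibilities of pull-back, push-forward and boundary maps recorded in \cite[\S 4]{Rost}. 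Restricting everything to $X$ then gives $[\alpha]|_X=s_0^\ast[\beta]|_X=s_\infty^\ast[\beta]|_X=0$, the last equality because the support of $s_\infty^\ast\beta$ misses every point of $S$. Combined with the limit reduction this proves $A^p(X;M)=0$ for all $p\geqslant 1$.

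The hard part is the middle step together with the construction of $\beta$ in the last one: producing a single projection that is simultaneously smooth of relative dimension one near $S$, finite on $Z$, and compactifiable inside $\PP^1_V$ with the points at infinity of $Z$ avoiding $\bar X$, and then extracting from this geometry an honest closed cycle on $\aA^1\times_k U$ that degenerates $\alpha$ to a cycle supported off $S$. Over a finite base field the descent through the auxiliary extensions, which rests on the transfer formalism of cycle modules, is a further technical point rather than a conceptual one.
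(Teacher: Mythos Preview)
The paper does not supply its own proof of this theorem; it is quoted as an input from Rost~\cite{Rost} (the label \texttt{Theo61} in fact points to Rost's Theorem~6.1). So there is no in-paper argument to compare against, only Rost's.

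Your outline is the standard Quillen--Gabber strategy, and it is essentially the one Rost implements in \cite[\S6]{Rost}: pass by a filtered colimit to an affine neighbourhood of the finite set $S$; produce a geometric presentation of that neighbourhood as a relative curve $\bar X\to V\subset\aA^{d-1}$ with the support $Z$ of the cycle finite over $V$; and then use an $\aA^1$-deformation together with homotopy invariance~(\ref{HIC}) to move the cycle off $S$. The finite-field case handled via two coprime pro-$\ell$ extensions and transfer is likewise standard. So at the level of architecture your proposal matches the source.

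The part that remains genuinely underspecified is your construction of $\beta$. Saying that one uses ``the $\GG_m$-action on the $\PP^1$-fibration'' is not quite what happens: what Rost actually builds is an explicit correspondence (in his sense) from $\aA^1\times U$ to $U$ out of the presentation data, and the verification that the resulting $\beta$ is a closed cycle with the stated specialisations $s_0^\ast\beta=\alpha|_U$ and $\Supp(s_\infty^\ast\beta)\cap S=\varnothing$ uses the full list of compatibilities in \cite[\S4]{Rost} together with an auxiliary section of the relative curve disjoint from $Z$. Your sketch gestures at this but does not pin it down; as written, a reader could not reconstruct $\beta$ from your description. This is not a wrong idea, just an incomplete one at the crucial step, and it is precisely the step you flag as ``the hard part''.
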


In other words, the complex $C^\ast(X;M)$ is acyclic. It is clear that one deduces a similar statement from this for the graded complexes.

When $X$ is smooth, it is possible to sheafify the notion of cycle modules as follows.

\begin{defi} Let $\MM_X$ (resp. $\MM_q$) be the sheaf on $X$ given by
$$U\mapsto A^0(U;M)\subset M(\xi_X),\qquad \left(\text{ resp. }\quad U\mapsto A^0(U;M,q)\subset M_q(\xi_X)\quad\right) $$
\end{defi}

In the case, when $M=K_\ast^M$ is Milnor $K$-theory, this definition coincides with the Milnor $K$-sheaf as defined in \ref{Defi19}.

\begin{cor}\label{Cor65} For a smooth variety $X$ over $k$ there are natural isomorphisms
$$A^p(X;M)=\h^p(X,\MM_X)\quad\text{ and }\quad A^p(X;M,q)=\h^p(X,\MM_q).$$
\end{cor}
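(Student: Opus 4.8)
The plan is to deduce Corollary~\ref{Cor65} from Theorem~\ref{Theo61} by the standard descent-of-cohomology argument: exhibit the cycle complex $C^\ast(X;M)$ as a flasque (or at least acyclic-for-global-sections) resolution of the sheaf $\MM_X$, and then read off sheaf cohomology as the cohomology of global sections of that resolution. First I would define, for each $p$, the Zariski sheaf $\mathscr{C}^p(-;M)$ on $X$ by $U\mapsto C^p(U;M) = \coprod_{x\in U^{(p)}} M(x)$, together with its differential coming from the residue maps; since the summands are pushforwards $i_{x\ast} M(x)$ of constant-type sheaves along points $x$ of codimension $p$, each $\mathscr{C}^p(-;M)$ is a flasque sheaf (a direct sum of skyscraper-type sheaves $i_{x\ast}$ of a constant sheaf on the closure, restricted appropriately), hence acyclic for $\h^\ast(X,-)$ and for $\Gamma(X,-)$.

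Next I would check exactness of the complex of sheaves
\begin{equation*}
0\rightarrow \MM_X \rightarrow \mathscr{C}^0(-;M)\rightarrow \mathscr{C}^1(-;M)\rightarrow \mathscr{C}^2(-;M)\rightarrow\cdots
\end{equation*}
This is a statement about stalks: for a point $x\in X$, the stalk of $\mathscr{C}^p(-;M)$ at $x$ is $\varinjlim_{U\ni x} C^p(U;M) = C^p(\mathscr{O}_{X,x};M)$ (the cycle complex of the local ring), and the stalk of $\MM_X$ is $A^0(\mathscr{O}_{X,x};M) = \Ker(C^0\rightarrow C^1)$ by definition. Exactness of the stalk complex in positive degrees is exactly the assertion of Theorem~\ref{Theo61} applied to the local ring $\mathscr{O}_{X,x}$ (or more generally its semilocalisations), which is smooth and essentially of finite type over $k$; exactness at the $\MM_X$ and $\mathscr{C}^0$ spots is immediate from the definition of $A^0$. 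Thus $\mathscr{C}^\bullet(-;M)$ is a flasque resolution of $\MM_X$, and $\h^p(X,\MM_X) = \h^p(\Gamma(X,\mathscr{C}^\bullet(-;M)))$. Finally I would identify $\Gamma(X,\mathscr{C}^p(-;M))$ with $C^p(X;M)$ — this is where one uses that the presheaf $U\mapsto C^p(U;M)$ is already a sheaf (a coproduct of pushforwards of flasque sheaves), so no sheafification changes global sections — whence $\h^p(X,\MM_X) = \h^p(C^\bullet(X;M)) = A^p(X;M)$. The graded version $A^p(X;M,q)=\h^p(X,\MM_q)$ follows by running the identical argument with the graded pieces $\mathscr{C}^p(-;M,q)$, since all the maps and the differential respect the grading.

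The main obstacle is the verification that the presheaves $U\mapsto C^p(U;M)$ genuinely form flasque sheaves whose stalks are the cycle complexes of the local rings, and that Theorem~\ref{Theo61} really does apply stalkwise: one must be careful that the local rings $\mathscr{O}_{X,x}$ (and finite localisations thereof appearing in Gersten-type arguments) satisfy the hypothesis ``smooth, semi-local, a localisation of a separated scheme of finite type over $k$'' of Theorem~\ref{Theo61}, which holds precisely because $X$ is a smooth variety over $k$. A secondary, more bookkeeping-level point is checking that restriction maps on the $C^p$ presheaves (given by the pull-back/flat-pullback maps on cycle complexes along open immersions) are compatible with the residue differentials — this is part of Rost's formalism of the four basic maps and their compatibilities \cite[Section 4]{Rost}, which we may invoke. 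Once these are in place the corollary is a formal consequence of the acyclic-resolution computation of sheaf cohomology.
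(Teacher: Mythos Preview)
Your proposal is correct and is exactly the standard derivation that the paper has in mind: the corollary is stated without proof in the paper (it is Rost's result, cf.\ \cite[Corollary 6.5]{Rost}), and the intended argument is precisely the one you give --- Theorem~\ref{Theo61} furnishes stalkwise exactness of the Gersten-type complex $\mathscr{C}^\bullet(-;M)$, the constituent sheaves are flasque as coproducts of pushforwards from points, and the resulting flasque resolution of $\MM_X$ computes $\h^p(X,\MM_X)$ as $A^p(X;M)$. There is nothing to add.
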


This tells us, that we can calculate the cohomology of the Milnor $K$-sheaf $\Ksheaf_\ast^M$ in terms of the cohomology of the associated cycle complex.

\subsubsection*{Projective bundle formula for Chow groups}

Following Gillet's axiomatic framework to construct Chern classes, one of the main steps is to establish a projective bundle formula. We give here a more detailed proof of the sketch in \cite[Proposition 54]{Gillet2}

\begin{prop}\label{ProjectiveBundleFormula} Let $M$ be a cycle module, $X$ a variety over $k$ and $\pi:\edg\rightarrow X$ a vector bundle of constant rank $n$. Let further $\xi\in\h^1(\PP(\edg),\OO^\ast)$ be the class of $\OO(1)$. Then there is an isomorphism
$$A^p(\PP(\edg),M,q)\cong\bigoplus_{i=0}^{n-1}A^{p-i}(X,M,q-i)\xi^i.$$
\end{prop}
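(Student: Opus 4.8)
The plan is to follow the standard strategy for projective bundle formulas in an axiomatic cohomology theory, adapted to the cycle-module setting, combining the localization long exact sequence \eqref{LESfH}, homotopy invariance \eqref{HIH}--\eqref{HIC}, and Corollary \ref{Cor65} to reduce the global statement to something computable. First I would fix the class $\xi=c_1(\OO(1))\in\h^1(\PP(\edg),\OO^\ast)$ and define the candidate map
$$\Phi:\bigoplus_{i=0}^{n-1}A^{p-i}(X;M,q-i)\longrightarrow A^p(\PP(\edg);M,q),\qquad (\alpha_i)_i\longmapsto\sum_{i=0}^{n-1}\pi^\ast(\alpha_i)\cdot\xi^i,$$
where $\pi^\ast$ is the flat pull-back on cycle-module cohomology and multiplication by $\xi$ is the ``multiplication with units'' operation \eqref{SubsecCyclecom} (descended to cohomology, using that $\xi$ is a $\h^1(\OO^\ast)$-class and the Chow groups of a cycle module form a module over $\h^\ast(X,\OO^\ast)$ via Milnor $K$-theory). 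The goal is to prove $\Phi$ is an isomorphism.

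The key steps, in order: (1) \emph{Reduce to the trivial bundle Zariski-locally.} Since $\PP(\edg)$ is Zariski-locally $\PP^{n-1}_X$, and by Corollary \ref{Cor65} the groups $A^p(-;M,q)$ are the cohomology of the sheaf $\MM_q$, I would argue that $\Phi$ is a morphism of (complexes of) sheaves on $X$ and that it suffices to check it is an isomorphism locally on $X$; here one uses compatibility of $\pi^\ast$ and of multiplication with units with restriction to opens, all provided by Rost's compatibilities in \cite[Section 4]{Rost}. (2) \emph{Prove the formula for $\PP^{n-1}_X$ by induction on $n$.} For $n=1$ it is the identity. For the inductive step, embed $\PP^{n-2}_X\hookrightarrow\PP^{n-1}_X$ as a hyperplane with open complement $\aA^{n-1}_X$, and feed this into the localization sequence \eqref{LESfH} (in the codimension form, after reindexing dimension/codimension since $\PP^{n-1}_X$ is equidimensional over $X$). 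Homotopy invariance \eqref{HIC} gives $A^p(\aA^{n-1}_X;M,q)\cong A^p(X;M,q)$, and the Gysin map $i_\ast$ for the hyperplane, together with the self-intersection/projective-bundle bookkeeping (the hyperplane class restricts to $\xi$, and $i_\ast 1 = \xi$ up to the appropriate twist), lets one splice the sequence for $\PP^{n-1}$ out of that for $\PP^{n-2}$ and a copy of $A^\ast(X)$, shifting degrees by one and tensoring with $\xi$. One then checks the connecting maps vanish because $\Phi$ provides an explicit splitting, so the long exact sequence degenerates into the claimed direct sum decomposition. (3) \emph{Globalize.} Having the isomorphism for trivial bundles and compatibility with localization, Step (1) upgrades it to arbitrary $\edg$; the graded version $A^p(\PP(\edg);M,q)$ with the twists $q-i$ comes out automatically because multiplication by the degree-one class $\xi$ shifts the grading by $1$, exactly matching the bookkeeping in \eqref{SubsecCyclecom} and the definition of $C^p(X;M,q)$.

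The main obstacle I anticipate is Step (2): making the localization-sequence induction genuinely \emph{split} rather than merely exact. One must verify that the Gysin pushforward $i_\ast$ along the hyperplane inclusion interacts with multiplication by $\xi$ according to the projection formula $i_\ast(i^\ast(y)\cdot z)=y\cdot i_\ast(z)$ and that $i^\ast\xi_{\PP^{n-1}}=\xi_{\PP^{n-2}}$, so that the image of $i_\ast$ is exactly $\bigoplus_{i\geqslant 1}\pi^\ast A^{p-i}(X;M,q-i)\xi^i$ and the quotient is $\pi^\ast A^p(X;M,q)$ via restriction to the open $\aA^{n-1}_X$; these compatibilities are exactly the content of Rost's four-basic-maps formalism, but assembling them correctly (with the right dimension-vs-codimension indexing, and checking the boundary map in \eqref{LESfH} dies) is where the real care is needed. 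A secondary subtlety is justifying that multiplication by $\xi\in\h^1(\OO^\ast)$ on the Chow groups $A^\ast(-;M,q)$ is well-defined on cohomology and associative/compatible with $\pi^\ast$ — this follows from the module structure of cycle modules over Milnor $K$-theory and the identification in Corollary \ref{Cor65}, but should be stated explicitly before it is used.
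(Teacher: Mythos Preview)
Your proposal is correct and follows the classical projective-bundle-formula strategy, but the paper takes a genuinely different route in the globalization step. Both arguments start identically: reduce to the trivial bundle and run the localization sequence for the hyperplane $\PP^{n-2}\hookrightarrow\PP^{n-1}$ with affine complement, using homotopy invariance. The divergence is in \emph{where} this induction is carried out. You propose to run the localization/induction directly over the base $X$, so that at each step you must show the long exact sequence splits over a general $X$; as you correctly flag, this forces you to verify the projection formula $i_\ast(i^\ast y\cdot z)=y\cdot i_\ast z$ and the compatibility $i^\ast\xi=\xi$ in Rost's formalism to produce an explicit splitting. The paper instead sidesteps this entirely: it first proves the formula only over a \emph{point} $X=\Spec k$, where the vanishing $A^p(\Spec k;M,q)=0$ for $p>0$ forces the localization sequence to break into short pieces automatically (no splitting argument needed). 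It then bootstraps to general affine $X$ by a Leray/derived-functor argument: writing $\Gamma_X=\Gamma_k\circ f_\ast$ and using the base-change square $f_\ast\pi'_\ast=\pi_\ast f'_\ast$ together with Corollary~\ref{Cor65} to identify $\DF\Gamma_X\DF\pi'_\ast\MM_q$ with $\DF\Gamma_{\PP^n_X}\MM_q$, reducing everything to the already-established case over the point.

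Your approach is more elementary and self-contained (no spectral sequences), at the price of checking Rost's compatibilities carefully; the paper's approach trades those compatibility checks for a formal derived-category manipulation, which is cleaner once one accepts that pushforward of a cycle module along $f$ is again a cycle module.
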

\begin{proof}: This being a local question, we may assume without loss of generality that $X=\Spec A$ is affine and that $\edg=\OO_X^n$. The first part of the proof establishes the result for the case of a point $X=\Spec k$. From there, the second part deduces the general result.

Now let $X=\Spec k$ be a point. This implies in particular that $\PP(\edg)=\PP^n_k$. The formula we have to show in this case reads
$$A^p(\PP^n,M,q)=A^0(X,M,q-p)\xi^p$$
because obviously the higher Chow groups vanish for a point so that we are left with only one term with $i=p$. Let $j:\PP^{n-1}\subset\PP^n$ be the hyperplane at infinity, $\aA^n$ its complement and $i:\aA^n\rightarrow \PP^n$ the inclusion of the open subset. Recalling the definition of cycle complexes as
\begin{eqnarray*} C^{p-1}(\PP^{n-1};M,q-1)&=&\prod_{x\in (\PP^n)^{(p-1)}}M_{q-1-(p-1)}(x)=\prod_{x\in (\PP^n)^{(p-1)}}M_{q-p}(x)\\
C^p(\PP^n;M,q) &=&  \prod_{x\in (\PP^n)^{(p)}}M_{q-p}(x)\\
C^p(\aA^n;M,q) &=&  \prod_{x\in (\aA^n)^{(p)}}M_{q-p}(x)\end{eqnarray*}
together with the fact that points of codimension $p-1$ in $\PP^{n-1}$ correspond to points of codimension $p$ in $\PP^n$ we see that the maps $i$ and $j$ induce via push-forward and pull-back respectively morphisms
$$j_\ast: C^{p-1}(\PP^{n-1};M,q-1)\rightarrow C^p(\PP^n;M,q)$$
and
$$i^\ast: C^p(\PP^n;M,q)\rightarrow C^p(\aA^n;M,q).$$
By choice and definition of $\PP^{n-1}$ and $\aA^n$, these morphisms of groups for varying $p$ combine to a short exact sequence of complexes
$$0\rightarrow C^\ast(\PP^{n-1};M,q-1)\left[1\right]\rightarrow C^\ast(\PP^n;M,q)\rightarrow C^\ast(\aA^n;M,q)\rightarrow 0$$
which gives rise to a long exact sequence of Chow groups by taking cohomology
$$\cdots\rightarrow A^{p-1}(\PP^{n-1};M,q-1)\xrightarrow{j_\ast}A^p(\PP^n;M,q)\xrightarrow{i^\ast}A^p(\aA^n;M,q)\rightarrow\cdots$$
where $j_\ast$ is the Gysin map. Let $\pi:\PP^n\rightarrow X=\Spec k$ be the projection induced from $\pi:\edg\rightarrow X$. Consequently the map associated to $\pi\cdot i$ on Chow groups
$$(\pi\cdot i)^\ast:A^p(\Spec(k),M,q)\rightarrow A^p(\aA^n,M,q)$$
is an isomorphism due to homotopy invariance (\ref{HIC}). Since the Chow groups of a point are trivial for $p>0$, the same holds true for the Chow groups of $\aA^n$. Therefore we can break up the long exact sequence. The first part reads
$$0\rightarrow A^0(\PP^n;M,q)\xrightarrow{i^\ast} A^0(\Spec k,M,q)\rightarrow A^0(\PP^n-1;M,q-1)\xrightarrow{j_\ast} A^1(\PP^n,M,q)\rightarrow 0.$$
Per definitionem
$$A^0(\Spec k,M,q)=C^0(\Spec k;M,q)=M_q(k)$$
and
$$A^0(\PP^n, M,q)=\Ker\left( C^0(\PP^n;M,q)\rightarrow C^1(\PP^n;M,q)\right)= \Ker\left( \prod_{x\in (\PP^n)^0}M_q(x)\rightarrow  \prod_{x\in (\PP^n)^1}M_{q-1}(x)\right)$$
and the fact that $(\pi\cdot i)^\ast=i^\ast\cdot \pi^\ast$ is an isomorphism shows that $\pi^\ast$ splits the sequence as $i^\ast$ is injective. Thus
$$A^0(\PP^n,M,q)\cong A^0(\aA^n,M,q).$$
The other parts of the long exact sequence become for every $i\geqslant 1$
$$0\rightarrow A^{i-1}(\PP^{n-1},M,q-1)\xrightarrow{j_\ast} A^i(\PP^n,M,q)\rightarrow 0$$
hence we have isomorphisms, where the map $j_\ast$ is the same as cap product with $\xi$. By induction it follows that the natural map
$$\xi^p:A^0(\Spec k,M,q-p)\rightarrow A^p(\PP^n,M,q)$$
is an isomorphism.

Assume now that $X=\Spec A$ for a $k$-algebra $A$. The projective bundle $\PP(\edg)$ takes the form $\PP^n_X=\PP^n_k\times X$, and it is useful to keep the following commutative diagram of the fibre product in mind
\begin{equation}\label{FiberDiag}\xymatrix{\PP^n_X\ar[r]^{f'}\ar[d]_{\pi'} & \PP^n_k\ar[d]^{\pi}\\ X\ar[r]_f & \Spec k}\end{equation}
Let $\xi$ be again the image of the twisting sheaf $\OO_{\PP^n}(1)$. Cup product with $\xi^i$ for $0\leqslant i\leqslant n-1$ provides a natural morphism of cohomology
$$\oplus_{i=0}^{n-1}\xi^i:\bigoplus_{i=0}^{n-1}A^{p-i}(X,M,q-i)\rightarrow A^p(\PP(\edg),M,q),$$
and the task is to show that this is an isomorphism. To this end we use the fact that $A^p(X;M,q)=\h^p(X,\MM_q)$. Note that $\h^p(\PP_X,\MM_q)=\R^p\Gamma_{\PP_X^n}\MM_q$ and $\h^{p-i}(X,\MM_{q-i})=\R^{p-i}\Gamma_X\circ\R^{i}\pi'_\ast(\MM_q)$. Thus the morphism above given by successive multiplication with $\xi^i$'s induces a morphism
$$\DF\Gamma_X\DF\pi'_\ast\MM_q\rightarrow\DF\Gamma_{\PP_X^n}\MM_q$$
in the derived category of abelian groups. Since $\Gamma_X=\Gamma_k\circ f_\ast$ where we write for simplicity $\Gamma_k=\Gamma_{\Spec k}$, there is a spectral sequence
$$\R^i\Gamma_k\circ\R^j f_\ast \Rightarrow R^n\Gamma_X$$
which degenerates because $\R^i\Gamma_k=0$ if $i\neq 0$. Therefore there is an isomorphism
$$\oplus_{i+j=p}\R^i\Gamma_X\circ\R^j\pi'_\ast(\MM_q)\cong\oplus_{i+j=p}\Gamma_k(\R^i f_\ast\circ R^j\pi'_\ast)(\MM_q),$$
which is in the derived category
$$\DF\Gamma_X\DF\pi'_\ast(\MM_q)\cong\Gamma_k\DF f_\ast\DF\pi'_\ast(\MM_q).$$
For the derived functors of the compositions $f_\ast\circ\pi'_\ast$ and $\pi_\ast\circ f'_\ast$ there are as usual two  spectral sequences
$$\R^i f_\ast \R^j\pi'_\ast\Rightarrow \R^n(f_\ast\circ\pi'_\ast)\;\text{ and }\; \R^i\pi_\ast \R^j f'_\ast\Rightarrow\R^n(\pi_\ast\circ f'_\ast),$$
yet the commutativity of the diagram (\ref{FiberDiag}) implies that they converge in fact to the same object. This in turn leads to an isomorphism in the derived category
$$\Gamma_k\DF f_\ast\DF\pi'_\ast(\MM_q)\cong\Gamma_k\DF\pi_\ast\DF f'_\ast(\MM_q).$$
If we recall that push-forward is well defined for cycle modules and compatible with the structure and therefore transforms cycle modules into cycle modules, we see that by the result of the first part of the proof the right-hand-side of this is isomorphic to
$$\DF\Gamma_{\PP^n_k}\DF f'_\ast(\MM_q).$$
Now similarly to above, the spectral sequence associated to the equality of functors $\Gamma_{\PP^n_X}=\Gamma_{\PP^n_k}\circ f'_\ast$ induces an isomorphism in the derived category
$$\DF\Gamma_{\PP^n_k}\DF f'_\ast(\MM_q)\cong\DF\Gamma_{\PP^n_X}\MM_q.$$
Putting everything together yields an isomorphism
$$\DF\Gamma_X\DF\pi'_\ast(\MM_q)\cong\DF\Gamma_{\PP_X^n}\MM_q$$
which corresponds by construction exactly the morphism of cohomology introduced at the beginning by cap product with $\xi^i$'s.  \end{proof}
\medskip

\subsection{Chern classes for higher algebraic $K$-theory with coefficients in the Milnor $K$-sheaf}\label{Gillet}

In this section, we establish the machinery to construct Chern classes.

\subsubsection*{Chern classes with coefficients in a generalised cohomology theory}\label{subsetChernGillet}

The idea behind Gillet's generalised cohomology theory in \cite{Gillet}, that concerns this subject, is that a cohomology theory with certain propertis allows for the construction of universal classes over the classifying space $B.\GL_n$., which in turn yield compatible universal classes for $\GL_n$. Using the Dold--Puppe functor, one obtains the desired characteristic classes.

Essentially, such a cohomology theory $\underline{\Gamma}^\ast(\ast)$ is given as a graded complex of sheaves of abelian groups together with an associative, graded-commutative pairing with unit in the derived category
$$\underline{\Gamma}^\ast(\ast)\otimes^L_{\ZZ}\underline{\Gamma}^\ast(\ast)\rightarrow\underline{\Gamma}^\ast(\ast).$$
Additionally, one requires it to satisfy eleven axioms, whereof we mention only the existence of a cap product, a projective bundle formula and the existence of a cycle class map, as these are the ones needed for the construction of Chern classes.

\begin{defi}\label{TCC} A theory of Chern classes with coefficients in $\Gamma$ for representations of sheaves of groups assigns for each $X$ to any representation $\rho:\GD\rightarrow\GL(\F)$ on a locally free $\OO_X$-module $\F$ classes
$$C_i(\rho)\in\h^{di}(X,\GD,\Gamma(i))$$
where $d\in\{1,2\}$ depends on the chosen duality theory $\Gamma$. These classes satisfy the following axioms.
\begin{enumerate}\item\label{TCCFunctoriality} \textbf{Functoriality.} Let $f:X\rightarrow Y$ be a morphism of schemes and $\rho:\GD\rightarrow\GL(\F)$ a representation of sheaves of groups in $Y$ and $\varphi:\mathscr{H}\rightarrow f^\ast\GD$ a homomorphism of sheaves of groups on $X$. Moreover, let
$$f^\ast(\rho)\circ\varphi:\mathscr{H}\rightarrow\GL(\F\otimes_{\OO_Y}\OO_X)$$
be the induced representation on $X$. Then
$$C.(f^\ast(\rho)\circ\varphi)=\varphi^\ast(f^\ast(C.(\rho))).$$

\item\label{TCCWhitneySum} \textbf{Whitney sum formula or additivity.} Let
$$0\rightarrow (\rho',\F')\rightarrow (\rho,\F)\rightarrow (\rho'',\F'')\rightarrow 0$$
be an exact sequence of representations of $\GD$, then
$$C.(\rho)=C.(\rho')\cdot C.(\rho'').$$

\item\label{TCCTensor} \textbf{Tensor products.} Let $(\rho_1,\F_1)$ and $(\rho_2,\F_2)$ be representations of $\GD$ and $(\rho_1\otimes\rho_2,\F_1\otimes\F_2)$ their tensor product, then
$$\widetilde{C}.(\rho_1\otimes\rho_2)=\widetilde{C}.(\rho_1)\circledast C.(\rho_2),$$
where $\circledast$ is the product defined by the universal polynomials of Grothendieck and $\widetilde{C}.$ is the augmented total Chern class.

\item \textbf{Stability.} Let $\varepsilon:\{e\}\rightarrow\GL(\OO_X)\cong\OO_X^\ast$ be the trivial rank one representation. Then
$$C.(\varepsilon)=1.$$

\item \textbf{Normalisation.} For any representation $\rho:\GD\rightarrow\GL(\F)$ the zero class is trivial
$$C_0(\rho)=1.$$
\end{enumerate}
\end{defi}

Gillet shows in \cite{Gillet} that this definition is not void.

\begin{theo}\label{TheoChernClasses} Let $\Gamma$ be as before. Then for a category of schemes there is a theory of Chern classes with coefficients in $\Gamma$.\end{theo}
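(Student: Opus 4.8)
The plan is to reproduce Gillet's argument in \cite{Gillet}, whose only input is that $\underline{\Gamma}^\ast(\ast)$ satisfies the eleven axioms --- in particular it admits a cap product, a projective bundle formula, and a cycle class map, the last one equivalently giving a functorial first Chern class $c_1(\LM)\in\h^d(X,\underline{\Gamma}^\ast(1))$ for line bundles $\LM$ which is additive on $\Pic(X)$. So the content is the assembly of formal ingredients; let me indicate the steps.

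First I would define Chern classes of vector bundles. For $\pi:\edg\to X$ of rank $n$ with tautological line bundle $\OO(1)$ on $\PP(\edg)$ and $\xi=c_1(\OO(1))$, the projective bundle formula gives an isomorphism of $\h^\ast(X,\underline{\Gamma}^\ast(\ast))$-modules
$$\h^\ast(\PP(\edg),\underline{\Gamma}^\ast(\ast))\;\cong\;\bigoplus_{i=0}^{n-1}\h^{\ast-di}(X,\underline{\Gamma}^\ast(\ast-i))\,\xi^i .$$
Hence $\xi^n$ is a unique $\h^\ast(X)$-linear combination of $1,\xi,\dots,\xi^{n-1}$, and up to sign its coefficients are the Chern classes $c_i(\edg)\in\h^{di}(X,\underline{\Gamma}^\ast(i))$; for the Milnor $K$-sheaf this projective bundle formula is exactly Proposition \ref{ProjectiveBundleFormula}.

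Next I would produce the universal classes. Applying the previous step degreewise to the universal rank $n$ bundle over the simplicial classifying scheme $B_\bullet\GL_n$ --- and using a degeneration argument for the spectral sequence of the simplicial scheme --- one obtains universal Chern classes $C_i\in\h^{di}(B_\bullet\GL_n,\underline{\Gamma}^\ast(i))$, equivalently (via the Dold--Puppe functor applied to the complex $\underline{\Gamma}^\ast(i)$) maps $B_\bullet\GL_n\to K(\underline{\Gamma}^\ast(i),di)$ in the homotopy category of simplicial sheaves. A representation $\rho:\GD\to\GL(\F)$ of rank $n$ induces a map $B_\bullet\GD\to B_\bullet\GL_n$ over $X$ (equivalently, $\rho$ turns a $\GD$-torsor into the $\GL_n$-torsor attached to $\F$), and one sets $C_i(\rho):=\rho^\ast C_i\in\h^{di}(X,\GD,\underline{\Gamma}^\ast(i))$. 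The inclusions $\GL_n\hookrightarrow\GL_{n+1}$ and the block-sum maps $\GL_n\times\GL_m\to\GL_{n+m}$ relate these classes across ranks, and on $B_\bullet\{e\}$ the bundle is trivial.

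Finally I would verify the five axioms of Definition \ref{TCC}. Functoriality is the naturality of the whole construction under pullback of torsors and of cohomology. Stability $C.(\varepsilon)=1$ and normalisation $C_0(\rho)=1$ are immediate from triviality of the universal bundle on $B_\bullet\{e\}$ and from $c_0=1$. The Whitney sum formula follows from the universal relation on $B_\bullet(\GL_n\times\GL_m)$, obtained because $\PP(\edg'\oplus\edg'')$ is assembled from $\PP(\edg')$ and $\PP(\edg'')$, together with the standard reduction of an exact sequence $0\to\rho'\to\rho\to\rho''\to 0$ to the split case $\rho'\oplus\rho''$ (a deformation/filtration argument, as in \cite{Gillet}). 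The tensor product formula is the splitting principle: by the projective bundle formula one passes to a flag bundle over which both bundles split into line bundles, applies additivity of $c_1$ on $\Pic$ (the cycle class axiom), and reassembles via Grothendieck's universal polynomials. The step I expect to be the real obstacle is the construction of the universal classes on $B_\bullet\GL_n$ and the check that they are well defined and compatible --- i.e.\ the interplay between the projective bundle formula, the cohomology of simplicial schemes, and the Dold--Puppe packaging; everything else is formal once the axioms are granted, and for $\underline{\Gamma}^\ast_X(j)=\Ksheaf_j^M$ those axioms are verified in the next section.
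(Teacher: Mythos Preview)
Your proposal is correct and follows essentially the same approach as the paper: both sketch Gillet's three-step argument (universal classes on $B_\bullet\GL_n$ via the projective bundle formula and cycle class map, passage to $\h^{di}(X,\GL_n(\OO_X),\Gamma(i))$, then generalisation to arbitrary representations by functoriality). The only minor difference in emphasis is that the paper singles out the cap product as the ingredient needed in the second step, which you leave implicit in the Dold--Puppe packaging; otherwise your outline, including the identification of the universal-class construction as the real content, matches the paper's.
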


This is shown in three steps. The first one is to construct universal classes $C_{in}\in\h^{di}(B.\GL_n,\Gamma(i))$ with the help of the projective bundle formula using the cycle class map. The second step is to pass from the classes $C_i\in\h^{di}(B.\GL_n,\Gamma(i))$ to classes $C_i\in\h^{di}(X,\GL_n(\OO_X),\Gamma(i))$. At this point, one needs the cap product. Lastly, one generalises to any representation $\rho:\GD\rightarrow\Saut(\F)$ of a sheaf of groups on a scheme $X$ using functoriality. This forms the basis for his to conclude with the following theorem.

\begin{theo}\label{TheoHigherChernClasses} Let $\ver$ be a category of schemes over a fixed base, $X\in\ver$ and $\Gamma(\ast)$ a duality theory. Then there exists a theory of Chern classes for higher algebraic $K$-theory
$$c_{ij}:K_j(X)\rightarrow\h^{di-j}\left(X,\Gamma(i)\right).$$
\end{theo}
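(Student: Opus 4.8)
The plan is to carry out Gillet's construction \cite{Gillet}: use the \emph{universal} Chern classes living on the classifying simplicial sheaf $B.\GL$, reinterpret them by the Dold--Puppe functor as morphisms into generalised Eilenberg--MacLane objects, and pull them back along the morphism that presents higher algebraic $K$-theory in terms of $B.\GL$.

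First I would collect the universal classes already at hand: by the first step in the proof of Theorem \ref{TheoChernClasses} one has $C_{in}\in\h^{di}(B.\GL_n,\Gamma(i))$, the $\Gamma$-cohomology of the classifying simplicial sheaf of $\GL_n$. Stability together with the Whitney sum formula applied to the stabilisation maps $\GL_n\hookrightarrow\GL_{n+1}$, $g\mapsto\mathrm{diag}(g,1)$, makes these strictly compatible, so they assemble into a universal class $C_i\in\h^{di}(B.\GL,\Gamma(i))$ over $B.\GL=\varinjlim_n B.\GL_n$, supplemented by the unit for $i=0$ and by the rank on the $\ZZ$-factor of $\ZZ\times B.\GL$.

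The new ingredient is the homotopy-theoretic passage. Via Dold--Puppe on the ambient site, the complex $\Gamma(i)$ produces an Eilenberg--MacLane simplicial sheaf $\mathbf{K}(\Gamma(i),di)$ corepresenting $\h^\ast(-,\Gamma(i))$, so that $C_i$ becomes a morphism $B.\GL\to\mathbf{K}(\Gamma(i),di)$ in the homotopy category of simplicial sheaves; since Quillen's plus construction $B.\GL\to B.\GL^+$ is a stalkwise homology equivalence and $\mathbf{K}(\Gamma(i),di)$ has abelian homotopy sheaves, this factors, uniquely up to homotopy, as $\widetilde{C}_i\colon(\ZZ\times B.\GL)^+\to\mathbf{K}(\Gamma(i),di)$. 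On the other hand, Quillen's theorem comparing the $+$- and $Q$-constructions, combined with Zariski descent for $K$-theory, identifies $K_j(X)$ for $j\geqslant 1$ with $\pi_j\,\DF\Gamma\bigl(X,(\ZZ\times B.\GL(\OO_X))^+\bigr)$ (the case $j=0$ being the usual $K_0(X)$, to which the Chern classes of Theorem \ref{TheoChernClasses} for the tautological representations apply directly). Applying $\DF\Gamma(X,-)$ to $\widetilde{C}_i$ and taking $\pi_j$ then gives
$$c_{ij}\colon K_j(X)\longrightarrow \pi_j\,\DF\Gamma\bigl(X,\mathbf{K}(\Gamma(i),di)\bigr)=\h^{di-j}(X,\Gamma(i)),$$
which for $j\geqslant 1$ is a group homomorphism because it is $\pi_j$ of a pointed map; functoriality, normalisation, stability, and on $K_0$ the Whitney sum formula and the compatibility with Chern classes of representations, all descend from the corresponding properties of the universal class $C_i$ recorded in Theorem \ref{TheoChernClasses}, since every evaluation $c_{ij}$ is obtained from $C_i$ by pull-back along a morphism of simplicial sheaves.

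I expect the bulk of the work, and the only real difficulty, to lie in the homotopy-theoretic apparatus rather than in anything specific to the coefficients: one needs a homotopy category of simplicial sheaves in which (a) the hypercohomology of the simplicial sheaf $B.\GL$ with coefficients in the \emph{complex} $\Gamma(i)$ is genuinely corepresented by maps into $\mathbf{K}(\Gamma(i),di)$ (a Dold--Puppe / generalised-Eilenberg--MacLane statement, using the pairing axioms of $\Gamma$), (b) the factorisation through $B.\GL^+$ exists and is canonical up to homotopy, and (c) $K_j(X)$ really agrees with $\pi_j$ of the derived global sections of $(\ZZ\times B.\GL)^+$, which rests on the $+$-versus-$Q$ theorem and on descent. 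One also has to dispose of possible ${\varprojlim}^{1}$-obstructions coming from the skeleta of $B.\GL$ and from the colimit over $n$; these vanish since the transition maps on the relevant cohomology are surjective, the classes $C_{in}$ being strictly compatible. Finally, "theory of Chern classes" is meant in Gillet's sense: on $K_0$ only the total Chern class is multiplicative, so additivity of $c_{ij}$ is claimed only for $j\geqslant 1$.
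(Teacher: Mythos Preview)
Your proposal is correct and follows precisely Gillet's construction from \cite{Gillet}, which is what the paper invokes; note however that the paper does not supply its own proof of this theorem but merely cites Gillet, giving only the one-line remark that the Chern classes for representations of Theorem~\ref{TheoChernClasses} ``form the basis'' for the higher classes. Your sketch---universal classes on $B.\GL$, Dold--Puppe to Eilenberg--MacLane objects, factorisation through the plus construction, and identification of $K_j(X)$ with $\pi_j$ of derived global sections---is exactly the argument the paper is referencing, so there is nothing to compare beyond observing that you have spelled out what the paper leaves to the citation.
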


\subsubsection*{Higher Chern classes for the Milnor $K$-sheaf}

Let $X$ be smooth over a field $k$ of dimension $n$. Now set
$$\underline{\Gamma}_X^\ast(j)=\Ksheaf^M_j$$
for $j\geqslant 0$ and the zero sheaf otherwise, where this is seen as a complex with only one spot non-zero, and further let $S=k$. By the Gersten Conjecture (Corollary \ref{CorGersten})
$$\Ksheaf^M_\ast=\begin{cases}\overline{\Ksheaf}_\ast^M &\text{ in the case of infinite residue fields}\\
\widehat{\Ksheaf}_\ast^M & \text{ in the case of finite residue fields}\end{cases}$$
The associated generalised cohomology theory is
$$\h^i(X,\Gamma(j))=\h^i(C^\ast(X;K_\ast^M,j))=A^i(X;K_\ast^M,j).$$

\begin{theo}\label{TheoKsheafChern} There is a theory of Chern classes for vector bundles and higher algebraic $K$-theory of regular varieties over $k$ with infinite residue fields, with values in Zariski cohomology with coefficients in the Milnor $K$-sheaf:
$$c_{ij}^M:K_j(X)\rightarrow\h^{i-j}(X,\Ksheaf_i^M).$$
\end{theo}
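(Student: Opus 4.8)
The plan is to deduce the statement directly from Gillet's general machinery (Theorem \ref{TheoHigherChernClasses}, together with Theorem \ref{TheoChernClasses} for the vector-bundle case): all that remains to be done is to check that the complex of sheaves $\underline{\Gamma}^\ast_X(j)=\Ksheaf^M_j$, concentrated in a single degree and equipped with the product coming from the ring structure on Milnor $K$-theory, is a duality theory satisfying Gillet's eleven axioms, with the parameter $d$ equal to $1$. The translation tool throughout is Corollary \ref{Cor65}, which identifies $\h^p(X,\Ksheaf^M_i)$ with the cycle cohomology $A^p(X;K^M_\ast,i)$; the companion homology theory is read off from the dimension complexes $C_\ast(X;K^M_\ast,\ast)$, that is, from the groups $A_\ast(X;K^M_\ast,\ast)$.

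First I would assemble the ``formal'' axioms. The associative, graded-commutative pairing with unit in the derived category comes from the ring structure on the cycle module $K^M_\ast$ (Rost, \cite[Section 4]{Rost}): the exterior product on cycle complexes descends to $A^\bullet(X;K^M_\ast,\bullet)$, hence via Corollary \ref{Cor65} to $\h^\bullet(X,\Ksheaf^M_\bullet)$, and the (anti)commutativity of Milnor symbols yields the sign-graded-commutativity after the cohomological shift. The cap product between homology and cohomology is supplied by the same module structure. The localization sequences are (\ref{LESfH}); homotopy invariance is (\ref{HIH}) and (\ref{HIC}); proper push-forward, flat pull-back and their mutual compatibilities are all part of Rost's formalism; and the local vanishing needed to compare homology and cohomology over a smooth local scheme is Theorem \ref{Theo61}. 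The projective bundle formula --- the step with genuine content --- is already proved, being Proposition \ref{ProjectiveBundleFormula} specialised to $M=K^M_\ast$.

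The remaining axiom, and the one I would treat most carefully, is the existence of a cycle class map compatible with all of the above. For the Milnor $K$-sheaf this is nearly tautological once the correct twist is chosen: since $C^p(X;K^M_\ast,p)=\coprod_{x\in X^{(p)}}K^M_0(\kappa(x))=\coprod_{x\in X^{(p)}}\ZZ$, one obtains a canonical identification $A^p(X;K^M_\ast,p)=\CH^p(X)$; for a codimension-$p$ closed subscheme $Z\subseteq X$ the class with support in $Z$ is the formal sum of the generic points of $Z$, each with coefficient $1\in K^M_0$, living in $A^p_Z(X;K^M_\ast,p)=\h^p_Z(X,\Ksheaf^M_p)$; and the fundamental class of $X$ in the homology theory is $\xi_X$ with coefficient $1$. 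Checking that these classes behave well under push-forward, flat pull-back, cup product and the boundary maps of the localization sequence amounts to invoking the compatibilities Rost establishes for the four basic operations on cycle complexes, which pass to Chow groups.

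Once Gillet's axioms are verified for $\underline{\Gamma}^\ast_X(j)=\Ksheaf^M_j$ with $d=1$, Theorem \ref{TheoHigherChernClasses} produces the Chern classes $c^M_{ij}\colon K_j(X)\to\h^{i-j}(X,\Ksheaf^M_i)$, which is the assertion. The hypothesis that all residue fields are infinite is used only to know, via the Gersten conjecture (Corollary \ref{CorGersten}), that $\Ksheaf^M_\ast=\overline{\Ksheaf}^M_\ast$, so that the sheaf here is the one to which Rost's machinery and Proposition \ref{ProjectiveBundleFormula} apply and is locally generated by symbols; the finite-residue-field variant runs the same argument with $\widehat{\Ksheaf}^M_\ast$ and Corollary \ref{CorGersten2}. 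I expect the main obstacle to be organisational rather than mathematical: matching Gillet's precise axiom list --- above all the cycle class map with its compatibilities, and the twisted purity/duality axioms --- against exactly what the cycle-module formalism of Section \ref{Rost} supplies; with the twists fixed (coefficient degree equal to codimension) no further input is needed.
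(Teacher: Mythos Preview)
Your proposal is correct and follows the same overall strategy as the paper: verify that $\underline{\Gamma}^\ast_X(j)=\Ksheaf^M_j$ satisfies the axioms of a Gillet duality theory with $d=1$, then invoke Theorems \ref{TheoChernClasses} and \ref{TheoHigherChernClasses}.

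There is one small difference in emphasis worth noting. The paper, having already observed in Section \ref{subsetChernGillet} that only three of Gillet's eleven axioms are actually used in the construction of Chern classes (cap product, projective bundle formula, cycle class map), checks exactly those three and no more. You instead survey the full list, which is harmless but more than necessary. More interestingly, your treatment of the cycle class axiom differs from the paper's: you produce a cycle class in every codimension via the identification $A^p(X;K^M_\ast,p)=\CH^p(X)$ coming from $K^M_0(\kappa(x))=\ZZ$, whereas the paper supplies only the divisor case, observing that $\Ksheaf^M_1=\OO_X^\ast$ and hence $\Pic(X)\cong\h^1(X,\OO_X^\ast)=\h^1(X,\Ksheaf^M_1)$. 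The paper's version is the minimal input Gillet's machinery actually requires (one needs only $c_1$ of $\OO_{\PP}(1)$ to run the projective bundle argument), while yours gives a stronger statement that the paper in fact establishes separately later, in the verification of Bloch's axioms in Section \ref{Cycle}. Either route is fine.
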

\begin{proof}: We have to verify that the duality theory associated with $\underline{\Gamma}_X^\ast(j)=C^\ast(X;K_\ast^M,j)$ satisfies the above mentioned necessary axioms. 
\begin{enumerate} \item\label{CapProdMilnorK} \textbf{Cap product.} Recall that there is a pairing of cycle modules
$$K_\ast^M\times K_\ast^M\rightarrow K_\ast^M$$
which respects grading. Using the ``multiplication-with-units'' map (\ref{SubsecCyclecom}) from in Subsection \ref{Rost} this induces a pairing of complexes
$$C_p(X,K_\ast^M,j)\times C^q_Y(X,K_\ast^m,i)\rightarrow C_{p-q}(Y,K_\ast^M,j-i),$$
where we have used that $C^q=C_{n-q}$ as X is of dimension $n$ and where $C^q_Y$ means sections with support in $Y$. This map respects the grading on $K_\ast^M$ since the original pairing on $K_\ast^M$ does so. Moreover, it respects the grading in dimension as it is a generalised correspondence map mentioned in \cite[(3.9)]{Rost}. Applying the (co)homology functor, we obtain a pairing
$$\bigcap:A_p(X;\K_\ast^M,j)\otimes A^q_Y(X;K_\ast^M,i)\rightarrow A_{p-q}(Y;K^M_\ast,{j-i}).$$

\item \textbf{Projective bundle formula.} We proved this in Proposition \ref{ProjectiveBundleFormula}

\item \textbf{Cycle class map.} This is clear from the definition of the first Milnor $K$-group. Indeed, recall that by definition of the Milnor $K$-sheaf
$$\Ksheaf^M_1=\OO_X^\ast,$$
and the isomorphism for a scheme $X$
$$\Pic(X)\cong \h^1(X,\OO_X^\ast)$$
gives a natural transformation of contravariant functors on the big Zariski site $\ver$.

\end{enumerate}
Now we can apply Theorem \ref{TheoChernClasses} and Theorem \ref{TheoHigherChernClasses} and go through the construction sketched earlier to obtain the claim.  \end{proof}
\medskip

\begin{rem} As we discuss in Section \ref{MilnorKOnEtale} this translates one-to-one to the \'etale case, and we can replace in the theorem Zariski cohomology with \'etale cohomology, with which we want to work. \end{rem}

\subsection{Logarithmic Witt differentials}\label{Logarithmic}

Let $k$ be a perfect field of characteristic $p>0$. For a smooth $k$-scheme $X$ let $W^\dagger \Omega_X$ be the overconvergent de Rham--Witt complex as defined by Davis, Langer and Zink in \cite{DavisLangerZink} and $W\Omega_X$ the usual de Rham--Witt complex \cite{Illusie}. The goal of this section is to find a good notion of logarithmic overconvergent differentials and prove that they factor through Milnor $K$-theory. 

\subsubsection*{Definition of log-differentials}

For $n\in\NN$ denote
$$d\log:\OO_X^\ast\rightarrow W_n\Omega_X^1$$
the morphism of abelian sheaves defined locally by $x\mapsto\frac{d\left[x\right]}{\left[x\right]}$. This induces a morphism of projective systems
$$d\log:\OO_X^\ast\rightarrow W_\bullet\Omega_X^1.$$
Let $W_n\Omega^i_{X,\log}\subset W_n\Omega^i_X$ be the subsheaf generated \'etale-locally by sections of the form $d\log\left[x_1\right]\ldots d\log\left[x_i\right]$ for $x_j\in\OO_X^\ast$. This construction is functorial in $X$, and the product structure of $W_n\Omega^\bullet_X$ carries over to $W_n\Omega_{X,\log}^\bullet$. For $n$ variable, $W_\bullet\Omega^\bullet_{X,\log}$ is an abelian subprosheaf of $W_\bullet\Omega^\bullet_X$ and we set $W\Omega_{X,\log}^\bullet:=\varprojlim W_\bullet\Omega^\bullet_{X,\log}$. For $i\in\NN_0$ there is a short exact sequence of prosystems for \'etale topology
$$0\rightarrow W_\bullet\Omega_{X,\log}^i\rightarrow W_\bullet\Omega_X^i\xrightarrow{\f-1} W_\bullet\Omega_X^i\rightarrow 0$$
where $\f$ denotes a lift of the Frobenius endomorphism. 

Taking the limit yields an exact sequence
$$0\rightarrow W\Omega_{X,\log}^i\rightarrow W\Omega_X^i\xrightarrow{\f-1}W\Omega_X^i.$$
This means that $W\Omega_{X,\log}^\bullet=\Ker(\f-1)\subset W\Omega_X^\bullet$. It is not a priori clear that exactness holds also on the right, that is that $\f-1$ is surjective. 

Let $R_{nm}:W_n\Omega_X^\bullet\rightarrow W_m\Omega_X^\bullet$ be the restriction map for $n\geqslant m$. We want to show that for $i$ fixed the projective system $(W_n\Omega_{X,\log}^i, R_{nm})$ satisfies the Mittag--Leffler condition locally. Indeed, since \'etale locally $W_n\Omega^i_{X,\log}$ is generated by sections of the form $d\log\left[x_1\right]_n\ldots d\log\left[x_i\right]_n$, where the Teichm\"uller lifts are in truncated Witt vectors of length $n$, and the restriction maps $R_{nm}$ commute with multiplication, addition and differential, we have for $n\geqslant m\geqslant k$
$$R_{nk}\left(\frac{d\left[x\right]_n}{\left[x\right]_n}\right)=R_{mk}\left(\frac{d\left[x\right]_m}{\left[x\right]_m}\right)$$
and thus locally
$$R_{nk}\left(W_n\Omega^\bullet_{X,\log}\right)=R_{mk}\left(W_m\Omega^\bullet_{X,\log}\right).$$
This induces exactness of the sequence
$$0\rightarrow W\Omega_{X,\log}^i\rightarrow W\Omega_X^i\xrightarrow{\f-1}W\Omega_X^i\rightarrow 0$$
for \'etale topology (but not for global sections).

\subsubsection*{Basic Witt differentials}\label{SubsecBasic}

Assume now that $X$ is the spectrum of a polynomial algebra $A=k[X_1,\ldots,X_d]$. Langer and Zink proved \cite[Theorem 2.8]{LangerZink} that any element $\omega\in W\Omega_A^\bullet$ has a unique expression as a convergent sum of basic Witt differentials
$$\sum_{k,\PU}e(\xi_{k,\PU},k,\PU)$$
where $k$ runs over all possible weight functions and $\PU$ over all partitions of $\Supp k$ and for any $m$ $\xi_{k,\PU}\in {}^{\V^m}\!W(k)$ for almost all weights $k$. The last condition is another way of saying that the sum converges $p$-adically.

For fixed $k$ with a total ordering, let $\PU$ be a partition of $\Supp k=I_0\sqcup I_1\sqcup\ldots\sqcup I_\ell$ respecting the ordering. The interval $I_0$ may be empty, but the intervals $I_{1},\ldots,I_\ell$ not. A basic Witt differential $e=e(\xi,k,\PU)\in W\Omega_A^\ell$ of degree $\ell$ with $\xi= {}^{\V^{u(I)}}\! \eta\in {}^{\V^{u(I)}}\!W(k)$ is defined in the following way: Denote by $r\in\left[0,\ell-1\right]$ the first index such that $k_{I_{r+1}}$ is integral. Three cases occur.
\begin{enumerate}\item $I_0\neq \varnothing$, no condition on the integrality of $k$.
\begin{eqnarray*}e &=& {}^{\V^{u(I_0)}}\!\left(\eta \left[X\right]^{p^{u(I_0)}k_{I_0}}\right)\left(d{} ^{\V^{u(I_1)}}\!\left[X\right]^{p^{u(I_1)}k_{I_1}}\right)\cdots\left(d {}^{\V^{u(I_r)}}\!\left[X\right]^{p^{u(I_r)}k_{I_r}}\right)\\
 && \left( {}^{\f^{-t(I_{r+1})}}\!d\left[X\right]^{p^{t(I_{r+1})}k_{I_{r+1}}}\right)\cdots\left( {}^{\f^{-t(I_{\ell})}}\!d\left[X\right]^{p^{t(I_{\ell})}k_{I_{\ell}}}\right)\end{eqnarray*}
Here $\xi= ^{\V^{u(I_0)}}\eta$.
\item $I_0=\varnothing$ and $k$ not integral.
$$e=\left(d {}^{\V^{u(I_1)}}\!\left(\eta \left[X\right]^{p^{u(I_1)}k_{I_1}}\right)\right)\cdots\left(d {}^{\V^{u(I_r)}}\! \left[X\right]^{p^{u(I_r)}k_{I_r}}\right)\left( {}^{\f^{-t(I_{r+1})}}\!d\left[X\right]^{p^{t(I_{r+1})}k_{I_{r+1}}}\right)\cdots\left( {}^{\f^{-t(I_{\ell})}}\!d\left[X\right]^{p^{t(I_{\ell})}k_{I_{\ell}}}\right)$$
Similarly as before $\xi= ^{\V^{u(I_0)}}\eta$.
\item $I_0=\varnothing$ and $k$ integral.
$$e=\eta\left( {}^{\f^{-t(I_1)}}\!d\left[X\right]^{p^{t(I_1)}k_{I_1}}\right)\cdots\left({}^{\f^{-t(I_\ell)}}\!d\left[X\right]^{p^{t(I_\ell)}k_{I_\ell}}\right)$$
Here $\xi=\eta$.
\end{enumerate}

If in $e(\xi,k,\PU)$ the element $\xi$ is contained in  $^{\V^m}W(R)$ then the image of the basic Witt differential under the restriction map $R_m$ is trivial. 

\begin{prop}\label{Prop4.2.1} The action of $\f$, $\V$ and $\alpha\in W(k)$ on the basic Witt differentials are given as follows:
\begin{enumerate}\item $\alpha e(\xi,k,\PU)=e(\alpha\xi,k,\PU)$.
\item If $I_0\neq\varnothing$, or if $k$ is integral (first and third case above)
$${}^{\f}\! e(\xi,k,\PU)=e({}^{\f}\!\xi,pk,\PU)$$
\item If $I_0=\varnothing$ and $k$ is not integral (second case above)
$${}^{\f}\!e(\xi,k,\PU)=e( {}^{\V^{-1}}\!\xi,pk,\PU)$$
\item If $I_0\neq\varnothing$ or $k$ is integral and divisible by $p$
$${}^{\V}\!e(\xi,k,\PU)=e({}^{\V}\!\xi,\frac{1}{p}k,\PU)$$
\item If $I_0=\varnothing$ and $\frac{1}{p}k$ is not integral
$${}^{\V}\!e(\xi,k,\PU)=e(p {}^{\V}\!\xi,\frac{1}{p}k,\PU)$$
\end{enumerate}\end{prop}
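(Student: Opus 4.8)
The plan is to verify each identity by a direct computation on the explicit product defining $e(\xi,k,\PU)$, organised by the three cases of the definition ($I_0\neq\varnothing$; $I_0=\varnothing$ with $k$ non-integral; $I_0=\varnothing$ with $k$ integral). The structural input is the usual package of relations in the de Rham--Witt complex $W\Omega^\bullet_A$: $\f$ is a homomorphism of graded rings; $\V$ is additive and obeys the projection formulas ${}^{\V}(\omega)\cdot\eta={}^{\V}(\omega\cdot\f\eta)$ and $\omega\cdot{}^{\V}(\eta)={}^{\V}(\f\omega\cdot\eta)$, hence $\alpha\cdot{}^{\V^{m}}x={}^{\V^{m}}(\f^{m}(\alpha)x)$ for $\alpha\in W(k)$; $\f\V=\V\f=p$; $\f\,d\,\V=d$; $\f(d[x])=[x]^{p-1}d[x]$ and $\f[x]=[x]^{p}$; and $d$ vanishes on $W(k)$ because $k$ is perfect. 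The combinatorial input is the behaviour of the auxiliary data: passing from $k$ to $pk$ (respectively $\tfrac1p k$) leaves every exponent $p^{u(I)}k_I$ and $p^{t(I)}k_I$ unchanged, lowers (raises) each $u(I)$ by one, raises (lowers) each $t(I)$ by one, and shifts the threshold index $r$ by at most one, the shift occurring precisely when a partial weight crosses between integral and non-integral.

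For $(1)$, since $d$ kills $W(k)$ and $W\Omega^{0}_A$ is commutative, $\alpha$ slides past every differential factor of $e(\xi,k,\PU)$ and is absorbed into the leading factor; there the projection formula rewrites $\alpha\cdot{}^{\V^{u}}(\eta[X]^{\bullet})$ as ${}^{\V^{u}}(\f^{u}(\alpha)\eta[X]^{\bullet})$, and $\alpha\cdot\eta[X]^{\bullet}$ as $(\alpha\eta)[X]^{\bullet}$ in the integral case, which in each of the three cases is exactly the leading factor of $e(\alpha\xi,k,\PU)$.

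For $(2)$--$(3)$, use that $\f$ is multiplicative, so $\f\,e(\xi,k,\PU)$ is the product of the images of the individual factors: $\f({}^{\f^{-t}}d[X]^{m})={}^{\f^{-(t-1)}}d[X]^{m}$; $\f(d\,{}^{\V^{u}}[X]^{m})=d\,{}^{\V^{u-1}}[X]^{m}$ by $\f d\V=d$, where for $u=1$ this is $d[X]^{m}={}^{\f^{0}}d[X]^{m}$, so an interval that turns integral reappears at the head of the $\f$-twisted block; and the leading factor gives ${}^{\V^{u(I_0)-1}}(p\eta[X]^{\bullet})$ — the leading factor attached to $\f\xi$ — when $u(I_0)\geqslant1$, and $\f(\eta)[X]^{pk_{I_0}}$ when $u(I_0)=0$. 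Comparing exponents and the shifted $u$-, $t$-values against the weight $pk$ reassembles this into $e(\f\xi,pk,\PU)$, which is $(2)$; in the case $I_0=\varnothing$, $k$ non-integral, the leading factor is itself a $d$-term, $\f d\V=d$ consumes one $\V$, so the net effect on the Witt-coefficient slot is $\V^{-1}$, giving $(3)$. Items $(4)$--$(5)$ are the mirror statements for $\V$: since $\V$ is not multiplicative I would push the single outermost $\V$ to the exterior using the projection formula repeatedly, which replaces each later factor by a power-of-$\f$ twist of itself, then rewrite these twists via $\f d\V=d$ and $\f\V=p$ so that every $u(I)$ is raised and every $t(I)$ lowered by one, matching $\tfrac1p k$; one obtains $e({}^{\V}\xi,\tfrac1p k,\PU)$ when $\tfrac1p k$ remains integral on the relevant interval and $e(p\,{}^{\V}\xi,\tfrac1p k,\PU)$ when it becomes non-integral, the extra $p$ coming from $\V\f=p$.

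The main obstacle is not the algebra of $\f,\V,d,[\cdot]$ — that is routine — but the combinatorial bookkeeping: one must check that the term-by-term image of the defining product reassembles into the normal form attached to the new weight with $r$ in its correct (possibly shifted) position. The delicate point is the single factor straddling the old and new positions of $r$; one has to see that $\f d\V=d$, equating the image of a $\V^{1}$-twisted differential with a $\f^{0}$-twisted one (and symmetrically for $\V$), makes this transition consistent, so that no spurious term is created or lost. Everything else reduces to matching exponents and the recorded shifts of $u(I)$ and $t(I)$.
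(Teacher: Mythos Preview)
The paper does not actually prove this proposition: immediately after the statement it simply records ``This is Proposition 2.5 in \cite{LangerZink}.'' So there is nothing to compare at the level of argument; the paper defers entirely to Langer--Zink.

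Your sketch is the natural direct verification and is along the lines of what one finds in \cite{LangerZink}: exploit that $\f$ is a graded ring homomorphism to treat it factor by factor, use the projection formula to absorb $\alpha\in W(k)$ into the Witt coefficient slot, and for $\V$ use $\V(\omega\, d\eta)={}^{\V}\omega\, d{}^{\V}\eta$ together with $\f d\V=d$ and $\f\V=p$ to propagate the single $\V$ through the product. Your identification of the delicate point --- the factor at which the threshold index $r$ shifts when an interval weight crosses between integral and non-integral --- is exactly right, and your observation that $\f d\V=d$ makes the two descriptions of that factor coincide is the key mechanism. The only caution is notational: the symbol ${}^{\f^{-t}}d[X]^{m}$ in Langer--Zink is a normal-form label rather than a literal inverse of $\f$, so when you write ``$\f({}^{\f^{-t}}d[X]^{m})={}^{\f^{-(t-1)}}d[X]^{m}$'' you should make sure you are invoking the defining property of that label (equivalently $d\f=p\f d$ on the relevant element) rather than an inverse map on $W\Omega^1$. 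With that clarification your outline is correct.
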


This is Proposition 2.5 in \cite{LangerZink}. Note that if $\omega\in W\Omega_A$ is given as a unique decomposition in basic Witt differentials $\sum e$ then its image under Frobenius has the unique decomposition $\f\omega=\sum\f e$. In this sense one could say that the decomposition remains ``fixed'' under Frobenius. The types of basic Witt differentials are almost stable under the action of Frobenius, i.e. one could switch from type 2 to type 3, since the weight is multiplied by $p$, but this is the only switch from one type to another that can possibly occur. What is more, the Frobenius action is injective on the set of basic Witt differentials. 

\subsubsection*{The overconvergent de Rham--Witt complex}\label{deRhamWittsurcon}

Let $A$ be a polynomial algebra over $k$. We recall the definition of the overconvergent de Rham--Witt complex \cite{DavisLangerZink}. Let $\omega=\sum_{k,\PU}e(\xi,k,\PU)\in W\Omega_A$ given as its unique decomposition as a sum of basic Witt differentials. For $\varepsilon >0$ the Gau\ss{} norm is defined by
$$\gamma_{\varepsilon}(\omega)=\inf_{k,\PU}\left\{\ord_V\xi_{k,\PU}-\varepsilon|k|\right\}.$$

\begin{defi} An element $\omega=\sum_{k,\PU}e(\xi,k,\PU)\in W\Omega_A$ is said to be overconvergent of radius $\varepsilon$, if $\gamma_\varepsilon(\omega)>-\infty$.\end{defi}

Note that the Teichm\"uller lift of an element in $A$ is by default overconvergent.

Denote by $W^\varepsilon\Omega_A$ the overconvergent Witt differentials of radius $\varepsilon$. The overconvergent de Rham--Witt complex is the union over all possible constants $\varepsilon>0$
$$\bigcup_\varepsilon W^\varepsilon\Omega_A =: W^\dagger\Omega_A$$
which is a subdifferential graded algebra of $W\Omega_A$. 

If $A=k\left[t_1,\ldots,t_r\right]$ is a smooth finitely generated $k$-algebra, and $S$ the polynomial algebra $k\left[X_1,\ldots,X_r\right]$, then the morphism $S\rightarrow A$, $X_i\mapsto t_i$ induces a canonical epimorphism
$$\lambda:W\Omega_S\rightarrow W\Omega_A$$
of de Rham--Witt complexes. Set $W^\dagger\Omega_A=\lambda\left(W^\dagger\Omega_S\right)$. This does not depend on the presentation, although the radii of overconvergence do in general. 

Davis, Langer and Zink show, that this construction can be globalised to a smooth scheme for \'etale and Zariski topology \cite[Cor.1.7, Theo.1.8]{DavisLangerZink}. Thus for a (smooth) scheme $X$ we have a subcomplex of the classical de Rham--Witt complex
$$W^\dagger\Omega_X\subset W\Omega_X.$$

\begin{rem}\label{RemPn} Notice that by definition Witt differentials of finite length are overconvergent. Hence the natural morphism
$$W^\dagger\Omega_X\otimes\ZZ\slash p^n\ZZ\rightarrow W_n\Omega_X$$
is an isomorphism in the derived category of $\ZZ\slash p^n\ZZ$-modules. Indeed, 
$$W^\dagger\Omega^i_X\otimes\ZZ\slash p^n\ZZ\rightarrow W^i_n\Omega_X$$
is evidently an isomorphism for all $i\geqslant0$ of $\ZZ/p^n\ZZ$-modules. On the other hand, the $\Tor$-functor, 
$$\Tor_j(\ZZ/p^n\ZZ,W^\dagger\Omega_X^i)=0$$
for all $j>0$ and all $i\geqslant 0$ as multiplication by $p$ is injective in $W^\dagger\Omega_X^i$. 
Accordingly, there is an isomorphism
$$W\Omega_X\cong\varprojlim W^\dagger\Omega_X\otimes\ZZ\slash p^n\ZZ.$$
\end{rem}

\subsubsection*{Logarithmic differentials in the overconvergent de Rham--Witt complex}

Let $X$ be a smooth scheme over $k$. The map $d\log$ induces a morphism
\begin{eqnarray*}d\log: \OO_X^\ast &\rightarrow& W\Omega^1_{X,\log}\\
x &\mapsto& \frac{d\left[x\right]}{\left[x\right]}\end{eqnarray*}
defined \'etale locally. The aforementioned fact that Teichm\"uller lifts are overconvergent along with the fact that the overconvergent complex is a subdifferential graded algebra of the classical de Rham--Witt complex shows that the elements $d\log\left[x_1\right]\ldots d\log\left[x_i\right]$ for $x_j\in\OO_X^\ast$ are overconvergent. Therefore we have in fact a natural map
$$d\log:\OO_X^\ast \rightarrow W^\dagger\Omega_X\left[1\right].$$
Moreover, extending this to higher degrees yields for every $i\geqslant 0$ a morphism
\begin{eqnarray*}d\log^{\otimes i}: \OO_X^\ast\otimes\cdots\otimes \OO_X^\ast & \rightarrow & W\Omega^i_{X,\log}\rightarrow W^\dagger\Omega_X\left[i\right]\\
x_1\otimes\cdots\otimes x_i &\mapsto & d\log(x_1)\cdots d\log(x_i).\end{eqnarray*}

\begin{defi} We set $W^\dagger\Omega_{X,\log}$ to be the subcomplex of the overconvergent complex generated \'etale locally by logarithmic Witt differentials.\end{defi} 

\begin{rem}\label{RemLogPn}\begin{enumerate}\item In Section \ref{SubsecShortExactSequence} we show the equality
$$W^\dagger\Omega_{X,\log} = W\Omega_{X,\log},$$
where the second complex is the logarithmic subcomplex of the usual de Rahm-Witt complex $W\Omega_X$, as both can be realised as the kernel of the map $1-\f$.
\item As Gros points out in \cite[Th\'eor\`eme 1.3.3]{Gros} there is a natural isomorphism
$$W_\bullet\Omega_{X,\log}\otimes\ZZ\slash p^n\ZZ\xrightarrow{\sim} W_n\Omega_{X,\log}$$
in the derived category of $\ZZ\slash p^n\ZZ$-promodules and therefore a natural isomorphism
$$W\Omega_{X,\log}\otimes\ZZ\slash p^n\ZZ\xrightarrow{\sim}W_n\Omega_{X,\log}.$$
Moreover one has
$$W\Omega_{X,\log}\cong\varprojlim W\Omega_{X,\log}^i\otimes\ZZ\slash p^n\ZZ.$$
\end{enumerate}\end{rem}
\medskip

In order to relate this to the Milnor $K$-sheaf, we prove the following:

\begin{prop}\label{PropSteinberg} The symbols $d\log(x_1)\cdots d\log(x_i)$ with $x_1,\ldots x_i\in\OO_X^\ast$ satisfy the Steinberg relation.\end{prop}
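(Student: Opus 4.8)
The plan is to push the identity down to each finite truncation of the ordinary de Rham--Witt complex, reduce to two universal pairs of units, dispose of the relator $a\otimes(-a)$ by hand, and reduce the genuine Steinberg relation to the case of a field. Concretely: since $W^\dagger\Omega_X\subset W\Omega_X=\varprojlim_n W_n\Omega_X$ and the symbols $d\log(x_1)\cdots d\log(x_i)$ lie in the logarithmic subcomplex $W\Omega_{X,\log}$, it suffices to show for every $n$ that the image of such a symbol in $W_n\Omega^i_{X,\log}$ vanishes whenever two of the units $x_j\in\OO_X^\ast$ satisfy $x_j+x_k=1$, respectively $x_k=-x_j$ (the two families of generators of the ideal $J$ of the na\"ive Milnor $K$-ring). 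The remaining factors $d\log[x_\ell]$ simply multiply on (the product on $W_n\Omega^\bullet$ being associative and graded-commutative), and everything is \'etale local and functorial in $X$; pulling back along the tautological map therefore reduces us to proving $d\log[t]_n\cdot d\log[1-t]_n=0$ in $W_n\Omega^2_R$ for $R=k[t,t^{-1},(1-t)^{-1}]$, and $d\log[t]_n\cdot d\log[-t]_n=0$ in $W_n\Omega^2_R$ for $R=k[t,t^{-1}]$.

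I use throughout that $W_n\Omega^\bullet_R$ is, as a differential graded algebra, a quotient of the de Rham complex $\Omega^\bullet_{W_n(R)/\ZZ}$ (it is generated in degree $0$ by $W_n(R)$, cf. \cite{Illusie}); hence $d\xi\wedge d\xi=0$ in $W_n\Omega^2_R$ for every $\xi\in W_n(R)$, and the Steinberg identity $\frac{d\xi}{\xi}\wedge\frac{d(1-\xi)}{1-\xi}=0$ holds whenever $\xi,1-\xi\in W_n(R)^\times$. The relator $t\otimes(-t)$ is then immediate: the Teichm\"uller lift $[-1]$ equals $-1$ when $p$ is odd and $1$ when $p=2$, so $d\log[-1]=0$ and $d\log[-t]=d\log[-1]+d\log[t]=d\log[t]$, whence $d\log[t]\cdot d\log[-t]=d\log[t]\cdot d\log[t]=[t]^{-2}\,d[t]\wedge d[t]=0$.

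For the Steinberg relator, note that the residue of $1-[t]\in W_n(R)$ is $1-t\in R^\times$ and that every element of $VW_n(R)$ is nilpotent, so $1-[t]\in W_n(R)^\times$. Applying the Steinberg identity in $\Omega^\bullet_{W_n(R)}$ to the pair $([t],1-[t])$ and pushing it down to $W_n\Omega^\bullet_R$ gives $d\log[t]\cdot d\log(1-[t])=0$; hence, with $u=[1-t]\,(1-[t])^{-1}\in 1+VW_n(R)$,
$$ d\log[t]_n\cdot d\log[1-t]_n\;=\;d\log[t]\cdot\bigl(d\log[1-t]-d\log(1-[t])\bigr)\;=\;d\log[t]\cdot d\log(u). $$
Writing the Witt addition law as $[t]+[1-t]=1+V(w)$ with $w\in W_{n-1}(R)$, one has $d[1-t]=-d[t]+dV(w)$, and since $d[t]\wedge d[t]=0$ the whole question collapses to the single vanishing statement $d[t]\wedge dV(w)=0$ in $W_n\Omega^2_R$.

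This last vanishing is the technical heart of the proof. The ``defect'' $V(w)$ measures the non-additivity of the Teichm\"uller lift; by the Witt addition law it is, up to units, a sum of terms $V^{m-1}([t]\cdot\zeta_m)$ with $\zeta_m\in W(R)$, since the universal polynomials $\phi_m$ in $[x]+[y]=\sum_m V^m([\phi_m(x,y)])$ satisfy $\phi_m(0,y)=0$, so $\phi_m(t,1-t)$ is divisible by $t$. This term does not interact formally with $d\log[t]$, and one needs genuine input beyond the abstract structure of $W_n\Omega^\bullet$. I would supply it by reduction to a function field: $R$ is a smooth integral domain, so the Gersten resolution for the logarithmic de Rham--Witt sheaves $W_n\Omega^i_{\log}$ (cf. \cite{Gros}) embeds $W_n\Omega^2_{R,\log}$ into $W_n\Omega^2_{K,\log}$ for $K=\Frac(R)=k(t)$, and over the field $K$ the statement that the symbol map $K_i^M(K)/p^n\to W_n\Omega^i_{K,\log}$ is well defined --- equivalently, that the $d\log$-symbols satisfy the Steinberg relation --- is precisely (part of) the Bloch--Kato--Gabber theorem. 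An alternative is a direct induction on $n$ using the exact sequences relating $W_n\Omega^i_{K,\log}$, $W_{n-1}\Omega^i_{K,\log}$ and $\Omega^i_{K,\log}$, which I expect to be more delicate. Granting the vanishing at every truncation, passing to the inverse limit and restricting to $W^\dagger\Omega_X$ shows that $d\log^i$ kills $J$ and hence factors through $\overline{\Ksheaf}^M_i$.
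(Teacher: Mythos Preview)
Your reduction is sound, but you then work far too hard. The paper's proof is a one-line dimension argument: for a local section $b\in B$ consider the ring map $\psi:k[z]\to B$, $z\mapsto b$, and the induced map $W\Omega_{k[z]}\to W\Omega_B$. Since $k[z]$ is smooth of dimension one, $W\Omega^2_{k[z]}=0$, so $d[z]\,d[1-z]=0$ trivially, and its image $d[b]\,d[1-b]$ vanishes in $W\Omega^2_B$.

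You perform essentially the same reduction --- to the universal pair $(t,1-t)$ in $R=k[t,t^{-1},(1-t)^{-1}]$ --- but then fail to notice that $R$ is itself one-dimensional, so $W_n\Omega^2_R=0$ already. Everything that follows (the unit $1-[t]$, the Witt defect $V(w)$, the vanishing $d[t]\wedge dV(w)=0$, the Gersten embedding into $W_n\Omega^2_{k(t),\log}$, and the appeal to Bloch--Kato--Gabber) is unnecessary: even your final target $W_n\Omega^2_{k(t)}$ is zero, since $k(t)$ has transcendence degree one. Your handling of the relator $a\otimes(-a)$ is correct, though note that this relation in fact follows from the Steinberg relation and bilinearity once the latter is established. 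In short, your argument is not wrong, but it invokes deep theorems to prove a statement that is vacuous for dimension reasons once you have made the reduction you already made.
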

\begin{proof}: Let $x\in\OO_X^\ast$ be a local section and assume that $1-x\in \OO_X^\ast$. It is enough to show locally that $d\log(x)d\log(1-x)=0$, or even that $d[x]d[1-x]=0$. 

Thus let $X=\Spec B$ where $B$ is a quotient of a polynomial algebra over $k$. For an element $b\in B$ we calculate the expression $d[b]d[1-b]$.  Consider the morphism of $k$-algebras
$$\psi: k[z]\rightarrow B$$
sending $z$ to $b$. This morphism induces by functoriality a morphism of differential graded algebras between the associated de Rham--Witt complexes,
$$\psi:W\Omega_{k_[z]}\rightarrow W\Omega_{B}$$
which by abuse of notation we also denote by $\psi$. Yet the de Rham--Witt complex of $k[z]$ is trivial in degree greater than 1,
$$W\Omega_{k[z]}: 0\rightarrow W\OO_{k[z]}\rightarrow W\Omega^1_{k[z]}\rightarrow 0,$$
thence is clear that $d[z]d[1-z]$ is zero. But as $\psi$ is a morphism of differentially graded algebras, this implies already that $d[b]d[1-b]$ as the image in $W\Omega_B$ is zero as well. 

From this follows that in the general case for $x\in\OO_X^\ast$
$$d\log(x)d\log(1-x)=\frac{d[x]}{[x]}\frac{d[1-x]}{[1-x]}=0.$$

Using basic properties of the de Rham--Witt complex as differentially graded algebra, in particular anti-commutativity, the claim follows. \end{proof}
\medskip

\begin{cor} Let $X$ be a smooth scheme over $k$ with infinite residue fields. For each $i$ the map
$$d\log^{\otimes i}: \OO_X^\ast\otimes\cdots\otimes \OO_X^\ast  \rightarrow  W^\dagger\Omega_X\left[i\right]$$
factor through $\Ksheaf_i^M$ on $X$ and $d\log^{\otimes i}$ can be augmented to a morphism of sheaves on $X$
$$d\log^i:\overline{\Ksheaf}_i^M\rightarrow W^\dagger\Omega\left[i\right].$$\end{cor}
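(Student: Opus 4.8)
The plan is to exhibit the factorisation first at the level of presheaves and then to sheafify, using Proposition~\ref{PropSteinberg}, the definition of $\overline{K}^M_\ast$ as the tensor algebra modulo the Steinberg relations and the elements $a\otimes(-a)$, and finally Corollary~\ref{CorGersten}. The starting observation is that $d\log\colon\OO_X^\ast\to W^\dagger\Omega^1_{X,\log}$ is a homomorphism from the \emph{multiplicative} sheaf $\OO_X^\ast$ to the \emph{additive} sheaf $W^\dagger\Omega^1_{X,\log}$: since $[xy]=[x][y]$ for Teichm\"uller lifts, the Leibniz rule gives $d\log[xy]=d\log[x]+d\log[y]$. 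Hence on each open (or \'etale) $U$ the map $d\log^{\otimes i}$ is $\ZZ$-multilinear in its $i$ arguments and extends to a homomorphism of graded rings $T^\ast\bigl(\OO_X^\ast(U)\bigr)\to W^\dagger\Omega^\bullet_X(U)$ landing in the logarithmic subalgebra, the target being equipped with its anti-commutative product as a differential graded algebra.

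Next I would check that this ring homomorphism annihilates the two-sided homogeneous ideal $J$. The Steinberg generators $x\otimes(1-x)$, with $x,1-x\in\OO_X^\ast$, are sent to $d\log(x)d\log(1-x)=0$ by Proposition~\ref{PropSteinberg}. For a generator $a\otimes(-a)$, I would use that $[-1]$ is a Teichm\"uller unit coming from the perfect base field and that the de Rham--Witt complex of $k$ is concentrated in degree $0$, so that $d[-1]=0$ and hence $d\log(-1)=0$; then $d\log(-a)=d\log(-1)+d\log(a)=d\log(a)$ and $d\log(a)d\log(-a)=\bigl(d\log(a)\bigr)^2=0$ since squares of degree-one elements vanish in $W^\dagger\Omega_X$. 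Therefore $d\log^{\otimes i}$ factors, compatibly with restriction, through the presheaf $U\mapsto\overline{K}^M_i\bigl(\Gamma(U,\OO_U)\bigr)$; because $W^\dagger\Omega_X[i]$ is a complex of sheaves, this factorisation passes to the associated sheaf, producing a morphism of sheaves $d\log^i\colon\overline{\Ksheaf}^M_i\to W^\dagger\Omega[i]$ (a chain map, since the logarithmic subsheaf is $d$-closed).

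Finally, to obtain the factorisation through $\Ksheaf^M_i$ exactly as in the statement, I would invoke Corollary~\ref{CorGersten}: as $X$ is smooth over the perfect field $k$ with infinite residue fields, the Gersten conjecture gives $\overline{\Ksheaf}^M_i=\Ksheaf^M_i$. The entire argument is local for the \'etale topology and so carries over verbatim to the small \'etale site, cf.\ Section~\ref{MilnorKOnEtale}. I expect the only delicate point to be the $a\otimes(-a)$ relation, including its degenerate instances (for $a=1$ it is trivial since $d\log(1)=0$); the remaining steps are Proposition~\ref{PropSteinberg}, formal graded-commutativity, and routine sheafification.
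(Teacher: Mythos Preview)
Your proof is correct and is exactly the natural unfolding of what the paper leaves implicit: the paper states this corollary without proof, treating it as an immediate consequence of Proposition~\ref{PropSteinberg} together with the definition of $\overline{K}^M_\ast$ and Corollary~\ref{CorGersten}. Your argument supplies the details faithfully, and in fact you are more careful than the paper in explicitly dispatching the $a\otimes(-a)$ generators of the ideal $J$, which Proposition~\ref{PropSteinberg} does not address; your treatment via $d\log(-1)=0$ and the vanishing of squares of degree-one elements (which, if one wants to be scrupulous in characteristic $2$, follows by the same functoriality trick as in the proof of Proposition~\ref{PropSteinberg}, since $W\Omega^2_{k[z]}=0$) is the right way to close that gap.
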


The next step is to show that the overconvergent de Rham--Witt complex is an object of $\Fan_{\text{\'et}}$. 

\subsubsection*{The transfer map for the overconvergent complex}

We will now define a transfer for the overconvergent complex. Let $i:A\rightarrow B$ be a finite \'etale extension of local rings. Fix an explicit representation $B\cong A[T]\slash(f)$ with $f\in A[T]$ monic such that $f'$ is a unit. Let $G_{B/A}=\Aut_A(B)$ be the automorphism group of $B$ that fixes $A$. Since $B/A$ is \'etale, in particular unramified, and the extension is of finite degree, we know that 
$$\# G_{B/A}=\deg(B/A).$$
By functoriality of the overconvergent complex, each element $g\in G_{B/A}$ induces a morphism of complexes
$$g:W^\dagger\Omega_B\rightarrow W^\dagger\Omega_B.$$

\begin{lem} Let $g:B\rightarrow B$ be an automorphism that fixes $A$. Then the induced morphism of de Rham--Witt complexes is also an automorphism which fixes $W^\dagger\Omega_A$.\end{lem}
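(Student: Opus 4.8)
The plan is to reduce everything to the elementary observation that the de Rham--Witt functor $W^\dagger\Omega_{(-)}$ is a functor from $k$-algebras to differential graded $W(k)$-algebras, so that an $A$-algebra automorphism $g\colon B\to B$ is sent to a $W^\dagger\Omega_A$-algebra automorphism of $W^\dagger\Omega_B$. Concretely, first I would record functoriality: $g$ is a ring isomorphism, hence $g^{-1}$ is too, and applying $W^\dagger\Omega$ to the relations $g\circ g^{-1}=g^{-1}\circ g=\id_B$ gives $W^\dagger\Omega(g)\circ W^\dagger\Omega(g^{-1})=W^\dagger\Omega(g^{-1})\circ W^\dagger\Omega(g)=\id_{W^\dagger\Omega_B}$, because $W^\dagger\Omega(-)$ respects composition and identities. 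Thus the induced map on complexes is an automorphism with inverse $W^\dagger\Omega(g^{-1})$.

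Next I would check that this automorphism fixes $W^\dagger\Omega_A$ pointwise. The structure map $i\colon A\to B$ satisfies $g\circ i = i$ since $g$ fixes $A$; applying the functor gives $W^\dagger\Omega(g)\circ W^\dagger\Omega(i) = W^\dagger\Omega(i)$, i.e. $W^\dagger\Omega(g)$ restricts to the identity on the image of $W^\dagger\Omega_A\to W^\dagger\Omega_B$. Here one should remark that $W^\dagger\Omega(i)$ is injective — this follows because $B$ is a finite \'etale, hence flat (in fact free) $A$-algebra, so $W\Omega_A\to W\Omega_B$ is injective (the de Rham--Witt complex of a free algebra is built functorially and the map is split on the level of Witt vectors of the underlying modules), and $W^\dagger\Omega$ is a subcomplex by the construction recalled in Section \ref{deRhamWittsurcon}. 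Alternatively one simply identifies $W^\dagger\Omega_A$ with its image and the statement is literally that $g$ acts trivially there.

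The only genuinely delicate point — and the one I would single out as the main obstacle — is that the overconvergent complex is defined through an explicit presentation by a polynomial algebra (one chooses $S=k[X_1,\dots,X_r]\twoheadrightarrow A$ and sets $W^\dagger\Omega_A=\lambda(W^\dagger\Omega_S)$), so a priori one must know that $W^\dagger\Omega(g)$ carries $W^\dagger\Omega_B$ into $W^\dagger\Omega_B$, not just $W\Omega_B$ into $W\Omega_B$. This is exactly the content of \cite[Cor.~1.7, Theo.~1.8]{DavisLangerZink}: overconvergence is intrinsic and functorial for arbitrary morphisms of smooth $k$-algebras, independent of the chosen presentation (only the explicit radius $\varepsilon$ may change). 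Granting that functoriality of $W^\dagger\Omega$ in the category of smooth $k$-algebras, the two displayed computations above finish the proof with no further work. I would therefore write the proof as: invoke functoriality of $W^\dagger\Omega$ (citing Davis--Langer--Zink for the well-definedness on the overconvergent subcomplex), deduce that $W^\dagger\Omega(g)$ is an automorphism from $g\circ g^{-1}=\id$, and deduce that it fixes $W^\dagger\Omega_A$ from $g\circ i=i$ together with injectivity of $W^\dagger\Omega(i)$ (finite \'etale $\Rightarrow$ free $\Rightarrow$ injective on de Rham--Witt complexes).
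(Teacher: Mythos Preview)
Your proposal is correct and follows essentially the same approach as the paper's own proof, which is extremely brief: it simply invokes functoriality of the (overconvergent) de Rham--Witt complex to get that the induced map is an isomorphism, and then notes that fixing $W^\dagger\Omega_A$ holds already for the usual de Rham--Witt complex by construction, hence also for the overconvergent subcomplex. Your version is more explicit---spelling out $g\circ g^{-1}=\id$, $g\circ i=i$, and flagging the injectivity of $W^\dagger\Omega(i)$ and the presentation-independence of overconvergence---but the underlying argument is identical.
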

\begin{proof}: By functoriality of the (overconvergent) de Rham--Witt complex the induced morphism is an isomorphism. It fixes $W^\dagger\Omega_A$ because this is true for the usual de Rham--Witt complex by construction, thus the same holds true for the restriction to the overconvergent subcomplex. \end{proof}
\medskip

\begin{lem} Let $\omega\in W^\dagger\Omega_B$ be fixed by all $g\in G_{B/A}$. Then $\omega$ is in fact in $W^\dagger\Omega_A$. \end{lem}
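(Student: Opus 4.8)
The claim is equivalent to the equality $\left(W^\dagger\Omega_B\right)^{G_{B/A}}=W^\dagger\Omega_A$ inside $W^\dagger\Omega_B$, and one inclusion is exactly the content of the preceding lemma; so the task is to show that a $G_{B/A}$-invariant $\omega$ already descends to $W^\dagger\Omega_A$. The plan is to exploit that, under the standing hypothesis $\#G_{B/A}=\deg(B/A)$, the finite \'etale extension $B/A$ is a $G_{B/A}$-Galois extension: then $\Spec B\to\Spec A$ is a faithfully flat \'etale cover and one has the usual Galois isomorphism $B\otimes_A B\xrightarrow{\ \sim\ }\prod_{g\in G_{B/A}}B$, $x\otimes y\mapsto\left(x\cdot g(y)\right)_g$, under which the two coface maps $B\rightrightarrows B\otimes_A B$ become $b\mapsto(b)_g$ and $b\mapsto\left(g(b)\right)_g$.

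First I would feed this cover into the result of Davis, Langer and Zink \cite{DavisLangerZink} that $R\mapsto W^\dagger\Omega_R$ is a sheaf for the \'etale topology --- this is precisely what underlies their globalisation to smooth schemes. Applying the sheaf axiom to $\Spec B\to\Spec A$ identifies $W^\dagger\Omega_A$ with the equaliser of the two coface maps $W^\dagger\Omega_B\rightrightarrows W^\dagger\Omega_{B\otimes_A B}$. Next I would use that the overconvergent complex is compatible with finite products, so that the Galois isomorphism above yields $W^\dagger\Omega_{B\otimes_A B}\cong\prod_{g\in G_{B/A}}W^\dagger\Omega_B$, and under this identification the two coface maps become $\omega\mapsto(\omega)_g$ and $\omega\mapsto\left(g\,\omega\right)_g$. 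Hence the equaliser is exactly $\left\{\omega\in W^\dagger\Omega_B\ :\ g\,\omega=\omega\text{ for all }g\in G_{B/A}\right\}=\left(W^\dagger\Omega_B\right)^{G_{B/A}}$, which gives $\left(W^\dagger\Omega_B\right)^{G_{B/A}}=W^\dagger\Omega_A$ and in particular $\omega\in W^\dagger\Omega_A$.

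To make explicit where the real content sits, one can also proceed in two steps: \'etale base change for the classical de Rham--Witt complex gives $W\Omega_B\cong W\Omega_A\otimes_{W(A)}W(B)$ with $W(B)/W(A)$ a $G_{B/A}$-Galois (pro-)\'etale extension, so ordinary Galois descent already yields $\omega\in\left(W\Omega_B\right)^{G_{B/A}}=W\Omega_A$; it then remains to see that an element of $W\Omega_A$ whose image in $W\Omega_B$ is overconvergent was already overconvergent in $W\Omega_A$, i.e.\ that $W^\dagger\Omega_B\cap W\Omega_A=W^\dagger\Omega_A$ inside $W\Omega_B$. This last point --- the \'etale-local nature of the Gau\ss{} norm condition --- is the step I expect to be the main obstacle, but it is exactly what is packaged into the \'etale descent statement of \cite{DavisLangerZink}; by contrast the identification $W^\dagger\Omega_{B\otimes_A B}\cong\prod_{g}W^\dagger\Omega_B$ and the matching of the coface maps with the $G_{B/A}$-action in the first argument are routine bookkeeping.
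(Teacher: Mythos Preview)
Your argument is correct, but it takes a genuinely different route from the paper. The paper argues directly from the structure of the de Rham--Witt complex: first it checks the degree~$0$ case $W(B)^{G_{B/A}}=W(A)$ by noting that $G_{B/A}$ acts componentwise on Witt vectors and that $B^{G_{B/A}}=A$; then it invokes the \'etale base change isomorphism $W\Omega_B\cong W(B)\otimes_{W(A)}W\Omega_A$ (the remark after \cite[Proposition~1.9]{DavisLangerZink}) to reduce the general case to the coefficient ring $W(B)$, and finally asserts that this ``transfers over to the overconvergent subcomplex.'' Your approach instead packages everything into the single statement that $W^\dagger\Omega$ is an \'etale sheaf and then unwinds the sheaf condition for the Galois cover $\Spec B\to\Spec A$ via $B\otimes_A B\cong\prod_{g}B$. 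This is cleaner and more conceptual: it would work verbatim for any \'etale sheaf, and it makes transparent that the only nontrivial input is the descent result of \cite{DavisLangerZink}. The paper's argument, by contrast, is more hands-on and specific to the de Rham--Witt setting; it has the advantage of making visible where the structure of $W\Omega_B$ as a free $W\Omega_A$-module enters, but the final step (passing from $W\Omega$ to $W^\dagger\Omega$) is exactly the point you isolate in your two-step variant as $W^\dagger\Omega_B\cap W\Omega_A=W^\dagger\Omega_A$, and the paper treats it rather briskly.
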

\begin{proof}: Consider first the case when $x\in W(B)$. In this case, the elements of $G_{B/A}$ act component wise and the claim follows from the fact that $A$ is exactly the fixed ring of $G_{B/A}$. This is of course also true if we restrict to the overconvergent Witt vectors. 

To extend this results to the (overconvergent) de Rham--Witt complex, recall that there is an isomorphism
$$W\Omega_B\cong W(B)\otimes_{W(A)}W\Omega_A.$$
Compare the remark after Proposition 1.9 in \cite{DavisLangerZink}. It is clear that $W\Omega_A$ is fixed by the elements in $G_{B/A}$ so it comes down to the coefficients in $W(B)$. But for this ring we just showed the claim. Without difficulty this transfers over to the overconvergent subcomplex.  \end{proof}
\medskip

Define the following map
\begin{eqnarray*}N_{B/A}: W^\dagger\Omega_B &\rightarrow& W^\dagger\Omega_B\\
\omega &\mapsto& \sum_{g\in G_{B/A}}g\omega\end{eqnarray*}

\begin{prop} The map $N_{B/A}$ has image in $W^\dagger\Omega_A$ and the restriction to $W^\dagger\Omega_A$ is multiplication by $\deg(B/A)$:
$$N_{B/A}\circ i_\ast=\deg(B/A)\id_{W^\dagger\Omega_A}.$$
\end{prop}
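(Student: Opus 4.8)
The statement has two parts: that $N_{B/A}$ lands in $W^\dagger\Omega_A$, and that $N_{B/A}\circ i_\ast$ is multiplication by $\deg(B/A)$ on $W^\dagger\Omega_A$. The first part is an immediate consequence of the two preceding lemmas: for $\omega\in W^\dagger\Omega_B$ and any $h\in G_{B/A}$ one has $h\bigl(N_{B/A}\omega\bigr)=\sum_{g\in G_{B/A}}(hg)\omega=\sum_{g'\in G_{B/A}}g'\omega=N_{B/A}\omega$, using that left multiplication by $h$ is a bijection of the finite group $G_{B/A}$. Hence $N_{B/A}\omega$ is fixed by every element of $G_{B/A}$, so by the second lemma it lies in $W^\dagger\Omega_A$. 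This also shows $N_{B/A}$ is additive, being a finite sum of the additive maps $g$.

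For the second part, let $\omega\in W^\dagger\Omega_A$, viewed inside $W^\dagger\Omega_B$ via $i_\ast$ (the map induced by $i\colon A\to B$). By the first lemma each $g\in G_{B/A}$ fixes $W^\dagger\Omega_A$, so $g(i_\ast\omega)=i_\ast\omega$ for all $g$. Therefore
\begin{equation*}
N_{B/A}(i_\ast\omega)=\sum_{g\in G_{B/A}}g(i_\ast\omega)=\sum_{g\in G_{B/A}}i_\ast\omega=\#G_{B/A}\cdot i_\ast\omega.
\end{equation*}
Since $B/A$ is finite \'etale, in particular finite separable and (being a local extension of local rings) monogenic with $f'$ a unit, the Galois-type count $\#G_{B/A}=\deg(B/A)$ recalled just before the proposition finishes the computation: $N_{B/A}\circ i_\ast=\deg(B/A)\cdot\id_{W^\dagger\Omega_A}$.

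I do not expect a serious obstacle here; the content has been front-loaded into the two lemmas (that automorphisms of $B$ fixing $A$ induce automorphisms of $W^\dagger\Omega_B$ fixing $W^\dagger\Omega_A$, and that the $G_{B/A}$-invariants of $W^\dagger\Omega_B$ are exactly $W^\dagger\Omega_A$) and into the fact $\#G_{B/A}=\deg(B/A)$. The only point deserving a word of care is that $i_\ast$ is injective on $W^\dagger\Omega_A$, so that ``multiplication by $\deg(B/A)$ on the image'' is unambiguous; this follows because $A\to B$ is faithfully flat (finite \'etale), hence $W(A)\to W(B)$ is injective and the isomorphism $W\Omega_B\cong W(B)\otimes_{W(A)}W\Omega_A$ shows $W\Omega_A\hookrightarrow W\Omega_B$, which restricts to an injection on the overconvergent subcomplexes. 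If one prefers, the identity $N_{B/A}\circ i_\ast=\deg(B/A)\,\id$ can simply be read as an identity of maps $W^\dagger\Omega_A\to W^\dagger\Omega_A$ after identifying $W^\dagger\Omega_A$ with its image, which is how it will be used in verifying Kerz's transfer axioms.
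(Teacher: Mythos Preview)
Your proof is correct and follows exactly the same route as the paper: show $N_{B/A}(\omega)$ is $G_{B/A}$-invariant by reindexing the sum, invoke the second lemma to land in $W^\dagger\Omega_A$, and for the second part use that every $g$ fixes $W^\dagger\Omega_A$ together with $\#G_{B/A}=\deg(B/A)$. Your additional remarks on injectivity of $i_\ast$ are fine but not needed for the argument as stated.
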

\begin{proof}: Let $h\in G_{B/A}$ and $\omega \in W^\dagger\Omega_B$. Then
\begin{eqnarray*}h N_{B/A}(\omega) &=& h\sum_{g\in G_{B/A}}g\omega\\
 &=& \sum_{g\in G_{B/A}}hg\omega\\
 &=& \sum_{g'\in G_{B/A}}g'\omega = N_{B/A}(\omega).\end{eqnarray*}
Thus $N_{B/A}(\omega)$ is fixed by all elements of $G_{B/A}$ and by the previous lemma this means that $N_{B/A}(\omega)\in W^\dagger\Omega_A$. 

For the second part of the claim, let $\omega\in W^\dagger\Omega_A$. Therefore, $\omega$ is fixed by $G_{B/A}$, and 
$$N_{B/A}(\omega)=\sum_{g\in G_{B/A}}g\omega = (\# G_{B/A} )\omega.$$
But we have seen that $\# G_{B/A}=\deg(B/A)$. This proves the claim. \end{proof}
\medskip

This shows that it makes sense to call the defined map $N_{B/A}$ a norm for the overconvergent complex for $B$ over $A$.

\begin{cor} Let $i:A\rightarrow B$ as before and $A'$ a local $A$ algebra such that $B'=B\otimes_A A'$ is also local. Let $i':A'\rightarrow B'$ be the induced inclusion. Then the map $N_{B'/A'}$ is multiplicative with image in $W^\dagger\Omega_{A'}$ and
$$N_{B'/A'}\circ i_\ast'=\deg(B/A)\id_{W^\dagger\Omega_{A'}}.$$\end{cor}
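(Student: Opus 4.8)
The plan is to deduce this from the preceding Proposition by checking that the finite \'etale extension $i':A'\rightarrow B'$ satisfies exactly the hypotheses that were imposed on $i:A\rightarrow B$, and then re-running the arguments of the two Lemmas and the Proposition verbatim with $(A,B)$ replaced by $(A',B')$; the only extra input will be the identification $\deg(B'/A')=\deg(B/A)$ and a description of $G_{B'/A'}$. First I would record that base change is harmless at the level of presentations: if $B=A[T]/(f)$ with $f$ monic and $f'$ a unit, then $B'=B\otimes_A A'=A'[T]/(\bar f)$ with $\bar f$ the image of $f$, and since units map to units $\bar f'$ is a unit in $B'$. Hence $i'$ is again finite \'etale with an explicit presentation of the required shape, and $\deg(B'/A')=\deg_T\bar f=\deg(B/A)=:d$; this already pins down the constant that must appear in the last formula.

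The step I expect to be the main obstacle is showing $\#G_{B'/A'}=d$: one cannot simply invoke that ``invariants commute with base change'', since $A\rightarrow A'$ is not assumed flat, so the count must be done by hand from the presentation. For the upper bound, an $A'$-automorphism of $A'[T]/(\bar f)$ is determined by the image of $T$, which is a root of $\bar f$ in $B'$; by repeated polynomial division by monic factors and induction on the degree, using at each step that the discriminant $\Disc(f)$ is a unit (equivalently $f'$ is a unit) so that a root of $\bar f$ is never a root of the complementary factor and distinct roots differ by a unit in the local ring $B'$, one gets that a monic polynomial of degree $d$ with unit discriminant over a local ring has at most $d$ roots, whence $\#G_{B'/A'}\leqslant d$. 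For the lower bound, each $g\in G_{B/A}$ base-changes to $g\otimes_A\id_{A'}\in G_{B'/A'}$, and $G_{B/A}\rightarrow G_{B'/A'}$ is injective: if $g\neq\id$ then $g(\overline{T})-\overline{T}$ is a unit in the local ring $B$ (distinct roots of a separable monic polynomial differ by a unit), so its image in $B'$ is a nonzero unit and $g\otimes\id\neq\id$. Since $\#G_{B/A}=d$ by hypothesis, this forces $\#G_{B'/A'}=d$, that $G_{B'/A'}=\{\,g\otimes_A\id_{A'}:g\in G_{B/A}\,\}$, and — this being the Galois situation again, with $B'$ connected because local — that $(B')^{G_{B'/A'}}=A'$, which is the fixed-ring input used in the analogue of the second Lemma.

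With the hypotheses in place the rest is formal. The two Lemmas, applied to $(A',B')$, give that each $g'\in G_{B'/A'}$ induces a morphism of differential graded algebras $W^\dagger\Omega_{B'}\rightarrow W^\dagger\Omega_{B'}$ fixing $W^\dagger\Omega_{A'}$, and that any element of $W^\dagger\Omega_{B'}$ fixed by all of $G_{B'/A'}$ lies in $W^\dagger\Omega_{A'}$. Applying the preceding Proposition word for word to $(A',B')$ then yields that $N_{B'/A'}=\sum_{g'\in G_{B'/A'}}g'$ has image in $W^\dagger\Omega_{A'}$ and that $N_{B'/A'}\circ i'_\ast=(\#G_{B'/A'})\,\id_{W^\dagger\Omega_{A'}}=\deg(B/A)\,\id_{W^\dagger\Omega_{A'}}$. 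Finally, ``multiplicative'' here is the projection formula, which is immediate: for $x\in W^\dagger\Omega_{A'}$ and $\omega\in W^\dagger\Omega_{B'}$ each $g'$ fixes $i'_\ast(x)$ and is a ring map, so $N_{B'/A'}(i'_\ast(x)\,\omega)=\sum_{g'}g'(x\,\omega)=\sum_{g'}x\,g'(\omega)=x\,N_{B'/A'}(\omega)$, exhibiting $N_{B'/A'}$ as a morphism of graded $W^\dagger\Omega_{A'}$-modules with the asserted properties. The only genuinely non-formal point, as flagged above, is the bookkeeping for $G_{B'/A'}$, and that is precisely where separability of $f$ — the hypothesis that $f'$, equivalently $\Disc(f)$, is a unit — gets used.
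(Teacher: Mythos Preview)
Your proposal is correct and follows essentially the same approach as the paper: apply the preceding Proposition to the pair $(A',B')$ once one knows $\deg(B'/A')=\deg(B/A)$. The paper's own proof is a single line (``This follows directly as $\deg(B'/A')=\deg(B/A)$''), so you have filled in considerably more detail --- in particular the verification that $\#G_{B'/A'}=d$ via the base-change map $G_{B/A}\rightarrow G_{B'/A'}$ and the root-counting for separable monic polynomials over a local ring, as well as the projection-formula reading of ``multiplicative'' --- but the underlying strategy is the same.
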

\begin{proof}: This follows directly as $\deg(B'/A')=\deg(B/A)$. \end{proof}
\medskip

Moreover we have the following compatibility.

\begin{prop} Let $i:A\rightarrow B$ be as before. Let $f:A'\rightarrow A''$ be a morphism of local $A$-algebras such that $B'=B\otimes_A A'$ and $B''=B\otimes_A A''$ are also local. Denote by $f^B:B'\rightarrow B''$ the induced morphism. Then the diagram
$$\xymatrix{W^\dagger\Omega_{B'}\ar[r]\ar[d]_{N_{B'/A'}} & W^\dagger\Omega_{B''}\ar[d]^{N_{B''/A''}}\\
W^\dagger\Omega_{A'}\ar[r] & W^\dagger\Omega_{A''}}$$
commutes.\end{prop}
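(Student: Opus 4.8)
\emph{Plan of proof.} The idea is to reduce the claimed commutativity to the statement that the three automorphism groups in play are ``the same'', compatibly with base change. Write $B\cong A[T]/(f)$ for the chosen presentation, so that $B'\cong A'[T]/(f)$ and $B''\cong A''[T]/(f)$ with $f'$ still a unit in each; in particular $\deg(B'/A')=\deg(B''/A'')=\deg(B/A)$. For $g\in G_{B/A}$ put $g'=g\otimes_A\id_{A'}\in\Aut_{A'}(B')$ and $g''=g\otimes_A\id_{A''}\in\Aut_{A''}(B'')$. I would first check that $g\mapsto g'$ and $g\mapsto g''$ are isomorphisms $G_{B/A}\xrightarrow{\sim}G_{B'/A'}$ and $G_{B/A}\xrightarrow{\sim}G_{B''/A''}$: with the hypothesis $\#G_{B/A}=\deg(B/A)$ already in force, $f$ splits in $B[T]$ into pairwise distinct linear factors (the discriminant being a unit), this splitting and the distinctness of the roots persist after base change, so every $A'$-automorphism of $B'$ (resp.\ $A''$-automorphism of $B''$) is of the form $g'$ (resp.\ $g''$) for a unique $g\in G_{B/A}$.

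Next I would record the equivariance of the base change map $f^B=\id_B\otimes_A f\colon B'\to B''$. For $g\in G_{B/A}$ one has, as ring homomorphisms $B'\to B''$,
$$f^B\circ g'=g\otimes_A f=g''\circ f^B,$$
and applying the covariant functor $W^\dagger\Omega_{(-)}$ gives $W^\dagger\Omega_{f^B}\circ W^\dagger\Omega_{g'}=W^\dagger\Omega_{g''}\circ W^\dagger\Omega_{f^B}$ on the overconvergent complexes. Now for $\omega\in W^\dagger\Omega_{B'}$ I compute, using additivity of $W^\dagger\Omega_{f^B}$, then the isomorphism $G_{B'/A'}\cong G_{B/A}$ together with the equivariance, and finally the isomorphism $G_{B/A}\cong G_{B''/A''}$:
\begin{align*}
W^\dagger\Omega_{f^B}\bigl(N_{B'/A'}(\omega)\bigr)
&=\sum_{g\in G_{B'/A'}}W^\dagger\Omega_{f^B}(g\,\omega)
=\sum_{g\in G_{B/A}}g''\cdot W^\dagger\Omega_{f^B}(\omega)\\
&=\sum_{h\in G_{B''/A''}}h\cdot W^\dagger\Omega_{f^B}(\omega)
=N_{B''/A''}\bigl(W^\dagger\Omega_{f^B}(\omega)\bigr).
\end{align*}
By the preceding propositions $N_{B'/A'}(\omega)\in W^\dagger\Omega_{A'}$ and $N_{B''/A''}\bigl(W^\dagger\Omega_{f^B}(\omega)\bigr)\in W^\dagger\Omega_{A''}$, and the restriction of $W^\dagger\Omega_{f^B}$ to $W^\dagger\Omega_{A'}\subset W^\dagger\Omega_{B'}$ is the functorial map $W^\dagger\Omega_{A'}\to W^\dagger\Omega_{A''}$ induced by $f$ (since $A'\hookrightarrow B'\xrightarrow{f^B}B''$ coincides with $A'\xrightarrow{f}A''\hookrightarrow B''$). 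Hence the displayed chain of equalities is exactly the commutativity of the square.

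The main obstacle is the bookkeeping of the first paragraph: verifying that passage to $B'$ and $B''$ really does induce bijections on automorphism groups. This is the Galois-descent input, which is standard once one knows that $B/A$ is a (connected) Galois cover with group $G_{B/A}$ — precisely the content of the hypothesis $\#G_{B/A}=\deg(B/A)$ used throughout this subsection — and I would make it explicit through the splitting of $f$ as above. Everything else is formal: the maps $f^B$, the $g$'s, and the norms were already shown in the preceding lemmas to be defined on the overconvergent subcomplexes and to respect the inclusion $W^\dagger\Omega_A\hookrightarrow W^\dagger\Omega_B$, so no convergence or radius-of-overconvergence issue intervenes; the entire statement lives inside the functoriality of $W^\dagger\Omega_{(-)}$ for homomorphisms of $k$-algebras.
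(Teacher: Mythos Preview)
Your proof is correct and follows essentially the same approach as the paper: both identify the three automorphism groups via base change, use the equivariance $f^B\circ g'=g''\circ f^B$, and then run the identical four-line computation summing over the group. Your version is in fact more careful than the paper's, which simply asserts the isomorphisms $G_{B/A}\cong G_{B'/A'}\cong G_{B''/A''}$ and the equivariance without further comment; your justification via the persistence of the splitting of $f$ and the explicit check that $W^\dagger\Omega_{f^B}$ restricts correctly to $W^\dagger\Omega_{A'}$ fills in details the paper leaves to the reader.
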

\begin{proof}: The ring extensions $B'/A'$ and $B''/A''$ are both finite \'etale of degree $\deg(B/A)$ since $B/A$ is finite \'etale and all are local rings. What is more, the corresponding automorphism groups are isomorphic
$$G_{B/A}\cong G_{B'/A'}\cong G_{B''/A''}.$$
Thus we see that for $\omega\in W^\dagger\Omega_{B'}$
\begin{eqnarray*} f_\ast\circ N_{B'/A'}(\omega) &=& f_\ast\sum_{g'\in G_{B'/A'}}g'\omega\\
&=& \sum_{g'\in G_{B'/A'}}f_\ast g'\omega\\
&=& \sum_{g''\in G_{B''/A''}}g''f^B_\ast\omega\\
&=& N_{B''/A''}\circ f^B_\ast(\omega).\end{eqnarray*}
Due to functoriality of $W^\dagger\Omega$ and by a similar statement for rings. \end{proof}
\medskip

Comparing what we have just shown with the definition of the category $\Fan_{\text{\'et}}$ in Section \ref{ImprMK} yields indeed the desired result. 

\begin{cor} The sheaf $W^\dagger\Omega$ is an object of $\Fan_{\text{\'et}}$.\end{cor}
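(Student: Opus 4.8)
The plan is to recognise the norm maps $N_{B/A}$ built above as a transfer structure in the sense of Kerz \cite{Kerz3} and to verify, one axiom at a time, the conditions that cut out the subcategory $\Fan_{\text{\'et}}\subset\FaiAb_{\text{\'et}}$, simply reading them off the propositions and lemmas already established. Recall that such a structure on an abelian \'etale sheaf $F$ attaches to every finite \'etale extension $i\colon A\to B$ of local rings a homomorphism $\tr_{B/A}\colon F(B)\to F(A)$ with: (i) $\tr_{A/A}=\id$; (ii) transitivity $\tr_{C/A}=\tr_{B/A}\circ\tr_{C/B}$ along towers $A\to B\to C$; (iii) additivity---the convention $\tr_{B/A}=\sum_j\tr_{B_j/A}$ extending $\tr$ to a not-necessarily-local finite \'etale $B=\prod_j B_j$; (iv) base change: for any local homomorphism $A\to A'$ the square relating $\tr_{B/A}$ to $\tr_{B'/A'}$, with $B'=B\otimes_A A'$ (split into local factors when needed), commutes; (v) the projection formula $\tr_{B/A}(x\cdot i_\ast y)=\tr_{B/A}(x)\cdot y$; and (vi) the normalisation $\tr_{B/A}\circ i_\ast=[B:A]\cdot\id$.

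I would set $\tr_{B/A}:=N_{B/A}=\sum_{g\in G_{B/A}}g$ and harvest the axioms from what precedes. By the two Lemmas above, each $g\in G_{B/A}$ acts on $W^\dagger\Omega_B$ fixing $W^\dagger\Omega_A$, and the $G_{B/A}$-invariants are exactly $W^\dagger\Omega_A$; hence the Proposition above shows $N_{B/A}$ has image in $W^\dagger\Omega_A$, and its second assertion $N_{B/A}\circ i_\ast=\deg(B/A)\id$ is exactly axiom (vi), where $\#G_{B/A}=\deg(B/A)$ is used. Axiom (i) is immediate since $G_{A/A}$ is trivial. Axiom (iv) is precisely the Corollary on base change to a local $A$-algebra $A'$ together with the Proposition on compatibility with a morphism $f\colon A'\to A''$, via the identifications $G_{B/A}\cong G_{B'/A'}\cong G_{B''/A''}$; a non-local $B\otimes_A A'$ is reduced to the local case by (iii). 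Axiom (iii) is a definitional convention, consistent because $W^\dagger\Omega$ of a finite product is the product of the complexes. Axiom (v) is a one-line computation: every $g\in G_{B/A}$ fixes $i_\ast y$, so $N_{B/A}(x\cdot i_\ast y)=\sum_g g(x)\cdot i_\ast y=N_{B/A}(x)\cdot i_\ast y$.

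The step I expect to be the main obstacle is transitivity (ii): a finite \'etale extension of local rings need not be Galois, and even granting $\#G_{B/A}=\deg(B/A)$ the automorphism groups in a tower $A\to B\to C$ are not linked by a short exact sequence unless $B$ is stable under $G_{C/A}$, so $N_{C/A}=N_{B/A}\circ N_{C/B}$ is not a mere re-indexing of sums. The clean way around this is to descend: \'etale-locally on $A$ one picks a finite \'etale $\widetilde A/A$ over which $B$ and $C$ both split into products of copies of $\widetilde A$, where both sides of (ii) become honest sums over finite sets arranged in a tower and the identity is elementary bookkeeping; the identity over $A$ then follows from axiom (iv), which is available by this point. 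With (i)--(vi) checked, comparison with the definition of $\Fan_{\text{\'et}}$ recalled in Section \ref{ImprMK} gives $W^\dagger\Omega\in\Fan_{\text{\'et}}$. One finally records (for later applications of Kerz's universal property, e.g.\ Corollary \ref{KtoFunctor}) that $W^\dagger\Omega$ is continuous---it commutes with the filtered colimits of ind-\'etale local rings in play---which is transparent from the explicit basic-Witt-differential description of $W^\dagger\Omega$, though this is an extra property beyond bare membership in $\Fan_{\text{\'et}}$.
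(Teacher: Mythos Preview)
Your approach is essentially the paper's: the corollary is read off the preceding results by matching them against Kerz's definition of $\Fan$. The paper's own proof is a single sentence doing exactly this.

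However, you have over-stated the axioms. In \cite[Definition~2]{Kerz3} a transfer structure consists only of norm maps $N_{B/A}$ for finite \'etale extensions of local rings satisfying \emph{two} conditions: the degree formula $N_{B/A}\circ i_\ast=\deg(B/A)\cdot\id$ and the base-change square for local $A$-algebras $A'\to A''$. These are precisely the Proposition and the subsequent Proposition established in the paper immediately before the corollary; nothing more is required. Your axioms (i), (ii), (iii), (v) --- in particular transitivity --- are not part of Kerz's definition, so the ``main obstacle'' you anticipate does not arise. Your \'etale-descent argument for transitivity is not wrong in spirit, but it is simply unnecessary here.

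A separate remark: you correctly flag, in passing, the delicate point that $\#G_{B/A}=\deg(B/A)$ presupposes the extension is Galois. The paper's construction of $N_{B/A}$ as $\sum_{g\in G_{B/A}} g$ tacitly relies on this, which is not automatic for an arbitrary finite \'etale extension of local rings. This is a genuine subtlety in the paper's argument rather than in yours; for the purposes of matching the paper you may take it as given.

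Finally, your closing observation that continuity is a further property beyond membership in $\Fan_{\text{\'et}}$ is correct and matches the paper, which treats continuity in the following subsection.
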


\subsubsection*{Continuity of the overconvergent complex}

Continuity of a functor means that it commutes with filtering direct limits. The de Rham--Witt complex does not commute with general direct limits and is thus not continuous. Although the overconvergent one might be, we content ourselves to show that it commutes with direct limits of finite $k$-algebras. Since in our context only smooth schemes over $k$ appear, which are in particular locally of finite type, this is enough. 

\begin{lem} Let 
$$A=\varinjlim A_i$$
be a filtering direct limit in the category of finite $k$-algebras. Then for the functor of Witt vectors the natural homomorphism
$$\varinjlim W(A_i)\rightarrow W(A)$$
is an isomorphism.\end{lem}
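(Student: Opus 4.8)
The plan is to reduce the statement to truncated Witt vectors, where it follows formally from the interaction of filtered colimits with finite products, and then to climb back up the truncation tower $W=\varprojlim_n W_n$.

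First I would dispose of the length-$n$ Witt vectors $W_n$. On underlying sets $W_n$ is just the functor $B\mapsto B^n$, and every structure map in sight --- the sum and product laws given by the universal Witt polynomials $S_j,P_j\in\ZZ[X_0,\dots,X_{n-1},Y_0,\dots,Y_{n-1}]$, the Teichm\"uller map, the Verschiebung $\V$, the Frobenius $\f$, and the restriction $W_{n+1}\to W_n$ --- is defined by polynomials with integer coefficients in finitely many of the coordinates. Since a filtered colimit of $k$-algebras is computed on underlying sets and a finite product commutes with filtered colimits in $\mathbf{Set}$, the canonical map $\varinjlim_i W_n(A_i)\to W_n(A)$ is a bijection; because the same finitely many polynomial formulas describe the operations at every stage of the system, it is in fact a ring isomorphism. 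I would also record that these isomorphisms are compatible with the restriction maps, so that they assemble to a map of towers $\{\varinjlim_i W_n(A_i)\}_n\to\{W_n(A)\}_n$. Note that nothing in this step uses finiteness of the $A_i$.

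The remaining, and essential, point is to pass to the inverse limit over $n$. Using $W=\varprojlim_n W_n$ and the previous step, the map $\varinjlim_i W(A_i)\to W(A)$ is identified with the canonical comparison map
$$\varinjlim_i\,\varprojlim_n W_n(A_i)\;\longrightarrow\;\varprojlim_n\,\varinjlim_i W_n(A_i),$$
and it suffices to prove this is an isomorphism. This is where I would bring in the hypothesis that the $A_i$ are finite $k$-algebras: one has to show that the towers $\{W_n(A_i)\}_n$ satisfy a Mittag--Leffler condition which is uniform across the directed system --- equivalently, that every Witt vector over $A$, together with the bounded amount of data needed to realise it, already descends to a single stage $A_i$. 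Granting such a descent statement, the filtered colimit commutes with the countable inverse limit and the lemma follows.

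I expect this last step to be the whole difficulty. As recalled just before the lemma, the de Rham--Witt functor --- and already $W$ itself --- is \emph{not} continuous for arbitrary filtered colimits of rings, precisely because a Witt vector packages infinitely many coordinates and these need not all come from one stage; so any proof must use finiteness of the $A_i$ over $k$ in an essential way, and isolating and verifying the correct descent / Mittag--Leffler property for finite $k$-algebras is the only non-formal ingredient.
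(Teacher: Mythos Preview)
Your reduction to truncated Witt vectors is clean and correct, and you have rightly isolated the only non-formal step as the exchange of the filtered colimit over $i$ with the inverse limit over $n$. But you have not actually proved this exchange, and the diagnosis you offer is imprecise: the Mittag--Leffler condition for the towers $\{W_n(A_i)\}_n$ is automatic, since the restriction $W_{n+1}(B)\to W_n(B)$ is just projection onto the first $n$ coordinates and hence surjective --- yet Mittag--Leffler for each tower separately does \emph{not} force $\varinjlim_i\varprojlim_n\to\varprojlim_n\varinjlim_i$ to be an isomorphism (your own observation that $W$ fails to be continuous on arbitrary filtered systems is a case where the transition maps are still surjective). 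What is really needed is that finiteness of the $A_i$ over $k$ lets you control infinitely many Witt coordinates simultaneously: since $A$ is a finite $k$-algebra, some transition map $A_{i_0}\to A$ is already surjective, whence $W(A_{i_0})\to W(A)$ is surjective on the nose; and for injectivity, if $x=(x_0,x_1,\dots)\in W(A_i)$ dies in $W(A)$ then every $x_n$ lies in the finite-dimensional $k$-vector space $\ker(A_i\to A)$, whose finitely many generators all die at a single later stage $A_j$, so that $x$ already dies in $W(A_j)$. This is the concrete ``descent'' you allude to but do not carry out, and once stated it is short.

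The paper does not pass through truncated Witt vectors. It works directly with the identification of $W(B)$ with a countable product of copies of $B$ (via the ghost map), and the heart of its argument is precisely the statement that $\varinjlim_i(A_i^{\NN})\to A^{\NN}$ is a bijection --- which amounts to the same finiteness argument sketched above. Your route through the tower $\{W_n\}$ is therefore not wrong, just longer: it repackages a product--colimit exchange as a limit--colimit exchange, and the former is the more transparent formulation here.
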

\begin{proof}: Consider the ghost maps
$$w_i:W(A_i)\rightarrow A_i^{\NN}\qquad\text{ and }\qquad w:W(A)\rightarrow A.$$
These are by definition ring homomorphisms. Because of the fact that the ring structure on the image of the ghost map is defined component wise the natural map
$$\varinjlim(A_i^{\NN})\rightarrow\left(\varinjlim A_i\right)^{\NN}=A^{\NN}$$
is a monomorphism. Since we have only finitely many generators it is in fact an isomorphism. By definition of the Witt vectors the following diagram commutes
$$\xymatrix{\varinjlim W(A_i)\ar[r]\ar[d]_w & W(A)\ar[d]\\ \varinjlim \left(A_i^{\NN}\right)\ar[r]^\sim & A^{\NN}}$$
and the vertical maps are injective. Additionally, we have just seen, that the bottom line is an isomorphism. Thus it is clear that the top line is a monomorphism. Now take an element $a\in W(A)$. To see that it has a preimage in $\varinjlim W(A_i)$ project it down to $A^{\NN}$ via the ghost map. The image $w(a)$ has a preimage $\widetilde{w(a)}\in \varinjlim \left(A_i^{\NN}\right)$. However, as the element $w(a)$ comes from a Witt vector given in Witt components, it is possible by solving the corresponding equations recursively to recover an element $\widetilde{a}\in\varinjlim W(A_i)$ that maps to $a$. \end{proof}
\medskip

\begin{prop} The de Rham--Witt complex is continuous on the category of finite $k$-algebras.\end{prop}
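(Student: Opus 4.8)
The plan is to bootstrap from the degree-zero case, which is exactly the preceding Lemma ($\varinjlim_i W(A_i)\xrightarrow{\sim}W(A)$ for a filtered system of finite $k$-algebras), first to the truncations $W_n\Omega^\bullet$ in every degree and then to $W\Omega=\varprojlim_n W_n\Omega$. Fix a filtered system $(A_i)_{i\in I}$ of finite $k$-algebras with colimit $A$. A filtered colimit of complexes is computed degreewise and, by functoriality, respects $d$, $\f$, $\V$ and the products; so it suffices to show that the canonical map $\varinjlim_i W\Omega^j_{A_i}\to W\Omega^j_A$ is an isomorphism for each fixed $j\geqslant 0$, and the isomorphisms will automatically assemble into an isomorphism of differential graded algebras.

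First I would dispose of the truncations. Recall (Illusie) that $W_n\Omega^\bullet_B$ is a functorial quotient, as a differential graded algebra, of the de Rham complex $\Omega^\bullet_{W_n(B)/\ZZ}$ of the ring of length-$n$ Witt vectors. The functor $B\mapsto W_n(B)$ commutes with filtered colimits of finite $k$-algebras: since $W_n(B)=W(B)/\V^n W(B)$ and filtered colimits are exact, this is immediate from the preceding Lemma (or one truncates its proof). As the de Rham complex, the exterior powers and passage to quotients all commute with filtered colimits of rings, $W_n\Omega^j$ does too, so $\varinjlim_i W_n\Omega^j_{A_i}\xrightarrow{\sim}W_n\Omega^j_A$ for all $n,j$.

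The remaining step --- interchanging $\varprojlim_n$ with $\varinjlim_i$ --- is where the real work lies, since
$$\varinjlim_i W\Omega^j_{A_i}=\varinjlim_i\varprojlim_n W_n\Omega^j_{A_i}\longrightarrow\varprojlim_n\varinjlim_i W_n\Omega^j_{A_i}=\varprojlim_n W_n\Omega^j_A=W\Omega^j_A$$
need not be bijective for arbitrary towers. The input I would use is finiteness coming from perfectness of $k$: one has $\Omega^1_{W_n(k)/\ZZ}=0$ (as $W(k)$ is formally unramified over $\ZZ_p$), hence $\Omega^1_{W_n(A)/\ZZ}=\Omega^1_{W_n(A)/W_n(k)}$; and since $A$ is finite over $k$, the ring $W_n(A)$ admits a finite filtration with finite-dimensional $k$-vector spaces as graded pieces, so it is finite over the Artinian local ring $W_n(k)=W(k)/p^n$, and consequently $W_n\Omega^j_A$, a subquotient of $\bigwedge^j\Omega^1_{W_n(A)/W_n(k)}$, has finite length over $W_n(k)$. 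Combining this with the surjectivity of the restriction maps $R\colon W_{n+1}\Omega^j_B\to W_n\Omega^j_B$ (so all towers involved are Mittag--Leffler) and the degree-zero case, one deduces bijectivity: finiteness over the Noetherian ring $W_n(k)$ forces the indices $i$ needed to represent a compatible system $(\omega_n)_n\in\varprojlim_n W_n\Omega^j_A$, together with the ambiguities in these representatives, to stabilise, so that after passing to a cofinal subsystem one obtains a single $i_0$ and a compatible lift $(\widetilde\omega_n)_n\in\varprojlim_n W_n\Omega^j_{A_{i_0}}$; injectivity is handled symmetrically, a class dying in $W\Omega^j_A$ already dying, compatibly in $n$, over some $A_{i'}$. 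I expect the main obstacle to be precisely this coherent bookkeeping over $n$ --- reconciling, simultaneously for all $n$, the a priori different indices at which the levelwise statements hold --- and the Artinian nature of $W_n(k)$ is exactly what makes it go through.
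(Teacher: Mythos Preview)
Your approach diverges substantially from the paper's and runs into a genuine gap. The paper does not pass through truncations at all: it argues directly from the universal property. Since $W\Omega_A$ is the initial object among Witt complexes over $A$, and since any Witt complex $M$ over $A$ becomes by restriction along $A_i\to A$ a Witt complex over $A_i$, one has for each $i$ a unique map $W\Omega_{A_i}\to M$; compatibility with the unique map $W\Omega_A\to M$ is deduced from the preceding Lemma $\varinjlim_i W(A_i)\cong W(A)$. Thus $W\Omega_A$ already enjoys the universal property of $\varinjlim_i W\Omega_{A_i}$, and the interchange of $\varprojlim_n$ with $\varinjlim_i$ never appears.

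The difficulty in your route is the reading of ``finite $k$-algebra''. In this paper it means \emph{of finite type over $k$} (finitely generated as a $k$-algebra), not finite as a $k$-module: the surrounding text says the overconvergent complex is only defined for such algebras and that smooth schemes are locally of finite type, and the proof of the preceding Lemma invokes ``finitely many generators'' in exactly this sense. Under that reading your Step~2 collapses: already for $A=k[X]$ the module $W_n\Omega^j_A$ has infinite length over $W_n(k)$, so the Artinian bookkeeping you sketch---producing a single index $i_0$ over which a compatible system $(\omega_n)_n$ lifts coherently for all $n$---has no leverage. Your Step~1 is correct in conclusion, but the justification via ``functorial quotient of $\Omega^\bullet_{W_n(B)/\ZZ}$'' is too quick, since the relations cutting out $W_n\Omega$ are specified recursively through the restriction maps; the clean way to see it is again the universal property of $W_n\Omega$ as initial truncated Witt complex, which is the paper's argument applied levelwise---at which point the detour through truncations buys nothing.
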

\begin{proof}: We have to show that for a filtering direct limit of finite $k$-algebras 
$$A=\varinjlim A_i$$
the natural map
$$\varinjlim W\Omega_{A_i}\rightarrow W\Omega_{\varinjlim A_i}$$
is an isomorphism. We will show that $W\Omega_A$ satisfies the universal property of direct limits in the category of Witt complexes over $A$. Let $M$ be a Witt complex over $A$. This means that it is a differentially graded $W(A)$ algebra with morphisms $\f$ and $V$ which satisfy certain properties. Keeping in mind that by functoriality there is a morphism $W(A_i)\rightarrow W(A)$, we see that $M$ is also a Witt complex over each $A_i$. Therefore it makes sense to consider maps
$$W\Omega_{A_i}\rightarrow M.$$
In fact, for each $i$ there is exactly one map of this form, because the de Rham--Witt complex is the initial object in the category of Witt complexes over $A_i$. The same is true for the de Rham--Witt complex over $A$ in the category of Witt complexes over $A$. Thus there is exactly one map
$$W\Omega_A\rightarrow M.$$
What is more, this map is compatible with the ones over $A_i$, i.e. the diagrams
$$\xymatrix{W\Omega_{A_i}\ar[r]\ar[d]& M\ar[d]\\ W\Omega_A\ar[r] & M}$$
where the upper horizontal morphism consists of $W(A_i)$-algebras and the bottom one of $W(A)$-algebras, commutes. This follows from the fact that we have an isomorphism $\varinjlim W(A_i)\xrightarrow{\sim}W(A)$ as shown in the last lemma. The claim follows.  \end{proof}
\medskip

\begin{cor} The overconvergent de Rham--Witt complex is continuous on the category of finite $k$-algebras.\end{cor}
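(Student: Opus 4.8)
The plan is to reduce to the previous proposition --- continuity of the ordinary de Rham--Witt complex --- by exploiting that $W^\dagger\Omega$ is a natural subcomplex of $W\Omega$ and that filtered colimits are exact. Fix a filtering direct limit $A=\varinjlim_i A_i$ of finite $k$-algebras. Functoriality of $W^\dagger\Omega$ gives a natural morphism
$$\varphi:\varinjlim_i W^\dagger\Omega_{A_i}\longrightarrow W^\dagger\Omega_A,$$
and the goal is to show it is an isomorphism.

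Injectivity will be essentially formal. Each inclusion $W^\dagger\Omega_{A_i}\hookrightarrow W\Omega_{A_i}$ is injective, filtered colimits are exact, and $\varinjlim_i W\Omega_{A_i}\cong W\Omega_A$ by the previous proposition; hence the composite $\varinjlim_i W^\dagger\Omega_{A_i}\hookrightarrow\varinjlim_i W\Omega_{A_i}\cong W\Omega_A$ is injective. By naturality of the inclusions this composite factors through $\varphi$, so $\varphi$ is injective and has image contained in $W^\dagger\Omega_A$. The statement is thereby reduced to surjectivity of $\varphi$: every overconvergent element of $W\Omega_A$ must already be the image of an overconvergent element of some $W\Omega_{A_i}$.

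For surjectivity I would descend via basic Witt differentials. Take $\omega\in W^\dagger\Omega_A\subset W\Omega_A\cong\varinjlim_i W\Omega_{A_i}$, pick $i_0$ and $\omega_{i_0}\in W\Omega_{A_{i_0}}$ lifting $\omega$, and choose a polynomial presentation $S_{i_0}\to A_{i_0}$; adjoining finitely many variables at each stage, extend it to a compatible system of presentations $S_i\to A_i$ with $S_i\subset S_j$ for $i\leqslant j$, so that $S_\infty=\varinjlim_i S_i\to A$ is a presentation of $A$ refining the given ones. Since $W^\dagger\Omega_A$ is independent of the presentation, choose a lift $\tilde\omega\in W^\dagger\Omega_{S_\infty}$ of $\omega$; by definition it is overconvergent of some radius $\varepsilon>0$, i.e. its basic Witt differential expansion $\tilde\omega=\sum_{k,\PU}e(\xi_{k,\PU},k,\PU)$ has finite Gau\ss{} norm $\gamma_\varepsilon(\tilde\omega)>-\infty$. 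Applying the previous proposition now to the polynomial system $(S_i)$ gives $W\Omega_{S_\infty}\cong\varinjlim_i W\Omega_{S_i}$, so $\tilde\omega$ comes from some $\tilde\omega_{i_1}\in W\Omega_{S_{i_1}}$ with $i_1\geqslant i_0$; in particular only finitely many variables occur in $\tilde\omega$. As $S_{i_1}\hookrightarrow S_\infty$ is split (kill the additional variables), the induced map $W\Omega_{S_{i_1}}\to W\Omega_{S_\infty}$ is injective and carries basic Witt differentials to basic Witt differentials, so by uniqueness of the expansion the one for $\tilde\omega_{i_1}$ agrees term by term with that for $\tilde\omega$; therefore $\gamma_\varepsilon(\tilde\omega_{i_1})=\gamma_\varepsilon(\tilde\omega)>-\infty$ and $\tilde\omega_{i_1}\in W^\dagger\Omega_{S_{i_1}}$. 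Pushing forward along $\lambda_{i_1}:W\Omega_{S_{i_1}}\to W\Omega_{A_{i_1}}$ then produces an element of $W^\dagger\Omega_{A_{i_1}}=\lambda_{i_1}(W^\dagger\Omega_{S_{i_1}})$ which maps to $\omega$ in $W\Omega_A$ by compatibility of the presentations; hence $\omega$ lies in the image of $\varphi$.

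The crux is this surjectivity step, and within it the single point that realizing an overconvergent element at a finite stage must preserve, not worsen, the radius of overconvergence. This rests entirely on the basic Witt differential expansion and the Gau\ss{} norm being compatible with the polynomial inclusions $S_{i_1}\hookrightarrow S_\infty$, together with the freedom --- presentation-independence of $W^\dagger\Omega$ --- to take the presentation of $A$ compatible with the system $(A_i)$. The other ingredients, namely exactness of filtered colimits, functoriality of $W^\dagger\Omega$, and continuity of $W\Omega$, are all available and routine.
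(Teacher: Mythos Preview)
Your approach coincides with the paper's: deduce injectivity from the inclusion $W^\dagger\Omega\hookrightarrow W\Omega$ together with continuity of $W\Omega$, and for surjectivity lift through polynomial presentations while controlling the radius of overconvergence. In fact your surjectivity argument is considerably more explicit than the paper's, which simply asserts that preimages ``have to be overconvergent'' and that ``up to choosing different presentations'' the radii can be bounded below by~$\varepsilon$; your use of the uniqueness of the basic Witt differential expansion under a split polynomial inclusion is exactly the mechanism making that assertion precise.

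There is one technical wrinkle you should address. Your auxiliary algebra $S_\infty=\varinjlim_i S_i$ need not be finitely generated (the $A_i$ may require unboundedly many generators even when $A$ itself is finite), and then neither the definition of $W^\dagger\Omega_{S_\infty}$ nor your invocation of the previous proposition for the system $(S_i)$ is available: Davis--Langer--Zink define overconvergence only over \emph{finite} polynomial algebras, and the Witt-vector lemma underlying the previous proposition explicitly uses finiteness of the colimit. The fix is easy and stays within your argument: drop $S_\infty$ entirely, take instead a single finite presentation $S\to A$, lift $\omega$ to $\tilde\omega\in W^\dagger\Omega_S$, factor $S\to A$ through some $A_{i_1}$ (possible since $S$ is free on finitely many generators), enlarge $S$ to a finite polynomial algebra $S'\supset S$ presenting $A_{i_1}$, and then apply your own observation that the split inclusion $S\hookrightarrow S'$ preserves basic Witt differentials and hence the Gau\ss{} norm. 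The image of $\tilde\omega$ in $W^\dagger\Omega_{A_{i_1}}$ is then the desired preimage of~$\omega$.
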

\begin{proof}: Let $\varinjlim A_i=A$ be a filtered direct system of finite $k$-algebras. The restriction of the natural isomorphism
$$\varinjlim W\Omega_{A_i}\rightarrow W\Omega_A$$
to the overconvergent subcomplexes $W^\dagger\Omega_{A_i}$ has image in the overconvergent subycomplex $W^\dagger\Omega_A$. In fact it is exactly the natural homomorphism 
$$\varinjlim W^\dagger\Omega_{A_i}\rightarrow W^\dagger\Omega_A$$
induced by functoriality from $\varinjlim A_i\xrightarrow{\sim}A$ and the diagram
$$\xymatrix{ \varinjlim W^\dagger\Omega_{A_i}\ar[r]\ar[d] & W^\dagger\Omega_A\ar[d]\\ \varinjlim W\Omega_{A_i}\ar[r] & W\Omega_A}$$
commutes.

The morphism on the overconvergent complex is injective as the original map on the whole de Rham--Witt complex is injective. To check surjectivity, assume that $\omega\in W^\dagger\Omega_A$ with radius $\varepsilon>0$. By the previous proposition there is a unique preimage $\widetilde{\omega}\in \varinjlim W\Omega_{A_i}$. In particular, for each $i$, there is a $\omega_i\in W\Omega_{A_i}$ which maps to $\omega$ and it has to be overconvergent. Let $\varepsilon_i>0$ be its radius of overconvergence. Up to choosing different presentations, we may assume without loss of generality that the radii $\varepsilon_i$ are bounded below by $\varepsilon$. Hence, $\widetilde{\omega}$ can be considered as an element of $\varinjlim W^\dagger\Omega_{A_i}$.  \end{proof}
\medskip

\subsubsection*{The transformation map}\label{TransMap}

In the last two subsections we have shown that the complex $W^\dagger\Omega$ on the big \'etale site of all schemes is an object of the category $\Fan_{\text{\'et}}$, continuous on finite $k$-algebras. One should note that this is sufficient our case, although it is a priori a restriction of Kerz' definition, because we only wish to apply our functors to such cases. In fact, the overconvergent complex as suggested in \cite{DavisLangerZink} is only defined for finite $k$-algebras.

Moreover, we have seen that $\overline{\Ksheaf}_n^M$ is for every $n$ a continuous object of in $\Fan^\infty_{\text{\'et}}$ and that there exists a continuous $\widehat{\Ksheaf}_n^M\in\Fan_{\text{\'et}}$ and a natural transformation $\overline{\Ksheaf}_n^M\rightarrow \widehat{\Ksheaf}_n^M$ satisfying a universal property. This comprises all ingredients needed to apply Theorem \ref{ImprEt}.

As mentioned earlier, there is for each $n$ a morphism of continuous \'etale sheaves
$$d\log^n:\overline{\Ksheaf}^M_n\rightarrow W^\dagger\Omega\left[n\right].$$
As a consequence of Theorem \ref{ImprEt} which is based on Kerz's result (cf. Corollary \ref{KtoFunctor}) we obtain a unique natural transformation of \'etale sheaves
\begin{equation}\widehat{d\log}^n:\widehat{\Ksheaf}^M_n\rightarrow W^\dagger\Omega\left[n\right].\end{equation}
For simplicity, we will use the notation
\begin{equation}\label{dlog}d\log^n:\Ksheaf^M_n\rightarrow W^\dagger\Omega\left[n\right]\end{equation}
for the general case, where $\Ksheaf^M_n$ is the sheaf $\overline{\Ksheaf}^M_n$ in the infinite residue field case and $\widehat{\Ksheaf}^M_n$ in the finite residue field case.

\section{Chern classes with coefficients in the overconvergent de Rham-Witt complex}\label{Overconvergent}

In this section we introduce overconvergent Chern classes and look into some of their properties. 

\subsection{Definition and construction}

Let $X/k$ be a smooth variety. The map $d\log^n:\Ksheaf_n^M\rightarrow W^\dagger\Omega[n]$ defined in the previous section induces a morphism on cohomology
\begin{equation}\label{KOvercon}\h^m(X,\Ksheaf_i^M)\rightarrow\HH^{m+i}(X,W^\dagger\Omega_X),\end{equation}
which by abuse of notation we denote as well by $d\log$. 

It follows that the Chern classes $c_{ij}^M:K_j(X)\rightarrow\h^{i-j}(X,\Ksheaf_i^M)$ from Theorem \ref{TheoKsheafChern} induce Chern classes with coefficients in the overconvergent complex. 

\begin{theo} Let $X$ be a smooth scheme over $k$. There is a theory of Chern classes for vector bundles and higher algebraic $K$-theory of regular varieties over $k$, with values with coefficients in the overconvergent de Rham-Witt complex:
$$c_{ij}^{\text{sc}}:K_j(X)\rightarrow\HH^{2i-j}(X,W^\dagger\Omega_X).$$
\end{theo}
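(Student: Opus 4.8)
The plan is to define the overconvergent Chern classes simply as the composition of the Milnor $K$-theory Chern classes of Theorem \ref{TheoKsheafChern} with the map on cohomology induced by $d\log$. Concretely, for each $i\geqslant 0$ the natural transformation $d\log^i:\Ksheaf_i^M\rightarrow W^\dagger\Omega[i]$ of (\ref{dlog}) induces, as in (\ref{KOvercon}), a homomorphism $\h^{i-j}(X,\Ksheaf_i^M)\rightarrow\HH^{2i-j}(X,W^\dagger\Omega_X)$, the target degree being $(i-j)+i=2i-j$ because of the shift by $[i]$; precomposing with $c_{ij}^M:K_j(X)\rightarrow\h^{i-j}(X,\Ksheaf_i^M)$ gives $c_{ij}^{\text{sc}}$. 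In the finite residue field case one replaces $\overline{\Ksheaf}_i^M$ by $\widehat{\Ksheaf}_i^M$ and uses the corresponding $\widehat{d\log}^i$; by Corollary \ref{CorGersten} and Corollary \ref{CorGersten2} the source $\h^{i-j}(X,\Ksheaf_i^M)$ is the same sheaf cohomology in both situations, so $c_{ij}^{\text{sc}}$ is well defined for every regular $X$ over $k$. One also notes that $c_{ij}^{\text{sc}}=0$ for $j>i$, so the total class is a finite sum, exactly as for $c_{ij}^M$.

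It then remains to verify the five axioms of Definition \ref{TCC}. In each case I would reduce to the corresponding property of $c_{ij}^M$ (established in Theorem \ref{TheoKsheafChern} via Theorem \ref{TheoChernClasses} and Theorem \ref{TheoHigherChernClasses}) together with two formal features of $d\log$. The first is \emph{naturality}: the sheaves $\Ksheaf_i^M$ and $W^\dagger\Omega$ are étale sheaves functorial in $X$ and $d\log^i$ is a morphism of such, so the induced maps on cohomology commute with pullback along $f:X\rightarrow Y$ and with the maps induced by a homomorphism $\mathscr{H}\rightarrow f^\ast\GD$ of sheaves of groups; this gives functoriality and stability. The second is \emph{multiplicativity}: the family $\{d\log^i\}_i$ is compatible with ring structures, in the sense that the square comparing the Milnor $K$-pairing $\Ksheaf_i^M\otimes\Ksheaf_{i'}^M\rightarrow\Ksheaf_{i+i'}^M$ to the product $W^\dagger\Omega[i]\otimes W^\dagger\Omega[i']\rightarrow W^\dagger\Omega[i+i']$ of the differential graded algebra $W^\dagger\Omega$ commutes, since by construction $d\log(x_1)\cdots d\log(x_{i+i'})=\bigl(d\log(x_1)\cdots d\log(x_i)\bigr)\cdot\bigl(d\log(x_{i+1})\cdots d\log(x_{i+i'})\bigr)$. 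Granting this, the Whitney sum formula and the tensor product formula (with the universal Grothendieck polynomials $\circledast$) for $c_{ij}^{\text{sc}}$ follow from those for $c_{ij}^M$, as these identities only involve the ring structure of the coefficients and are preserved by a multiplicative natural transformation; normalisation follows because $d\log$ sends $1\in\Ksheaf_0^M$ to $1$. One should also remark that the anti-commutativity of the Milnor $K$ cycle module matches the graded-commutativity of $W^\dagger\Omega$ under the Koszul sign rule, so the passage to $W^\dagger\Omega$ introduces no sign discrepancy.

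The main obstacle I expect is making the multiplicativity statement precise at the level of the \emph{complexes} computing the two cohomology theories, and compatibly with the cup and cap products actually used in Gillet's construction. On the Milnor $K$ side the product on $\HH^\ast(X,\Ksheaf_\ast^M)=A^\ast(X;K_\ast^M,\cdot)$ is built from the ``multiplication with units'' map (\ref{SubsecCyclecom}) as in the proof of Theorem \ref{TheoKsheafChern}, whereas on the overconvergent side it comes from the dga structure of $W^\dagger\Omega_X$; one must check that the étale-local identity above lifts to a commutative diagram relating $C^\ast(X;K_\ast^M,\cdot)$ and $W^\dagger\Omega_X$ in the derived category, so that $d\log$ induces a morphism of pairings and not merely a degreewise multiplicative map of sheaves. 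Once this compatibility of pairings is in hand, all five axioms follow formally from Theorem \ref{TheoKsheafChern}, and the construction of Theorem \ref{TheoChernClasses} and Theorem \ref{TheoHigherChernClasses} applied to the duality theory $\underline{\Gamma}^\ast_X(i)=\Ksheaf_i^M$ transports to coefficients in $W^\dagger\Omega_X$.
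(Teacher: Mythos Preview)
Your first paragraph is exactly the paper's argument: the theorem is stated immediately after observing that $d\log^i:\Ksheaf_i^M\rightarrow W^\dagger\Omega[i]$ induces $\h^{i-j}(X,\Ksheaf_i^M)\rightarrow\HH^{2i-j}(X,W^\dagger\Omega_X)$, and $c_{ij}^{\text{sc}}$ is simply the composite with $c_{ij}^M$ from Theorem \ref{TheoKsheafChern}. The paper gives no further proof.

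Where you diverge is in then attempting to verify the five axioms of Definition \ref{TCC} for the overconvergent theory directly. The paper does not do this: it treats the phrase ``theory of Chern classes'' here as shorthand for ``a family of maps $c_{ij}$ obtained from Gillet's machinery'', and the overconvergent classes inherit this status by postcomposition. The multiplicative compatibilities you worry about are handled later and only to the extent needed---the product formula and the behaviour on the $\gamma$-filtration are established in Section \ref{AppendixGamma} for $c_{ij}^M$ and then pushed forward via the ring homomorphism $\h^\ast(X,\Ksheaf^M_\ast)\rightarrow\HH^\ast(X,W^\dagger\Omega)$ (see Proposition \ref{Prop534}). So the obstacle you flag---lifting the multiplicativity of $d\log$ to a compatibility of pairings at the level of the complexes $C^\ast(X;K_\ast^M,\cdot)$ and $W^\dagger\Omega_X$---is real if one wanted $W^\dagger\Omega$ itself to be a duality theory in Gillet's sense, but it is not needed for the theorem as the paper states and uses it. Your approach is more thorough; the paper's is more economical, deferring each property to where it is actually invoked.
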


\begin{rem} Note that the maps $d\log^n$ respect the descending filtration of the de Rham-Witt complex by the differential graded ideals. Thus we obtain in fact Chern classes into the bi-graded cohomology groups
$$c_{ij}^{\text{sc}}:K_j(X)\rightarrow\HH^{2i-j}(X,W^\dagger\Omega^{\geqslant i}_X).$$
\end{rem}

\subsection{Comparison to crystalline Chern classes}\label{Sec2.2}

In this section assume that $X/k$ is proper in addition to being smooth. 

By construction the morphism $d\log^i:\Ksheaf^M_i\rightarrow W^\dagger\Omega\left[i\right]$ factors for each $i$ through $W^\dagger\Omega_{\log}^i=W\Omega_{\log}^i$. (For said equality see Section \ref{SubsecShortExactSequence}.) In the same manner as above, one may define logarithmic Chern classes
$$c_{ij}^{\log}:K_j(X)\rightarrow \HH^{i-j}(X,W\Omega^i_{\log})$$
which factor the overconvergent Chern classes. Thus it is natural to ask how these Chern classes compare to the logarithmic Chern classes with finite coefficients of Gros in \cite{Gros}
$$c_{ij}^{\text{Gros}}: K_j(X)\rightarrow\HH^{i-j}(X,W_n\Omega^i_{\log}).$$

\begin{prop} The diagram 
$$\xymatrix{& \h^{i-j}\left(X,W\Omega_{\log}^i\right)\ar[dd]\\
K_j(X)\ar[ur]^{c_{ij}^{\log}}\ar[dr]_{c_{ij}^{\text{Gros}}} &\\
& \h^{i-j}\left(X,W_n\Omega_{\log}^i\right)}$$
commutes.\end{prop}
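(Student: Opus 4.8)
The plan is to trace both Chern class constructions back to their common origin. Both $c_{ij}^{\log}$ and $c_{ij}^{\text{Gros}}$ arise from Gillet's general machinery (Theorem \ref{TheoChernClasses} and Theorem \ref{TheoHigherChernClasses}) applied to different (but related) duality theories, and by the uniqueness built into that machinery — the classes are determined by universal classes on $B.\GL_n$ constructed via the projective bundle formula and the cycle class map — it suffices to check that the coefficient morphism intertwining the two theories is compatible with those structural ingredients. Concretely, $c_{ij}^{\log}$ is obtained by composing the Milnor $K$-theory Chern classes $c_{ij}^M$ with the map on cohomology induced by $d\log^i\colon\Ksheaf_i^M\to W\Omega_{X,\log}^i[i]$ (which factors the overconvergent $d\log$), while $c_{ij}^{\text{Gros}}$ is Gros's construction with coefficients in $W_n\Omega_{\log}^i$, which likewise factors through Milnor $K$-theory (this is one of the facts recalled from \cite{Gros}, and is asserted in the surrounding text). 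So the first step is to record that both sides factor through $c_{ij}^M\colon K_j(X)\to\h^{i-j}(X,\Ksheaf_i^M)$.

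Second, I would identify the vertical map in the diagram. The restriction morphism $W\Omega_X\to W_n\Omega_X$ carries $W\Omega_{X,\log}^i$ into $W_n\Omega_{X,\log}^i$, inducing $\h^{i-j}(X,W\Omega_{\log}^i)\to\h^{i-j}(X,W_n\Omega_{\log}^i)$; this is the vertical arrow. The key compatibility to verify is then that the composite
$$\Ksheaf_i^M\xrightarrow{\ d\log^i\ } W\Omega_{X,\log}^i[i]\xrightarrow{\ R_n\ } W_n\Omega_{X,\log}^i[i]$$
coincides with the $d\log$-type map used by Gros to factor $c_{ij}^{\text{Gros}}$ through Milnor $K$-theory. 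This is essentially immediate on symbols: on a local section $\{x_1,\dots,x_i\}$ both send it to $d\log[x_1]\cdots d\log[x_i]$, the only difference being whether the Teichmüller lifts live in $W(\OO_X)$ or in $W_n(\OO_X)$, and the restriction map is precisely reduction mod the relevant truncation, so $R_n\circ d\log^i = d\log^i_{(n)}$ on generators, hence everywhere since the Milnor $K$-sheaf is generated by symbols (cf. \cite{Kerz2}, and the étale-local generation noted in Section \ref{MilnorKOnEtale}).

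Third, with the coefficient maps matched on the level of sheaves, the diagram commutes after applying $\h^{i-j}(X,-)$ and precomposing with $c_{ij}^M$: the left triangle of the diagram is $c_{ij}^{\log} = (d\log^i)_* \circ c_{ij}^M$, the would-be composite down the left then across is $R_{n,*}\circ(d\log^i)_*\circ c_{ij}^M = (R_n\circ d\log^i)_*\circ c_{ij}^M = (d\log^i_{(n)})_*\circ c_{ij}^M = c_{ij}^{\text{Gros}}$, using functoriality of sheaf cohomology for the middle equality. One should also remark that Gros's classes and the Milnor-$K$ classes agree under $d\log$ because both are pinned down by the same normalisation (the first Chern class of a line bundle is $d\log$ of its class in $\Pic(X) = \h^1(X,\OO_X^\ast) = \h^1(X,\Ksheaf_1^M)$) together with the Whitney sum formula and the splitting principle afforded by the projective bundle formula; this is the content of the comparison in \cite{Gros} and needs only to be cited.

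The main obstacle is the third point: strictly speaking one must know that Gros's $c_{ij}^{\text{Gros}}$ really does factor through $\Ksheaf_i^M$ via the symbol-wise $d\log$, i.e. that Gros's construction and Gillet's construction with coefficients $W_n\Omega_{\log}^i$ produce the same classes. For line bundles this is a normalisation statement, and the extension to higher Chern classes and higher $K$-theory is forced by the axioms (functoriality, Whitney sum, stability) once one has the projective bundle formula for $W_n\Omega_{\log}^i$-cohomology — which in turn follows from the one for $\Ksheaf_\ast^M$ (Proposition \ref{ProjectiveBundleFormula}) pushed forward along $d\log$. So the argument is really a uniqueness argument in Gillet's framework, and the only genuine input beyond bookkeeping is that the $d\log$ coefficient maps at the two truncation levels commute with the restriction map, which is the easy symbol computation above.
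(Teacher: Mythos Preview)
Your proposal is correct and follows essentially the same route as the paper: factor both Chern class maps through the Milnor $K$-theory Chern classes $c_{ij}^M$, observe that the sheaf-level triangle $\Ksheaf_i^M\to W\Omega_{\log}^i\to W_n\Omega_{\log}^i$ commutes (checked on symbols), and reduce the identification of Gros's classes with $(d\log^i_{(n)})_*\circ c_{ij}^M$ to the first Chern class via the projective bundle formula and uniqueness. The paper phrases the vertical map as $\otimes\,\ZZ/p^n\ZZ$ rather than the restriction $R_n$, but by Remark~\ref{RemLogPn} these agree, so there is no substantive difference.
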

\begin{proof}: Recall first that Gros proves a projective bundle formula for finite length log-differentials which allows him to build logarithmic Chern classes, which are then are uniquely defined. We will take advantage of this fact to show that Gros' Chern classes factor through the Milnor Chern classes.

The differential logarithm map $d\log:\OO_X^\ast\rightarrow W_n\Omega^1_{X,\log}$ as described by Gros, induces a map on the Milnor $K$-sheaf
$$d\log^i:\Ksheaf^M_i\rightarrow W_n\Omega^i_{X,\log}$$
in the same way as above and we have a commutative diagram of sheaves
\begin{equation}\label{diagKOmega}\xymatrix{& W\Omega^i_{\log}\ar[dd]^{\otimes\ZZ/p^n\ZZ}\\ \Ksheaf^M_i\ar[ur]\ar[dr] & \\ & W_n\Omega^i_{\log}}\end{equation}
It induces also morphisms of cohomology groups
$$\h^m(X,\Ksheaf^M_i)\rightarrow\h^m(X,W_n\Omega^i_{\log})$$
which is in particular compatible with multiplication in the cohomology ring. 

The fact that in both cases --- for the Milnor $K$-sheaf and for the logarithmic de Rham-Witt complex with finite coefficients --- we have a projective bundle formula, limits the comparison problem to the first Chern class. The diagram
$$\xymatrix{&\h^1(X,\Ksheaf^M_1)\ar[dd]^{d\log}\\ \h^1(X,\OO_X^\ast)\ar[ur]^{\id}\ar[dr]_{d\log}&\\ & \h^1(X,W_n\Omega^1_{\log})}$$
is commutative per definitionem and the upper morphism defines $c_1^M$ whereas the lower one defines $c_1^{\text{Gros}}$ (cf. \cite[Section I.2]{Gros}). With the respective projective bundle formulas and by using (\ref{diagKOmega}) this extends to a commutative diagram
$$\xymatrix{&\h^{i-j}(X,\Ksheaf^M_i)\ar[dd]\ar[r]^{d\log} & \h^{i-j}(X,W\Omega^i_{\log})\ar[ddl]^{\otimes\ZZ/p^n\ZZ}\\ K_j(X) \ar[ur]^{c_{ij}^M}\ar[dr]_{c_{ij}^{\text{Gros}}} & &  \\ & \h^{i-j}(X,W_n\Omega^i_{\log})}$$
and this is exactly what we claimed. \end{proof}

Because of properness of $X$ as assumed in the beginning of this section, we may compare the overconvergent Chern classes to crystalline Chern classes. From the discussion above and Gros' results we see that the diagram
$$\xymatrix{\OO_X\otimes\cdots\otimes\OO_X \ar[r]\ar[rrd] & \Ksheaf_i^M\ar[r] & W\Omega_{X,\log}^i\ar[d]^{\otimes\ZZ\slash p^n\ZZ}\ar[r] & W^\dagger\Omega_X\left[i\right]\ar[d]^{\otimes\ZZ\slash p^n\ZZ}\\ & & W_n\Omega_{X,\log}^i\ar[r] & W_n\Omega_X\left[i\right]}$$
commutes. Since Gros shows in \cite[Section 2.1]{Gros} that the logarithmic Chern classes he defines factor the crystalline Chern classes with $\mod p^n$ coefficients, the overconvergent Chern classes do the same, and one obtains a commutative diagram
$$\xymatrix{ & \HH^{2i-j}\left(X,W^\dagger\Omega\right)\ar[dd]\\
K_j(X)\ar[ur]^{c_{ij}^{\text{sc}}}\ar[dr]_{c_{ij}^{\text{cris},n}} &\\
& \HH^{2i-j}\left(X,W_n\Omega\right),}$$
that compares the overconvergent classes with finite level crystalline Chern classes. Taking limits we obtain
$$\xymatrix{ & \HH^{2i-j}\left(X,W^\dagger\Omega\right)\ar[dd]\\
K_j(X)\ar[ur]^{c_{ij}^{\text{sc}}}\ar[dr]_{c_{ij}^{\text{cris}}} &\\
& \h_{\text{cris}}^{2i-j}\left(X/W\right).}$$

\subsection{Overconvergent Chern classes and the $\gamma$-filtration}\label{AppendixGamma}

It is well known that Quillen's $K$-theory groups have a $\lambda$-structure -- more precisely for a given scheme $X$ $K_0(X)$ is a $\lambda$-ring and by Soul\'e the groups $K_m(X)$ can be equipped with a $K_0(X)$-$\lambda$ algebra structure. The operations $\gamma^k$ are then defined as shift $\gamma^k(x)=\lambda^k(x+k-1)$. For a noetherian, regular, connected scheme, we have the following $\gamma$-filtration compatible with products:

\begin{eqnarray*}F_\gamma^0 K(X) &=& K(X)\\
F_\gamma^j K(X) &=& \langle\gamma^{i_1}(x_1)\cdots\gamma^{i_n}(x_n)\,\big|\,\varepsilon(x_1)=\cdots=\varepsilon(x_n)=0,\, i_1+\cdots+i_n\geqslant j\rangle\end{eqnarray*}
We denote by $gr^i_\gamma K(X)=F^i_\gamma K(X)\slash F^{i+1}_\gamma K(X)$ the corresponding grading. 

\subsubsection*{Milnor Chern classes and the $\gamma$-filtration}

Let $X$ be smooth over $k$, no restrictions on the residue fields. Considering the fact that the overconvergent Chern classes are defined via Milnor $K$-theory, we may study the behaviour of the classes $c_{ij}^M:K_j(X)\rightarrow \h^{i-j}(X,\Ksheaf^M_i)$ on the filtration. 

As mentioned in Section \ref{Gillet} we know from \cite[Lemma 2.26]{Gillet} that the $c_{ij}^M$ for $j>0$ are group homomorphisms, which follows from the Whitney Sum Formula. In order to study how the Chern classes act on the $\gamma$-filtration we take a look at the product structure on $K$-theory. The multiplication as described by Loday is induced by a map
$$\mu_0: B_\cdot\GL(\OO_X)^+\times B_\cdot\GL(\OO_X)^+\rightarrow B_\cdot\GL(\OO_X)^+.$$
Arguing as in \cite[Lemma 2.32]{Gillet} we see that there is a commutative diagram
\begin{equation}\label{DiagLoday}\xymatrix{ B_\cdot\GL(\OO_X)^+\wedge B_\cdot\GL(\OO_X)^+\ar[r]^{\qquad\mu_0}\ar[d]^{C^M_\cdot\wedge C^M_\cdot} &  B_\cdot\GL(\OO_X)^+\ar[d]^{C^M_\cdot}\\
\prod_{i\in\NN}\Ksheaf(di,\Ksheaf^M_i)\wedge\prod_{i\in\NN}\Ksheaf(di,\Ksheaf^M_i)\ar[r]^{\qquad\qquad\ast} & \prod_{i\in\NN}\Ksheaf(di,\Ksheaf^M_i)}\end{equation}
where $\ast$ is Grothendieck's multiplication \cite{Grothendieck2} and $C_\cdot^M$ is the total Chern class. 

\begin{lem} If $\alpha\in K_l(X)$ and $\alpha'\in K_q(X)$ then
$$c_{ij}^M(\alpha\alpha')=-\sum_{r+s=i}\frac{(i-1)!}{(r-1)!(s-1)!}c_{rl}^M(\alpha)c_{sq}^M(\alpha'),$$
where $l+q=j$.\end{lem}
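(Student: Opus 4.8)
The plan is to reduce the product formula for Milnor Chern classes to the corresponding formula for Chern classes in a generalised cohomology theory, which in turn comes from the combinatorics of the universal multiplicative structure. First I would recall that the total Chern class $C^M_\cdot$ is multiplicative for products in the sense made precise by the commutative diagram (\ref{DiagLoday}): the total Chern class of a product $\alpha\alpha'$ is computed by applying Grothendieck's multiplication $\ast$ to the total Chern classes of the factors. Concretely, writing $C^M_\cdot(\alpha) = 1 + c^M_{1l}(\alpha) + c^M_{2l}(\alpha) + \cdots$ and similarly for $\alpha'$, the class $c^M_{ij}(\alpha\alpha')$ is obtained as the degree-$i$ component of $C^M_\cdot(\alpha)\ast C^M_\cdot(\alpha')$.

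Next I would invoke the explicit description of Grothendieck's product $\ast$ on total classes in terms of the universal polynomials. For elements coming from $K_l$ and $K_q$ with $l,q>0$ (so that all Chern classes $c^M_{rl}(\alpha)$, $c^M_{sq}(\alpha')$ lie in positive cohomological degree and the $0$-th Chern class is $1$ by normalisation), the product $\ast$ linearises: only the terms in which exactly one Chern class is taken from each factor survive, because products of two or more positive-degree classes from the same factor contribute to a different slot of the $\gamma$-filtration and get killed. This is the same mechanism used by Gillet \cite[Lemma 2.32]{Gillet} and in the classical computation of the Chern character; the coefficient $\frac{(i-1)!}{(r-1)!(s-1)!}$ with the overall sign $-1$ is exactly the universal coefficient appearing in Grothendieck's formula for the product of two elements in the ``additive part'' of the power-series ring, with $r+s=i$ and $l+q=j$.

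Carrying this out, I would write $C^M_\cdot(\alpha)\ast C^M_\cdot(\alpha')$ using the logarithm/exponential description of $\ast$: since $\log$ turns $\ast$ into addition up to the appropriate normalisation, and the Chern classes $c^M_{rl}(\alpha)$ behave like the homogeneous components of $\log C^M_\cdot(\alpha)$ in this truncated setting, the degree-$i$ part of the product picks up precisely the bilinear term $\sum_{r+s=i} (\text{coeff})\, c^M_{rl}(\alpha)\, c^M_{sq}(\alpha')$. Matching the universal coefficients — which are the same ones Grothendieck tabulates and which reduce to multinomial expressions in the elementary symmetric functions — yields $\frac{(i-1)!}{(r-1)!(s-1)!}$, and tracking the sign convention in the definition of the augmented total Chern class $\widetilde{C}_\cdot$ versus $C_\cdot$ produces the leading minus sign.

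The main obstacle I expect is the bookkeeping of the universal coefficients: one must be careful that the reduction from the full multiplicative formula to the bilinear term is legitimate (i.e. that cross-terms with several Chern classes from a single factor really do vanish in the relevant cohomology group, using that $\h^m(X,\Ksheaf^M_i)$ has the right vanishing for the filtration argument), and that the combinatorial identity for the surviving coefficient genuinely evaluates to $\frac{(i-1)!}{(r-1)!(s-1)!}$ rather than some variant. I would handle the first point by the same $\gamma$-filtration argument as in Gillet, and the second by citing Grothendieck \cite{Grothendieck2} for the shape of the universal polynomials and specialising, rather than re-deriving them from scratch.
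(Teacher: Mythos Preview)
Your overall structure is right: pull back along the Loday product, expand via Grothendieck's $\ast$-product into bilinear terms plus ``decomposable'' remainders, and argue the remainders vanish. The coefficients you name are correct and come exactly from the universal polynomials (the paper cites Shekhtman via \cite{Niziol} rather than going through a logarithm/exponential identification, which would give Chern characters, not Chern classes; your log/exp sketch is at best a heuristic here).

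The genuine gap is your explanation of \emph{why} the decomposable terms vanish. You say they ``contribute to a different slot of the $\gamma$-filtration and get killed'' and propose to handle this by ``the same $\gamma$-filtration argument as in Gillet''. This is both circular and not what Gillet does. The $\gamma$-filtration behaviour of the Chern classes (the next lemma in the paper) is \emph{deduced from} the present product formula, so it cannot be invoked to prove it; and there is no vanishing of $\h^m(X,\Ksheaf^M_i)$ that would kill these terms. What actually makes a decomposable expression $C^M_a\cdot C^M_b$ vanish on a class $\eta\in K_j(X)$ with $j\geqslant 1$ is purely topological: $\eta$ is represented by a map $\mathscr{S}^j_X\to B_\cdot\GL(\OO_X)^+$, and $(C^M_a\cdot C^M_b)(\eta)$ factors through the reduced diagonal $\mathscr{S}^j_X\to\mathscr{S}^j_X\wedge\mathscr{S}^j_X$, which is null-homotopic. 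This is the content of \cite[Lemma~2.25]{Gillet}, and it is exactly the argument the paper reproduces. Once you replace your $\gamma$-filtration claim by this sphere-diagonal argument, the proof is complete and matches the paper's.
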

\begin{proof}: By property (\ref{TCCTensor}) of Definition \ref{TCC} we know that for the tensor product of two representations
$$\widetilde{C}^M_\cdot(\rho_1\otimes\rho_2)=\widetilde{C}^M_\cdot(\rho_1)\ast C_\cdot(\rho_2)$$
where $\widetilde{C}^M_\cdot$ is the total augmented Chern class, and the product is as above in the diagram the Grothendieck multiplication which after Shekhtman (see \cite[Section 2]{Niziol}) is given by universal polynomials
$$(\sum_{i\geqslant 1}x_i)\ast(\sum_{j\geqslant 1}y_i)=\sum_{l\geqslant 0}P_l(x_1,\ldots,x_l,y_1,\ldots,y_l)$$
with $P_l(x_1,\ldots,y_l))\sum_{r+s=l}a_{rs}x_r y_s+Z_r(x)T_s(y)$. Here 
$$a_{rs}=-\frac{(l-1)!)}{(r-1)!(s-1)!}$$
and $Z_r$ and $T_s$ are polynomials of weight $r$ and $s$ respectively and for $r+s=l$ at least one of them is decomposable. Explicitly, this means for the Loday multiplication $\mu_0$
$$\mu_0^\ast C^M_\cdot=\sum_{l\geqslant 0}\left(\sum_{r+s=l}\left(a_{rs}p_1^\ast C^M_r\cdot p_2^\ast C^M_s+ Z_r(p_1^\ast C^M_\cdot)T_s(p_2^\ast C^M_\cdot)\right)\right)$$
where $p_i:\GL(\OO_X)\times\GL(\OO_X)\rightarrow\GL(\OO_X)$ are the natural projections. Thus we have to show that the terms $Z_r(p_1^\ast C^M_\cdot)T_s(p_2^\ast C^M_\cdot)$ disappear when evaluated on the corresponding $K$-theory classes. The argumentation is analogue to the proof of \cite[Lemma 2.25]{Gillet}. An element $\eta\in K_j(X)$, $j\geqslant 1$ is represented by a map $\eta:\mathscr{S}^j_X\rightarrow B_\cdot\GL(\OO_X)^+$ in the homotopy category where $\mathscr{S}^j_X$ is the the simplicial version of the $j$-sphere. For any $a,b\in\NN$ the class $(C^M_a\cdot C^M_b)(\eta)$ is represented by the commutative diagram
$$\xymatrix{\mathscr{S}^j_X\ar[r]^{\Delta_{\mathscr{S}^j_X}}\ar[d]^\eta & \mathscr{S}^j_X\wedge\mathscr{S}^j_X\ar[d]^{\eta\wedge\eta}\\
 B_\cdot\GL(\OO_X)^+\ar[r]^{\Delta\qquad\qquad}\ar[d]^{C^M_a\cdot C^M_b}&  B_\cdot\GL(\OO_X)^+\wedge B_\cdot\GL(\OO_X)^+\ar[d]^{C_a^M\wedge C^M_b}\\
\Ksheaf(a+b,\Ksheaf^M_{a+b}) & \Ksheaf(a,\Ksheaf^M_a)\wedge\Ksheaf(b,\Ksheaf^M_b)\ar[l]_{\mu_{a,b}\quad}}$$
and as a consequence we have the equalities
$$(C_a^M\cdot C_b^M)\eta=\mu_{a,b}(C_a^M\wedge C_b^M)\Delta\eta=\mu_{a,b}(C_a^M\wedge C_b^M)(\eta\wedge\eta)\Delta_{\mathscr{S}^j_X}.$$
Since the map $\Delta_{\mathscr{S}^j_X}$ is null-homotopic, this composition of maps is null-homotopic as well. The decomposable part of the $Z_rT_s$ is made up by terms of this form, hence it disappears and with it the whole expression.  \end{proof}

\begin{lem} The integral Chern class maps $c_{ij}^M$ restrict to zero on $F^{i+1}_\gamma K_j(X)$ for $i\geqslant 1$.\end{lem}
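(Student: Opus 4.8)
The plan is to reduce the statement to the well-known analogous fact for Chern classes in a general duality theory (or for algebraic $K$-theory with the usual Chern classes), exploiting that the $c_{ij}^M$ are built from exactly such a formalism via Gillet's construction and that $F^{i+1}_\gamma K_j(X)$ is generated by products of $\gamma$-operations of total weight $\geqslant i+1$ applied to augmentation-zero classes. First I would recall the standard fact (due to Soulé, cf. the treatment Gillet uses) that if $x\in K_j(X)$ has $\varepsilon(x)=0$ then $c_{kj}^M(x)$ lies in the image of the appropriate cohomology "in weight $\geqslant 1$", and more precisely that the total Chern class behaves like a ring homomorphism up to the decomposable-terms correction established in the two lemmas just proved: the formula
$$c_{ij}^M(\alpha\alpha')=-\sum_{r+s=i}\frac{(i-1)!}{(r-1)!(s-1)!}c_{rl}^M(\alpha)c_{sq}^M(\alpha')$$
shows that a product of two classes has its degree-$i$ Chern class expressed through Chern classes of the factors of strictly smaller Chern-degree, paired by cup product.

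The key computation is then to understand $c_{ij}^M$ on a single $\gamma^m(x)$. Using the relation between $\gamma$-operations and $\lambda$-operations together with the Whitney sum formula (property (\ref{TCCWhitneySum}) of Definition \ref{TCC}), one sees that for $x$ with $\varepsilon(x)=0$ the class $\gamma^m(x)$ has vanishing Chern classes $c_k^M$ for $0<k<m$; the first potentially nonzero one is in Chern-degree $m$. This is the graded-$K$-theory analogue of the classical fact that $c_k(\gamma^m(x))=0$ for $1\leqslant k<m$, and I would derive it by the same argument Gillet uses for his general Chern classes, since our $c_{ij}^M$ come from precisely that machinery (Theorem \ref{TheoChernClasses}, Theorem \ref{TheoHigherChernClasses}). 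Combining this with the multiplicativity formula above by induction on the number of factors: a generator $\gamma^{i_1}(x_1)\cdots\gamma^{i_n}(x_n)$ of $F^{i+1}_\gamma K_j(X)$ has its degree-$i$ Chern class written as a sum of products $c_{r_1}^M(\gamma^{i_1}(x_1))\cdots c_{r_n}^M(\gamma^{i_n}(x_n))$ with $r_1+\cdots+r_n=i$; since each factor forces $r_t\geqslant i_t$ unless it vanishes, and $\sum i_t\geqslant i+1>i$, at least one factor must be zero, so the whole product vanishes. Hence $c_{ij}^M$ kills every generator of $F^{i+1}_\gamma K_j(X)$, and since the $c_{ij}^M$ are group homomorphisms on $K_j(X)$ for $j\geqslant 1$ (as recalled from \cite[Lemma 2.26]{Gillet}), it vanishes on all of $F^{i+1}_\gamma K_j(X)$.

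The main obstacle I anticipate is making precise the vanishing $c_k^M(\gamma^m(x))=0$ for $1\leqslant k<m$ in the setting of \emph{higher} $K$-theory with coefficients in the Milnor $K$-sheaf, as opposed to $K_0$: one must either lift $x$ to a suitable map from a sphere and argue as in the proof of the previous lemma (where $\Delta_{\mathscr{S}^j_X}$ being null-homotopic kills decomposables), or invoke the compatibility of the $\gamma$-structure with the Chern class formalism at the level of classifying spaces. A clean way is to note that $F^{i+1}_\gamma K_j(X)$ for $j\geqslant 1$ is generated by products $\gamma^{i_1}(x_1)\cdots\gamma^{i_n}(x_n)$ with at least one factor in positive $K$-degree, represent that factor by a map out of a simplicial sphere, and run the null-homotopy argument from the proof of the multiplicativity lemma to conclude that any product of $\leqslant i$ Chern classes vanishes on it when the total $\gamma$-weight exceeds $i$. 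I would also need to be slightly careful that the binomial-type coefficients appearing in expanding $\gamma^m$ in terms of $\lambda$-operations do not interfere — but since we only use vanishing (not the precise value) of intermediate Chern classes, the coefficients are harmless.
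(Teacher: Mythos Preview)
Your overall reduction is the same as the paper's: use the product formula and the Whitney sum to reduce to showing $c_{k}^M(\gamma^m(x))=0$ for $1\leqslant k<m$, then conclude by a pigeonhole argument on generators of $F^{i+1}_\gamma$. You correctly identify this single-$\gamma$ vanishing as the crux and honestly flag that you do not have a complete argument for it in higher $K$-theory.

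The paper resolves exactly this point by a different device than the one you sketch. Rather than expanding $\gamma^m$ in $\lambda$-operations and applying Whitney sum elementwise, it works universally: $\gamma_k$ on $K_j(X)$ is induced by $\gamma_k(\id_N-N)$ in the representation ring $R_{\ZZ}(\GL)$, and the augmented total Chern class
\[
\widetilde{C}^M:\; R_{\ZZ}(\GL)\longrightarrow \widetilde{\h}^\ast(X,\ZZ\times B_\cdot\GL(\OO_X)^+,\Ksheaf_i^M),\qquad \rho\mapsto(\rank\rho,\,C^M_\cdot(\rho)),
\]
is a homomorphism of $\lambda$-rings (the target being a strict $\lambda$-ring, \cite[2.27]{Gillet}). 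Hence the Chern polynomial commutes with $\gamma$-operations, and one computes directly
\[
c_{t,N}^M(\gamma_k(\id_N-N))=\gamma_k\bigl(c_{t,N}^M(\id_N-N)\bigr)=\gamma_k(1+c_{1,N}^M t+\cdots)=1+(-1)^{k-1}(k-1)!\,c_{k,N}^M\,t^k+\cdots,
\]
so the coefficients in degrees $<k$ vanish. This bypasses the difficulty you anticipated: one never has to manipulate $\gamma^m(x)$ for an individual higher $K$-class, since the vanishing is established once and for all at the level of the universal class in $\h^i(X,\GL_N(\OO_X),\Ksheaf_i^M)$. Your null-homotopy idea from the previous lemma is not needed here; it was used there only to kill decomposable terms in the product formula, not to handle $\gamma$-operations.
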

\begin{proof}: In light of the previous formula and the Whitney sum formula, it suffices to show that $c_{ij}^M$ is trivial on elements of the form $\gamma_k(x)$ for $k\geqslant i+1$ and $x\in K_j(X)$. Recall from above that the operation $\gamma_k$ on $K_j(X)$ is defined to be the image of $\gamma_k(\id_N-N)$ under the natural map
$$r:R_{\ZZ}(\GL)\rightarrow[B_\cdot\GL(\OO_X)^+,B_\cdot \GL(\OO_X)^+]\rightarrow\Hom(K_j(X),K_j(X)).$$
As in \cite[Definition 2.27]{Gillet} we define the augmented cohomology group
$$\widetilde{\h}^\ast(X,\ZZ\times B_\cdot\GL(\OO_X)^+,\Ksheaf_i^M):=\h^0(X,\ZZ\times B_\cdot\GL(\OO_X)^+,\ZZ)\times\{1\}\times\h^i(X,\ZZ\times B_\cdot\GL(\OO_X)^+,\Ksheaf_i^M)$$
where $\h^i(X,\ZZ\times B_\cdot\GL(\OO_X)^+,\Ksheaf_i^M)=[\ZZ\times B_\cdot\GL(\OO_X)^+,\Ksheaf(i,\Ksheaf_i^M)]$. The ring $\widetilde{\h}^\ast(X,\ZZ\times B_\cdot\GL(\OO_X)^+,\Ksheaf_i^M)$ is a strict $\lambda$-ring \cite[2.27]{Gillet} and the augmented Chern class map
\begin{eqnarray*}\widetilde{C}^M:R_{\ZZ}(\GL)&\rightarrow&\widetilde{\h}^\ast(X,\ZZ\times B_\cdot\GL(\OO_X)^+,\Ksheaf_i^M)\\
\rho&\mapsto& \left(\rank(\rho),C_\cdot^M(\rho)\right)\end{eqnarray*}
is a $\lambda$-ring homomorphism. 

Now it suffices to show that the class of $c_{i,N}^M(\gamma_k(\id_N-N)))\in\h^i(X,\GL_N(\OO_X),\Ksheaf_i^M)$ is trivial for $k\geqslant i+1$. By the previous paragraph, the Chern polynomial $c_{t,N}^M$ is a $\lambda$-ring homomorphism as well, and hence commute with the $\gamma$-operation. With the usual formulae we get
\begin{eqnarray*}c_{t,N}^M(\gamma_k(\id_N-N)) &=& \gamma_k(c_{t,N}^M(\id_N-N))\\
&=& \gamma_k(1+c_{1,N}^M(\id_N-N)t+\cdots)\\
&=& 1+(-1)^{k-1}(k-1)!c_{k,N}^M(\id_N-N)t^k+\cdots\end{eqnarray*}
and therefore $c_{i,N}^M(\id_N-N))=0$ for $i<k$. \end{proof}

\begin{cor} If $\alpha\in F^j_\gamma K_l(X)$, $j\neq 0$ and $\alpha'\in F_\gamma^k K_q(X)$ then
$$c_{j+k,l+q}^M(\alpha\alpha')=-\frac{(j+k-1)!}{(j-1)!(k-1)!}c_{jl}^M(\alpha)c_{kq}^M(\alpha').$$
\end{cor}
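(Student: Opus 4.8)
The plan is to read off this corollary directly from the two preceding lemmas — the product formula for the Milnor Chern classes and their vanishing on the $(i+1)$-st step of the $\gamma$-filtration — via a short index computation, with no new input.

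First I would apply the product formula (the first of the two lemmas above) with $i=j+k$ to the classes $\alpha\in F^j_\gamma K_l(X)$ and $\alpha'\in F^k_\gamma K_q(X)$, regarded merely as elements of $K_l(X)$ and $K_q(X)$; here one uses $l,q\geqslant 1$, so that the maps $c^M_{rl}$, $c^M_{sq}$ are group homomorphisms and the lemma applies verbatim. This yields
$$c_{j+k,\,l+q}^M(\alpha\alpha')=-\sum_{r+s=j+k}\frac{(j+k-1)!}{(r-1)!\,(s-1)!}\,c_{rl}^M(\alpha)\,c_{sq}^M(\alpha'),$$
the sum running over $r,s\geqslant 1$ (a putative term with $r=0$ or $s=0$ carries a factor $1/(-1)!=0$ and may be ignored).

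Next I would invoke the filtration lemma. Since $\alpha\in F^j_\gamma K_l(X)\subseteq F^{r+1}_\gamma K_l(X)$ whenever $r\leqslant j-1$, it gives $c_{rl}^M(\alpha)=0$ for every $r<j$; symmetrically $c_{sq}^M(\alpha')=0$ for every $s<k$. Hence every summand with $r<j$ or $s<k$ vanishes, and the constraints $r+s=j+k$, $r\geqslant j$, $s\geqslant k$ leave only $r=j$, $s=k$. What survives is precisely
$$c_{j+k,\,l+q}^M(\alpha\alpha')=-\frac{(j+k-1)!}{(j-1)!\,(k-1)!}\,c_{jl}^M(\alpha)\,c_{kq}^M(\alpha'),$$
as claimed; since $j\neq 0$ (and, in the relevant range, $k\geqslant 1$) the surviving indices $r=j$, $s=k$ lie in $\{1,2,\ldots\}$, so the displayed coefficient is an honest integer. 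There is no genuine obstacle in this argument: it is bookkeeping on top of the two lemmas. The only mildly delicate points are to keep track of the standing hypotheses $l,q\geqslant 1$ under which the product formula was established, and to observe that the $\gamma$-filtration inclusions used are compatible with the degree grading of $K$-theory so that the vanishing lemma can be applied in each fixed degree.
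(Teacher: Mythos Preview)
Your argument is correct and is exactly the intended one: the paper states this result as a corollary with no proof, since it is immediate from the two preceding lemmas via precisely the index bookkeeping you carry out. Your care in noting that the sum is effectively over $r,s\geqslant 1$ and in flagging the implicit hypothesis $l,q\geqslant 1$ inherited from the product-formula lemma is appropriate.
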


\subsubsection*{Passage to overconvergent Chern classes}

Because the map 
$$\h^m(X,\Ksheaf^M_i)\rightarrow \HH^{m+i}(X,W^\dagger\Omega)$$
induced by the morphism of complexes $\Ksheaf^M_i\rightarrow W^\dagger\Omega[i]$ is a morphism of cohomology rings and therefore respects the respective operations, the results from the previous section carry over to the overconvergent Chern classes. This is summarised in the following proposition.

\begin{prop}\label{Prop534} Let $X/k$ be smooth. 
\begin{enumerate}\item If $\alpha\in K_l(X)$ and $\alpha'\in K_q(X)$ then
$$c_{ij}^{\text{sc}}(\alpha\alpha')=-\sum_{r+s=i}\frac{(i-1)!}{(r-1)!(s-1)!}c_{rl}^{\text{sc}}(\alpha)c_{sq}^{\text{sc}}(\alpha'),$$
where $l+q=j$.
\item The integral Chern class maps $c_{ij}^{\text{sc}}$ restrict to zero on $F^{i+1}_\gamma K_j(X)$ for $i\geqslant 1$.
\item If $\alpha\in F^j_\gamma K_l(X)$, $j\neq 0$ and $\alpha'\in F_\gamma^k K_q(X)$ then
$$c_{j+k,l+q}^{\text{sc}}(\alpha\alpha')=-\frac{(j+k-1)!}{(j-1)!(k-1)!}c_{jl}^{\text{sc}}(\alpha)c_{kq}^{\text{sc}}(\alpha').$$
\end{enumerate}
\end{prop}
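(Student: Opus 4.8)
The plan is to deduce all three statements from the corresponding facts for the Milnor Chern classes $c_{ij}^M$ established in the preceding subsection, by transporting them along the morphism on cohomology induced by $d\log^i\colon\Ksheaf^M_i\rightarrow W^\dagger\Omega[i]$. The key observation, which I would state first, is that the map
$$d\log\colon\h^m(X,\Ksheaf^M_i)\rightarrow\HH^{m+i}(X,W^\dagger\Omega)$$
assembles, as $i$ varies, into a morphism of (bi)graded cohomology rings: the morphism of complexes $\Ksheaf^M_i\rightarrow W^\dagger\Omega[i]$ is compatible with the respective cup products because $d\log$ is multiplicative on symbols (the product of logarithmic Witt differentials of degrees $a$ and $b$ is one of degree $a+b$), and the pairing on the Milnor $K$-sheaf is exactly the one used to define $c^M$. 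By the very construction of $c^{\text{sc}}_{ij}$ as the composite $d\log\circ c^M_{ij}$, any identity among the $c^M_{ij}$ that is expressed purely in terms of sums, products and scalar multiples in the cohomology ring will be preserved.

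For part (1), I would take the formula
$$c_{ij}^M(\alpha\alpha')=-\sum_{r+s=i}\frac{(i-1)!}{(r-1)!(s-1)!}c_{rl}^M(\alpha)c_{sq}^M(\alpha')$$
from the first Lemma of the subsection, apply $d\log$ to both sides, use additivity of $d\log$ on cohomology to move it inside the sum, and use multiplicativity $d\log(x\cdot y)=d\log(x)\cdot d\log(y)$ on each term $c^M_{rl}(\alpha)c^M_{sq}(\alpha')$; the integer coefficients pass through since $d\log$ is $\ZZ$-linear. This yields the claimed formula with $c^{\text{sc}}$ in place of $c^M$. For part (2), the map $c^{\text{sc}}_{ij}=d\log\circ c^M_{ij}$ kills $F^{i+1}_\gamma K_j(X)$ simply because $c^M_{ij}$ already does (second Lemma of the subsection) — no ring structure is even needed here, only functoriality. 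For part (3), I would apply $d\log$ to the Corollary's identity in exactly the same way as for part (1); alternatively, (3) follows formally by combining (1) and (2) as in the proof of the Corollary.

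The only point requiring genuine (though routine) care is the claim that $d\log$ is a ring homomorphism on cohomology in the sense needed, i.e. that the diagram relating the cup products on $\h^\ast(X,\Ksheaf^M_\bullet)$ and $\HH^\ast(X,W^\dagger\Omega)$ commutes. I would verify this on the level of the underlying pairings of complexes: the multiplication-with-units map used to define the product on the cycle complex $C^\ast(X;K^M_\ast)$ corresponds, under $d\log$, to left multiplication by the logarithmic Witt differential $d\log[u]$ in $W^\dagger\Omega$, and this is compatible with the differential and the product structure on $W^\dagger\Omega$ because the latter is a differential graded algebra and $W^\dagger\Omega_{X,\log}$ is a subalgebra. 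Once this compatibility is recorded, parts (1)--(3) are immediate transcriptions of the Milnor-level statements, and the proposition follows. I do not expect any serious obstacle beyond bookkeeping of gradings.
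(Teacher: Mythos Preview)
Your proposal is correct and follows essentially the same approach as the paper: the paper's proof consists of a single sentence observing that the map $\h^m(X,\Ksheaf^M_i)\rightarrow\HH^{m+i}(X,W^\dagger\Omega)$ induced by $d\log$ is a morphism of cohomology rings and hence transports the Milnor-level identities verbatim. If anything, you supply more detail than the paper does in verifying the multiplicativity of $d\log$ at the level of complexes.
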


\subsection{The action of $1-\f$ on the overconvergent de Rham-Witt complex}\label{SubsecShortExactSequence}

In this section, we want to adapt some short exact sequences from \cite{Illusie} to the overconvergent context.

We generalise the notion of basic Witt differentials to the case when $A$ is of the form $k\left[X_1,X_1^{-1},\ldots,X_d,X_d^{-1}\right]$. See the proof of Proposition 1.3 in \cite{DavisLangerZink}. A basic Witt differential $e\in W\Omega_A$ has one of the following shapes:
\begin{enumerate}\item $e$ is a classical basic Witt differential in variables $\left[X_1\right],\ldots,\left[X_d\right]$. 
\item Let $J\subset\{1,\ldots,d\}$ a subset and denote by $e(\xi,k,\PU,J)$ a basic classical Witt differential in $\{X_j\,|\,j\in J\}$.
\begin{enumerate}\item $e=e(\xi,k,\PU,J)\prod_{j\notin J}d\log\left[X_j\right]$.
\item $e=\prod_{j\notin J}\left[X_j\right]^{-r_j}e(\xi,k,\PU,J)$ for some $r_j\in\NN$.
\item $e=\prod_{j\notin J} {}^{\f^{s_j}}\!d\left[X_j\right]^{-l_j}e(\xi,k,\PU,J)$ for some $l_j\in\NN$, $p\nmid l_j$, $s_j\in\NN_0$.\end{enumerate}
\item $e= ^{\V^u}\!\left(\xi\prod_{j\notin J}\left[X_j\right]^{p^u{k_j}}\left[X\right]^{p^u{k_{I_0}}}\right)d {}^{\V^{u(I_1)}}\!\left[X\right]^{p^{u(I_1)}{k_{I_1}}}\cdots ^{\f^{-t(I_\ell)}}d\left[X\right]^{p^{t(I_\ell)}{k_{I_\ell}}}$. In particular, for each such $e$, there is a weight function on variables $\{X_j\,|j\in J\}$ with partition $\PU$, $u>0$, $k_{j\notin J}\in\ZZ_{<0}\left[\frac{1}{p}\right]$ and $u(k_j)\leqslant u=\max\{u(I_0),u(k_j)\}$.
\item $e=d e'$ where $e'$ as in (3).\end{enumerate}

\subsubsection*{The action of $\f$, $\V$ and $p$ on $W^\dagger\Omega$}

\begin{prop}\label{FrobAction} The action of $\f$ on the generalised basic Witt differentials are given as follows:
\begin{enumerate}\item If $e$ is a classical basic Witt differential in variables $\left[X_1\right],\ldots,\left[X_d\right]$, the action is given as in Proposition \ref{Prop4.2.1}. 
\item Let $J\subset\{1,\ldots,d\}$ a subset and denote by $e(\xi,k,\PU,J)$ a basic classical Witt differential in $\{X_j\,|\,j\in J\}$.
\begin{enumerate}\item If $e=e(\xi,k,\PU,J)\prod_{j\notin J}d\log\left[X_j\right]$, then 
$$^{\f}\!e=(^{\f}\!e(\xi,k,\PU,J))\prod_{j\notin J}d\log\left[X_j\right].$$
\item If $e=\prod_{j\notin J}\left[X_j\right]^{-r_j}e(\xi,k,\PU,J)$ for some $r_j\in\NN$, then 
$${}^{\f}\!e=\prod_{j\notin J}\left[X_j\right]^{-pr_j}({}^{\f}\!e(\xi,k,\PU,J)).$$
\item If $e=\prod_{j\notin J} {}^{\f^{s_j}}\!d\left[X_j\right]^{-l_j}e(\xi,k,\PU,J)$ for some $l_j\in\NN$, $p\nmid l_j$, $s_j\in\NN_0$ then
$$^{\f}\!e=\prod_{j\notin J} {}^{\f^{s_j+1}}\!d\left[X_j\right]^{-l_j}({}^{\f}\!e(\xi,k,\PU,J)).$$\end{enumerate}
\item If $e= {}^{\V^u}\!\left(\xi\prod_{j\notin J}\left[X_j\right]^{p^u{k_j}}\left[X\right]^{p^u{k_{I_0}}}\right)d {}^{\V^{u(I_1)}}\!\left[X\right]^{p^{u(I_1)}{k_{I_1}}}\cdots ^{{}\f^{-t(I_\ell)}}\!d\left[X\right]^{p^{t(I_\ell)}{k_{I_\ell}}}$, then
$${}^{\f}\!e={}^{\V^u}\!\left({}^{\f}\!\xi\prod_{j\notin J}\left[X_j\right]^{p^u{k'_j}}\left[X\right]^{p^u{k'_{I_0}}}\right)d {}^{\V^{u(I_1)}}\!\left[X\right]^{p^{u(I_1)}{k'_{I_1}}}\cdots {}^{\f^{-t(I_\ell)}}\!d\left[X\right]^{p^{t(I_\ell)}{k'_{I_\ell}}},$$
where $k'=pk$. 
\item If $e=d e'$ where $e'$ as in (3), the expression changes similar to the previous case, with the only difference that we get ${}^{\V^{-1}}\!\xi$ instead of ${}^{\f}\!\xi$.\end{enumerate}\end{prop}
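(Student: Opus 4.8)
The plan is to use that the Frobenius $\f$ is a homomorphism of graded rings on $W\Omega_A$, so that $\f$ applied to any of the product expressions describing a generalised basic Witt differential may be evaluated factor by factor, and then to quote Proposition \ref{Prop4.2.1} for the classical basic Witt differential $e(\xi,k,\PU,J)$ occurring as a factor. Beyond Proposition \ref{Prop4.2.1}, the only inputs needed are the elementary identities
\[
\f[X_j] = [X_j]^p, \qquad \f\!\left(\tfrac{d[X_j]}{[X_j]}\right) = \tfrac{[X_j]^{p-1}d[X_j]}{[X_j]^{p}} = \tfrac{d[X_j]}{[X_j]}, \qquad \f\bigl({}^{\f^{s}}\!d[X_j]^{-l}\bigr) = {}^{\f^{s+1}}\!d[X_j]^{-l},
\]
the third being immediate from the notation, together with $\f(\omega\eta)=\f\omega\cdot\f\eta$ and the standard relation $\f d\V = d$.

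First I would dispose of shape (1): here there is nothing to do beyond Proposition \ref{Prop4.2.1}. For shapes (2a)--(2c) one writes $e$ as $e(\xi,k,\PU,J)$ times the indicated product over $j\notin J$, applies multiplicativity of $\f$, uses Proposition \ref{Prop4.2.1} on the factor $\f e(\xi,k,\PU,J)$, and substitutes the three identities above for the remaining factors: $d\log[X_j]$ is $\f$-invariant, $[X_j]^{-r_j}$ acquires exponent $-pr_j$, and ${}^{\f^{s_j}}\!d[X_j]^{-l_j}$ becomes ${}^{\f^{s_j+1}}\!d[X_j]^{-l_j}$. This yields the three stated formulas at once.

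For shape (3) the leading factor is a ${}^{\V^u}$ (with $u>0$) of a Teichm\"uller monomial times $\xi$, followed by factors of the form $d\,{}^{\V^{u(I_r)}}\![X]^{\cdots}$ and ${}^{\f^{-t(I_r)}}\!d[X]^{\cdots}$; this is structurally the first case of Proposition \ref{Prop4.2.1} ($I_0\neq\varnothing$), the inverted variables $X_j$, $j\notin J$, merely contributing extra Teichm\"uller factors inside the same $\V^u$. Running the argument of Proposition \ref{Prop4.2.1} verbatim --- $\f$ commutes past $\V^u$ onto $\xi$, multiplies every Teichm\"uller exponent by $p$, and transforms each remaining factor as in the classical case --- produces exactly the stated formula with $k'=pk$. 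Shape (4), $e=de'$ with $e'$ of shape (3), follows by the Leibniz rule and $d\circ d=0$ (so that $d$ falls only on the leading $\V^u$-factor of $e'$, the other factors being exact) together with $\f d\V = d$: the latter replaces the leading coefficient ${}^{\f}\xi$ of shape (3) by ${}^{\V^{-1}}\xi$, in exact parallel with the way the second case of Proposition \ref{Prop4.2.1} differs from the first.

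The genuinely delicate part, which I would write out carefully rather than wave at, is the bookkeeping of the weight function under $\f$: one must check that $k\mapsto pk$ is compatible with the partition $\PU$ and with the index separating the non-integral from the integral intervals, and in particular that the only ``type change'' that can occur is the harmless one already noted after Proposition \ref{Prop4.2.1}. As in \cite{LangerZink}, the cleanest packaging is to note that if $\omega=\sum e$ is the unique decomposition of $\omega\in W\Omega_A$ into (generalised) basic Witt differentials then $\f\omega=\sum\f e$ is again such a decomposition, so the decomposition stays ``fixed'' under $\f$ and the formula may be verified one summand at a time. No convergence issues arise, since the statement is at the level of $W\Omega_A$ and purely a formula.
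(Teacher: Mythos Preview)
Your proposal is correct and is essentially the approach the paper intends: the paper's own proof is the single sentence ``This is a straight forward calculation, using the definition of Frobenius and Verschiebung,'' and your factor-by-factor application of $\f$ together with Proposition~\ref{Prop4.2.1} and the elementary Teichm\"uller/$\f d\V=d$ identities is exactly that calculation spelled out. If anything, you supply more care than the paper does, particularly in flagging the weight-function bookkeeping and the preservation of the unique decomposition under $\f$.
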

\begin{proof}: This is a straight forward calculation, using the definition of Frobenius and Verschiebung.  \end{proof}

In particular, $\f$ has the same stabilizing properties on the types of generalised basic Witt differentials as mentioned at the end of Section \ref{SubsecBasic} with respect to the usual basic Witt differentials.

\begin{rem}\label{RemFrobAction}In this concrete case we can give a criterion when an element $\omega= \sum e(\xi,k,\PU)$ of the de Rham-Witt complex given as its decomposition in basic generalised Witt differentials is overconvergent based on the proof of Proposition 1.3 in \cite{DavisLangerZink}. Namely, $\omega$ is overconvergent, if there exist constants $C_1>0$ and $C_2\in\RR$ such that the basic Witt differentials $e$ appearing in the decomposition satisfy the following conditions:

\begin{itemize}\item If $e$ is of type (1) or of type (2.a), 
$$|k|\leqslant C_1\ord_p\xi_{k,\PU}+C_2.$$
\item If $e$ is of type (2.b),
$$|r|+|k|\leqslant C_1\ord_p\xi_{k,\PU}+C_2,$$
where $|r|=\sum r_j$.
\item If $e$ is of type (2.c),
$$|l\cdot p^s| + |k|\leqslant C_1\ord_p\xi_{k,\PU}+C_2,$$
where $|l\cdot p^s|=\sum l_j\cdot p^{s_j}$.
\item If is of type (3) or (4),
$$\sum|k_j|+\sum|k_{I_i}|\leqslant C_1\ord_p({}^{V^u}\!\xi)+C_2,$$
where $|k_j|=-k_j$ and $|k_{I_i}|=\sum_{m\in I_i}k_m$.\end{itemize}\end{rem}

The multiplication of an element $\alpha\in W(k)$ on a generalised basic Witt differential is particularly easy to define.

\begin{prop} The action of $\alpha\in W(k)$ on a generalised basic Witt differential is given by multiplying the coefficient $\xi$ with $\alpha$.\end{prop}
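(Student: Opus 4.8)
The plan is to proceed exactly as in the proof of Proposition~\ref{Prop4.2.1}(1), treating the shapes (1)--(4) of a generalised basic Witt differential in turn. The structural point is that $W\Omega_A$ is a $W(k)$-algebra in which $W(k)$ lies in the centre, so that multiplication by $\alpha\in W(k)$ is $W(k)$-linear and hence commutes with the unique convergent decomposition of \cite[Theorem 2.8]{LangerZink} and of its extension to $A=k[X_1,X_1^{-1},\ldots,X_d,X_d^{-1}]$ from the proof of \cite[Proposition 1.3]{DavisLangerZink}. It therefore suffices to compute $\alpha\cdot e$ for a single generalised basic Witt differential $e$ and check that the result is again one of the same type, with coefficient multiplied by $\alpha$. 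Since $\alpha$ is a constant it preserves overconvergence (it changes neither the weight function $k$ nor $\ord_V$ of the coefficient), so the statement for $W^\dagger\Omega_A$ follows at once from the one for $W\Omega_A$.

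For $e$ of type (1) the claim is precisely Proposition~\ref{Prop4.2.1}(1). For $e$ of type (2.a), (2.b) or (2.c) one writes $e=e(\xi,k,\PU,J)\cdot\omega_J$, where $\omega_J$ is the product over $j\notin J$ of the factors $d\log[X_j]$, $[X_j]^{-r_j}$, or ${}^{\f^{s_j}}d[X_j]^{-l_j}$, none of which involves the coefficient $\xi$. Using centrality of $W(k)$ to move $\alpha$ past $\omega_J$ and then applying Proposition~\ref{Prop4.2.1}(1) to the classical factor gives $\alpha e=e(\alpha\xi,k,\PU,J)\cdot\omega_J$, a generalised basic Witt differential of the same type with coefficient $\alpha\xi$.

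For $e$ of type (3) the coefficient sits inside a leading Verschiebung, $e={}^{\V^u}(\xi\cdot m)\cdot(d\text{-factors})$ with $m$ a monomial in the $[X_j]$. Here one invokes the projection formula $\alpha\cdot{}^{\V^u}(\eta)={}^{\V^u}({}^{\f^u}\alpha\cdot\eta)$ in $W(k)$, which is meaningful since $k$ is perfect so that ${}^{\f^u}$ is an automorphism of $W(k)$; this says exactly that $\alpha\cdot{}^{\V^u}\xi$ is again an element of ${}^{\V^u}W(k)$, i.e.\ that the Verschiebung coefficient ${}^{\V^u}\xi$ is multiplied by $\alpha$. Moving $\alpha$ past the $d$-factors by centrality then yields the result; type (4), being $d$ of an element of type (3), follows from the type (3) case together with $d\circ\alpha=\alpha\circ d$.

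The only point requiring care — and the ``obstacle'', such as it is — is the bookkeeping in types (3) and (4): one must remember that, in the same convention as in Proposition~\ref{Prop4.2.1}(1) (where the coefficient is $\xi={}^{\V^{u(I_0)}}\eta$), the quantity multiplied by $\alpha$ is the full coefficient ${}^{\V^u}\xi\in{}^{\V^u}W(k)$, so that after absorbing $\alpha$ the inner element becomes ${}^{\f^u}\alpha\cdot\xi$ rather than $\alpha\xi$. With this understood, the decomposition is unchanged apart from the coefficient, and the claim follows. Everything here is a routine computation with the defining relations of the de Rham--Witt complex, so no serious difficulty arises.
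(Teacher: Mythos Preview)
Your proof is correct and follows essentially the same idea as the paper's: the key point in both is that ${}^{\V^{u}}W(k)$ is stable under multiplication by $W(k)$ (equivalently, the projection formula $\alpha\cdot{}^{\V^u}\eta={}^{\V^u}({}^{\f^u}\alpha\cdot\eta)$), together with centrality of $W(k)$ in $W\Omega_A$. The paper's proof is more terse---it simply invokes the discussion on \cite[p.~40]{LangerZink} to observe that the coefficients live in ${}^{\V^{u(I)}}W(k)$ and that this is an ideal---whereas you unpack the argument type by type and flag the notational ambiguity between the ``inner'' and ``outer'' coefficient in types (3) and (4), which is a useful clarification the paper glosses over.
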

\begin{proof}: Analogues to the discussion in \cite[p.40]{LangerZink} we see that the coefficients in the generalised basic Witt differentials  are elements of $^{\V^{U(I)}}W(k)$ where $I$ is the partition of the support of the weight function in question and $u$ is defined as in Section \ref{SubsecBasic}. As for any $\alpha\in W(k)$ and $\xi\in ^{\V^{U(I)}}W(k)$ the product $\alpha\xi$ is again in $^{\V^{U(I)}}W(k)$, we see as in \cite[loc.sit.]{LangerZink} that multiplying by $\alpha$ a general basic Witt differential of one of the forms given at the begin of the section means to multiply the coefficient $\xi$ appearing there by $\alpha$.  \end{proof}

In particular, multiplication by an element in $W(k)$ respects the types of general basic Witt differentials, and this is the fact that we will use later for multiplication by $p$.

Due to the fact that the Verschiebung is only additive, but not a ring homomorphism, its action on the generalised basic Witt differentials is more complicated to describe. We have recalled the action on the usual basic Witt differentials in Proposition \ref{Prop4.2.1}. We note further, that 
$${}^{\V}\!d\log[X_i]=d{}^{\V}\!\log[X_i]$$
as the general formula ${}^{\V}\!(\omega_0d\omega_1\cdots d\omega_i)={}^{\V}\!\omega_0 d{}^{\V}\!\omega_1\cdots d{}^{\V}\!\omega_i$ holds. This formula also tells us, that for differentials of type (3), i.e. if $e= {}^{\V^u}\!\left(\xi\prod_{j\notin J}\left[X_j\right]^{p^u{k_j}}\left[X\right]^{p^u{k_{I_0}}}\right)d {}^{\V^{u(I_1)}}\!\left[X\right]^{p^{u(I_1)}{k_{I_1}}}\cdots {}^{\f^{-t(I_\ell)}}\!d\left[X\right]^{p^{t(I_\ell)}{k_{I_\ell}}}$, then 
$$^{\V}\!e={}^{\V^u+1}\!\left(\xi\prod_{j\notin J}\left[X_j\right]^{p^u{k_j}}\left[X\right]^{p^u{k_{I_0}}}\right)d {}^{\V^{u(I_1)+1}}\!\left[X\right]^{p^{u(I_1)}{k_{I_1}}}\cdots {}^{\f^{-t(I_\ell)-1}}\!d\left[X\right]^{p^{t(I_\ell)}{k_{I_\ell}}}.$$
Furthermore, it is possible to describe the action of $\V$ on elements of type (4) using the same formula by multiplying with the factor 1, which allows us to write
$$^{\V}\! d e={}^{V}\!1 d^{\V}\!e$$
which changes the coefficient to $p^{\V}\xi$ (see \cite[p. 41]{LangerZink} and we obtain
$$^{\V}\!e= d {}^{\V^u}\!\left(p{}^{\V}\!\xi\prod_{j\notin J}\left[X_j\right]^{p^u{k_j}}\left[X\right]^{p^u{k_{I_0}}}\right)d {}^{\V^{u(I_1)}+1}\!\left[X\right]^{p^{u(I_1)}{k_{I_1}}}\cdots {}^{\f^{-t(I_\ell)}-1}\!d\left[X\right]^{p^{t(I_\ell)}{k_{I_\ell}}}.$$
As for general basic differentials of type (2), this depends on the different cases and also on the form of the basic Witt differentials $e(\xi,k,\PU,J)$ involved.

\subsubsection*{The kernel and cokernel of $1-\f$ for $X=\GG_m^d$}

The logarithmic differentials are by definition of the de Rham-Witt complex fixed by the Frobenius endomorphism. One would like to prove that these are the only elements with this property. We start by making the assertion for the special case, when $X$ is a product of multiplicative groups, i.e. $X=\Spec A$ with $A=k\left[X_1,X_1^{-1},\ldots,X_d,X_d^{-1}\right]$. 

\begin{prop} The sections of $W^\dagger\Omega_{X,\log}$ on $X=\GG_m^d$ are exactly the Frobenius fixed elements of $W\Omega_X$.\end{prop}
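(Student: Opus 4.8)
The plan is to reduce the statement to an explicit computation on the decomposition of a Frobenius-fixed element into generalised basic Witt differentials. By definition $W^\dagger\Omega_{X,\log}$ consists (étale-locally) of sums of symbols $d\log[x_1]\cdots d\log[x_i]$, and since for units the Teichmüller lift satisfies $\f d\log[x]=d\log[x]$, the inclusion $W^\dagger\Omega_{X,\log}\subseteq(W^\dagger\Omega_X)^{\f=1}$ is immediate. The content is the reverse inclusion: if $\omega\in W^\dagger\Omega_X$ satisfies $\f\omega=\omega$, then $\omega$ is a sum of logarithmic symbols.

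First I would fix $X=\Spec A$ with $A=k[X_1^{\pm1},\dots,X_d^{\pm1}]$ and write $\omega=\sum_{k,\PU}e(\xi_{k,\PU},k,\PU)$ as its unique convergent decomposition into generalised basic Witt differentials (the four types listed in Section \ref{SubsecShortExactSequence}). By the remark after Proposition \ref{Prop4.2.1}, extended in Proposition \ref{FrobAction}, applying $\f$ permutes the index set $(k,\PU)$ in a controlled way (multiplying the weight by $p$, essentially fixing partitions and types, possibly swapping types 2–3), and the Frobenius action on the set of basic differentials is injective. So $\f\omega=\omega$ forces, term by term after matching indices, a relation of the form $\xi_{k,\PU}=\f\xi_{k/p,\PU}$ (or the type-4 variant with $\V^{-1}$) whenever $k$ is a "$p$-multiple" of another admissible weight, and otherwise forces $\xi_{k,\PU}=\f\xi_{k,\PU}$, i.e. $\xi_{k,\PU}\in W(\FF_p)=\ZZ_p$. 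The key case analysis is: a weight $k$ that is integral and not divisible by $p$ "restarts" an orbit, and iterating $\f$ multiplies it by powers of $p$, so the orbit is infinite; convergence of $\omega$ (the Gauss-norm condition, equivalently the overconvergence inequalities in Remark \ref{RemFrobAction}) together with $\xi=\f\xi$ along the orbit then forces $\xi$ to be a unit-scaled integer and, crucially, forces $k$ itself to be zero on the "honest" variables — the only weight surviving in an infinite $\f$-orbit with bounded coefficient is the one supported purely on $d\log$'s. Concretely, a type-(2.a) differential $e(\xi,k,\PU,J)\prod_{j\notin J}d\log[X_j]$ with $\f$-fixed $\xi\in\ZZ_p$ and $k=0$ on $J$ is exactly $\xi\cdot d\log[X_{j_1}]\cdots d\log[X_{j_s}]$, a logarithmic symbol (using $p^N=d\log$ of a $p$-power unit and $\ZZ_p$-linearity of the log-symbol group, cf. the argument that $W\Omega_{X,\log}$ is $p$-adically generated by symbols, Remark \ref{RemLogPn}(2)). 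All other types contribute a genuine $d[X_j]$ or $\V$-twisted piece whose weight strictly grows under $\f$, so a bounded $\f$-fixed coefficient is impossible unless that part is absent; type (3) and (4) and (2.b),(2.c) are ruled out this way.

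I would then pass from the torus to a general smooth $X$: by Davis–Langer–Zink the formation of $W^\dagger\Omega$ is étale-local and compatible with the epimorphism $\lambda:W^\dagger\Omega_S\twoheadrightarrow W^\dagger\Omega_A$ for a smooth presentation, and the $d\log$ symbols and $\f$ are compatible with $\lambda$; since every smooth $k$-scheme is étale-locally a localisation of an affine space, and one can build in inverted coordinates to land in the $\GG_m^d$ case on a suitable étale neighbourhood (standard in this circle of ideas), the local statement globalises.

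The main obstacle is the finiteness/convergence argument in the torus case: showing rigorously that an element with an infinite, weight-increasing $\f$-orbit and a uniformly overconvergence-bounded coefficient sequence cannot be $\f$-fixed unless its "differential" part is purely logarithmic. This is where the explicit inequalities of Remark \ref{RemFrobAction} — the bounds $|k|\leqslant C_1\ord_p\xi+C_2$ etc., combined with $\ord_p(\f\xi)=\ord_p\xi$ and $|pk|=p|k|$ — must be used to derive a contradiction, and it requires a careful bookkeeping of all four types of generalised basic Witt differentials rather than a single slick estimate.
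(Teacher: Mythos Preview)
Your overall strategy --- decompose into generalised basic Witt differentials, use injectivity of $\f$ on that set, and analyse $\f$-orbits --- is exactly the paper's. But two points deserve correction.

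First, you have misread the statement: it asserts $W^\dagger\Omega_{X,\log}=(W\Omega_X)^{\f=1}$, not $(W^\dagger\Omega_X)^{\f=1}$. You restrict at the outset to overconvergent $\omega$ and then invoke the overconvergence inequalities of Remark \ref{RemFrobAction} as the ``main obstacle''. None of that is needed. The paper takes $\omega\in W\Omega_X$ and uses only the $p$-adic convergence built into the decomposition $\omega=\sum e(\xi_{k,\PU},k,\PU)$: for each $m$, one has $\xi_{k,\PU}\in{}^{\V^m}W(k)$ for almost all $(k,\PU)$. An infinite chain $e,\f e,\f^2 e,\ldots$ appearing in the decomposition would have coefficients of constant $p$-adic valuation (since $\f$ is an automorphism of $W(k)$), contradicting that convergence condition outright. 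This is both simpler than your Gauss-norm bookkeeping and proves the stronger statement actually claimed.

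Second, the paper's orbit analysis is cleaner than yours. Rather than tracking relations $\xi_{k,\PU}=\f\xi_{k/p,\PU}$ along orbits, the paper observes: (i) there are no finite $\f$-cycles of length $>1$ among basic differentials, because $\f$ strictly increases the weight (Proposition \ref{FrobAction}); (ii) infinite chains are excluded by the convergence argument above. Hence every basic differential in the sum is \emph{individually} $\f$-fixed. Then one reads off from Proposition \ref{FrobAction} that the only $\f$-fixed basic differentials are those of type (2.a) with trivial classical part, i.e.\ $\xi\cdot\prod d\log[X_j]$ with $\xi\in\ZZ_p$, which lies in $W^\dagger\Omega_{X,\log}$.

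Finally, the globalisation to arbitrary smooth $X$ is not part of this proposition; the paper handles it separately (Lemma \ref{LemC22} and the lemma following, leading to Corollary \ref{CorC25}), so you should drop that paragraph here.
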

\begin{proof}:  It is clear that $W^\dagger\Omega_{X,\log}\subset (W\Omega_X)^{\f-1}$. For the converse, let $\omega\in(W\Omega_X)^{\f-1}$, with decomposition in generalised basic Witt differentials $\omega=\sum e$. However, we know that the action of Frobenius preserves the types of basic Witt differentials and that the decomposition is unique. We want to use this to argue that it is enough to check the generalised basic Witt differentials.

With the assumption $\f \omega=\omega$ there are two cases to consider: finite and infinite sets of elements appearing in the sum decomposition of $\omega$ that form a (finite respectively infinite) ``cycle'' under the action of $\f$.\\
\textbf{The finite case}: assume that among the basic Witt differentials in the decomposition $\omega=\sum e$ there is a finite set $e_1,\ldots,e_n$ such that $\f\sum_{i=1}^n e_i=\sum_{i=1}^n e_i$ which means after possibly reordering
$$\f e_1=e_2\; ,\; \f e_2=e_3\; ,\; \ldots,\; \f e_n=e_1.$$
But according to Proposition \ref{FrobAction} this is impossible unless $n=1$.\\
\textbf{The infinite case}: assume that in the decomposition there is an infinite set $e_1,e_2,\ldots$ such that 
$$\f\sum_{i=1}^\infty e_i=\sum_{i=1}^\infty e_i$$
which means after reordering
$$\f e_1=e_2\; ,\; \f e_2=e_3\; ,\; \ldots$$
However this is not convergent $p$-adically and therefore not feasible.

Thus if the whole sum $\omega=\sum e$ is fixed under Frobenius, every basic differential appearing in this sum must be so.

According to Proposition \ref{FrobAction}, the sections fixed under Frobenius action are of the form (2.a) with $e(\xi,k,\PU,J)$ trivial. This shows that $(W\Omega_X)^{\f-1}\subset W^\dagger\Omega_{X,\log}$.  \end{proof}

It follows in particular that we have for all $i\in\NN$ locally for \'etale topology a commutative diagram where the rows are exact:
$$\xymatrix{0\ar[r] & W\Omega^i_{X,\log}\ar@{^{(}->}[r]\ar@{=}[d] & W\Omega^i_X\ar[r]^{\f-1} & W\Omega^i_X\ar[r] & 0\\
0\ar[r] & W^\dagger\Omega^i_{X,\log}\ar@{^{(}->}[r]\ar@{^{(}->}[ru] & W^\dagger\Omega^i_X\ar[r]^{\f-1}\ar@{_{(}->}[u] & W^\dagger\Omega^i_X\ar@{_{(}->}[u] & }$$

Earlier we mentioned the condition for an element $\omega\in W\Omega_{X/k}$ to be overconvergent. On the other hand, this shows that an element $\omega\in W\Omega_{X/k}$ is \textbf{not} overconvergent, if for all $C_1>0$ and $C_2\in \RR$ there is an elementary Witt differential $e$ in its decomposition violating one of the inequalities.  We will study the action of Frobenius on elementary Witt differentials subject to the inequalities indicating overconvergence or non-overconvergence.

As mentioned before, any Witt differential over $\GG_m^d$ can be written in a unique way as a sum of basic Witt differentials. The basic Witt differentials appearing in this sum are characterised by a coefficient, a weight function and a partition. The action of Frobenius was described in Proposition \ref{FrobAction}. In essence Frobenius action changes the weights and the coefficients of the basic differentials creating a new unique sum. One can see Frobenius as being injective on the \textbf{set} of basic Witt differentials. 

We want to argue that overconvergence is preserved by Frobenius if we modify one of the constants in an obvious way.

\begin{lem} Let $e$ be a basic Witt differential satisfying an inequality indicating overconvergence for constants $C_1$ and $C_2$, then so does $^{\f}\! e$ for constants $C_1$ and $pC_2$.\end{lem}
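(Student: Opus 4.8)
The strategy is to go case by case through the classification of generalised basic Witt differentials (the four types listed just before Proposition \ref{FrobAction}), use the explicit description of the Frobenius action from Proposition \ref{FrobAction}, and in each case compare the overconvergence inequality for $e$ (as recorded in Remark \ref{RemFrobAction}) with the corresponding inequality for ${}^{\f}\!e$. In every case the weight $k$ is multiplied by $p$, the coefficient $\xi$ changes to either ${}^{\f}\!\xi$ or ${}^{\V^{-1}}\!\xi$, and we must check that the left-hand side of the relevant inequality scales by a controlled factor while the $p$-adic order of the coefficient on the right-hand side does not decrease too much.

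First I would treat types (1) and (2.a). Here the relevant inequality is $|k|\leqslant C_1\ord_p\xi_{k,\PU}+C_2$. Frobenius sends $k\mapsto pk$, so the left side becomes $|pk|=p|k|\leqslant pC_1\ord_p\xi+pC_2$, and I must observe that $\ord_p({}^{\f}\!\xi)\geqslant\ord_p\xi$ (indeed $\f$ on $W(k)$ is an injective ring endomorphism that does not lower $p$-adic valuation; for type (2.a), where the coefficient $\xi$ of the classical factor $e(\xi,k,\PU,J)$ transforms as in Proposition \ref{Prop4.2.1}, the same holds). Hence $p|k|\leqslant pC_1\ord_p({}^{\f}\!\xi)+pC_2\leqslant C_1\ord_p({}^{\f}\!\xi)+pC_2$ after absorbing the extra factor of $p$ into $C_1$ — but since the lemma only asks to keep $C_1$ fixed and replace $C_2$ by $pC_2$, I should instead argue that $|pk| = p|k| \le p(C_1\ord_p\xi + C_2)$ and that for overconvergence one may always shrink $C_1$; more cleanly, note $\ord_p$ values are nonnegative so $p|k|\le C_1\cdot p\ord_p\xi + pC_2$ and $p\ord_p\xi \le \ord_p({}^\f\xi)$ is false in general — so the honest bookkeeping is: $|{}^\f k| = p|k| \le pC_1\ord_p\xi_{k,\PU} + pC_2$, and since $\ord_p\xi_{k,\PU} \le \ord_p({}^\f\xi)$ while we are allowed to enlarge the multiplicative constant, the inequality holds with $(pC_1, pC_2)$, hence a fortiori (after the trivial observation that overconvergence of a fixed $\omega$ is insensitive to replacing $C_1$ by a larger value) with the constants as stated. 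For types (2.b) and (2.c) the same computation applies to $|r|+|k|$ and $|l\cdot p^s|+|k|$ respectively: in (2.b), $r_j\mapsto pr_j$ and $k\mapsto pk$, so the left side scales by $p$; in (2.c), the exponent $l_j$ is unchanged but $s_j\mapsto s_j+1$, so $|l\cdot p^s|\mapsto p|l\cdot p^s|$, and again everything scales by $p$. For types (3) and (4), by Proposition \ref{FrobAction} the weight $k$ becomes $pk$ (all the $k_j$ and $k_{I_i}$ are multiplied by $p$), so $\sum|k_j|+\sum|k_{I_i}|$ scales by $p$, and the coefficient ${}^{\V^u}\!\xi$ becomes ${}^{\V^u}({}^{\f}\!\xi)$ in type (3) and ${}^{\V^u}({}^{\V^{-1}}\!\xi)$ in type (4); in both cases $\ord_V$ of the coefficient does not decrease, so the same argument closes the case.

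The main obstacle — really the only subtlety — is the bookkeeping around the coefficient's $p$-adic (or $V$-adic) order: one must be careful that $\f$ and the switch to ${}^{\V^{-1}}\!\xi$ in type (4) do not decrease $\ord_p\xi$, and that multiplying the weight by $p$ genuinely multiplies the relevant "size" functional $|k|$, $|r|+|k|$, etc., by exactly $p$ (this is immediate from their definitions as sums of absolute values of exponents). Once these two observations are in hand, the inequality with constants $(C_1,C_2)$ for $e$ yields the inequality with constants $(C_1,pC_2)$ for ${}^{\f}\!e$ by direct substitution, uniformly over all four types. I would present this as a short paragraph handling type (1)/(2.a) in detail and then remarking that the remaining types are identical mutatis mutandis, citing Proposition \ref{FrobAction} and Remark \ref{RemFrobAction} for the explicit formulas.
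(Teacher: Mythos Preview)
Your approach --- a case-by-case analysis through the types of generalised basic Witt differentials, quoting Proposition~\ref{FrobAction} for the Frobenius action and Remark~\ref{RemFrobAction} for the overconvergence inequalities --- is exactly the paper's. However, your bookkeeping contains a genuine error.

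You correctly arrive at $|pk|\leqslant pC_1\ord_p\xi + pC_2$ from the original inequality and correctly note $\ord_p({}^{\f}\!\xi)=\ord_p\xi$ (Frobenius on $W(k)$ is an automorphism for perfect $k$), so the inequality for ${}^{\f}\!e$ holds with constants $(pC_1,pC_2)$. The mistake is your ``a fortiori'' step: passing from $(pC_1,pC_2)$ to $(C_1,pC_2)$ \emph{strengthens} the inequality, since $\ord_p\xi'\geqslant 0$ and $C_1<pC_1$; an inequality with the larger slope does not imply one with the smaller slope. Your parenthetical that overconvergence is insensitive to enlarging $C_1$ goes the wrong direction --- enlarging $C_1$ weakens the condition, so what follows a fortiori is the inequality with a \emph{larger} first constant, not a smaller one.

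There is a second gap. In the cases where the coefficient becomes ${}^{\V^{-1}}\!\xi$ (classical type with $I_0=\varnothing$ and $k$ fractional, and likewise in type~(4)), one has $\ord_p({}^{\V^{-1}}\!\xi)=\ord_p\xi-1$, so the $p$-adic order genuinely drops. You flag this but then assume it away (``in both cases $\ord_V$ of the coefficient does not decrease''). With that drop even $(pC_1,pC_2)$ is insufficient; one needs something like $(pC_1,\,pC_2+pC_1)$.

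To be fair, the paper's own proof is equally unjustified at this exact point: it simply writes down the target inequality with $(C_1,pC_2)$ without deriving it. You have actually located a soft spot in the stated constants. What is really needed for the application (Proposition~\ref{PropC14}) is only that Frobenius preserves overconvergence with \emph{some} new constants depending on the old ones, and that is precisely what your honest computation with $(pC_1,\,pC_2+pC_1)$ establishes; so if you drop the faulty ``a fortiori'' and instead state the result with those constants, the argument is complete and serves the same purpose.
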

\begin{proof}: This is shown for one type at a time. 
\begin{itemize} \item If $e$ is of type (1) or of type (2.a), then the inequality depends only on the basic classical Witt differential appearing in the expression: $|k| \leqslant C_1\ord_p\xi_{k,\PU}+C_2$. The action of $\f$ on a differential of this type changes the weight $k$ to $pk$ and the coefficient $\xi_{k,\PU}$ to $^{\f}!\xi_{k,\PU}$ if $k$ is integral and $^{V^{-1}}!\xi_{k,\PU}$ if $k$ is fractional, the partition is essentially unchanged. The inequality has to be modified to
$$|pk|=p|k| \leqslant C_1\ord_p({}^{\f}\!\xi_{k,\PU})+pC_2.$$
\item If $e$ is of type (2.b), the crucial inequality is $|r|+|k|\leqslant C_1\ord_p\xi_{k,\PU}+C_2$, where $|r|=\sum r_j$. The action of $\f$ changes $r_j$ to $pr_j$ and therefore $|r|$ to $p|r|$, $k$ and $\xi$ change as in the previous case. As above, it becomes clear that the only modification to the constants has to be $pC_2$ instead of $C_2$.
\item If $e$ is of type (2.c), the same argument is valid with $|l|$ in the place of $|r|$.
\item If $e$ is of type (3) or (4), action of Frobenius means that the $k_I$'s appearing are multiplied by $p$ and the coefficient changes to $^{\f}\!\xi$ or $^{V^{-1}}\!\xi$. Again we see that the inequality still holds if we change $C_2$ to $pC_2$.
\end{itemize}
This shows the claim. \end{proof}

\begin{lem} Let $e$ be a basic Witt differential satisfying an inequality indicating non-overconvergence for constants $C_1$ and $C_2$, then so does $^{\f}\!e $ for the same constants.\end{lem}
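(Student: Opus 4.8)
The plan is to rerun the case-by-case analysis of the preceding lemma, now observing that in the non-overconvergence direction every change produced by $\f$ works in our favour, so that the constants need no adjustment. Recall that ``$e$ satisfies an inequality indicating non-overconvergence for $C_1$ and $C_2$'' means precisely that the relevant overconvergence inequality of Remark \ref{RemFrobAction} \emph{fails} for these constants: $|k| > C_1\ord_p\xi_{k,\PU} + C_2$ when $e$ is of type (1) or (2.a); $|r| + |k| > C_1\ord_p\xi_{k,\PU} + C_2$ when $e$ is of type (2.b); $|l\cdot p^s| + |k| > C_1\ord_p\xi_{k,\PU} + C_2$ when $e$ is of type (2.c); and $\sum|k_j| + \sum|k_{I_i}| > C_1\ord_p({}^{\V^u}\!\xi) + C_2$ when $e$ is of type (3) or (4).

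First I would record two elementary observations, both read off directly from Proposition \ref{FrobAction} together with Proposition \ref{Prop4.2.1}. First, in each of these types $\f$ multiplies every exponent entering the left-hand side of the inequality by $p$: the weight $k$ becomes $pk$, the $r_j$ become $pr_j$, the $l_j p^{s_j}$ become $l_j p^{s_j+1}$, and in types (3) and (4) the $k_j$ and $k_{I_i}$ become $pk_j$ and $pk_{I_i}$; hence the left-hand side is multiplied by $p \geqslant 1$, so it does not decrease. Secondly, the coefficient $\xi$ is replaced by ${}^{\f}\!\xi$ --- which preserves $\ord_p$, since $\f$ is an automorphism of $W(k)$ fixing $p$ --- except in the fractional-weight situation (the second case of Proposition \ref{Prop4.2.1}) and in type (4), where instead it is replaced by ${}^{\V^{-1}}\!\xi$, which \emph{lowers} $\ord_p$ by one. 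So in every case $\ord_p$ of the new coefficient is at most that of the old one, and since $C_1 > 0$ the right-hand side $C_1\ord_p(\cdot) + C_2$ does not increase.

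Combining the two observations: passing from $e$ to $\f e$, the left-hand side of the inequality does not decrease while the right-hand side does not increase, so a strict violation for $(C_1, C_2)$ persists with the same $(C_1, C_2)$. Running this through each of the types (1), (2.a), (2.b), (2.c), (3), (4) in turn finishes the proof. I do not expect a real obstacle; the one point deserving a moment's care is the fractional-weight subcases and type (4), where the coefficient acquires a ${}^{\V^{-1}}$ and its $\ord_p$ therefore decreases --- but this only makes the inequality more badly violated, so it causes no trouble. What remains is the purely mechanical bookkeeping of checking that the left-hand quantity is genuinely scaled by $p$ in every type, which is exactly the content of Proposition \ref{FrobAction}.
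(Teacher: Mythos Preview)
Your proposal is correct and follows essentially the same approach as the paper: a case-by-case check, parallel to the preceding lemma, noting that under $\f$ the left-hand side of each inequality is multiplied by $p$ while the right-hand side does not increase, so a strict violation persists for the same constants. The paper's own proof is in fact just the one-line remark that ``this is essentially the same argument as before, with the difference that this time we deal with strict inequalities in the other direction, so there is no need to increase the second constant''; your write-up simply makes this explicit.
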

\begin{proof}: This is essentially the same argument as before, with the difference, that this time we deal with strict inequalities in the other direction, so there is no need to increase the second constant.  \end{proof}

\begin{prop}\label{PropC14} The map $1-\f$ over $X=\GG_m^d$ is surjective for \'etale topology.\end{prop}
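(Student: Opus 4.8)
The plan is to invert $1-\f$ by first invoking the classical surjectivity of $\f-1$ on $W\Omega^i_X$ for the \'etale topology --- already used in Section \ref{Logarithmic}, where it follows from the finite-level statement of \cite{Illusie} together with the Mittag--Leffler property of $W_\bullet\Omega^i_{X,\log}$ --- and then showing that, over $X=\GG_m^d$, a classical primitive of an overconvergent class is automatically overconvergent. So: given $\omega\in W^\dagger\Omega^i_X$, I would work \'etale-locally, pick $\eta\in W\Omega^i_X$ with $(\f-1)\eta=\omega$, and reduce the proposition to the claim that $\eta\in W^\dagger\Omega^i_X$.

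For the claim I would expand $\eta$ and $\omega$ in their unique decompositions into generalized basic Witt differentials. By Proposition \ref{FrobAction} the Frobenius permutes the basic Witt differentials, preserves their types, and sends a weight $k$ to $pk$; hence it partitions the index set into $\f$-orbits, each of which it maps into itself, so the equation $(\f-1)\eta=\omega$ decouples orbit by orbit with no cancellation between distinct orbits. The weight-zero orbits are exactly the fixed points of $\f$, i.e.\ the logarithmic differentials $d\log[X_{j_1}]\cdots d\log[X_{j_i}]$; there are only finitely many of these and they are overconvergent, so they cause no trouble. On an orbit $\mathcal O$ of nonzero weight $\f$ acts freely; enumerating its members so that $\f$ carries the $m$-th to the $(m+1)$-st, and writing $\xi_m$, $\zeta_m$ for the coefficients of the $m$-th basic differential in $\eta$, $\omega$ respectively, Proposition \ref{FrobAction} yields the relation
$$\zeta_m=\phi_{m-1}(\xi_{m-1})-\xi_m ,$$
where each $\phi_{m-1}$ is either the Frobenius $\f$ or the inverse Verschiebung $\V^{-1}$ on $W(k)$, while the $m$-th differential has weight $k_m$ with $|k_m|=p^m|k_0|$, strictly increasing in $m$.

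The crux is then the following estimate. Since $\eta$ lies in $W\Omega^i_X$ its decomposition converges $p$-adically, so $\ord_p\xi_m\to\infty$ as $m\to+\infty$ along $\mathcal O$; I would therefore solve the recursion \emph{backwards}, $\xi_m=\phi_m^{-1}(\xi_{m+1}+\zeta_{m+1})$, and iterate, obtaining $\xi_m=\sum_{l\geqslant 1}\bigl(\phi_m^{-1}\cdots\phi_{m+l-1}^{-1}\bigr)(\zeta_{m+l})$, the remainder $\bigl(\phi_m^{-1}\cdots\phi_{m+n-1}^{-1}\bigr)(\xi_{m+n})$ tending to $0$ because the operators $\f^{-1}$ and $\V$ do not lower the $p$-adic valuation. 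Using that $\f^{-1}$ preserves and $\V$ raises $\ord_p$, and that $\omega=(\f-1)\eta$ is overconvergent --- say $|k|\leqslant C_1\ord_p\xi+C_2$ for all its basic differentials, with $C_1,C_2$ fixed --- one gets
$$\ord_p\xi_m\ \geqslant\ \inf_{l\geqslant 1}\ord_p\zeta_{m+l}\ \geqslant\ \inf_{l\geqslant 1}\frac{|k_{m+l}|-C_2}{C_1}\ =\ \frac{|k_{m+1}|-C_2}{C_1}\ \geqslant\ \frac{|k_m|-C_2}{C_1},$$
the middle equality because $|k_{m+l}|$ increases with $l$. As this holds with the same $C_1,C_2$ for every $m$ and every orbit, $\eta$ is overconvergent, which proves the claim and hence the surjectivity of $\f-1$, equivalently of $1-\f$, on $W^\dagger\Omega^i_X$ for the \'etale topology.

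The main obstacle is precisely this estimate: one cannot invert $1-\f$ by the naive geometric series $\sum_{n\geqslant 0}\f^n\omega$, because by the lemmas of this section each application of $\f$ multiplies the overconvergence constant by $p$, so the radius degrades at every step, and on logarithmic terms the series does not even converge $p$-adically; the point is to invert in the backward direction, where the Frobenius action is non-expansive on $p$-adic valuations while the weights it has to absorb grow geometrically, so that overconvergence is gained rather than lost --- all of which rests on the precise description of the $\f$-action on (generalized) basic Witt differentials obtained just above. A secondary, bookkeeping-type point is that the classical primitive $\eta$ may only be defined on a finite \'etale cover of $\GG_m^d$ rather than on $\GG_m^d$ itself (Artin--Schreier--Witt covers are genuinely needed to handle the logarithmic part), but this is harmless because the overconvergent complex of \cite{DavisLangerZink} is compatible with finite \'etale morphisms, which lets one transport the above estimate from $\GG_m^d$ to its covers.
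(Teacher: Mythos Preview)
Your argument is correct and arrives at the same conclusion as the paper, but by a genuinely different route. Both proofs begin identically --- pick a classical preimage $\eta\in W\Omega^i$ \'etale-locally and decompose it into generalised basic Witt differentials, using Proposition~\ref{FrobAction} to see that $\f$ permutes these along orbits with weights multiplied by $p$ --- but diverge in how overconvergence of $\eta$ is deduced from that of $\omega$. The paper argues by contradiction: if some basic differential $e$ in $\eta$ violates the overconvergence inequality for the constants $C_1,C_2$ witnessing overconvergence of $\omega$, then (by the lemma just above) so does $\f e$, and the relation $\zeta_{\f e}=\xi_{\f e}-\phi(\xi_e)$ forces $\f e$ to occur in $\eta$ with coefficient of equally small $\ord_p$; inductively the whole forward orbit $\f^n e$ appears with bounded valuation, contradicting $p$-adic convergence of the decomposition. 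You instead solve the orbit-wise recursion \emph{backward} and obtain directly the bound $\ord_p\xi_m\geqslant(|k_m|-C_2)/C_1$ with the \emph{same} constants, which is slightly sharper (same radius for $\eta$ as for $\omega$) and makes transparent that on the non-logarithmic part one can in fact solve on $\GG_m^d$ itself, with no cover.

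One correction on the \'etale bookkeeping: ``transport the estimate to the cover'' does not literally work, since the basic Witt differential decomposition is specific to $\GG_m^d$ and need not survive passage to an Artin--Schreier--Witt cover. The clean fix --- which your own orbit analysis already suggests --- is to split $\omega=\omega_{\log}+\omega'$: for the nonzero-weight part $\omega'$ your backward series produces an overconvergent preimage on $\GG_m^d$ with no cover needed; for $\omega_{\log}=\sum a_J\,d\log[X_J]$ one only has to solve $(1-\sigma)b_J=a_J$ in $W(k')$ for a finite extension $k'/k$, and $\sum b_J\,d\log[X_J]$ is then manifestly overconvergent on $\GG_{m,k'}^d$, where the decomposition is still available. (The paper's proof glosses over the same point.)
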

\begin{proof}: Let $\omega\in W^\dagger\Omega_{X/k}$. We have seen that up to \'etale localisation, there is $\eta\in W\Omega_{X/k}$ such that $\omega=(1-\f)\eta$. We have to show that $\eta$ is in fact overconvergent. 

Write $\eta=\sum_{k,\PU,J} e(\xi,k,\PU,J)$ as a unique sum of elementary Witt differentials and assume that it is not overconvergent. Then for all $C_1>0$ and $C_2\in\RR$ there is an element $e$ appearing in the sum that violates the inequalities for overconvergence, or in other words satisfies the strict inequalities for non-overconvergence, and so do the elements $^{\f^i}\!e$, $i\in\NN_0$ for the same constants $C_1$ and $C_2$. 

Since $\omega=(1-\f)\sum_{k,\PU,J} e(\xi,k,\PU,J)$ is overconvergent there must be $C_1, C_2$ for which the corresponding elements that violate the overconvergence inequality in the original sum cancel out after applying $1-\f$. Let $e$ be one of these elements. 

The image of $e$ is $e-{}^{\f}\!e$. Due to the nature of the basic Witt differentials and the way Frobenius acts on the different types as pointed out in Remark \ref{RemFrobAction}, it is clear that $e$ (and similarly $^{\f}\!e$) either remains and appears as a basic Witt differential in the unique decomposition of $\omega$ or is cancelled out by some basic Witt differential $^{\f}\!e'$ where $e'$ is another basic Witt differential of the decomposition of $\eta$ (similarly $^{\f}\!e$ appears or is cancelled out by a basic Witt differential $e''$ which appears in the decomposition of $\eta$). 

Since we assumed that $e$ be cancelled out after applying $1-\f$ the same must hold true for $^{\f}\!e$ which is subject to the same inequality. Hence $e''= {}^{\f}\!e$ has to appear in the unique sum of $\eta$. By induction the basic Witt differentials $^{\f^i}\!e, i\in\NN_0$ all appear in the unique sum of $\eta$. But a sum containing all of these elements cannot be convergent in the sense of Section \ref{SubsecBasic}. This is a contradiction to the uniqueness of the sum and therefore $\eta$ must be overconvergent to begin with.  \end{proof}

\begin{cor}\label{CorSimplified} The same is true for $X=\Spec A[Y,Y^{-1}]$ where $A=k[X_1,\ldots,X_d]$.\end{cor}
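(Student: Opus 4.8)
The plan is to run the argument of Proposition \ref{PropC14} essentially verbatim, the only genuinely new ingredient being the bookkeeping of basic Witt differentials over the coordinate ring $A[Y,Y^{-1}]=k[X_1,\ldots,X_d,Y,Y^{-1}]$ of $\aA^d\times\GG_m$. Here there are two flavours of basic Witt differentials simultaneously: the classical ones in the polynomial variables $X_1,\ldots,X_d$ (with $Y$ allowed to occur to a non-negative power, treated like an extra polynomial variable), and the localised ones of types (2)--(4) of Section \ref{SubsecShortExactSequence} in which $Y$ occurs to a negative or fractional power. Exactly as in the proof of Proposition~1.3 of \cite{DavisLangerZink} — which produces the decomposition over $\GG_m^d$ precisely by combining the classical decomposition with the localised one — every $\omega\in W\Omega_{A[Y,Y^{-1}]}$ has a unique $p$-adically convergent expression $\omega=\sum e(\xi,k,\PU,J)$ as a sum of such basic Witt differentials. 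First I would record this decomposition and check that Proposition \ref{FrobAction}, Remark \ref{RemFrobAction}, and the two preparatory lemmas preceding Proposition \ref{PropC14} all hold without change in this mixed setting: each of those statements is verified one type at a time, and $\f$ still carries a basic Witt differential of a given type to one of the same type, multiplying the weight by $p$ and twisting the coefficient by $\f$ or $\V^{-1}$ (cf.\ Proposition \ref{Prop4.2.1}); consequently overconvergence is preserved upon replacing $C_2$ by $pC_2$, and non-overconvergence is preserved with the same constants.

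With this in place the argument closes as before. Given $\omega\in W^\dagger\Omega_{X/k}$ for $X=\Spec A[Y,Y^{-1}]$, the étale-local surjectivity of $\f-1$ on the full de Rham--Witt complex provides, after étale localisation, an $\eta\in W\Omega_{X/k}$ with $\omega=(1-\f)\eta$; it remains to show $\eta$ is overconvergent. Writing $\eta=\sum e(\xi,k,\PU,J)$ in its unique decomposition and assuming it is \emph{not} overconvergent, for every pair $C_1>0$, $C_2\in\RR$ some $e$ in the sum violates the overconvergence inequalities, and then by the preparatory lemmas so does its entire forward orbit $\{\f^i e\}_{i\geqslant 0}$ for the same constants. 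Since the decomposition of $\omega$ is obtained from $\sum(e-\f e)$ by collecting basic Witt differentials and $\omega$ is overconvergent, such an $e$ must be cancelled after applying $1-\f$; because $\f$ preserves types and is injective on the set of basic Witt differentials, the only way this can happen is that $\f e$ again occurs in the decomposition of $\eta$, hence inductively every $\f^i e$ occurs, contradicting $p$-adic convergence of the decomposition. Therefore $\eta\in W^\dagger\Omega_{X/k}$, so $1-\f$ is surjective for the étale topology on $X$.

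The only step demanding real care is the first: making precise the basic-Witt-differential formalism over $\aA^d\times\GG_m$ and confirming that the type classification of Section \ref{SubsecShortExactSequence} together with the Frobenius formulas of Proposition \ref{FrobAction} extend verbatim. Once that routine verification is settled, no new idea is required — the convergence-and-cancellation mechanism is identical to that of Proposition \ref{PropC14}, which is precisely why the statement is phrased as a corollary.
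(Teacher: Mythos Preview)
Your proposal is correct and follows exactly the approach the paper intends: the paper's own proof is the single sentence ``This is just a simplified version of the previous assertion,'' meaning that the argument of Proposition~\ref{PropC14} goes through verbatim for $k[X_1,\ldots,X_d,Y,Y^{-1}]$, with the bookkeeping only becoming easier because just one variable is inverted (so the subset $J^c$ in the type (2)--(4) classification is the singleton $\{Y\}$). Your write-up spells out precisely what the paper leaves implicit, namely the mixed $\aA^d\times\GG_m$ decomposition and the observation that Proposition~\ref{FrobAction}, Remark~\ref{RemFrobAction}, and the two preparatory lemmas carry over type by type; after that the cancellation-and-convergence contradiction is identical.
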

\begin{proof}: This is just a simplified version of the previous assertion. \end{proof}

We have now established the following exact sequence for $X=\GG_m^d$ for \'etale topology
$$0\rightarrow W^\dagger\Omega_{X,\log}^i\rightarrow W^\dagger\Omega_X^i\xrightarrow{\f-1}W^\dagger\Omega_X^i\rightarrow 0.$$
We want to extend this result to general smooth schemes over $k$. 

\subsubsection*{The map $1-\f$ over a smooth $k$-scheme}

First we note, that we can reduce the general case to the case of a localised polynomial algebra. By a result of Kedlaya \cite{Kedlaya3} any smooth variety has a cover by standard \'etale affines as defined in \cite{Raynaud}. What is more, this cover can be chosen in a way that any finite intersection is again standard \'etale affine (see \cite[Proposition 4.3.1]{Davis}). Let $A=k[X_1,\ldots,X_d]$ and $f\in A$. In the proof of Theorem 1.8 in \cite{DavisLangerZink} the authors argue that it suffices to consider finite \'etale monogenic algebras over rings of the form $A_f$. In \cite[Proposition 1.9]{DavisLangerZink} they reduce this further by stating

\begin{prop}\label{PropC12} Let $B$ a finite \'etale and monogenic $C$-algebra, where $C$ is smooth over a perfect field of char $p>0$. Let $B=C[X]/(f(X))$ for a monic polynomial $f(X)$ of degree $m=\left[B:C\right]$ such that $f'(X)$ is invertible in $B$. Let $[x]$ be the Teichm\"uller of the element $X\mod f(X)$ in $W(B)$. Then we have for each $d\geqslant 0$ a direct sum decomposition of $W^\dagger(C)$-modules
$$W^\dagger\Omega^d_{B/k}=W^\dagger\Omega_{C/k}^d\oplus W^\dagger\Omega_{C/k}^d[x]\oplus\cdots\oplus W^\dagger\Omega_{C/k}^d[x]^{m-1}.$$
\end{prop}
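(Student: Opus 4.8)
The statement is Proposition 1.9 of \cite{DavisLangerZink}, so the natural strategy is to reduce it to the corresponding decomposition for the full de Rham--Witt complex together with a compatibility with Gau\ss{} norms / radii of overconvergence. First I would recall that for the ordinary de Rham--Witt complex there is a direct sum decomposition
$$W\Omega^d_{B/k}=\bigoplus_{i=0}^{m-1}W\Omega^d_{C/k}[x]^i$$
of $W(C)$-modules; this follows from the étale base change property $W\Omega_{B}\cong W(B)\otimes_{W(C)}W\Omega_{C}$ (the remark after Proposition 1.9 of \cite{DavisLangerZink}, already cited in this excerpt) together with the fact that $W(B)$ is free of rank $m$ over $W(C)$ with basis $[x]^0,\dots,[x]^{m-1}$, which in turn comes from $B=C[X]/(f)$ being finite étale monogenic and the behaviour of Teichmüller lifts. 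So the free module structure is clear; the content is that this decomposition is compatible with overconvergence.

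\textbf{The core of the argument.} Fix $\omega\in W^\dagger\Omega^d_{B/k}$ and write $\omega=\sum_{i=0}^{m-1}\omega_i[x]^i$ with $\omega_i\in W\Omega^d_{C/k}$ via the decomposition above; the claim is that each $\omega_i$ is itself overconvergent, and conversely that $\omega_i[x]^i$ is overconvergent whenever $\omega_i$ is. The ``converse'' direction is the easy half: $[x]$ is a Teichmüller lift, hence overconvergent by the remark after the definition of $W^\dagger\Omega$ in Section \ref{deRhamWittsurcon} (``the Teichmüller lift of an element in $A$ is by default overconvergent''), products of overconvergent elements are overconvergent since $W^\dagger\Omega$ is a subdifferential graded algebra of $W\Omega$, so $\omega_i[x]^i\in W^\dagger\Omega^d_{B/k}$. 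For the forward direction I would use the explicit description of $W^\dagger\Omega_{B/k}$ as $\lambda(W^\dagger\Omega_{S})$ for a polynomial presentation $S\twoheadrightarrow B$, or — following \cite{DavisLangerZink} more closely — work with a presentation where $B$ is a finite étale monogenic algebra over a localised polynomial ring $C=A_f$ and use that the Gau\ss{} norm on $W\Omega_{B}$ can be computed coordinate-wise in the basis $[x]^0,\dots,[x]^{m-1}$: an element is overconvergent of some radius iff each of its components in this basis is. This coordinate-wise estimate is exactly what makes the decomposition restrict to the overconvergent subcomplexes.

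\textbf{The main obstacle.} The delicate point is the claim that overconvergence can be checked component-by-component in the basis $\{[x]^i\}$, i.e. that $\gamma_\varepsilon$ (or a comparable norm) is, up to a bounded distortion of $\varepsilon$, equivalent to the maximum of the $\gamma_\varepsilon$ of the components. This requires controlling how multiplication by $[x]^i$ and, crucially, how the transition from the étale algebra $B$ back to a polynomial presentation affects weights and orders of vanishing of Witt vectors; concretely one needs to know that $[x]$, $[x]^{-1}$ when relevant, and the coefficients expressing powers of $[x]$ beyond degree $m-1$ in terms of lower ones (coming from the relation $f([x])=0$, using that $f'$ is a unit) are all overconvergent with uniformly bounded radius. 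I would handle this by invoking the structural results already in place: the generalised basic Witt differential decomposition and the overconvergence criterion recalled in Remark \ref{RemFrobAction} for localised polynomial algebras, extended along the finite étale monogenic map as in the proof of Theorem 1.8 of \cite{DavisLangerZink}. Since the excerpt explicitly states that in \cite[Proposition 1.9]{DavisLangerZink} the authors ``reduce this further'' precisely to this decomposition, I would ultimately cite that proof for the norm-compatibility estimate rather than reprove it, and present the argument here as: (i) recall the $W(C)$-free decomposition of $W\Omega_{B}$; (ii) observe both inclusions $W^\dagger\Omega^d_{C/k}[x]^i\subset W^\dagger\Omega^d_{B/k}$ and $\omega_i\in W^\dagger\Omega^d_{C/k}$ for $\omega$ overconvergent, the latter via the coordinate-wise Gau\ss{} norm estimate; (iii) conclude the direct sum decomposition of $W^\dagger\Omega^d_{B/k}$ as $W^\dagger(C)$-modules.
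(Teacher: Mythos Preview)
The paper does not actually prove this proposition: it is stated verbatim as a citation of \cite[Proposition 1.9]{DavisLangerZink} and is used as a black box in the reduction argument preceding Lemma~\ref{LemC22}. There is therefore no proof in the paper to compare your proposal against.

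That said, your sketch is a faithful outline of how the result is established in \cite{DavisLangerZink}: one starts from the \'etale base change isomorphism $W\Omega_B\cong W(B)\otimes_{W(C)}W\Omega_C$ and the freeness of $W(B)$ over $W(C)$ on the basis $1,[x],\ldots,[x]^{m-1}$, and then checks that overconvergence is detected componentwise. Your identification of the nontrivial direction --- that if $\omega=\sum_i\omega_i[x]^i$ is overconvergent then each $\omega_i$ is --- and of the mechanism behind it (uniform control on the Gau\ss{} norms under the change of presentation, using that $f'$ is a unit) is correct. The only caveat is that this norm-compatibility estimate is genuinely the content of the proof in \cite{DavisLangerZink} and requires some care with the explicit presentations; your decision to ultimately cite that argument rather than reproduce it is exactly what the present paper does.
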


Finally they prove that the overconvergent de Rham-Witt complex over a smooth $k$-scheme $X$ is a complex of \'etale (and Zariski) sheaves on $X$. Thus we see that for our purposes as we seek a local result it is enough to consider the (overconvergent) de Rham-Witt complex over a localised polynomial algebra of the form $A_f$.

Now we proceed to calculate kernel and cokernel of the map $1-\f$. 

\begin{lem}\label{LemC22} Let $X=\Spec A_f$. The map $\f-1$ on $W^\dagger\Omega_{X/k}$ is surjective for \'etale topology.\end{lem}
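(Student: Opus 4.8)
The plan is to deduce the statement from the surjectivity of $\f-1$ over $A[Y,Y^{-1}]$ already established in Corollary~\ref{CorSimplified}. The first step is to exhibit $A_f$ as a quotient of the mixed Laurent algebra: writing $Y$ for a new variable, the assignment $Y\mapsto 1/f$, $X_i\mapsto X_i$ defines a surjection $\pi\colon A[Y,Y^{-1}]=k[X_1,\dots,X_d,Y,Y^{-1}]\twoheadrightarrow A_f$ whose kernel is generated by the single element $fY-1$, a nonzerodivisor, so that $\iota\colon\Spec A_f\hookrightarrow\Spec A[Y,Y^{-1}]$ is a closed immersion of smooth $k$-schemes. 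Since the overconvergent de Rham--Witt complex does not depend on the chosen presentation, functoriality of $W^\dagger\Omega$ applied to $\pi$ produces, for every $i$, a Frobenius-equivariant surjection $\lambda_\pi\colon W^\dagger\Omega^i_{A[Y,Y^{-1}]/k}\twoheadrightarrow W^\dagger\Omega^i_{A_f/k}$, compatible with restriction to \'etale opens of source and target respectively.

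Granting this, I would argue as follows. Fix $\omega\in W^\dagger\Omega^i_{A_f/k}$ and, using surjectivity of $\lambda_\pi$ in degree $i$, choose an overconvergent lift $\widetilde\omega\in W^\dagger\Omega^i_{A[Y,Y^{-1}]/k}$. By Corollary~\ref{CorSimplified}, $\f-1$ is surjective on $W^\dagger\Omega^i_{A[Y,Y^{-1}]/k}$ for the \'etale topology, so there is an \'etale cover $\{U_\alpha\to\Spec A[Y,Y^{-1}]\}$ and sections $\widetilde\eta_\alpha\in W^\dagger\Omega^i_{A[Y,Y^{-1}]/k}(U_\alpha)$ with $(\f-1)\widetilde\eta_\alpha=\widetilde\omega|_{U_\alpha}$. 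Pulling this cover back along $\iota$ gives an \'etale cover $\{U_\alpha':=U_\alpha\times_{\Spec A[Y,Y^{-1}]}\Spec A_f\to\Spec A_f\}$, and applying $\lambda_\pi$ (along the closed immersions $U_\alpha'\hookrightarrow U_\alpha$) to the $\widetilde\eta_\alpha$ produces sections $\eta_\alpha\in W^\dagger\Omega^i_{A_f/k}(U_\alpha')$ satisfying $(\f-1)\eta_\alpha=\omega|_{U_\alpha'}$, because $\f$ commutes with $\lambda_\pi$ and $\lambda_\pi(\widetilde\omega)=\omega$. Thus $\f-1$ is surjective on $W^\dagger\Omega^i_{A_f/k}$ for the \'etale topology; letting $i$ vary and combining with the inclusion of the logarithmic subcomplex as the kernel, one recovers, as before, the short exact sequence $0\to W^\dagger\Omega^i_{A_f,\log}\to W^\dagger\Omega^i_{A_f}\xrightarrow{\f-1}W^\dagger\Omega^i_{A_f}\to 0$ for the \'etale topology.

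I expect the main difficulty to be the sheaf-theoretic bookkeeping rather than any new computation: one must make sure that the overconvergent complex attached to $A_f$ through its standard polynomial presentation really coincides with $\lambda_\pi\bigl(W^\dagger\Omega_{A[Y,Y^{-1}]/k}\bigr)$ --- this is where presentation-independence and the description of $W^\dagger\Omega$ over $A[Y,Y^{-1}]$ by generalised basic Witt differentials enter --- and that $\lambda_\pi$ is compatible with restriction to \'etale opens, so that the local primitives found upstairs genuinely descend. A secondary point requiring care is the direction of the reduction: one restricts an \'etale cover of the ambient $\Spec A[Y,Y^{-1}]$ down to $\Spec A_f$, rather than trying to extend an arbitrary \'etale cover of $\Spec A_f$, which is impossible in general for a non-nilpotent closed immersion; this is harmless since ``surjective for the \'etale topology'' only asks for the existence of some trivialising cover. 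Finally one should record that the lift $\widetilde\omega$ can indeed be chosen overconvergent, which is precisely the content of $\lambda_\pi$ being surjective on the overconvergent subcomplexes and not merely on the full de Rham--Witt complexes.
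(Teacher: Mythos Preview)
Your proposal is correct and follows essentially the same route as the paper: present $A_f$ as a quotient of $A[Y,Y^{-1}]$ (the paper sends $Y\mapsto f$ rather than $Y\mapsto 1/f$, but this is immaterial), obtain a Frobenius-equivariant surjection $W^\dagger\Omega_{A[Y,Y^{-1}]/k}\twoheadrightarrow W^\dagger\Omega_{A_f/k}$, and then deduce surjectivity of $\f-1$ downstairs from Corollary~\ref{CorSimplified} upstairs via the resulting commutative square. You spell out the \'etale-topological bookkeeping (pulling back the trivialising cover along the closed immersion) that the paper leaves implicit in the phrase ``the vertical map on the left is surjective and thus the same holds true for the one on the right,'' which is a genuine clarification rather than a different argument.
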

\begin{proof}: Consider the $k$-algebra $A[Y,Y^{-1}]$. There is a canonical surjection
\begin{eqnarray*} A[Y,Y^{-1}] &\rightarrow& A_f\\ Y &\mapsto& f.\end{eqnarray*}
This induces by functoriality a surjection of the associated de Rham-Witt complexes $W\Omega_{A[Y,Y^{-1}]/k}\rightarrow W\Omega_{A_f/k}$. For quotients of polynomial algebras an element of the corresponding de Rham-Witt complex is said to be overconvergent if there exist a lift of this element to the polynomial algebra which is overconvergent. Therefore we have in fact a surjection of overconvergent de Rham-Witt complexes
$$W^\dagger\Omega_{A[Y,Y^{-1}]/k}\rightarrow W^\dagger\Omega_{A_f/k}.$$
Moreover, there is a commutative diagram
$$\xymatrix{W^\dagger\Omega_{A[Y,Y^{-1}]/k} \ar[r]\ar[d]_{\f-1} & W^\dagger\Omega_{A_f/k}\ar[d]^{\f-1}\\W^\dagger\Omega_{A[Y,Y^{-1}]/k}\ar[r] & W^\dagger\Omega_{A_f/k}}$$
By Corollary \ref{CorSimplified} the vertical map on the left is surjective and thus the same holds true for the one on the right.  \end{proof}

\begin{lem} The kernel of $\f-1$ on $W^\dagger\Omega_{X/k}$ for $X=\Spec A_f$ is $W^\dagger\Omega_{X/k,\log}$.\end{lem}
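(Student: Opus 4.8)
The inclusion $W^\dagger\Omega_{X/k,\log}\subseteq\Ker(\f-1)$ is immediate from the construction of the logarithmic subcomplex: \'etale-locally it is generated by the symbols $d\log[u_1]\cdots d\log[u_i]$ with $u_j\in\OO_X^\ast$, and each of these is fixed by $\f$ since $\f\,d\log[u]=d\log[u]$. So the content is the reverse inclusion. My first step would be to trade it for a purely ``overconvergence'' statement by using the classical identification recalled in Section~\ref{Logarithmic}, namely $W\Omega^i_{X,\log}=\Ker(\f-1\colon W\Omega^i_X\to W\Omega^i_X)$ for \'etale topology. Since $W^\dagger\Omega^i_X\subseteq W\Omega^i_X$ compatibly with $\f$, this yields at once
\[\Ker(\f-1\colon W^\dagger\Omega^i_X\to W^\dagger\Omega^i_X)=W^\dagger\Omega^i_X\cap W\Omega^i_{X,\log},\]
so the lemma becomes equivalent to $W^\dagger\Omega^i_X\cap W\Omega^i_{X,\log}=W^\dagger\Omega^i_{X,\log}$ for $X=\Spec A_f$, i.e.\ to the assertion that a Witt differential over $A_f$ which is overconvergent \emph{and} classically logarithmic is in fact a finite $d\log$-combination of bounded radius.

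To attack this I would reduce to the torus, exactly as in the proof of Lemma~\ref{LemC22}. The surjection of $k$-algebras $A[Y,Y^{-1}]\to A_f$, $Y\mapsto f$, induces a Frobenius-equivariant surjection $\pi\colon W^\dagger\Omega_{A[Y,Y^{-1}]/k}\to W^\dagger\Omega_{A_f/k}$ which carries Teichm\"uller lifts of units to Teichm\"uller lifts of units, hence sends $W^\dagger\Omega_{A[Y,Y^{-1}]/k,\log}$ into $W^\dagger\Omega_{A_f/k,\log}$. Now $A[Y,Y^{-1}]$ is again a localisation of a polynomial algebra, so the Proposition above identifying the Frobenius-fixed sections of $W\Omega_X$ over $X=\GG_m^d$ with $W^\dagger\Omega_{X,\log}$ applies verbatim (its proof only uses the uniqueness of the decomposition into generalised basic Witt differentials and the explicit Frobenius action of Proposition~\ref{FrobAction}); it gives $W^\dagger\Omega_{A[Y,Y^{-1}]/k,\log}=W^\dagger\Omega_{A[Y,Y^{-1}]/k}\cap W\Omega_{A[Y,Y^{-1}]/k,\log}=W\Omega_{A[Y,Y^{-1}]/k,\log}$, i.e.\ over the source the identity we want is already known.

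The remaining, and I expect hardest, point is to push this identity down along $\pi$: given $\omega\in W^\dagger\Omega_{A_f/k}$ with $\f\omega=\omega$, one must produce a \emph{logarithmic} overconvergent lift of it over $A[Y,Y^{-1}]$, whence $\omega\in\pi\bigl(W^\dagger\Omega_{A[Y,Y^{-1}]/k,\log}\bigr)\subseteq W^\dagger\Omega_{A_f/k,\log}$ and we are done. This is the genuine obstacle: the two natural snake-lemma manipulations in the diagram built from $\pi$ and $\f-1$ (using surjectivity of $\f-1$ on both complexes, Corollary~\ref{CorSimplified} and Lemma~\ref{LemC22}) only trade the desired statement against the surjectivity of $\f-1$ on $\ker\pi$, so some local input is unavoidable. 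I would supply it by re-running the argument of Proposition~\ref{PropC14} for logarithmic elements: write $A_f=A[Y,Y^{-1}]/(Y-f)$, take a classical lift $\eta$ of $\omega$ with $(\f-1)\eta\in\ker\pi$, expand $\eta$ into generalised basic Witt differentials, and use that $\f$ strictly multiplies their weights by $p$ and permutes their types without finite or infinite cycles (Proposition~\ref{FrobAction}), together with the overconvergence estimates under $\f$ from Remark~\ref{RemFrobAction} and the two lemmas preceding Proposition~\ref{PropC14}, to force every non-logarithmic basic differential occurring in $\eta$ to cancel after applying $1-\f$ --- exactly the cancellation mechanism of the proof of Proposition~\ref{PropC14}, now bookkept so as to leave $\eta$ (and hence its image) logarithmic. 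Granting this descent the lemma follows; the extension from $X=\Spec A_f$ to an arbitrary smooth $X$ is then the standard \'etale-local d\'evissage via Kedlaya's standard \'etale covers and Proposition~\ref{PropC12}, carried out in the following part of the paper.
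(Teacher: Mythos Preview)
Your opening reduction is exactly the paper's argument: once you observe that $\Ker(\f-1\colon W^\dagger\Omega_X^i\to W^\dagger\Omega_X^i)=W^\dagger\Omega_X^i\cap W\Omega_{X,\log}^i$, the paper is already finished. It simply invokes the equality $W^\dagger\Omega_{X,\log}=W\Omega_{X,\log}$ (stated in Remark~\ref{RemLogPn} and regarded as established by this point --- the generators $d\log[x_1]\cdots d\log[x_i]$ are overconvergent, so $W\Omega_{X,\log}\subseteq W^\dagger\Omega_X$ and both sheaves are the \'etale subsheaf generated by the same symbols). Granting that, the intersection $W^\dagger\Omega_X\cap W\Omega_{X,\log}$ is just $W\Omega_{X,\log}=W^\dagger\Omega_{X,\log}$, and the lemma is immediate.

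Everything from your second paragraph on is therefore superfluous for the paper's purposes: the descent along $A[Y,Y^{-1}]\to A_f$, the snake-lemma bookkeeping, and the proposed re-run of the Proposition~\ref{PropC14} cancellation argument are all aimed at proving $W^\dagger\Omega_X\cap W\Omega_{X,\log}=W^\dagger\Omega_{X,\log}$ by hand, which the paper absorbs into the single line ``we have seen that $W^\dagger\Omega_{X,\log}=W\Omega_{X,\log}$''. Your route would give an independent verification of that identity over $A_f$ (and so is not wrong), but it is a genuinely different and much longer path than the one the paper takes; what you flag as ``the genuine obstacle'' is precisely the step the paper declares already done.
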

\begin{proof}: By definition of the complex $W^\dagger\Omega_{X/k,\log}$ and the discussion above it is contained in the kernel of $\f-1$. On the other hand, we have seen, that $W^\dagger\Omega_{X/k,\log}=W\Omega_{X/k,\log}$ and this is the kernel of $\f-1$ on $W\Omega_{X/k}$ without the overconvergence condition. Thence the restriction of $\f-1$ to the overconvergent subcomplex $W^\dagger\Omega_{X/k}$ must have the same kernel.  \end{proof}

Combining the arguments of the last two sections yields

\begin{cor}\label{CorC25} Let $X$ be a smooth scheme over a perfect field $k$ of characteristic $p>0$. Then for all $i\in\NN$ there is locally for \'etale topology a short exact sequence
$$0\rightarrow W^\dagger\Omega^i_{X/f,\log}\rightarrow W^\dagger\Omega^i_{X/k}\xrightarrow{\f-1} W^\dagger\Omega^i_{X/k}\rightarrow 0.$$
\end{cor}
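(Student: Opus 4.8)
The plan is to deduce the corollary by combining the computations of the previous two subsections with the \'etale-local structure theory for smooth $k$-schemes recalled above. The three things to check are: that $\f-1$ is a morphism of \'etale sheaves (so that everything may be verified \'etale-locally), that its kernel is the logarithmic subsheaf $W^\dagger\Omega^i_{X/k,\log}$, and that it is \'etale-locally surjective.

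For the first point, $\f$ and hence $\f-1$ act degreewise on $W^\dagger\Omega_{X/k}$, and since Davis, Langer and Zink prove that $W^\dagger\Omega_{X/k}$ is a complex of sheaves for the \'etale topology, $\f-1\colon W^\dagger\Omega^i_{X/k}\to W^\dagger\Omega^i_{X/k}$ is a morphism of \'etale sheaves for every $i$. Thus it suffices to establish exactness of the three-term sequence locally on a suitable \'etale cover. For the kernel, recall that logarithmic Witt differentials are overconvergent (Teichm\"uller lifts are), so $W^\dagger\Omega^i_{X/k,\log}\subseteq W^\dagger\Omega^i_{X/k}\subseteq W\Omega^i_{X/k}$; since the kernel of $\f-1$ on $W\Omega^i_{X/k}$ is $W\Omega^i_{X/k,\log}$, the kernel of $\f-1$ on the overconvergent subsheaf is $W^\dagger\Omega^i_{X/k}\cap W\Omega^i_{X/k,\log}$, and by the last lemma of the previous subsection together with the identification $W^\dagger\Omega_{X/k,\log}=W\Omega_{X/k,\log}$ of this section this intersection equals $W^\dagger\Omega^i_{X/k,\log}$. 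This handles exactness at the first two terms.

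It remains to prove that $\f-1$ is \'etale-locally surjective. Here I would use the reduction at the start of this subsection: by Kedlaya's result every smooth $k$-variety is covered by standard \'etale affines, by \cite[Proposition 4.3.1]{Davis} one may take this cover closed under finite intersections, and by the argument in the proof of \cite[Theorem 1.8]{DavisLangerZink} together with Proposition \ref{PropC12} it is enough to treat a finite \'etale monogenic algebra $B=C[x]/(f)$ over $C=A_f$, with $A=k[X_1,\ldots,X_d]$. Writing an overconvergent form via the decomposition $W^\dagger\Omega^i_{B/k}=\bigoplus_{j=0}^{m-1}W^\dagger\Omega^i_{C/k}[x]^j$ of Proposition \ref{PropC12} as $\omega=\sum_{j}\omega_j[x]^j$, one then solves $(\f-1)\eta=\omega$ componentwise, the point being that each such equation is, after an \'etale base change supplied by surjectivity over the base, controlled by Lemma \ref{LemC22} for $W^\dagger\Omega_{A_f/k}$, which in turn rests on Corollary \ref{CorSimplified} and on the case $X=\GG_m^d$ of Proposition \ref{PropC14}. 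Putting the three points together gives the short exact sequence locally for the \'etale topology.

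The main obstacle is precisely the passage from $A_f$ to the finite \'etale monogenic extension $B$: since $\f$ is only a lift of Frobenius and is not $W^\dagger(C)$-linear, it does not respect the $[x]$-adic decomposition of Proposition \ref{PropC12} (one has $\f[x]=[x]^p$, together with the relation $f(x)=0$ in $B$), so one must keep careful track of how $\f$ permutes and mixes the components before the componentwise solution of $(\f-1)\eta=\omega$ can be carried out; alternatively one may try to exploit that the transfer map $N_{B/C}$ constructed above commutes with $\f$. Everything else is either the \'etale-sheaf formalism or a citation of the $\GG_m^d$ computation already in hand.
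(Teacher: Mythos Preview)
Your treatment of the kernel is correct and matches the paper. The issue is in your surjectivity argument: the obstacle you flag --- that $\f$ does not respect the $W^\dagger(C)$-module decomposition of Proposition~\ref{PropC12} --- is genuine for the route you chose, but the paper avoids it entirely by pushing the reduction one step further than you do.

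You stop the reduction at a finite \'etale monogenic $B$ over $C=A_f$ and then try to solve $(\f-1)\eta=\omega$ inside $W^\dagger\Omega_B$ using the direct-sum splitting. The paper instead observes that, because $\Spec B\to\Spec A_f$ is \'etale and $W^\dagger\Omega$ is a sheaf for the \'etale topology with $\f$ functorial, the restriction of the \'etale sheaf $W^\dagger\Omega_{A_f}$ along this \'etale map is precisely $W^\dagger\Omega_B$. Pullback along an \'etale morphism is exact, so \'etale-local exactness of the sequence on $\Spec A_f$ --- which is exactly Lemma~\ref{LemC22} together with the kernel lemma that follows it --- immediately yields \'etale-local exactness on $\Spec B$, hence on $X$. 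This is what the paper means by ``for our purposes as we seek a local result it is enough to consider the (overconvergent) de Rham--Witt complex over a localised polynomial algebra of the form $A_f$.'' The decomposition of Proposition~\ref{PropC12} is recalled there only as part of the Davis--Langer--Zink proof that $W^\dagger\Omega$ is an \'etale sheaf; it is not used to transfer the $(\f-1)$-computation from $A_f$ to $B$.

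In short: you already have the key ingredient (the \'etale sheaf property, which you invoke in your first paragraph), but you did not use it to its full strength. Once you do, the componentwise analysis over $B$ and the Frobenius-mixing problem disappear, and the corollary follows directly from the two $A_f$-lemmas.
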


\subsubsection*{Chern classes into the Frobenius fixed part}

Unfortunately the Frobenius morphism and therefore also the morphism $1-\f$ is only a ring homomorphism and not a morphism of complexes. The reason for this is, that $\f$ does not commute with the differential, in fact the formula
\begin{equation}\label{FDcommuting}d\f=p\f d\end{equation}
holds. Thus some modifications are required which are inspired by \cite[Corollaire I.3.29 and Th\'eor\`eme II.5.5]{Illusie}. 

\begin{defi} For each $m\geqslant 1$ we define an endomorphism of complexes
$$\f_m:W\Omega^{\geqslant m}\rightarrow W\Omega^{\geqslant m},$$
where we use the na\"ive truncation, which is given in degree $i\geqslant m$ by $p^{i-m}\f$. \end{defi}
Looking at the commuting formula (\ref{FDcommuting}) of $d$ and $\f$, it is clear now that this definition gives indeed a morphism of complexes, and by extension the same holds true for $1-\f_m$. 

Illusie shows in \cite[Lemme I.3.30]{Illusie} that for all $r\geqslant 1$ and all $i\geqslant 0$, the morphism $1-p^r\f$ is an automorphism of the pro-object $W_{_\bullet}\Omega^i_X$, hence (for example using the Mittag-Leffler condition) the induced map on $W\Omega^i_X$ is also an automorphism. 

We consider now the restriction of theses morphisms to the overconvergent subobjects. We have already seen, that multiplication by $p$ and the Frobenius $\f$ map overconvergent elements to overconvergent elements. Thus for $m\geqslant 1$ there is an endomorphism of complexes
$$1-\f_m:W^\dagger\Omega^{\geqslant m}\rightarrow W^\dagger\Omega^{\geqslant m}$$
and for $r\geqslant 1$ and $i\geqslant 0$ the morphism $1-p^r\f:W^\dagger\Omega^i_X\rightarrow W^\dagger\Omega^i_X$ as restriction from the usual de Rham-Witt complex is injective.

Unfortunately, the argument from Proposition \ref{PropC14}, where we show that $1-\f$ is surjective, does not work in this case, as subsequent multiplication of a generalised basic Witt differential by $p^r$ for a fixed $r>1$ creates and overconvergent sequence. Together with the short exact sequence from Corollary \ref{CorC25} we obtain for a fixed $m\geqslant 1$ the following exact sequence of complexes:
\begin{equation}\label{ExactComplexSequence} 0\rightarrow W^\dagger\Omega^m_{X/k,\log}[-m]\rightarrow W^\dagger\Omega^{\geqslant m}_{X/k}\xrightarrow{1-\f_m} W^\dagger\Omega^{\geqslant m}_{X/k}.\end{equation}
However, this is enough for our purposes. 

The exact sequence (\ref{ExactComplexSequence}) induces an exact sequence on cohomology.

\begin{prop} For $X/k$ smooth and $m\in\NN_0$, $i\in\NN$ there is an exact sequence
$$0\rightarrow \h^m(X,W^\dagger\Omega^i_{\log})\rightarrow \HH^{m+i}(X,W^\dagger\Omega^{\geqslant i})\xrightarrow{1-\f_i}\HH^{m+i}(X,W^\dagger\Omega^{\geqslant i}).$$
\end{prop}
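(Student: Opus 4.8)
The backbone of the argument is the exact sequence of complexes of \'etale sheaves (\ref{ExactComplexSequence}). The first step is to upgrade it to a genuine short exact sequence. Write $\mathcal{C}^\bullet := (1-\f_i)\bigl(W^\dagger\Omega^{\geqslant i}_{X/k}\bigr)$ for the image subcomplex of $W^\dagger\Omega^{\geqslant i}_{X/k}$; it really is a subcomplex because $d$ commutes with $1-\f_i$, the identity $d\f = p\f d$ of (\ref{FDcommuting}) giving $d(1-p^r\f) = (1-p^{r+1}\f)\,d$. Then one has the short exact sequence of complexes of \'etale sheaves
$$0\rightarrow W^\dagger\Omega^i_{X/k,\log}[-i]\rightarrow W^\dagger\Omega^{\geqslant i}_{X/k}\xrightarrow{\ 1-\f_i\}\mathcal{C}^\bullet\rightarrow 0.$$
Surjectivity onto $\mathcal{C}^\bullet$ holds by construction, and the kernel of $1-\f_i$ is $W^\dagger\Omega^i_{\log}[-i]$: in degree $i$ the map is $1-\f$, whose kernel is $W^\dagger\Omega^i_{X/k,\log}$ by Corollary \ref{CorC25}, while in each degree $j>i$ it is $1-p^{j-i}\f$, which is injective since $1-p^r\f$ ($r\geqslant 1$) is bijective on $W\Omega^j_X$ by Illusie's lemma and we restrict to the overconvergent subsheaf.

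Next I would apply $R\Gamma(X_{\text{\'et}},-)$ and pass to the associated long exact sequence of hypercohomology. Since $W^\dagger\Omega^i_{\log}[-i]$ is concentrated in degree $i$ we have $\HH^n(X,W^\dagger\Omega^i_{\log}[-i]) = \h^{n-i}(X,W^\dagger\Omega^i_{\log})$, so in degree $n=m+i$ the sequence reads
$$\HH^{m+i-1}(X,\mathcal{C}^\bullet)\xrightarrow{\ \partial\ }\h^m(X,W^\dagger\Omega^i_{\log})\xrightarrow{\ a\ }\HH^{m+i}(X,W^\dagger\Omega^{\geqslant i})\xrightarrow{\ \bar v\ }\HH^{m+i}(X,\mathcal{C}^\bullet).$$
Composing $\bar v$ with the map $\iota_\ast$ induced by the inclusion $\mathcal{C}^\bullet\hookrightarrow W^\dagger\Omega^{\geqslant i}$ recovers precisely the endomorphism $1-\f_i$ of $\HH^{m+i}(X,W^\dagger\Omega^{\geqslant i})$ occurring in the statement. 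Hence $\operatorname{im}(a) = \ker\bar v \subseteq \ker(1-\f_i)$ automatically, and the proposition reduces to the two assertions: (i) $a$ is injective, equivalently $\partial = 0$; and (ii) $\ker(1-\f_i) = \ker\bar v$, i.e.\ $\iota_\ast$ is injective on $\operatorname{im}\bar v$.

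The hard part will be exactly (i) and (ii): they measure how far $\mathcal{C}^\bullet\hookrightarrow W^\dagger\Omega^{\geqslant i}$ is from being a quasi-isomorphism, which is the failure of $1-\f_i$ to be surjective on $W^\dagger\Omega^{\geqslant i}$ in degrees $>i$ pointed out before the statement (because $1-p^r\f$ is not surjective on the overconvergent complex for $r\geqslant 1$). One observation that helps is that $\operatorname{coker}(1-\f_i)$ on $W^\dagger\Omega^{\geqslant i}$ is concentrated in degrees $>i$, so $\mathcal{C}^\bullet\hookrightarrow W^\dagger\Omega^{\geqslant i}$ is an isomorphism in degree $i$; I would combine this with a comparison — via a morphism of short exact sequences — with the corresponding sequence $0\to W\Omega^i_{X,\log}[-i]\to W\Omega^{\geqslant i}_X\xrightarrow{1-\f_i} W\Omega^{\geqslant i}_X\to 0$ for the full de Rham--Witt complex, where now $1-\f_i$ is surjective in every degree (surjective in degree $i$ by the logarithmic Artin--Schreier--Witt sequence, bijective in higher degrees by Illusie), using the inclusions $W^\dagger\Omega^\bullet_X\hookrightarrow W\Omega^\bullet_X$ and the identification $W^\dagger\Omega^i_{X,\log}=W\Omega^i_{X,\log}$ of Remark \ref{RemLogPn}. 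Chasing this diagram should yield $\partial = 0$ and the injectivity of $\iota_\ast$ in the required range. Once (i) and (ii) are established, the stated three-term exact sequence is immediate from the long exact sequence above.
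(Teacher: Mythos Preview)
Your skeleton coincides with the paper's: replace the target of $1-\f_i$ by its image $\mathcal{C}^\bullet:=(1-\f_i)(W^\dagger\Omega^{\geqslant i})$ to obtain a genuine short exact sequence, then pass to the long exact sequence in hypercohomology. The paper's entire justification after that point is a single sentence --- ``the connecting morphisms are obviously trivial and after going back to the original complexes we obtain the above sequence'' --- so you have in fact gone further than the paper by isolating precisely what remains: (i) the vanishing of the connecting map $\partial$, and (ii) the injectivity of $\iota_\ast$ on $\operatorname{im}\bar v$ (so that $\ker(1-\f_i)=\ker\bar v$). The paper simply asserts (i) as ``obvious'' and leaves (ii) implicit in the phrase ``going back to the original complexes''.

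That said, your proposed route to (i) and (ii) via comparison with the full de Rham--Witt complex does not close the gap. The morphism of short exact sequences you describe, together with $W^\dagger\Omega^i_{\log}=W\Omega^i_{\log}$, yields $\partial=\partial'\circ(\text{vertical})$; but the connecting map $\partial'$ for the sequence $0\to W\Omega^i_{\log}[-i]\to W\Omega^{\geqslant i}\xrightarrow{1-\f_i}W\Omega^{\geqslant i}\to 0$ is \emph{not} known to vanish in general --- equivalently, the map $\h^m(X,W\Omega^i_{\log})\to\HH^{m+i}(X,W\Omega^{\geqslant i})$ need not be injective (this is tied to the slope spectral sequence and fails already for proper $X$). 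So the diagram chase you sketch cannot produce (i). Likewise, comparing with $W\Omega$ does not settle (ii): the cokernel of $\iota$ lives in degrees $>i$ and its hypercohomology has no reason to vanish, so $\iota_\ast$ need not be injective on the relevant range. Your analysis is sharper and more honest than the paper's about where the difficulty lies, but the argument as written is not complete, and the indicated fix does not work as stated; you would need a different mechanism to force (i) and (ii).
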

\begin{proof}: We start with the exact sequence (\ref{ExactComplexSequence}) and replace the rightmost object by the image of $1-\f_m$, which makes it into a short exact sequence. In the associated long exact sequence of cohomology the connecting morphisms are obviously trivial and after going back to the original complexes we obtain the above sequence for each $i$ and $m$.  \end{proof}

In particular, we obtain the following result.
\begin{cor} Let $X/k$ be smooth and $m\in\NN_0$, $i\in\NN$. Then we have the identity
$$\HH^{m+i}(X,W^\dagger\Omega^{\geqslant i})^{1-\f_i}=\h^m(X,W^\dagger\Omega^i_{\log}).$$
\end{cor}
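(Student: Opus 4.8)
The strategy is to deduce the identity directly from the exact sequence of the preceding proposition, essentially by bookkeeping. First I would note that, since $\f_i$ is an endomorphism of the complex $W^\dagger\Omega^{\geqslant i}_{X/k}$, it induces an endomorphism of $\HH^{m+i}(X,W^\dagger\Omega^{\geqslant i})$; hence $1-\f_i$ acts on this group and the expression $\HH^{m+i}(X,W^\dagger\Omega^{\geqslant i})^{1-\f_i}$ is well defined and is by definition the subgroup $\Ker\left(1-\f_i\colon\HH^{m+i}(X,W^\dagger\Omega^{\geqslant i})\rightarrow\HH^{m+i}(X,W^\dagger\Omega^{\geqslant i})\right)$.

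Next I would invoke the preceding proposition, which provides the exact sequence
$$0\rightarrow \h^m(X,W^\dagger\Omega^i_{\log})\xrightarrow{\ \iota\ }\HH^{m+i}(X,W^\dagger\Omega^{\geqslant i})\xrightarrow{\ 1-\f_i\ }\HH^{m+i}(X,W^\dagger\Omega^{\geqslant i}),$$
in which $\iota$ is the morphism on cohomology induced by the inclusion of complexes $W^\dagger\Omega^i_{X/k,\log}[-i]\hookrightarrow W^\dagger\Omega^{\geqslant i}_{X/k}$, so that $\HH^{m+i}$ of the shifted sheaf is $\h^m(X,W^\dagger\Omega^i_{\log})$. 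Exactness at the middle term gives $\im\iota=\Ker(1-\f_i)$, and exactness at the left term gives that $\iota$ is injective. Combining these, $\iota$ induces an isomorphism $\h^m(X,W^\dagger\Omega^i_{\log})\xrightarrow{\ \sim\ }\Ker(1-\f_i)=\HH^{m+i}(X,W^\dagger\Omega^{\geqslant i})^{1-\f_i}$, which is exactly the claimed identity.

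I do not expect any genuine obstacle here: the entire content of the statement is already packaged into Corollary \ref{CorC25} together with the proposition immediately preceding this corollary, which produce the short exact sequence of complexes and then pass to the long exact sequence in hypercohomology (with vanishing connecting maps). The only point worth making explicit is that the identification is the canonical one, realized by the natural map $\iota$ above; this is automatic from the construction, so the proof reduces to citing exactness of that sequence and reading off its kernel.
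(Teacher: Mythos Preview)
Your proposal is correct and matches the paper's approach exactly: the corollary is stated without proof precisely because it is immediate from the exact sequence of the preceding proposition, and you have spelled out the (trivial) bookkeeping that extracts the kernel identification from that sequence.
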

The submodule $\HH^{m+i}(X,W^\dagger\Omega^{\geqslant i})^{1-\f_i}\subset \HH^{m+i}(X,W^\dagger\Omega^{\geqslant i})$ can be thought of as the Frobenius eigen module of eigenvalue $\frac{1}{p^m}$. 

Now recall that by construction the overconvergent Chern classes factor through the logarithmic differentials. Therefore the stated identity entails the subsequent corollary.

\begin{cor} Let $X/k$ be smooth. Then the overconvergent Chern classes constructed earlier can be written as
$$c_{ij}^{\text{sc}}: K_j(X)\rightarrow \HH^{2i-j}(X,W^\dagger\Omega^{\geqslant i})^{1-\f_i}.$$
\end{cor}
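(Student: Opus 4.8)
The plan is to derive the statement from two facts already established. First, $c_{ij}^{\text{sc}}$ is by construction obtained by pushing the Milnor Chern class $c_{ij}^M$ of Theorem \ref{TheoKsheafChern} along $d\log^i$, and $d\log^i$ factors through logarithmic Witt differentials. Second, the corollary just proved identifies $\HH^{m+i}(X,W^\dagger\Omega^{\geqslant i})^{1-\f_i}$ with $\h^m(X,W^\dagger\Omega^i_{\log})$, the isomorphism being realised by the edge map of the exact sequence of the preceding proposition. Putting $m=i-j$ will match the bidegree occurring in $c_{ij}^{\text{sc}}$.

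Concretely, I would first make the factorisation explicit. By definition $d\log^i\colon\Ksheaf^M_i\to W^\dagger\Omega[i]$ sends a symbol $\{x_1,\dots,x_i\}$ to the $d$-closed form $d\log[x_1]\cdots d\log[x_i]$, so it factors through the logarithmic subsheaf $W^\dagger\Omega^i_{X,\log}$, and since these forms are closed this is a factorisation of complexes with image inside the naive truncation $W^\dagger\Omega^{\geqslant i}_X\subset W^\dagger\Omega_X$. Applying hypercohomology in degree $m=i-j$ (the case $j>i$ being vacuous, since $W^\dagger\Omega^{\geqslant i}_X$ is concentrated in degrees $\geqslant i$, so $\HH^{2i-j}(X,W^\dagger\Omega^{\geqslant i})=0$ there) and precomposing with $c_{ij}^M$, one sees that $c_{ij}^{\text{sc}}$ equals the composite
$$K_j(X)\xrightarrow{\,c_{ij}^M\,}\h^{i-j}(X,\Ksheaf^M_i)\xrightarrow{\,d\log\,}\h^{i-j}(X,W^\dagger\Omega^i_{\log})\longrightarrow\HH^{2i-j}(X,W^\dagger\Omega^{\geqslant i}),$$
where the last arrow is induced by the inclusion $W^\dagger\Omega^i_{X,\log}\hookrightarrow W^\dagger\Omega^{\geqslant i}_X$. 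Hence $c_{ij}^{\text{sc}}$ factors through the image of that last arrow. By the corollary above, applied with $m=i-j$, this image is exactly $\HH^{2i-j}(X,W^\dagger\Omega^{\geqslant i})^{1-\f_i}$, because the isomorphism $\h^{i-j}(X,W^\dagger\Omega^i_{\log})\xrightarrow{\sim}\HH^{2i-j}(X,W^\dagger\Omega^{\geqslant i})^{1-\f_i}$ coming from the preceding proposition is realised precisely by that same inclusion-induced map. This yields the claimed factorisation of $c_{ij}^{\text{sc}}$.

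The only point requiring a little attention — rather than a genuine obstacle — is the bookkeeping of shifts and truncations, i.e.\ checking that the factorisation of $d\log^i$ through the subcomplex of closed forms induces, after taking $\HH^{\ast}(X,-)$, exactly the edge map of the exact sequence used in the preceding proposition. Once the degree conventions are pinned down this is purely formal, and the argument uses no property of $W^\dagger\Omega_X$ beyond those already proved.
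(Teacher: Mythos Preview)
Your proposal is correct and follows exactly the same line as the paper: the overconvergent Chern classes factor by construction through $\h^{i-j}(X,W^\dagger\Omega^i_{\log})$, and the preceding corollary identifies this with $\HH^{2i-j}(X,W^\dagger\Omega^{\geqslant i})^{1-\f_i}$ via the inclusion-induced map. The paper states this in two sentences; your version just spells out the factorisation and the degree conventions more explicitly.
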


Taking into account the $\gamma$-filtration, especially Proposition \ref{Prop534} (2) yields Chern classes on the $\gamma$-graded pieces of the algebraic $K$-groups. 
\begin{cor} Let $X/k$ be smooth. There are overconvergent Chern classes
$$c_{ij}^{\text{sc}}:\gr_\gamma^i K_j(X)\rightarrow \HH^{2i-j}(X,W^\dagger\Omega^{\geqslant i})^{1-\f_i}.$$
\end{cor}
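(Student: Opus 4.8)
The plan is to obtain this last corollary formally from the two immediately preceding results, with no new input. By the preceding corollary we already have, for $X/k$ smooth, the factorization
$$c_{ij}^{\text{sc}}\colon K_j(X)\longrightarrow \HH^{2i-j}(X,W^\dagger\Omega^{\geqslant i})^{1-\f_i},$$
so the task reduces to showing that this map descends along the surjection $F^i_\gamma K_j(X)\twoheadrightarrow\gr^i_\gamma K_j(X)$. First I would restrict $c_{ij}^{\text{sc}}$ to the subgroup $F^i_\gamma K_j(X)\subseteq K_j(X)$.

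The key step is then to check that this restriction is constant on cosets of $F^{i+1}_\gamma K_j(X)$, i.e.\ that $c_{ij}^{\text{sc}}(x+y)=c_{ij}^{\text{sc}}(x)$ for $x\in F^i_\gamma K_j(X)$ and $y\in F^{i+1}_\gamma K_j(X)$. For $j\geqslant 1$ this is immediate: $c_{ij}^{\text{sc}}$ is a group homomorphism on all of $K_j(X)$ (recalled in Section \ref{AppendixGamma}, following \cite[Lemma 2.26]{Gillet}), and $c_{ij}^{\text{sc}}(y)=0$ by Proposition \ref{Prop534}(2) since $i\geqslant 1$. For $j=0$ one expands with the Whitney sum formula (Definition \ref{TCC}(\ref{TCCWhitneySum})): $c_{i0}^{\text{sc}}(x+y)=\sum_{a+b=i}c_{a0}^{\text{sc}}(x)\,c_{b0}^{\text{sc}}(y)$ with $c_{00}^{\text{sc}}=1$, and every term with $1\leqslant a\leqslant i-1$ vanishes because $F^i_\gamma K_0(X)\subseteq F^{a+1}_\gamma K_0(X)$ (the $\gamma$-filtration being decreasing, and $a+1\leqslant i$), so Proposition \ref{Prop534}(2) forces $c_{a0}^{\text{sc}}(x)=0$, while the term $a=i$ contributes $c_{i0}^{\text{sc}}(y)=0$ again by Proposition \ref{Prop534}(2); only $a=0$ and $a=i$ survive. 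Hence $c_{ij}^{\text{sc}}$ descends to a well-defined map $c_{ij}^{\text{sc}}\colon\gr^i_\gamma K_j(X)\to\HH^{2i-j}(X,W^\dagger\Omega^{\geqslant i})^{1-\f_i}$, and the same computation (applied with $x,y\in F^i_\gamma K_j(X)$) shows it is additive.

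I do not expect a genuine obstacle here; the argument is purely formal once Proposition \ref{Prop534}(2) and the preceding corollary are in hand. The only points requiring care are bookkeeping: that the $\gamma$-filtration is decreasing, so that $F^i_\gamma\subseteq F^{a+1}_\gamma$ precisely when $a\leqslant i-1$, and that the Whitney sum formula is available in the form used for $j=0$. Both are supplied by the earlier sections, so the proof is short and formal.
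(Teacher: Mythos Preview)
Your proposal is correct and follows exactly the paper's approach: the paper gives no detailed proof at all, merely stating that Proposition~\ref{Prop534}(2) yields the descent to $\gr^i_\gamma$, and you have supplied the routine verification behind that sentence. One small slip in your $j=0$ case: the term $a=i$ has $b=0$ and contributes $c_{i0}^{\text{sc}}(x)\cdot c_{00}^{\text{sc}}(y)=c_{i0}^{\text{sc}}(x)$, which is the surviving term you want, while it is the term $a=0$ (hence $b=i$) that contributes $c_{i0}^{\text{sc}}(y)=0$; your labels for these two extreme terms are swapped, but the conclusion $c_{i0}^{\text{sc}}(x+y)=c_{i0}^{\text{sc}}(x)$ is unaffected.
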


\section{Comparison of overconvergent and rigid Chern classes}\label{Comparison}

The purpose of this section is to show that in the case of a smooth and quasi-projective variety the overconvergent Chern classes from the previous section are compatible with the rigid Chern classes defined by Petrequin in \cite{Petrequin}.

\subsection{Rigid Chern classes}\label{RigChernClasses}

Let $X$ be a proper variety over $k$ and $\ver$ a discrete valuation ring with residue field $k$. We choose a closed immersion $X\hookrightarrow\yer$, where $\yer$ is a formal scheme over $\Spf(\ver)$ smooth in a neighbourhood of $X$. 
Petrequin defines in \cite[Section 1.2]{Petrequin} the cohomology class asociated to a good pseudo-divisor in the sense of Fulton using \v{C}ech cohomology. Functoriality allows this to be generalised for open varieties. 

Consider  a (smooth) $k$-variety $X$ and a vector bundle $\edg$ of rank $r$ over $X$. We denote by $\pi:\PP=\PP_{\edg}\rightarrow X$ the associated projective bundle. Let 
$$\xi=c_1^{\text{rig}}(\OO_{\PP}(1))\in\h^2_{\text{rig}}(\PP\slash K)$$
be the class of the good pseudo-divisor $(\OO_{\PP}(1),X,-)$. According to Petrequin this can be calculated by the \v{C}ech cocycle 
$$\left(\frac{du}{u}\right)\in Z^2(\mathfrak{U}_K,\Omega^\ast_{]\PP[}),$$
where $\mathfrak{U}$ is a covering trivialising $\OO_{\PP}(1)$ over $\ver$. By \cite[Corollary 4.4]{Petrequin} there is a projective bundle formula
$$\h^n_{\text{rig}}(\PP\slash K)\cong \bigoplus_{i=0}^{r-1}\h^{n-2i}_{\text{rig}}(X\slash K)\xi^i.$$
As usual, this produces welldefined Chern classes $c_i^{\text{rig}}(\edg)\in\h^{2i}_{\text{rig}}(X\slash K)$.  and then induces a theory of Chern classes for higher algebraic $K$-theory with coefficients in rigid cohomology
$$c_{ij}^{\text{rig}}:K_j(X)\rightarrow \h^{2i-j}_{\text{rig}}(X\slash K).$$

\begin{prop}\label{RigChernFac} Let $X/k$ be a smooth variety. The rigid Chern classes defined by Petrequin factor through Milnor $K$-theory via a morphism
\begin{equation}\label{CompMilnorRig}\h^i(X,\Ksheaf^M_m)\rightarrow \h^{i+m}_{\text{rig}}(X/K).\end{equation}
\end{prop}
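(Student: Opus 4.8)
The plan is to mimic, in the rigid setting, the argument already used twice in this paper: for a cohomology theory equipped with a projective bundle formula, a theory of Chern classes is determined by the first Chern class, so it suffices to produce the map \eqref{CompMilnorRig} in a way that is compatible with the first Chern class and with the projective bundle formulas on both sides. First I would construct the comparison map. The starting point is the $d\log$ map: Petrequin's first Chern class of a line bundle $\LM$ is represented by the \v{C}ech cocycle $(du/u)$ on a trivialising cover, i.e. it is the image of $\LM\in\h^1(X,\OO_X^\ast)=\h^1(X,\Ksheaf_1^M)$ under the map $\OO_X^\ast\to\Omega^\ast_{]X[}$, $u\mapsto du/u$, inducing $\h^1(X,\OO_X^\ast)\to\h^2_{\text{rig}}(X/K)$. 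More generally, since $\Ksheaf_m^M$ is \'etale-locally generated by symbols $\{u_1,\dots,u_m\}$ (Kerz, cited above), one sends such a symbol to the product $\frac{du_1}{u_1}\cdots\frac{du_m}{u_m}$ of \v{C}ech cocycles; the Steinberg relation holds in the de Rham complex $\Omega^\ast_{]X[}$ exactly as in Proposition \ref{PropSteinberg} (a symbol $\{u,1-u\}$ maps to $d\log u\, d\log(1-u)=0$), so this is well defined on $\Ksheaf_m^M$ and yields the morphism \eqref{CompMilnorRig}.

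Next I would check the two compatibilities needed to run Gillet's machinery. Compatibility with the cycle class map in degree one is essentially the observation just made, that on $\h^1(X,\OO_X^\ast)$ the composite agrees with $c_1^{\text{rig}}$. Compatibility with the projective bundle formula is the heart of the matter: one has Rost's projective bundle formula for the Milnor $K$-sheaf (Proposition \ref{ProjectiveBundleFormula}, via Corollary \ref{Cor65}) on one side and Petrequin's \cite[Corollary 4.4]{Petrequin} on the other, and I must verify that the comparison map carries the class $\xi\in\h^2(\PP(\edg),\Ksheaf_1^M)$ of $\OO_{\PP}(1)$ to the class $\xi=c_1^{\text{rig}}(\OO_{\PP}(1))\in\h^2_{\text{rig}}(\PP(\edg)/K)$ and is a morphism of modules over the cohomology ring; this is again reduced to the line-bundle case since both $\xi$'s are defined as the first Chern class of the same line bundle, computed by the same \v{C}ech cocycle $(du/u)$. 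Granting this, the comparison map is an isomorphism onto the corresponding summands and intertwines the two projective bundle decompositions, so by the uniqueness built into Gillet's construction (Theorem \ref{TheoChernClasses}, Theorem \ref{TheoHigherChernClasses}) the rigid Chern classes of vector bundles, and hence the higher Chern classes $c_{ij}^{\text{rig}}$ on $K_j(X)$, factor through $c_{ij}^M$ via \eqref{CompMilnorRig}.

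Finally, a technical point: Petrequin's \v{C}ech-theoretic construction is carried out for proper $X$ with a chosen embedding into a smooth formal scheme, and the general (open, smooth) case is obtained by functoriality; I would make sure the comparison map \eqref{CompMilnorRig} is itself functorial in $X$ (clear from the description on local symbols) so that the factorisation over Milnor $K$-theory propagates from the proper case to all smooth quasi-projective $X$, which is the setting we ultimately need. I expect the main obstacle to be the projective bundle compatibility: matching Rost's cycle-module projective bundle formula with Petrequin's rigid one requires tracking the class of $\OO_{\PP}(1)$ through Corollary \ref{Cor65} and the \v{C}ech description simultaneously, and checking that the module structures (cap product with $\xi$ on the $K$-theory side versus cup product with $\xi$ on the rigid side) correspond under the map — everything else reduces cleanly to the degree-one statement about $d\log$.
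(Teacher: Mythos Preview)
Your proposal is correct and follows essentially the same route as the paper: construct the map \eqref{CompMilnorRig} by sending a symbol $\{u_1,\dots,u_m\}$ to the product of $d\log$-forms, verify the Steinberg relation in the target, match the first Chern classes of $\OO_{\PP}(1)$ via the \v{C}ech cocycle $(du/u)$, and then use the two projective bundle formulas together with multiplicativity to conclude. The paper adds two refinements you should incorporate: it is explicit that the $u_i$ must first be lifted to sections $\tilde u_i$ over the formal scheme $\yer$ (so that $d\tilde u_i/\tilde u_i$ lives in $\Omega_{]\overline X[}$, with the cocycle class independent of the lift), and it treats the finite residue field case separately by observing that the algebraic de Rham complex is a continuous object of $\Fan_{\text{\'et}}$ and invoking Corollary~\ref{KtoFunctor} to pass from $\overline{\Ksheaf}^M$ to $\widehat{\Ksheaf}^M$.
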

\begin{proof}: We start with the case of infinite residue fields. If $X$ is not proper let $j:X\hookrightarrow \overline{X}$ be a suitable compactification, which exists by \cite[Lemme 3.19]{Petrequin}, otherwise take $X=\overline{X}$. Furthermore, let $\overline{X}\hookrightarrow\yer$ be a closed immersion into a formal scheme over $\Spf(\ver)$ as before, and $]X[$ and $]\overline{X}[$ the tubes of $X$ and $\overline{X}$ respectively in the generic fibre of $\yer$. 

For a local section $x$ of $\OO_X^\ast$ choose a lift $\tilde{x}$ over $\yer$. This can be seen as a rigid analytic function on $]X[$ and $]\overline{X}[$ that restricts to an invertible element on $X$, which is therefore itself invertible (as rigid analytic function). We thus set
$$\mu=\frac{d\tilde{x}}{\tilde{x}}$$
and thereby define a local section of $\Omega^1_{]\overline{X}[}$ whose cocycle class is independent of the choice of lift. In the same manner, it is possible to assign to a local section $x_1\otimes\cdots\otimes x_i\in \OO_X^\ast\otimes\cdots\otimes\OO_X^\ast$ a section
$$\mu_1\cdots\mu_i=\frac{d\tilde{x}_1}{\tilde{x}_1}\cdots\frac{d\tilde{x}_i}{\tilde{x}_i}$$
of $\Omega^i_{]\overline{X}[}$. It is clear that 
$$\frac{d\tilde{x}}{\tilde{x}}\frac{d(\tilde{1}-\tilde{x})}{(\tilde{1}-\tilde{x})}=0$$
and therefore, as $\tilde{1}-\tilde{x}$ is a lift of $1-x$, the classes of the symbols $\mu_1\cdots\mu_i$ satisfy the Steinberg relation. Consequently, there are induced morphisms of cohomology groups
\begin{equation}\label{MorphMRig}\h^m(X,\overline{\Ksheaf}^M_i)\rightarrow\HH^{i+m}\left(]\overline{X}[,j^\dagger\Omega_{]\overline{X}[}\right)=\h^{i+m}_{\text{rig}}(X/K),\end{equation}
and it is clear that it respects the multiplicative structure of the cohomology rings. 

To see that this factors the rigid Chern classes, consider a vector bundle $\pi:\edg\rightarrow X$ of constant rank $n$, and let $\PP=\PP(\edg)$ be the associated projective bundle. As we mentioned there is a cover $\mathfrak{U}=(\mathscr{U}_i)$ of $\yer$ such that the induced cover $\mathfrak{U}_X=(U_i)$ of $X$ trivialises the line bundle $\OO_{\PP}(1)$. To this trivialisation we can associate in the classical manner a \v{C}ech cocycle
$$(u)\in Z^1(\mathfrak{U}_X,\OO^\ast_X)$$
as $u_{ij}$ on $U_i\cap U_j$, which calculates the first Chern class of $\OO_{\PP}(1)$ in $\h^1(\PP,\overline{\Ksheaf}_1^M)$. On the other hand, Petrequin shows that the first Chern class of $\OO_{\PP}(1)$ in $\h^2_{\text{rig}}(\PP/K)$ is given by the \v{C}ech cocycle 
$$\left(\frac{d\tilde{u}}{\tilde{u}}\right)\in Z^2(\mathfrak{U}_K,\Omega_{]\PP[}),$$
and its class agrees with the image of the class of $(u)$ under the morphism (\ref{MorphMRig}). In both cases the Chern classes $c_i^{\text{rig}}(\edg)\in\h^{2i}_{\text{rig}}(X/K)$ and $c^M_{i}(\edg)\in\h^i(X,\overline{\Ksheaf}^M_i)$ are uniquely defined via a projective bundle formula using the same relations namely
\begin{eqnarray*} c_0(\edg) &=& 0,\\
c_{i>r}(\edg) &=& 0,\\
\sum_{i=0}^r c_i(\edg)c_1\left(\OO_{\PP}(1)\right)^{r-i} &=& 0.\end{eqnarray*}
Indeed, we have a commutative diagram of the form
$$\xymatrix{\h^j(\PP,\K_j^M)\ar[r] & \h^j_{\text{rig}}(\PP/K)\\ 
\bigoplus_{i=0}^{n-1}\h^{j-i}(X,\overline{\Ksheaf}_{j-i}^M)\cdot c_1^M(\OO_{\PP}(1))^i\ar[r]\ar[u]^{\sim} & \bigoplus_{i=0}^{n-1}\h^{i-2j}_{\text{rig}}(X/K)\cdot c_1^{\text{rig}}(\OO_{\PP}(1))^i\ar[u]_{\sim}}.$$
The fact that the morphism (\ref{CompMilnorRig}) is compatible with multiplication shows that the rigid Chern classes $c_i^{\text{rig}}$ factor through Milnor $K$-theory sheaves  The same then holds for the higher Chern classes.  

Now we come to the case of finite residue fields. It is easy to see that the algebraic de Rham complex $\Omega$ is continuous abelian sheaf which disposes of a transfer on the big \'etale (as well as Zariski) site of schemes, in other words it is a continuous object of the category $\Fan_{\text{\'et}}$. Thus by Corollary \ref{KtoFunctor}, the morphism above induces a morphism of cohomology groups for the improved Milnor $K$-sheaf
\begin{equation}\label{CompMilnorRigFin}\h^m(X,\widehat{\Ksheaf}^M_i)\rightarrow\HH^{i+m}\left(]\overline{X}[,j^\dagger\Omega_{]\overline{X}[}\right)=\h^{i+m}_{\text{rig}}(X/K).\end{equation}
Recall however that Rost's results state that the cohomology of a cycle module can be calculated by using the associated complex, which means in particular that we have to evaluate them solely on fields --- and on fields Kerz's usual and improved Milnor $K$-theories coincide. 

We have seen that the morphism (\ref{CompMilnorRig}) factors the rigid Chern classes of Petrequin in the case of a scheme with infinite residue fields. As a consequence of the remark in the previous paragraph and of the uniqueness property  in Corollary \ref{KtoFunctor} used for the construction of the Milnor Chern classes in the case of finite residue fields and also for the construction of morphism (\ref{CompMilnorRigFin}), we conclude that the morphism (\ref{CompMilnorRigFin}) factors the rigid Chern classes in the case of finite residue fields. \end{proof}

\subsection{The comparison theorem between rigid and overconvergent cohomology}\label{CompDLZSubsec}

From now on, let $X/k$ be smooth and quasiprojective and $K$ the fraction field of $W(k)$. There is a canonical comparison isomorphism 
$$\h^i_{\text{rig}}(X/K)\xrightarrow{\sim}\HH^i\left(W^\dagger\Omega_{X/k}\right)\otimes\QQ$$
between rigid and overconvergent de Rham cohomology constructed by Davis, Langer and Zink \cite{DavisLangerZink}.

They first construct it locally for affine schemes which posses a Witt lift using the universal property of K\"ahler differentials. It can be shown, that such a lift exists, and furthermore, that the comparison map is independent of the choice of Witt lift. The next step is to glue this construction wih the help of dagger spaces via a \v{C}ech spectral sequence. In this way they obtain a natural quasiisomorphism

\begin{equation}\label{CompDLZ}R\Gamma_{\text{rig}}(X)\xrightarrow{\sim} R\Gamma\left(X,W^\dagger\Omega_{X/k}\right)\otimes \QQ.\end{equation}


\begin{lem}\label{LemProd} The morphism (\ref{CompDLZ}) of Davis, Langer and Zink is compatible with the multiplicative structure on both sides: there is a commutative diagram
$$\xymatrix{R\Gamma_{\text{rig}}(X)\bigotimes^L R\Gamma_{\text{rig}}(X)\ar[r]\ar[d] & \left(R\Gamma(X,W^\dagger\Omega_{X/k})\otimes\QQ\right)\bigotimes^L\left(R\Gamma(X,W^\dagger\Omega_{X/k})\otimes\QQ\right)\ar[d]\\
R\Gamma_{\text{rig}}(X) \ar[r] & \left(R\Gamma(X,W^\dagger\Omega_{X/k})\otimes\QQ\right)}$$
where the tensor is taken over $W(k)$, the vertical maps represent the product and the upper horizontal map is given by the tensor product of the morphism (\ref{CompDLZ}).\end{lem}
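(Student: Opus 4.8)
The plan is to trace the construction of the comparison quasi-isomorphism (\ref{CompDLZ}) in \cite{DavisLangerZink} step by step and check at each stage that the cup products are respected. The proof is essentially a naturality/compatibility verification, and the key observation is that the comparison morphism is built out of pieces each of which is manifestly multiplicative, so the product structure is preserved throughout. I would structure the argument in three parts, mirroring the three steps of the original construction: the local construction via the universal property of Kähler differentials, the independence of the Witt lift, and the globalisation via a \v{C}ech spectral sequence over a cover by dagger spaces.

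First I would treat the affine local case. Suppose $U = \Spec A$ is affine admitting a Witt lift $\widetilde{A}$ (a weakly complete finitely generated $W(k)$-algebra lifting $A$ together with a compatible Frobenius). The comparison map is induced by the canonical map of differential graded algebras $\Omega^\bullet_{\widetilde{A}\otimes\QQ/K} \to W^\dagger\Omega_{A/k}\otimes\QQ$ coming from the universal property of Kähler differentials applied to the ring homomorphism $\widetilde{A} \to W^\dagger(A)$. Since this is a morphism of \emph{differential graded algebras}, it is by definition compatible with the multiplication $\Omega^p\otimes\Omega^q\to\Omega^{p+q}$ on the source and $W^\dagger\Omega^p\otimes W^\dagger\Omega^q\to W^\dagger\Omega^{p+q}$ on the target; on the level of $R\Gamma$ of an affine these wedge products compute the cup products. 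So multiplicativity at this stage is immediate from the dga structure. The same bookkeeping handles the comparison on overlaps.

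Next comes independence of the Witt lift and the passage to a general quasi-projective $X$. One chooses an affine open cover $\{U_i\}$ of $X$ with Witt lifts, forms the associated \v{C}ech double complexes computing $R\Gamma_{\text{rig}}(X)$ (via dagger spaces, using Grosse-Kl\"onne's theory) and $R\Gamma(X, W^\dagger\Omega_{X/k})\otimes\QQ$, and the comparison morphism is the map of total complexes induced termwise by the local comparison maps. The cup product on \v{C}ech cohomology is computed by the Alexander–Whitney-type diagonal composed with the wedge product on each term; since the local comparison maps are multiplicative (previous paragraph) and the combinatorial part of the product formula is the same on both sides, the total-complex map commutes with cup products up to the usual sign conventions, which are identical in rigid and overconvergent de Rham cohomology. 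Passing to the derived category (inverting the \v{C}ech quasi-isomorphisms and the quasi-isomorphism of the refinement maps) yields the commutative square of the lemma, using that $\otimes^L$ over $W(k)$ is computed by these same \v{C}ech models since everything is flat over $W(k)$ after tensoring with $\QQ$.

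The main obstacle I anticipate is a bookkeeping one rather than a conceptual one: verifying that the identification of cup product on $R\Gamma_{\text{rig}}(X)$ with the wedge-product-on-\v{C}ech-cochains model really is the one used in \cite{DavisLangerZink} and in Petrequin's framework, and checking the signs in the Alexander–Whitney maps on both sides agree. One must also be careful that the Witt lift $\widetilde{A}$ can be chosen so that $\widetilde{A_i}\otimes_{\widetilde{A_j}}\cdots$ on intersections is still (weakly complete and) compatible enough to carry the product — but this is exactly the dagger-space gluing that \cite{DavisLangerZink} already establishes, so I would simply cite it and note that the product on dagger-space de Rham cohomology is the standard wedge product, hence compatible by construction. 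In short, every individual arrow in the construction is a dga morphism or a standard \v{C}ech-product comparison, and assembling them gives the claim; I would keep the written proof short, emphasising the dga-morphism point and deferring the sign/combinatorial checks to ``a routine verification.''
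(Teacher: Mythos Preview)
Your proposal is correct and follows essentially the same approach as the paper: the key point in both is that the comparison morphism is a morphism of differential graded algebras, hence automatically compatible with products. The paper's proof is simply a three-sentence version of your first paragraph---it notes that both vertical maps come from the dga structures on $\Omega_V$ and $W^\dagger\Omega$, asserts that (\ref{CompDLZ}) is by construction a dga morphism, and concludes---without spelling out the \v{C}ech/Alexander--Whitney compatibility that you (rightly) identify as the technical content behind that assertion.
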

\begin{proof}: Recall that $R\Gamma_{\text{rig}}(X)=R\Gamma(V,j^\dagger\Omega_V)$ where $V$ is a strict neighbourhood as above. Thus the left vertical map is given via the structure of $\Omega_V$ as differentially graded algebra. The same goes for the vertical map on the right hand side. Moreover, the comparison morphism (\ref{CompDLZ}) is by construction a morphism of differentially graded algebras, and consequently the diagram commutes.  \end{proof}

\subsection{Comparison with rigid Chern classes}

\begin{prop} Let $X$ be as above. The morphism (\ref{CompMilnorRig}) of cohomology groups $\h^i(X,\Ksheaf^M_m)\rightarrow \h^{i+m}_{\text{rig}}(X/K)$ factors through $\HH^{i+m}(X,W^\dagger\Omega)$.\end{prop}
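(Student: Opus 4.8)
The plan is to show that the morphism (\ref{CompMilnorRig}) coincides with the composite
\[
\h^i(X,\Ksheaf^M_m)\xrightarrow{\;d\log\;}\HH^{i+m}(X,W^\dagger\Omega)\longrightarrow\HH^{i+m}(X,W^\dagger\Omega)\otimes\QQ\xrightarrow[\;\sim\;]{(\ref{CompDLZ})}\h^{i+m}_{\text{rig}}(X/K),
\]
where the first arrow is the map induced on cohomology by $d\log^m\colon\Ksheaf^M_m\to W^\dagger\Omega[m]$ from (\ref{dlog}) and the last arrow is the inverse of the Davis--Langer--Zink comparison isomorphism. Granting this, the map $\HH^{i+m}(X,W^\dagger\Omega)\to\HH^{i+m}(X,W^\dagger\Omega)\otimes\QQ\xrightarrow{\sim}\h^{i+m}_{\text{rig}}(X/K)$ is the desired intermediate factor, so that the triangle
\[
\xymatrix{
\h^i(X,\Ksheaf^M_m)\ar[r]^-{d\log}\ar[rd]_{(\ref{CompMilnorRig})} & \HH^{i+m}(X,W^\dagger\Omega)\ar[d]\\
& \h^{i+m}_{\text{rig}}(X/K)
}
\]
commutes. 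Since all maps involved are natural and built from maps of complexes of sheaves defined \'etale-locally, the identity can be verified locally on an affine piece $U=\Spec A$, and by Lemma \ref{LemProd} (compatibility of (\ref{CompDLZ}) with products) it suffices to treat degree-one symbols and then multiply.

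First I would recall the symbol-level descriptions. By the construction in Proposition \ref{RigChernFac}, the morphism (\ref{CompMilnorRig}) is induced by the map of complexes sending a local symbol $\{x_1,\ldots,x_m\}$ to the class of $\tfrac{d\tilde x_1}{\tilde x_1}\cdots\tfrac{d\tilde x_m}{\tilde x_m}\in\Omega^m_{]\overline X[}$, where $\tilde x_i$ is an arbitrary lift and the class is independent of the choice. On the other side, $d\log^m$ is induced by $\{x_1,\ldots,x_m\}\mapsto d\log[x_1]\cdots d\log[x_m]=\tfrac{d[x_1]}{[x_1]}\cdots\tfrac{d[x_m]}{[x_m]}\in W^\dagger\Omega^m_X$ (Proposition \ref{PropSteinberg} and the corollary following it). The key local input is then the construction of (\ref{CompDLZ}) in \cite{DavisLangerZink}: on an affine admitting a Witt lift, the comparison morphism is the \emph{unique} morphism of differential graded $K$-algebras $W^\dagger\Omega_{A/k}\otimes\QQ\to\Omega^\bullet_{A^\dagger_K/K}$ whose degree-zero part sends a Teichm\"uller representative $[a]$ to a lift $\tilde a$ of $a$ in the weak completion. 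Being a map of dgas, it carries $d\log[a]=\tfrac{d[a]}{[a]}$ to $\tfrac{d\tilde a}{\tilde a}$, hence $d\log[x_1]\cdots d\log[x_m]$ to $\tfrac{d\tilde x_1}{\tilde x_1}\cdots\tfrac{d\tilde x_m}{\tilde x_m}$; passing to cohomology and using lift-independence, this matches the local symbol description of (\ref{CompMilnorRig}).

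Next I would globalise. Davis, Langer and Zink assemble (\ref{CompDLZ}) from these local comparison maps via a \v{C}ech/dagger-space argument; since the local identifications send $d\log$ symbols to the rigid symbols and these symbols glue (being the images of the globally defined morphism $d\log^m$), the induced map on hypercohomology carries $d\log$-classes to the classes defining (\ref{CompMilnorRig}). This gives the asserted factorisation in the case of infinite residue fields. For finite residue fields one argues exactly as in the last paragraph of the proof of Proposition \ref{RigChernFac}: by Rost's description the cohomology of $\Ksheaf^M_m$ is computed from the Gersten complex, whose terms only involve Milnor $K$-groups of fields where the na\"ive and improved theories agree, and the universal property of Corollary \ref{KtoFunctor} used to build both $d\log^m$ and (\ref{CompMilnorRigFin}) then yields the statement in general.

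The main obstacle I anticipate is making the comparison with (\ref{CompDLZ}) rigorous through its construction: the DLZ morphism is a priori only a quasi-isomorphism in the derived category, produced by a \v{C}ech spectral sequence over a cover by affines with Witt lifts, and one must check that the gluing data---transition between different Witt lifts on overlaps, and the identification of $R\Gamma_{\text{rig}}(X)$ used by Petrequin (via $]\overline X[$ and $j^\dagger\Omega_{]\overline X[}$) with the model used by Davis--Langer--Zink---is compatible with the $d\log$ symbols. This is essentially bookkeeping, since the symbols are canonical and all transition maps fix them up to coboundary, and functoriality for the open immersion $X\hookrightarrow\overline X$ handles the compactification; but it is where the substantive verification lies.
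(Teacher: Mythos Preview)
Your proposal is correct and follows essentially the same route as the paper: reduce to the affine situation, verify on symbols that the Davis--Langer--Zink comparison carries $d\log[x]$ to $\tfrac{d\tilde x}{\tilde x}$ (the paper does this by choosing a Frobenius lift with $F(u)=u^p$ so that $\varkappa$ sends the chosen coordinate lift to the Teichm\"uller lift, then invokes independence of the Frobenius lift), pass to \v{C}ech cocycles to globalise, and finish the finite-residue-field case via Kerz's universal property exactly as you indicate. The only point to sharpen is the direction and description of the local comparison: in \cite{DavisLangerZink} the map is built from $\Omega^\bullet_{B^\dagger}\to W^\dagger\Omega_A$ via $\varkappa\colon B\to W(A)$ (not the other way), and it is only the \emph{chosen coordinates} that are sent to Teichm\"uller lifts, not arbitrary elements---but since you only need the statement on cohomology classes and for units this does not affect your argument.
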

\begin{proof}: Assume at first that $X$ has infinite residue fields. As $X$ is quasiprojective we can choose an open embedding $X\rightarrow \Proj\mathcal{S}$ where $\mathcal{S}$ is a finitely generated graded algebra over $k$. Davis, Langer and Zink point out that this enables us to consider finite coverings $\mathfrak{U}_X=\{U_i\}$ of $X$ fine enough such that the $U_i=\Spec A_i$ are standard smooth affines as well as their intersections. What is more, if we are given a global section $\overline{u}\in \OO_X^\ast$, we can choose the covering in a way that it represents a trivialisation for $\overline{u}$ given in local coordinates by
$$\overline{u}_{ij}\qquad\text{ on }U_i\cap U_j.$$
Further refinements allow us to consider $m$ such sections $\overline{u}^{(1)},\ldots,\overline{u}^{(m)}$ of $\OO_X^\ast$ each of which are given in local coordinates $\overline{u}_{ij}^{(k)}$. 

Thus we can consider a section of $\overline{\Ksheaf}^M_m$ locally given by
$$\{\overline{u}_{ij}^{(1)},\ldots,\overline{u}_{ij}^{(m)}\}\qquad\text{ on }U_i\cap U_j.$$
Its image in $W^\dagger\Omega\left[1\right]$ under $d\log$ is
$$\frac{d[\overline{u}_{ij}^{(1)}]}{[\overline{u}_{ij}^{(1)}]}\cdots\frac{d[\overline{u}_{ij}^{(m)}]}{[\overline{u}_{ij}^{(m)}]}\qquad\text{ on }U_i\cap U_j.$$
 
As in \cite{DavisLangerZink} we can choose for each $A_i$ a standard smooth lift $B_i$ over $W(k)$ with a fixed Frobenius lift such that for the lift $u^{(l)}_{ij}$ of each $\overline{u}^{(l)}_{ij}$  
$$F(u_{ij}^{(l)})=(u_{ij}^{(l)})^p,$$
and a homomorphism $\varkappa_i:B_i\rightarrow W(A_i)$, induced by $F$, which lifts $B_i\rightarrow A_i$, such that the image is overconvergent, thereby giving an overconvergent frame $(U_i,F_i,\varkappa_i)$ where $F_i=\Spec B_i$. By choosing the covering fine enough, we ensure that all intersections are again standard smooth affine and give rise to overconvergent frames. We denote the intersections by $U_I=\bigcap_{i\in I}U_i$ and $U_I=A_I$, where $I$ is a multi-index. 

Let as before $j:X\hookrightarrow \overline{X}$ be a suitable compactification and let $\overline{X}\hookrightarrow\yer$ be a closed immersion into a formal scheme over $\Spf(W(k))$. Denote by $\mathscr{U}_i=\Spf \mathscr{A}_i$ the formal completion of $F_i$ along $U_i$, which cover the image of $X$ in $\yer$. This can be completed to a covering $\mathfrak{U}$ of $\yer$ that induces the covering $\mathfrak{U}_X$. Further denote by $\mathfrak{U}_K$ the induced cover of the rigid generic fibre. Analogue to above we use the notations $\mathscr{U}_I$, $\mathscr{A}_I$ etc.

Again we choose liftings $\tilde{u}_{ij}^{(1)},\ldots,\tilde{u}_{ij}^{(m)}\in \mathscr{A}_{ij}$ of the local sections $\overline{u}_{ij}^{(1)},\ldots,\overline{u}_{ij}^{(m)}$. Then
$$\frac{d\tilde{u}_{ij}^{(1)}}{\tilde{u}_{ij}^{(1)}}\cdots\frac{d\tilde{u}_{ij}^{(m)}}{\tilde{u}_{ij}^{(m)}}$$
is a local section of $\Omega_{]\overline{X}[}$ and the image of $\{\overline{u}_{ij}^{(1)},\ldots,\overline{u}_{ij}^{(m)}\}\in\overline{\Ksheaf}^M_m$ under the map defined in Proposition \ref{RigChernFac}. The way these local sections were obtained implies that they glue to a global section $\frac{d\tilde{u}}{\tilde{u}}$ if we use dagger spaces. 

The task now is to check that the image of $\frac{d\tilde{u}_{ij}^{(1)}}{\tilde{u}_{ij}^{(1)}}\cdots\frac{d\tilde{u}_{ij}^{(m)}}{\tilde{u}_{ij}^{(m)}}$ under the comparison morphism of Davis,Langer and Zink is compatible with $\frac{d[\overline{u}_{ij}^{(1)}]}{[\overline{u}_{ij}^{(1)}]}\cdots\frac{d[\overline{u}_{ij}^{(m)}]}{[\overline{u}_{ij}^{(m)}]}$. 

Recalling that the $\overline{u}_{ij}^{(l)}\in A_{ij}$ and $\tilde{u}_{ij}^{(l)}\in B_{ij}$ are local coordinates and that we chose the Frobenius lift in a particular way, we see that the image of $\tilde{u}_{ij}^{(l)}$ under $\varkappa_{ij}$ is the Teichm\"uller lift $\left[\overline{u}_{ij}^{(l)}\right]$ (cf. \cite[Proposition 2.2.2]{Davis}). By the construction in \cite{DavisLangerZink} it follows that the map
$$\Gamma\left(\left]U_i\right[_{\mathscr{U}_i},\Omega_{\left]U_i\right[_{\mathscr{U}_i}}\right) \rightarrow  W\Omega_{U_i}\otimes\QQ$$
which is as a local map based upon the comparison map between the affine comparison morphism between Monsky--Washnitzer and overconvergent cohomology sends the class of $\frac{d \widetilde{u}_{ij}^{(l)}}{\widetilde{u}_{ij}^{(l)}}$ to the class of  $\frac{d[\overline{u}_{ij}^{(l)}]}{[\overline{u}_{ij}^{(l)}]}$ for all $i,j,l$. Although a priori this depends on the choice of Frobenius lift, Davis shows in \cite[Corollary 4.1.13]{Davis} that the comparison map is in fact independent of it. 

This morphism being the basis of the comparison morphism, we see that $\frac{d \widetilde{u}_{ij}^{(l)}}{\widetilde{u}_{ij}^{(l)}}$ is still mapped to $\frac{d[\overline{u}_{ij}^{(l)}]}{[\overline{u}_{ij}^{(l)}]}$ after passing to dagger spaces in order to glue. In particular, the \v{C}ech cocycle of rigid cohomology
$$\left(\frac{d \tilde{u}^{(l)}}{\tilde{u}^{(l)}}\right)\in Z^2(\mathfrak{U}_K,\Omega_{]X[}))$$
is sent to the \v{C}ech cocycle of overconvergent cohomology
$$\left(\frac{d[\overline{u}^{(l)}]}{[\overline{u}^{(l)}]}\right)\in Z^2(\mathfrak{U},W^\dagger\Omega_{X}),$$
for varying $l$. Thus, the same holds true for $\frac{d\tilde{u}^{(1)}}{\tilde{u}^{(1)}}\cdots\frac{d\tilde{u}^{(m)}}{\tilde{u}^{(m)}}$, which accordingly is sent to $\frac{d[\overline{u}^{(1)}]}{[\overline{u}^{(1)}]}\cdots\frac{d[\overline{u}^{(m)}]}{[\overline{u}^{(m)}]}$.

This shows that the induced morphism of cohomologies $\h^i(X,\overline{\Ksheaf}^M_m)\rightarrow \h^{i+m}_{\text{rig}}(X/K)$ factors indeed through $\HH^{i+m}(X,W^\dagger\Omega)$ and the diagram
$$\xymatrix{\h^i(X,\overline{\Ksheaf}^M_m)\ar[rr]\ar[dr] & & \h^{i+m}_{\text{rig}}(X/K)\ar[r]^{\sim\qquad} & \HH^{i+m}(X,W^\dagger\Omega)\otimes\QQ \\
& \HH^{i+m}(X,W^\dagger\Omega)\ar[urr]&&}$$
commutes.
 
In the case of finite residue fields, the statment follows as in Proposition \ref{RigChernFac} from the fact, that the morphisms $\h^i(X,\widehat{\Ksheaf}^M_m)\rightarrow \h^{i+m}_{\text{rig}}(X/K)$ and $\h^i(X,\widehat{\Ksheaf}^M_m)\rightarrow \HH^{i+m}(X,W^\dagger\Omega)$ are both deduced from the corresponding morphisms for Kerz's usual Milnor $K$-theory, and they are both unique by Corollary \ref{KtoFunctor}. \end{proof}
 
Now we can conclude with a comparison of rigid and overconvergent Chern classes.

\begin{theo}\label{CompChern} Let $X$ be a smooth quasiprojective scheme over $k$. The overconvergent Chern classes for $X$ defined here are compatible with the rigid Chern classes defined in \cite{Petrequin}.\end{theo}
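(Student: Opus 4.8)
The plan is to prove the theorem by assembling the diagram displayed in the introduction, whose outer triangle is exactly the assertion (\ref{DiagComp}). The key observation is that \emph{all three} maps out of $K_j(X)$ factor through the single Milnor Chern class $c_{ij}^M:K_j(X)\to\h^{i-j}(X,\Ksheaf^M_i)$ of Theorem \ref{TheoKsheafChern}: for $c_{ij}^{\text{sc}}$ this is the very definition given in Section \ref{Overconvergent}, namely $c_{ij}^{\text{sc}}$ is $c_{ij}^M$ followed by the map (\ref{KOvercon}) induced by $d\log^i$; for $c_{ij}^{\text{rig}}$ this is Proposition \ref{RigChernFac}, which exhibits $c_{ij}^{\text{rig}}$ as $c_{ij}^M$ followed by the morphism (\ref{CompMilnorRig}). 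Granting this, the theorem reduces to the commutativity of the triangle whose vertices are $\h^{i-j}(X,\Ksheaf^M_i)$, $\HH^{2i-j}(X,W^\dagger\Omega)$ and $\h^{2i-j}_{\text{rig}}(X/K)$, with arrows the map induced by $d\log^i$, the comparison isomorphism (\ref{CompDLZ}) (inverted and tensored with $\QQ$), and the map (\ref{CompMilnorRig}). But this triangle is precisely the content of the Proposition immediately preceding this theorem.

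So the essential work is already done; what remains in the proof is to make this assembly explicit and to dispatch two points. I would first recall, for the reader's convenience, why the preceding proposition gives what we need: using quasiprojectivity one chooses a fine enough affine cover $\mathfrak{U}_X=\{U_i=\Spec A_i\}$ (with all intersections standard smooth affine) trivializing the line bundles in sight, lifts it to an overconvergent frame with Frobenius-compatible Witt lifts as in \cite{DavisLangerZink} and to a dagger cover of a compactification $\overline X\hookrightarrow\yer$; then for a unit with local coordinates $\bar u_{ij}$ one has $\varkappa_{ij}(\tilde u_{ij})=[\bar u_{ij}]$, so the Monsky--Washnitzer-based local comparison morphism of Davis, Langer and Zink sends $d\tilde u_{ij}/\tilde u_{ij}$ to $d[\bar u_{ij}]/[\bar u_{ij}]$, independently of the chosen Frobenius lift by \cite[Corollary 4.1.13]{Davis}. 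Gluing over dagger spaces and using that the comparison morphism is multiplicative (Lemma \ref{LemProd}), the rigid \v{C}ech cocycle representing the image under (\ref{CompMilnorRig}) of a symbol $\{\bar u^{(1)},\dots,\bar u^{(m)}\}$ is carried to the overconvergent \v{C}ech cocycle representing the image of the same symbol under $d\log^m$. Since $\h^\ast(X,\Ksheaf^M_\ast)$ is computed by the Gersten/cycle complex, which involves only symbols in units of the residue fields, this determines the triangle on all classes.

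The first remaining point is the finite-residue-field case: there both $\h^\bullet(X,\widehat{\Ksheaf}^M_m)\to\h^{\bullet+m}_{\text{rig}}(X/K)$ and $\h^\bullet(X,\widehat{\Ksheaf}^M_m)\to\HH^{\bullet+m}(X,W^\dagger\Omega)$ are induced from the infinite-residue-field maps for $\overline{\Ksheaf}^M_m$ by the universal property of the improved Milnor $K$-sheaf (Corollary \ref{KtoFunctor}, Theorem \ref{ImprEt}), hence are the unique natural transformations with the required properties and are therefore automatically compatible with one another; the triangle then commutes by uniqueness, exactly as in the proof of Proposition \ref{RigChernFac}. The second point---and the step I expect to need the most care---is to make sure that having the comparison on the \emph{first} Chern class really propagates to all higher Chern classes and to higher algebraic $K$-theory. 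This is settled as in Proposition \ref{RigChernFac}: the classes $c_i^{\text{rig}}(\edg)$ and $c_i^M(\edg)$ are defined by the \emph{same} projective bundle relations $c_0=1$, $c_{i>r}=0$, $\sum_i c_i(\edg)c_1(\OO_{\PP}(1))^{r-i}=0$; the maps (\ref{CompMilnorRig}) and $d\log$ are ring homomorphisms intertwining the two classes $c_1(\OO_{\PP}(1))$ and the two projective bundle decompositions; and Gillet's construction (Theorems \ref{TheoChernClasses} and \ref{TheoHigherChernClasses}) builds the higher classes and the $K$-theoretic classes functorially out of this data by cap products which are likewise intertwined. Hence every square in the diagram of the introduction commutes, and reading off its outer triangle yields the diagram (\ref{DiagComp}), which is the theorem.
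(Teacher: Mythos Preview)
Your proposal is correct and follows essentially the same approach as the paper: both Chern class maps factor through the Milnor Chern class $c_{ij}^M$ (the overconvergent one by construction, the rigid one by Proposition \ref{RigChernFac}), and the remaining triangle involving $\h^{i-j}(X,\Ksheaf^M_i)$, $\HH^{2i-j}(X,W^\dagger\Omega)$ and $\h^{2i-j}_{\text{rig}}(X/K)$ commutes by the Proposition immediately preceding the theorem. The paper's own proof is simply the terse version of this, citing those earlier results without re-explaining their content; your second and third paragraphs essentially recapitulate the proofs of Proposition \ref{RigChernFac} and of the preceding Proposition rather than adding new ingredients, so while nothing you write is wrong, much of it is redundant at this point in the argument.
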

\begin{proof}: We use the fact that the rigid and the overconvergent Chern classes factor through the Milnor $K$-sheaf. Consider the following diagram
$$\xymatrix{&&&\h^{2j-i}_{\text{rig}}(X/K)\\
K_j(X)\ar[rrru]^{c_{ij}^{\text{rig}}}\ar[rr]|{c_{ij}^M}\ar[rrrd]_{c_{ij}^{\text{sc}}}& & \h^{i-j}(X,\Ksheaf^M_i)\ar[ru]\ar[rd] & \\
&&&\HH^{2i-j}(X,W^\dagger\Omega)\ar[uu]}$$
where all the triangles commute: the upper left one by Proposition \ref{RigChernFac}, the lower left one by construction and the right one by the previous lemma. Given that all morphisms involved are compatible with products, this shows that the Chern classes in question are indeed compatible.
\end{proof}

\section{Overconvergent cycle classes}\label{Cycle}

In the light of the facts that, $\CH^i(X)=\h^i(X,\Ksheaf^M_i),$
and thus we have a morphism $\eta_{\text{sc}}^i: \CH^i(X)\rightarrow \HH^{2i}(X,W^\dagger\Omega_X^{\geqslant i}),$
and second, we can restrict the Chern classes $c_{ij}^{\text{sc}}:K_j(X)\rightarrow\h^{2i-j}(X,W^\dagger\Omega)$ to the $\gamma$-graded pieces whose sum is rationally isomorphic to Bloch's higher Chow groups $\CH^i(X,j)$, the goal of this section is to construct an integral morphism of higher cycle classes
$$\eta_{\text{sc}}^{ij}: \CH^i(X,j)\rightarrow \h^{2i-j}(X,W^\dagger\Omega^{\geqslant i}).$$

\subsection{Bloch's higher Chow groups}

Let $k$ be a field. Bloch makes in \cite{Bloch} the following definition of higher Chow groups which are in the case when $X/k$ is smooth, separated and $k$ perfect, equivalent to Voevodsky's motivic cohomology theory. 

Denote by $\Delta^N_k$ the standard algebraic $N$-simplex
$$\Delta^N_k:=\Spec k[t_0,\ldots,t_n]/(\sum t_i-1)$$
and by $\Delta_X^\ast$ the cosimplicial scheme given by
$$N\mapsto X\times_k\Delta^N_k.$$
The faces of $\Delta^N_X$ are defined by equations of the form $t_{i_1}=\cdots=t_{i_r}=0$. Let $z_r(X,i)$ be the subgroup of the cycles of dimension $r+i$ generated by the set of irreducible dimension $r+i$-subschemes of $\Delta_X^i$ that intersect all faces properly (i.e. that intersect each dimension $r$-face in dimension $\leqslant r+r$). Bloch's simplicial group is then given by
$$i\mapsto z_r(X,i).$$
The Chow groups (with respect to dimension) are then the homology groups of the associated complex
$$\CH_r(X,i)=\h_i(z_r(X,\ast)).$$
In the case when $X$ is equidimensional it is more convenient to label the complexes by codimension and define
$$\CH^r(X,i)=\h_i(z^r(X,\ast)),$$
where $z^r(X,i)=z_{n-r}(X,i)$ if $X$ is of dimension $n$. We may extend the definition of $z^r(X,i)$ to arbitrary smooth schemes by taking the direct sum over the irreducible components.

\subsection{Higher cycle classes with integral coefficients in the Milnor $K$-sheaf}

Let $X$ be smooth over a perfect field $k$ of characteristic $p>0$.

In \cite[\P 4]{Bloch} Bloch defines higher cycle classes from higher Chow groups into reasonable bigraded cohomology theories
$$\CH^b(X,n)\rightarrow\h^{2b-n}(X,b).$$
Namely, the cohomology theory has to be the hypercohomology of a complex which is contravariant. By replacing it with its Godement resolution it can be assumed to be built of acyclic sheaves. Moreover, Bloch assumes that for this theory one can define cohomology groups with supports, such that it satisfies a localisation sequence,  that it satisfies homotopy invariance, the existence of a cycle class for subschemes of pure codimension and weak purity. 

In order to construct higher cycle classes into the overconvergent cohomology groups, we take again the detour over Milnor $K$-theory using Rost's axiomatic. In particular we will not have to worry about the size of the residue fields of $X$.

\begin{lem} The cohomology groups of the Milnor $K$-sheaves satisfy the conditions required by Bloch in his construction of cycle classes.\end{lem}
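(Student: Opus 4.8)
The plan is to realise the ``cohomology theory'' $X\mapsto\h^\ast(X,\Ksheaf^M_b)$ through the codimension cycle complex of the Milnor $K$-cycle module and to read off each of Bloch's requirements from Rost's results recalled in Section~\ref{Rost}. Concretely, take the cycle module $M=K^M_\ast$ and set $\Gamma^\ast(b):=C^\ast(\,\cdot\,;M,b)$, the cycle complex of $M$ in weight $b$; by Corollary~\ref{Cor65} its hypercohomology on a smooth $X$ is $A^p(X;M,b)=\h^p(X,\Ksheaf^M_b)$, and after replacing each term by its Godement resolution we may assume the complex consists of flasque, hence acyclic, sheaves. Flat pullback of cycle complexes makes $\Gamma^\ast(b)$ contravariant for flat morphisms, which is the functoriality Bloch uses; this settles the requirement that the theory be the hypercohomology of a contravariant complex of acyclic sheaves.

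Next, cohomology with supports. For a closed immersion $i\colon Y\hookrightarrow X$ with open complement $j\colon U\hookrightarrow X$, define $\h^p_Y(X,\Ksheaf^M_b):=A^p_Y(X;M,b)$ by means of the cycle complex with support $C^\ast_Y(X;M,b)$ of Section~\ref{Rost}; for smooth $X$ this agrees with ordinary cohomology with supports because the cycle complex is a flasque (Gersten) resolution of $\Ksheaf^M_b$. Since the underlying point set of $X$ is the disjoint union of the points lying in $Y$ and those lying in $U$, the flat pullback $j^\ast$ fits into a short exact sequence of complexes
$$0\to C^\ast_Y(X;M,b)\to C^\ast(X;M,b)\xrightarrow{\ j^\ast\ }C^\ast(U;M,b)\to 0,$$
whose long exact cohomology sequence is precisely the localisation sequence Bloch requires, the connecting map being Rost's boundary $\partial^U_Y$; compare the homology statement~(\ref{LESfH}). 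Homotopy invariance is immediate from~(\ref{HIC}): for an affine bundle $\pi\colon V\to X$, in particular for $V=X\times_k\aA^n$, the pullback $A^p(X;M,b)\to A^p(V;M,b)$ is bijective, i.e.\ $\h^p(X,\Ksheaf^M_b)\xrightarrow{\sim}\h^p(X\times_k\aA^n,\Ksheaf^M_b)$.

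It remains to produce cycle classes of subschemes of pure codimension and to verify weak purity. If $W\subset X$ is an integral closed subscheme of codimension $b$ with generic point $w\in X^{(b)}$, then the weight-zero group $M_{b-b}(w)=K^M_0(\kappa(w))=\ZZ$ occurs as a summand of $C^b_W(X;M,b)$; the generator $\{1\}_{\kappa(w)}$ is automatically a cocycle, since the next term $C^{b+1}_W(X;M,b)$ involves the groups $M_{-1}(\,\cdot\,)\equiv 0$, and its image under $C^b_W(X;M,b)\to A^b_W(X;M,b)\to A^b(X;M,b)=\h^b(X,\Ksheaf^M_b)$ is the desired cycle class of $W$; extending $\ZZ$-linearly yields the class of an arbitrary codimension-$b$ cycle, which under the identification $\CH^b(X)=\h^b(X,\Ksheaf^M_b)$ is the standard one. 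For weak purity, note that if every irreducible component of the closed set $W$ has codimension $\geqslant b$ in $X$, then $W$ contains no point of codimension $<b$, whence $C^i_W(X;M,b)=\coprod_{w\in W\cap X^{(i)}}M_{b-i}(w)=0$ for all $i<b$, and therefore $\h^i_W(X,\Ksheaf^M_b)=A^i_W(X;M,b)=0$ for $i<b$ --- this is Bloch's weak purity in the degree convention $d=1$ attached to the Milnor $K$-sheaf.

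The mathematical content of all six properties is thus already contained in Rost's \cite{Rost} (exactness of the cycle/Gersten complex, proper pushforward, flat pullback, homotopy invariance) together with Corollary~\ref{Cor65}. I expect the only delicate points to be purely expository: matching Bloch's normalisations --- the precise degree conventions for the cycle class and for weak purity, and the exact list of functoriality axioms he imposes --- with the cycle-module formalism, and checking that the short exact sequence of cycle complexes above is natural in the flat morphism so that the localisation sequences are compatible; neither is substantive.
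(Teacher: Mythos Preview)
Your proposal is correct and follows essentially the same route as the paper: both verify Bloch's list of conditions by invoking Rost's cycle-module formalism (Corollary~\ref{Cor65} for the complex, the boundary triple for localisation, (\ref{HIC}) for homotopy invariance, $K^M_0=\ZZ$ for the cycle class, and vanishing in low codimension for weak purity). The only cosmetic differences are that the paper derives localisation and weak purity via the dimension/codimension duality $A^p_Y(X;K^M_\ast,b)\cong A_{n-p}(Y;K^M_\ast,b-n)$, whereas you argue directly on the codimension complex with supports---same content, different packaging.
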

\begin{proof}: We check the properties one by one. 
\begin{enumerate}\item \textbf{Calculated by a complex}. As we have seen in Corollary \ref{Cor65} and the subsequent remarks that according to Rost \cite{Rost} the sheaf cohomology of $\Ksheaf_b^M$ over $X$ can be calculated by the cohomology of the associated cycle complex $C^\ast(X;K^M_\ast,b)$
$$A^p(X;K^M_\ast,b)=\h^p(X,\Ksheaf^M_b).$$

\item \textbf{Localisation Sequence}. For a closed subscheme $i:Y\hookrightarrow X$ let $C_Y^\ast(X,K^M_\ast,b)$ be the ``cycle complex with supports'' defined by
$$C^p_Y(X;K^M_\ast,b)=\coprod_{\substack{x\in X^{(p)}\\x\in Y}}K^M_{b-p}(x),$$
and $A_Y^n(X;K^M_\ast,b)$ the associated cohomology with supports. 
Recall that there is a long exact sequence (\ref{LESfH}) for a closed subscheme $i:Y\hookrightarrow X$ and the associated immersion $j:U=X\backslash Y\rightarrow X$
$$\cdots\xrightarrow{\partial}A_p(Y;K^M_\ast,b)\xrightarrow{i_\ast}A_p(X;K^M_\ast,b)\xrightarrow{j_\ast}A_p(U;K_\ast^M,b)\xrightarrow{\partial}A_{p-1}(Y;K^M_\ast,b)\xrightarrow{i_\ast}\cdots.$$
This is in fact a localisation sequence: by definition we have Poincar\'e duality style equalities
\begin{eqnarray*} A_{n-p}(Y;K^M_\ast,n-b) &=& A^p_Y(X;K^M_\ast,b)\\
A_{n-p}(X;K^M_\ast,n-b) &=& A^p(X;K^M_\ast,b),\end{eqnarray*}
where $n$ is the relative dimension of $X$ over $k$. Thus the long exact sequence of homology above induces along exact sequence of cohomology
$$\cdots\xrightarrow{\partial}A^p_Y(X,K^M_\ast,b)\xrightarrow{i^\ast}A^p(X;K^M_\ast,b)\xrightarrow{j^\ast}A^p(U;K_\ast^M,b)\xrightarrow{\partial}A^{p+1}_Y(X;K^M_\ast,b)\rightarrow\cdots$$
and by the pointwise definition of cycle complexes, this sequence satisfies the usual functorial properties. 

\item \textbf{Homotopy invariance}. According to Rost (equation (\ref{HIC})), the cohomology groups $A^p(X;K^M_\ast,b)$ satisfy homotopy invariance
$$A^p(X;K^M_\ast,b)\cong A^p(X\times\aA^1;K^M_\ast,b).$$

\item \textbf{Cycle class}. Although this point does not hold for cycle modules in general, it holds for Milnor $K$-theory. Let $Y\subset X$ be of pure codimension $b$. Similarly to the discussion above there is an isomorphism
$$A^b_Y(X;K_\ast^M,b)\cong A^0(Y;K_\ast^M,0),$$
and the right hand sight is isomorphic to the zero cohomology group of $\Ksheaf^M_0$ on $Y$. But for any ring $K^M_0(A)=\ZZ$ and there is a well defined class 
$$\left[Y\right]\in A^b_Y(X;K_\ast^M,b)$$
that corresponds to the identity, which by construction is contravariant functorial with respect to the pull back of cycles.

\item \textbf{Weak purity}. Let $Y\subset X$ be of pure codimension $r$. There is an isomorphism
$$A^p_Y(X;K^M_\ast,b)\cong A_{n-p}(Y;K^M_\ast,b-n)),$$
where again $n$ is the dimension of $X$. The right-hand side is zero of $n-p$ is greater than the dimension of $Y$. Consequently the left-hand side is zero if $p<r$ .
\end{enumerate}\end{proof}

Now we can go step by step through the construction of cycle classes. Using that the cohomology of the Milnor $K$-sheaf can be calculated by a complex, we see that the usual diagram of simplices
\begin{equation}\label{chaincomplex}\xymatrix{X\ar@<.5ex>[r]\ar@<-.5ex>[r] & X\times\Delta^1 \ar@<1ex>[r]\ar[r]\ar@<-1ex>[r] & X\times\Delta^2\ar@<1.5ex>[r]\ar@<-1.5ex>[r]^{\vdots}&\cdots}\end{equation}
yields a double complex
$$C(X\times \Delta^\bullet;K^M_\ast,\bullet).$$
The first sheet of the spectral sequence associated to this double complex is given by
$$E_1^{pq}=A^q(X\times\Delta^{-p};K^M_\ast,b).$$
Note the appearance of a sign at the index $p$ on the right sight. This is due to the fact that the functor $A^p$ is contravariant in its first place and the introduction of a sign aligns the induced morphisms by the natural maps of (\ref{chaincomplex}) with the required structure of a spectral sequence. The sheet $E_1^{pq}$ is therefore by construction only non-zero for $p\leqslant 0$. In order to calculate the second sheet, we fix $q$ and look at the associated bounded complex
$$\cdots A^q(X\times\Delta^{-p};K^M_\ast,b)\xrightarrow{d_1^{pq}}A^q(X\times\Delta^{-(p+1)};K^M_\ast,b)\rightarrow\cdots\rightarrow A^q(X\times\Delta^1;K^M_\ast,b)\xrightarrow{d_1^{1q}}A^q(X;K^M_\ast,b)\rightarrow 0$$
where the boundary morphisms
$$d_1^{pq}:E_1^{pq}=A^q(X\times\Delta^{-p};K^M_\ast,b)\rightarrow E_1^{p+1,q}=A^q(X\times\Delta^{-p-1};K^M_\ast,b)$$
are induced by the pull-backs of the maps in (\ref{chaincomplex}). By the homotopy invariance of the cohomology $A^q$, 
$$A^q(X\times\Delta^{-p};K^M_\ast,b)\cong A^q(X;K^M_\ast,b)$$
for all $p\leqslant 0$. However as the simplexes collapse $\Delta^{-p}$ in the above complex, we discern from the definition of the boundary maps $d_1^{pq}$ that 
they are trivial if $p$ is odd and isomorphisms if $p$ is even. Therefore we find the second sheet to be
$$E_2^{pq}=\begin{cases}A^q(X;K^M_\ast,b) & \text{ for } p=0\\ 0 & \text{ otherwise.}\end{cases}$$
Hence the spectral sequence converges and we may write
\begin{equation}\label{SpectFirst}E_1^{pq}\Rightarrow A^\ast(X;K^M_\ast,b).\end{equation}
We get the same result if we truncate the diagram (\ref{chaincomplex}) at $X\times\Delta^N$ for $N$ even. 
The right-hand side of (\ref{SpectFirst}) is the target of our desired cycle map. We will use an auxiliary spectral sequence $\widetilde{E}^{pq}_r$ which maps into $E^{pq}_r$. 

Let $\widetilde{A}^a(X\times\Delta^p;K^M_\ast,b)=\varinjlim A^a_{|Z|}(X\times\Delta^p;K^M_\ast,b)$ where the limit is over $z^b(X,p)$ as used in the definition of the Chow groups and $|Z|$ denotes the support of $Z$. If we truncate again at some large even $N$ to avoid convergence problems, we get in the same manner as above another spectral sequence with the first sheet
$$\widetilde{E}_1^{pq}=\begin{cases} \widetilde{A}^q(X\times\Delta^{-p};K^M_\ast,b) & \text{ for } -p\leqslant N\\ 0 & \text{ otherwise.}\end{cases}$$
The natural morphism of cohomology groups from cohomology with supports to the regular one induces a map of spectral sequences
$$\widetilde{E}^{pq}_1\rightarrow E_1^{pq}.$$
Using Bloch's notation let $t_Nz^b(X,\cdot)$ be the truncation of the complex $z^b(X,\cdot)$ in degree $N$. Then the cycle class as described in the list above yields a morphism of complexes
\begin{equation}\label{Truncated}t_Nz^b(X,\cdot)\rightarrow \widetilde{E}_1^{\cdot,b}.\end{equation}
Note that as the limit in the definition of $\widetilde{A}^a(X\times\Delta^p;K^M_\ast,b)$ runs over cycles of pure codimension $b$ the weak purity axiom implies that  $\widetilde{E}_1^{pa}=\widetilde{A}^a(X\times\Delta^p;K^M_\ast,b)=0$ for $a<b$. Consequently this holds even for all sheets, i.e. $\widetilde{E}_r^{pa}=0$ for $a<b$. In particular, for $r>1$ this implies that the boundary maps
$$d_r^{pb}:\widetilde{E}^{p,b}_r\rightarrow\widetilde{E}_r^{p+r,b-r+1}=0$$
are zero as well. Taking cohomology on both sides of (\ref{Truncated}) and using the fact that the Chow groups derived from the untruncated complex $z^b(X,\cdot)$ maps into the truncated ones we get for any $n$
\begin{equation}\label{ChowFirst}\CH^b(X,n)\rightarrow \widetilde{E}_2^{-n,b}\rightarrow \widetilde{E}_{\infty}^{-n,b}.\end{equation}
Again by the weak purity axiom we see that $\widetilde{E}_{\infty}^{p,a}=0$ for $a<b$. Thus the morphism (\ref{ChowFirst}) maps in fact into the limit of the $\widetilde{E}_1$ spectral sequence in degree $b-n$. The morphism of spectral sequences $\widetilde{E}^{pq}_1\rightarrow E_1^{pq}$ induces then that (\ref{ChowFirst}) also maps into the limit of the $E_1$ spectral sequence in degree $b-n$ which is $A^{b-n}(X;K^M_\ast,b)$ as shown above. This concludes the construction and we get
\begin{cor} For a smooth scheme $X/k$ there is a family of cycle classes
\begin{equation}\label{CycleM}\eta^{bn}_M:\CH^b(X,n)\rightarrow A^{b-n}(X;K^M_\ast,b)=\h^{b-n}(X,\Ksheaf^M_b).\end{equation}
\end{cor}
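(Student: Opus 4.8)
The plan is to carry out Bloch's construction of higher cycle classes from \cite[\P 4]{Bloch} with the target cohomology theory taken to be $X \mapsto \h^\ast(X,\Ksheaf^M_b) = A^\ast(X;K^M_\ast,b)$. This is legitimate precisely because the preceding lemma verifies all of Bloch's input axioms for this theory: it is computed by the cycle complex $C^\ast(X\times\Delta^\bullet;K^M_\ast,b)$, it has a localisation sequence, it is homotopy invariant, it admits cycle classes for subschemes of pure codimension, and it satisfies weak purity. Once these are in hand, no further input about Milnor $K$-theory is needed.

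First I would form the cosimplicial scheme $N \mapsto X\times_k\Delta^N_k$ and the associated double complex, with first-page spectral sequence $E_1^{pq} = A^q(X\times\Delta^{-p};K^M_\ast,b)$. Homotopy invariance (equation (\ref{HIC})) identifies each column with $A^q(X;K^M_\ast,b)$ and makes the horizontal differentials alternate between zero maps and isomorphisms, so the spectral sequence degenerates at $E_2$ with abutment $A^\ast(X;K^M_\ast,b)$, and the same holds after truncating the simplicial diagram at $X\times\Delta^N$ for $N$ even. In parallel I would form the ``with supports'' variant, setting $\widetilde{A}^a(X\times\Delta^p;K^M_\ast,b) = \varinjlim_{Z\in z^b(X,p)} A^a_{|Z|}(X\times\Delta^p;K^M_\ast,b)$, obtaining a spectral sequence $\widetilde{E}_1^{pq} = \widetilde{A}^q(X\times\Delta^{-p};K^M_\ast,b)$ together with the forget-supports map $\widetilde{E}^{pq}_r \to E^{pq}_r$.

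Next, the cycle-class axiom provides a morphism of complexes $t_N z^b(X,\cdot) \to \widetilde{E}_1^{\cdot,b}$ from the truncated cycle complex, and weak purity forces $\widetilde{E}_r^{p,a} = 0$ for $a<b$ on every page; hence all differentials leaving $\widetilde{E}_r^{\cdot,b}$ vanish, $\widetilde{E}_2^{-n,b} = \widetilde{E}_\infty^{-n,b}$, and this class survives into the abutment in total degree $b-n$. Passing to homology and noting that $\CH^b(X,n)$ coming from the untruncated complex maps into the homology of the truncated one, I obtain $\CH^b(X,n) \to \widetilde{E}_2^{-n,b} \to \widetilde{E}_\infty^{-n,b} \to E_\infty^{-n,b} \hookrightarrow A^{b-n}(X;K^M_\ast,b) = \h^{b-n}(X,\Ksheaf^M_b)$, the last inclusion being again weak purity placing the class in the correct graded piece of the degenerate $E$-abutment. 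Finally I would check independence of the (large even) truncation level $N$, so that $\eta^{bn}_M$ is well defined, and that naturality for flat pull-backs follows from the pointwise functoriality of the cycle complexes.

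The main obstacle is bookkeeping rather than a conceptual difficulty: one must track carefully that weak purity not only annihilates the low-weight pages of $\widetilde{E}$ but also ensures the image lands exactly in $A^{b-n}(X;K^M_\ast,b)$ inside the abutment of the $E$-spectral sequence, and that the whole passage is insensitive to the choice of Godement resolution and to the truncation. These are precisely the points Bloch addresses, so the work reduces to transcribing his argument against the cycle-module cohomology just shown to satisfy his axioms.
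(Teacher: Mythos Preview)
Your proposal is correct and follows essentially the same approach as the paper: both run Bloch's spectral-sequence construction against the cycle-module cohomology $A^\ast(X;K^M_\ast,b)$, using homotopy invariance to collapse the $E$-spectral sequence, weak purity to kill the outgoing differentials from row $b$ in the auxiliary $\widetilde{E}$-spectral sequence, and the cycle-class axiom to feed in the truncated complex $t_N z^b(X,\cdot)$. One small imprecision: you write $\widetilde{E}_2^{-n,b} = \widetilde{E}_\infty^{-n,b}$, but weak purity only kills the \emph{outgoing} differentials from row $b$, so you only get a surjection $\widetilde{E}_2^{-n,b} \twoheadrightarrow \widetilde{E}_\infty^{-n,b}$; this is harmless since a map is all that is needed.
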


We list some properties of the cycle class map for the Milnor $K$-sheaf.
\begin{description}\item[\textbf{Normalisation}.] The class of $X$ in the Chow ring $\CH^\ast(X,\ast)$ maps to the identity in the ring $\h^\ast(X,\Ksheaf^M_\ast)$. Indeed, we see that the cycle $\left[X\right]\in\CH^0(X,0)$ is mapped via the cycle map to the class of $X$ in $\h^0_X(X,\Ksheaf^M_0)=\h^0(X,\Ksheaf^M_0)$ which is isomorphic to $\ZZ$ and $\left[X\right]$ corresponds to the identity as we have seen above.

\item[\textbf{Functoriality with respect to flat pull-back and proper push-forward}.] Both the Chow ring and the cohomology of the Milnor $K$-sheaf are contravariant functorial with respect to flat pull-backs. Let $f:X'\rightarrow X$ be flat (a condition which can be dropped in case $X$ is smooth). Then Bloch shows in \cite[Prop.(1.3)]{Bloch2} that the complex that calculates the Chow groups is contravariant with respect to $f$, and consequently there is a well-defined pull-back map 
$$f^\ast:\CH^b(X,n)\rightarrow\CH^b(X',n).$$ 
Likewise Rost constructs in \cite[Section 12]{Rost} a pull-back map 
$$f^\ast:A^p(X;K^M_\ast,q)\rightarrow A^p(X';K^M_\ast,q)$$
coming from the corresponding pull-back map on the complex $C^p(X;K^M_\ast,q)$. 

The cycle class $\left[Y\right]\in\h^b(X,\Ksheaf^M_b)$ for subschemes $Y\subset X$ of pure codimension which plays an important role in the construction of the cycle class maps are contravariant functorial for morphisms $f:X'\rightarrow X$ which preserve the codimension. Thus, if we assume that $f$ is faithfully flat, we obtain functoriality of the cycle class maps $\eta_M^{bn}$ in the sense that the following diagram commutes
$$\xymatrix{\CH^b(X,n)\ar[r]^{\eta_M^{bn}}\ar[d]_{f^\ast} & \h^{b-n}(X,\Ksheaf^M_b))\ar[d]^{f^\ast}\\ \CH^b(X',n)\ar[r]^{\eta_M^{bn}} & \h^{b-n}(X',\Ksheaf^M_b)}$$

Even though we dispose in both cases of push-forwards for a proper morphism $f:X'\rightarrow X$, it is not clear to us yet, how to make use of it for the cycle class map, as Bloch points out that in case of the Chow groups the push-forward $f_\ast$ causes a shift in codimension by the degree of $f$ (\cite[Prop. (1.3)]{Bloch2}) which according to Rost \cite[3.5]{Rost} doesn't occur for his cycle complexes. 

\item[\textbf{Ring homomorphism}.] It is clear that the cycle class map is additive by linearity. In fact we have the following diagram
$$\xymatrix{\CH^b(X,n)\otimes\CH^b(X,n)\ar[r]^{\eta_M^{bn}\otimes\eta_M^{bn}} \ar[d] & \h^{b-n}(X,\Ksheaf_b^M)\otimes\h^{b-n}(X,\Ksheaf^M_b)\ar[d]\\
\CH^{b}(X\times X,n)\ar[r]^{\eta_M^{b,n}}\ar[d]_{\Delta^\ast} & \h^{b-n}(X\times X,\Ksheaf^M_{b})\ar[d]^{\Delta^\ast}\\
\CH^{b}(X,n)\ar[r]^{\eta_M^{b,n}} & \h^{b-n}(X,\Ksheaf^M_{b})}$$
where the upper square commutes by linearity and the lower one by pulling back along the diagonal. This extends of course linearly to addition of cycles of different codimension and degree. 

Multiplicativity requires more work. Multiplication in the higher Chow ring is described by Bloch in \cite[Section 5]{Bloch2}. In order to do this, it is sufficient to construct a map in the derived category for the corresponding complexes. More precisely, let $X$ and $Y$ be quasi-projective algebraic $k$-schemes. Then Bloch constructs a map
$$s\left(z^a(X,\cdot)\otimes z^b(Y,\cdot)\right)\rightarrow z^{a+b}(X\times Y,\cdot)$$
where on the left-hand side is $s$ denotes the simple complex associated with a double complex. The idea is to fix a triangulation for $\Delta^m\times\Delta^N\cong\aA^{m+n}$ for all $m$, $n$ such that it induces a well-defined morphism on the complexes. A triangulation is a family $T=\{T_{m,n}\}_{m,n\in\NN}$ with
$$T_{m,n}=\sgn(\theta)\theta$$
where $\theta$ is a face map $\Delta^{m+n}\rightarrow\Delta^m\times\Delta^n$. It is possible to fix a system of maps $T$ such that it induces a morphism of complexes $s\left(z^\ast(X,\cdot)\otimes z^\ast(Y,\cdot)\right)\rightarrow z^\ast(X\times Y,\cdot)$ where it is defined. However, the problem hereby is that $T_{n,m}\left(z^\ast(X,\cdot)\otimes z^\ast(Y,\cdot)\right)$ is not necessarily contained in $z^\ast(X\times Y,\cdot)$ as images of cycles might not meet all faces properly. The solution is to take the subcomplex of $s\left(z^\ast(X,\cdot)\otimes z^\ast(Y,\cdot)\right)$ generated by products $Z\otimes W$ such that $Z$ and $W$ are irreducible subvarieties of $X\times\Delta^m$ and $Y\times\Delta^n$ respectively and such that $Z\times W\subset X\times Y\times\Delta^m\times\Delta^n$ meets all faces properly. We denote this subcomplex by $z^\ast(X,Y,\cdot)'\subset s\left(z^\ast(X,\cdot)\otimes z^\ast(Y,\cdot)\right)$. Bloch shows in \cite[Theorem 5.1]{Bloch2} that this inclusion is in fact a quasi-isomorphism. As a consequence, one obtains a commutative diagram
$$\xymatrix{ s(z^{\ast}(X,\cdot)\otimes z^{\ast}(Y,\cdot))\ar[r]^{\qquad\sim}\ar[drr] & z^\ast(X,Y,\cdot)'\ar[r]^T & z^\ast(X\times Y,\cdot)\ar[d]\\
 & & z^\ast(X,\cdot) }$$
in the derived category and this induces an action of $\CH^\ast(Y,\cdot)$ on $\CH(X,\cdot)$. In particular, if $Y=X$ is smooth, one obtains a product on $\CH^\ast(X,\cdot)$ via pull-back along the diagonal
$$\CH^a(X,n)\otimes\CH^b(X,m)\rightarrow\CH^{a+b}(X\times X,n+m)\xrightarrow{\Delta^\ast}\CH^{a+b}(X,n+m)$$
which makes it into an anti-commutative ring \cite[Corollary 5.7]{Bloch}. 

By the above statements, it is sufficient, in order to see if the family of maps $\eta_M^{bn}$ is compatible with products, to consider the subcomplex $z^\ast(X,X,\cdot)'\subset s\left(z^\ast(X,\cdot)\otimes z^\ast(X,\cdot)\right)$. Thus let $Z\in z^a(X,n)$ and $W\in z^b(X,m)$ be irreducible subvarieties of $X\times\Delta^n$ and $X\times\Delta^m$ respectively such that $Z\times W\subset X\times X\times\Delta^n\times\Delta^m$ meets all faces of $\Delta^n\times\Delta^m$ properly, which means, that $Z\otimes W$ is in the set of generators of $z^\ast(X,X,\cdot)'$. The cycle class of Milnor $K$-theory mentioned above sends the class of  $Z$ to a unique class $[Z]\in A_Z^a(X\times \Delta^n,a)=A^0(Z;K^M_\ast,0)\cong\ZZ$ and $W$ to a unique class $[W]\in A_W^b(X\times\Delta^m,b)=A^0(W;K^M_\ast,0)\cong\ZZ$, which in both cases represents the identity. Rost's definition of (cross) products for cycle modules in \cite[Section 14]{Rost} 
$$C^p(Y;N,n)\times C^q(X,M,m)\rightarrow C^{p+q}(Y\times X;M)$$
holds in particular for the case of $N=M=K^M_\ast$. In this case the product is anti-commutative as shown in \cite[Corollary 14.3]{Rost}. Hence the product of $[Z]$ and $[W]$ as evoked above can easily be given as
\begin{eqnarray*}A^0(Z;K^M_\ast,0)\times A^0(W;K^M_\ast,0) &\rightarrow& A^0(Z\times W;K^M_\ast,0)\\
\left[Z\right]\times\left[W\right] &\mapsto&\left[Z\times W\right]\end{eqnarray*}
as all cycles involved represent the identity. Thus by means of the corresponding inclusions we obtain a commutative diagram
$$\xymatrix{z^a(X,n)\otimes z^b(X,m)\ar[r]\ar[d] & \widetilde{A}^a(X;K^M_\ast,n)\otimes\widetilde{A}^b(X;K^M_\ast,m)\ar[d]\\
z^{a+b}(X\times X, n+m)\ar[r] & \widetilde{A}^{a+b}(X\times;K^M_\ast,n+m)}$$
This shows that the morphism of complexes (\ref{Truncated}) is compatible with products and since this is the core of Bloch's construction of cycle class maps, they are compatible with products as well and one has a  diagram
$$\xymatrix{\CH^a(X,n)\otimes\CH^b(X,m)\ar[r]^{\eta_M^{an}\otimes\eta_M^{bm}} \ar[d] & \h^{a-n}(X,\Ksheaf_a^M)\otimes\h^{b-m}(X,\Ksheaf^M_b)\ar[d]\\
\CH^{a+b}(X\times X,n+m)\ar[r]^{\eta_M^{a+b,n+m}}\ar[d]_{\Delta^\ast} & \h^{a+b-n-m}(X\times X,\Ksheaf^M_{a+b})\ar[d]^{\Delta^\ast}\\
\CH^{a+b}(X,n+m)\ar[r]^{\eta_M^{a+b,n+m}} & \h^{a+b-n-m}(X,\Ksheaf^M_{a+b})}$$
where the upper square commutes due to the discussed reasons and the lower one again by pulling back along the diagonal.

\end{description}

\subsection{Higher cycle classes with integral coefficients in the overconvergent complex}

We now use the map (\ref{dlog}) of section \ref{TransMap}
$$d\log^n:\Ksheaf^M_n\rightarrow W^\dagger\Omega[n]$$
to define higher cycle classes with coefficients in the overconvergent cohomology theory. Remember that it induces a morphism of cohomology groups
$$\h^m(X,\Ksheaf_i^M)\rightarrow\HH^{m+i}(X,W^\dagger\Omega).$$
Note that whereas the first cohomology theory is bigraded, the second one is not. However, by definition the image of $d\log$ lies in the truncated complex $W^\dagger\Omega^{\geqslant n}[n]$ which is a subcomplex of $W^\dagger\Omega[n]$. As a consequence, $d\log$ factors and we can write
$$d\log^n:\Ksheaf^M_n\rightarrow W^\dagger\Omega[n],$$
and therefore on the cohomological level a morphism
$$\h^m(X,\Ksheaf^M_i)\rightarrow\HH^{m+i}(X,W^\dagger\Omega^{\geqslant i}).$$
Then the cycle class map for the Milnor $K$-sheaf (\ref{CycleM}) induces the following result.
\begin{prop} For $b,n\geqslant 0$ there exist cycle class maps
$$\eta_{\text{sc}}^{bn}:\CH^b(X,n)\rightarrow \HH^{2b-n}(X,W^\dagger\Omega^{\geqslant b}).$$\end{prop}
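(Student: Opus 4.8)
The plan is to obtain $\eta_{\text{sc}}^{bn}$ simply as the composition of the Milnor cycle class with the morphism of cohomology induced by $d\log^b$. Concretely, the preceding subsection produces, for every smooth $X/k$ and all $b,n\geqslant 0$, the cycle class map
$$\eta_M^{bn}:\CH^b(X,n)\rightarrow A^{b-n}(X;K^M_\ast,b)=\h^{b-n}(X,\Ksheaf^M_b)$$
of (\ref{CycleM}), and the transformation map $d\log^b:\Ksheaf^M_b\rightarrow W^\dagger\Omega[b]$ from Section \ref{TransMap} induces on hypercohomology a homomorphism
$$d\log:\h^{b-n}(X,\Ksheaf^M_b)\rightarrow\HH^{(b-n)+b}(X,W^\dagger\Omega^{\geqslant b})=\HH^{2b-n}(X,W^\dagger\Omega^{\geqslant b}).$$
One then defines $\eta_{\text{sc}}^{bn}:=d\log\circ\eta_M^{bn}$, and the degree $2b-n$ in the statement is exactly $(b-n)+b$.

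First I would recall why the target of $d\log$ can be taken to be the truncation $W^\dagger\Omega^{\geqslant b}[b]$ rather than all of $W^\dagger\Omega[b]$: the map $d\log^b$ is built from symbols $d\log[x_1]\cdots d\log[x_b]$ which sit in degree exactly $b$, and $d\log^b$ respects the descending filtration of $W^\dagger\Omega$ by differential graded ideals, as already observed in Section \ref{Overconvergent}; hence it factors through $W^\dagger\Omega^{\geqslant b}[b]\subset W^\dagger\Omega[b]$ and one gets the corresponding factorisation on cohomology. Then it remains only to invoke the existence of $\eta_M^{bn}$, which rests on the lemma verifying that $\h^\ast(X,\Ksheaf^M_b)=A^\ast(X;K^M_\ast,b)$ satisfies the five axioms needed in Bloch's construction (calculability by a complex, a localisation sequence, homotopy invariance, a cycle class for pure-codimension subschemes, and weak purity) together with the spectral-sequence argument carried out above.

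There is no real obstacle here beyond bookkeeping: all the substantive work has already been done, namely the construction of $\eta_M^{bn}$ and the construction of the natural transformation $d\log^b$ together with the fact that it lands in the truncated complex. For the bare existence statement the composition suffices; I would, however, remark in passing that since $\eta_M^{bn}$ is compatible with flat pull-back, addition and the product structure, and since $d\log$ is a morphism of cohomology rings respecting the filtration, the composite $\eta_{\text{sc}}^{bn}$ automatically inherits normalisation, flat functoriality and multiplicativity, which is what makes it a reasonable higher cycle class map.
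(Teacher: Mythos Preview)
Your proposal is correct and follows essentially the same approach as the paper: the proposition is obtained by composing the Milnor cycle class $\eta_M^{bn}$ of (\ref{CycleM}) with the cohomology map induced by $d\log^b$, after noting that $d\log^b$ factors through the truncated complex $W^\dagger\Omega^{\geqslant b}[b]$. Your additional remarks on inherited functoriality and multiplicativity mirror exactly what the paper records immediately after the proposition.
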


By functoriality of the morphism of cohomology rings $\h^\ast(X,\Ksheaf^M_\ast)\rightarrow\HH^\ast(X,W^\dagger\Omega^{\geqslant\ast})$ the cycle classes $\eta_{\text{sc}}^{bn}$ satisfy similar properties as mentioned above for the cycle classes $\eta_M^{bn}$.



\addcontentsline{toc}{section}{References}


\end{document}